\newcommand{\hlabel}{\phantomsection\label}
\DeclareMathOperator{\id}{id}
\DeclareMathOperator{\het}{ht}
\newcommand{\clapdots}{\; \mathclap{\dots} \;}
\renewcommand{\phi}{\varphi}
\newcommand{\s}[2]{s\hspace{-4pt}\underset{\scriptscriptstyle{#2}}{\overset{\scriptscriptstyle{#1}}{\downarrow}}}
\newcommand{\su}[2]{s\hspace{-4pt}\overset{\scriptscriptstyle{#2}}{\underset{\scriptscriptstyle{#1}}{\uparrow}}}
\begin{document}

\title{Decomposable Specht modules indexed by bihooks}
\author{\begin{tabular}{cc}Liron Speyer&Louise Sutton\\[3pt]
{\normalsize University of Virginia,}&{\normalsize National University of Singapore,}\\
{\normalsize Charlottesville,}&{\normalsize 10 Lower Kent Ridge Road,}\\
{\normalsize VA 22904,}&{\normalsize Singapore 119076}\\
{\normalsize USA}&{\normalsize }\\[3pt]
{\normalsize\texttt{\normalsize l.speyer@virginia.edu}}&{\normalsize\texttt{\normalsize matloui@nus.edu.sg}}
\end{tabular}}

\renewcommand\auth{Liron Speyer \& Louise Sutton}

\runninghead{Decomposable Specht modules indexed by bihooks}
\msc{20C30, 20C08, 05E10}

\toptitle

\begin{abstract}
We study the decomposability of Specht modules labelled by \emph{bihooks}, bipartitions with a hook in each component, for the Iwahori--Hecke algebra of type $B$.
In all characteristics, we determine a large family of decomposable Specht modules, and conjecture that these provide a complete list of decomposable Specht modules indexed by bihooks.
We prove the conjecture for small $n$.
\end{abstract}

\section{Introduction}

Specht modules are of fundamental importance in the study of reflection groups and their deformations.
We are particularly interested in the Iwahori--Hecke algebras of types $A$ and $B$.
In type $B$, these Hecke algebras have been studied from the point of view of their decomposition numbers~\cite{Fay06,aj10}, their Kazhdan--Lusztig theory~\cite{
gip08,Jac11}, and via applications from higher representation theory~\cite{bsIII}.
Nowadays, the subject often takes on a more diagrammatic and categorical flavour, for example with the inception of Elias and Williamson's diagrammatic Hecke category~\cite{EW16}, Webster's diagrammatic Cherednik algebra~\cite{Webster} and recent work of Elias--Losev~\cite{el17}.

It is known by \cite{dj91,rouq08,fs16} that the Specht modules are indecomposable if the \emph{quantum characteristic} $e$ is not $2$, and under the further assumption in type $B$ that the parameters $\kappa_1$ and $\kappa_2$ are distinct.
Rouquier's work in fact gives us that the Hecke algebras admit faithful quasi-hereditary covers, whence indecomposability follows easily by considering the trivial endomorphism spaces of standard modules.

In type $A$, Murphy~\cite{gm80} and the first author~\cite{ls14} completely determined the decomposability of Specht modules indexed by \emph{hook} partitions.
The general case is very difficult, owing to the complicated structure of the endomorphism rings of Specht modules. In the case of the symmetric group, Dodge and Fayers~\cite{df12} give the first new family of decomposable Specht modules in thirty years, which are indexed by partitions of the form $(a,3,1^b)$.
Parallel to this, the graded composition multiplicities for Specht modules indexed by hooks have been determined using Fock space machinery in~\cite{cmt04}.

Here, we take the natural first step in extending this study of decomposable Specht modules to Iwahori--Hecke algebras of type $B$.
We study Specht modules indexed by \emph{bihooks}, that is bipartitions for which both components are hook partitions.
In a certain subfamily of these, the second author has determined graded decomposition numbers~\cite{Sutton17I,Sutton17II}, drawing an analogue in type $B$ with the aforementioned work of \cite{cmt04}.
As in~\cite{ls14}, we study these Specht modules from the perspective of the cyclotomic Khovanov--Lauda--Rouquier algebras that were introduced by Khovanov--Lauda~\cite{kl09} and Rouquier~\cite{rouq}, an equivalent point of view by virtue of the isomorphism theorem of Brundan and Kleshchev~\cite{bkisom}.
In this framework, we investigate endomorphisms of Specht modules, and obtain non-trivial generalised eigenspace decompositions for several large families of Specht modules, which we conjecture are the only decomposable Specht modules indexed by bihooks if $e\neq 2$ and $\nchar \bbf \neq 2$ (see \cref{enot2conj,e2conj}).
In other words, our main results prove one direction of our conjectural classification in all of the cases where decomposable Specht modules may arise, and we prove our classification in full in a few cases.
For small $n$ or $e=2$, we have some extra decomposable Specht modules -- see \cref{smallbihooks,kejefore=2}, respectively.
We summarise the majority of our decomposable Specht modules as follows.

{\theoremstyle{plain}\newtheorem*{mainresult}{Theorem \ref{mainresult}}
\begin{mainresult}
Suppose that we take a Hecke algebra of type $B$ with parameters $\kappa_1 = \kappa_2$.
Let $\la = ((ke + a, 1^b),(je + a, 1^b))$ or $((b+1, 1^{je+a-1}), (b+1, 1^{ke+a-1}))$, for some $j,k\geq 1$, $0< a \leq e$ and $0 \leq b < e$ with $a+b \neq e$, or for $a=b=0$.
\begin{enumerate}[label=(\roman*)]
\item For $j,  k > 1$, if $j+k$ is even and $\nchar \bbf \neq 2$, or if $j+k$ is odd, then $\spe\la$ is decomposable.

\item If $j=1$ or $k=1$, then $\spe\la$ is decomposable if and only if $\nchar\bbf \nmid j+k$.
\end{enumerate}
\end{mainresult}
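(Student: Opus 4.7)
The natural framework is the cyclotomic KLR algebra $R^\Lambda_n$ via the Brundan--Kleshchev isomorphism, in which each Specht module is cyclically generated by a distinguished vector $z_\lambda$ subject to explicit (idempotent, Garnir, and Kleshchev--Mathas--Ram-style straightening) relations. The plan is to exhibit a non-scalar endomorphism $\phi \in \operatorname{End}(\spe\lambda)$ and then analyse $\phi^2$ to extract a non-trivial generalised eigenspace decomposition. The fact that $\kappa_1=\kappa_2$ is essential: it forces the residue sequences of the two hook components of $\lambda$ to match up in a highly symmetric way, so that there exists a standard $\lambda$-tableau $T \neq T^\lambda$ with the same residue sequence as $T^\lambda$, obtained by ``swapping'' the contributions of the two hooks. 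Taking $d$ to be the corresponding tableau permutation, the candidate is $\phi\colon z_\lambda \mapsto \psi_d z_\lambda$ (plus a correction term in some cases).

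The first and hardest step is to check that this assignment extends to a well-defined endomorphism of $\spe\lambda$, i.e.\ that $\psi_d z_\lambda$ is annihilated by each of the defining relators of the Specht presentation. This is the main obstacle: a priori $\psi_d z_\lambda$ need not kill the Garnir relators, so one has to commute $\psi$-strands past the leg and arm of both hooks using the KLR relations, tracking carefully the arithmetic of $e$, $a$, and $b$ (the hypothesis $a+b \neq e$ keeps us away from the degenerate residues where the construction would fail). I expect the cleanest route to be to write the candidate image in terms of the standard basis $\{\psi_w z_\lambda\}$ and use the dominance order on tableaux to kill lower-order error terms.

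Once $\phi$ is constructed, the computation of $\phi^2$ should yield, in suitable cases, a polynomial identity whose coefficients depend only on $j+k$ and $\nchar \bbf$. In case (i) with $j,k \geq 2$, the two eigenvalues of $\phi$ coincide precisely when $2 \mid j+k$ and $\nchar \bbf = 2$, so in every other parity/characteristic situation the module splits via the generalised eigenspace decomposition. In case (ii), one hook collapses to a column and the computation should simplify to $\phi^2 = (j+k)\phi$ (up to scaling), so $\phi$ is a non-zero idempotent exactly when $\nchar \bbf \nmid j+k$; the converse ``only if'' direction then requires showing that in the excluded case $\operatorname{End}(\spe\lambda)$ has no non-trivial idempotents, which I would try to get by a direct dimension/filtration count using the known composition series for the $j=1$ family from~\cite{Sutton17I,Sutton17II}. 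The ``transposed'' bihook family $((b+1,1^{je+a-1}),(b+1,1^{ke+a-1}))$ can be handled by a parallel construction, or if available, deduced from the first family by a sign/Mullineux-type duality on the Hecke algebra of type $B$.
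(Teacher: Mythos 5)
Your overall architecture (build an explicit endomorphism in the KLR picture, then split $\spe\la$ into generalised eigenspaces) matches the paper's, but there are two concrete gaps that would sink the argument as written. First, your candidate endomorphism $z_\la\mapsto\psi_d z_\la$, where $d$ swaps the two hook components, does not extend to an endomorphism once $j,k>1$. For $\la=((ke),(je))$ the idempotent truncation $e(\bfi_\la)\spe\la$ has a basis indexed by all $\binom{j+k}{j}$ $e$-brick tableaux, and the single ``full swap'' vector is not annihilated by the generators $\psi_{re}$ for $r\neq j$ (these are the only relations that do not vanish for degree or residue reasons). The ``correction term'' you wave at is in fact the whole construction: the paper's endomorphism is the weighted sum $\phi(z_\la)=\sum_{i,l}(j-l)(k-i)\,\Psiu{j-l}{j-1}\Psid{j+i}{j}z_\la$ over \emph{all} brick tableaux, and identifying these coefficients (so that the contributions to $\psi_{re}\phi(z_\la)$ cancel in triples with weights $c$, $-2c$, $c$) is the heart of the proof. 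The paper also first reduces from $((ke+a,1^b),(je+a,1^b))$ to $((ke),(je))$ via divided power functors; working directly with the general hook shapes, as you propose, would make the Garnir analysis far harder than necessary.

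Second, your eigenvalue analysis in case (i) is too coarse. With $\binom{j+k}{j}$ basis vectors, $\phi$ does not satisfy a quadratic relation, so there is no ``polynomial identity in $j+k$''; one must exhibit eigenvectors explicitly. The first two eigenvalues the paper finds are $-j(k+1)$ and $-(j-1)(k+2)$, which differ by $j-k-2$; they coincide whenever $\nchar\bbf\mid(k-j+2)$, which can happen in odd characteristic with $j+k$ odd (e.g.\ $j=2$, $k=5$, $\nchar\bbf=5$), contradicting your claim that coincidence occurs only when $\nchar\bbf=2$ and $j+k$ is even. This is exactly why the paper computes a \emph{third} eigenvalue $-(j-2)(k+3)$ (or $0$ when $j=2$): among the three, at least two are distinct in every characteristic except $\nchar\bbf=2$ with $j+k$ even. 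Your case (ii) is essentially right ($\phi^2=-(k+1)\phi$ for the unique-up-to-scalar endomorphism, so the only-if direction follows from $\phi^2=0$ without invoking composition series), and handling the second family by conjugation is what the paper does via the fact that $\spe\la$ and $\spe{\la'}$ are simultaneously (in)decomposable.
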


\newtheorem*{lev1tolev2}{Theorem \ref{lev1tolev2}}
\begin{lev1tolev2}
Let $e=2$, and suppose that $\mu$ is a hook partition of $n$ such that $\spe \mu$ is a decomposable Specht module over the Hecke algebra of type $A$ (cf.~\cref{lev1murph,lev1spey}).
Then, for any partition $\nu$ of $m$, the Specht modules $\spe{(\mu, \nu)}$ and $\spe{(\nu, \mu)}$ over the Hecke algebra of type $B$ are decomposable.
\end{lev1tolev2}}

We now outline the layout of this paper.
In \cref{sec:background}, we will collect all necessary definitions and background from the literature, before studying the case of `small bihooks' (when $n \leq 2e$) in \cref{sec:small}.
In this case, we are able to completely determine the decomposability of Specht modules: we prove the above results in this special case, and furthermore show the converse, that all other bihooks index \emph{indecomposable} Specht modules.
Our method for this converse is a case-by-case analysis examining the tableaux that can appear in endomorphisms of these Specht modules. We emphasise that this method will not readily extend to large $n$.
Next, we conduct the majority of our study of Specht modules labelled by bihooks in \cref{sec:generalbihooks}, finding the aforementioned families of decomposable Specht modules.
Our method here is to first use the divided power functors to reduce proving \cref{mainresult} to the case of bipartitions of the form $((ke),(je))$, and then determine certain endomorphisms for Specht modules in~\Cref{keje} indexed by these bipartitions.
We compute three eigenvalues for these endomorphisms, yielding at least two distinct eigenvalues in any characteristic (with the exception of characteristic 2 when $j+k$ is even), resulting in a generalised eigenspace decomposition for the Specht modules having at least two non-trivial summands.
\cref{sec:e=2} covers the $e=2$ situation, which makes use of previous work of the first author in~\cite{ls14} to yield quick results and prove \cref{lev1tolev2,kejefore=2}.
We leave some long technical calculations for \cref{sec:calc}, where the keen reader may find the grittier details of our work.

\begin{ack}
The authors are grateful to the organisers of the conference Representation Theory of Symmetric Groups and Related Algebras, National University of Singapore, which allowed them to collaborate closely on parts of this research.
The second author is supported by Singapore MOE Tier 2 AcRF MOE2015-T2-2-003, and thanks the Universities of Osaka and Virginia for hosting her visits, as well as the Japan Society for the Promotion of Science for financial support.
The authors would also like to thank both Chris Bowman and Matthew Fayers for their helpful comments, as well as for the use of Fayers's GAP package, which was used extensively for computations.
We thank the referee for their close reading of the paper, offering many helpful comments and corrections.
\end{ack}

\section{Background}\label{sec:background}

In this section we give an overview of KLR algebras, Specht modules labelled by bihooks, and the associated combinatorics.
Throughout, $\bbf$ will denote an arbitrary field.

\subsection{Lie theoretic notation}

Let $e\in \{2,3,\dots\} \cup \{\infty\}$, which we call the \emph{quantum characteristic}.
If $e<\infty$, then we set $I:=\bbz/{e\bbz}$, which we identify with the set $\{0, 1, \dots, e-1\}$, whereas if $e=\infty$, we set $I:=\bbz$.
We let $\Gamma$ be the quiver with vertex set $I$ and an arrow $i\rightarrow{i-1}$ for each $i\in{I}$.
If $e=\infty$, then $\Gamma$ is the quiver of type $A_{\infty}$, otherwise $\Gamma$ is of type $A_{e-1}^{(1)}$.

Following Kac's book~\cite{kac}, we recall standard notation for the Kac--Moody algebra associated to the generalised Cartan matrix $(a_{ij})_{i,j\in I}$.
We have simple roots $\{\alpha_i\mid{i\in{I}}\}$, fundamental dominant weights $\{\La_i \mid i \in I\}$, and the invariant symmetric bilinear form $(\:,\:)$ such that $(\alpha_i, \alpha_j) = a_{i,j}$ and $(\La_i, \alpha_j) = \delta_{ij}$, for all $i, j \in I$.
Let $Q_+:= \bigoplus_{i\in I} \bbz_{\geq 0} \alpha_i$ be the positive cone of the root lattice.
If $\alpha = \sum_{i\in I} c_i \alpha_i \in Q_+$, then we define the \emph{height of $\alpha$} to be $\het(\alpha) = \sum_{i \in I} c_i$.

An \emph{$e$-bicharge} is an ordered pair $\kappa = (\kappa_1,\kappa_2)\in{I^2}$.
We define its associated dominant weight $\La$ of level two to be $\La = \La_\kappa := \La_{\kappa_1} + \La_{\kappa_2}$.

\subsection{The symmetric group}

Let $\mathfrak{S}_n$ be the symmetric group on $n$ letters.
We let $s_1,\dots,s_{n-1}$ denote the standard Coxeter generators, where $s_i$ is the simple transposition $(i, i+1)$ for $1\leq i<n$.
We define a \emph{reduced expression} for a permutation $w\in\mathfrak{S}_n$ to be an expression $s_{i_1}\dots s_{i_m}$ such that $m$ is minimal, and call $m$ the length of $w$, denoted $\ell(w)$.

We define the \emph{Bruhat order} $\leq$ on $\mathfrak{S}_n$ as follows.
If $x, w \in \mathfrak{S}_n$, then we write $x\leq w$ if there is a reduced expression for $x$ which is a subexpression of a reduced expression for $w$.

For $1\leq i\leq j\leq n-1$, we define $\s{j}{i}:=s_js_{j-1}\dots s_i$ and $\su{i}{j}:=s_is_{i+1}\dots s_j$.

\subsection{Bipartitions}

A \emph{partition} $\la$ of $n$ is a weakly decreasing sequence of non-negative integers $\la = (\la_1, \la_2, \dots)$ such that $|\la| := \sum \la_i = n$.
We write $\varnothing$ for the \emph{empty partition} $(0, 0, \dots)$.
A \emph{bipartition} $\la$ of $n$ is a pair $\la = (\la^{(1)}, \la^{(2)})$ of partitions such that $|\la| = |\la^{(1)}| + |\la^{(2)}| = n$.
We refer to $\la^{(1)}$ and $\la^{(2)}$ as the \emph{$1$st  and $2nd$ component}, respectively, of $\la$.
We abuse notation and also write $\varnothing$ for the \emph{empty bipartition} $(\varnothing, \varnothing)$.
We denote the set of all bipartitions of $n$ by $\mptn 2 n$.

For $\la, \mu \in \mptn 2 n$, we say that $\la$ \emph{dominates} $\mu$, and write $\la \dom \mu$, if for all $k\geq 1$,
\[
\sum_{j=1}^{k} \la_j^{(1)} \geq \sum_{j=1}^{k} \mu_j^{(1)} \text{ and } |\la^{(1)}| + \sum_{j=1}^{k} \la_j^{(1)} \geq |\mu^{(1)}| + \sum_{j=1}^{k} \mu_j^{(1)}.
\]

The \emph{Young diagram} of $\la = (\la^{(1)}, \la^{(2)}) \in \mptn 2 n$ is defined to be
\[
[\la]:= \{ (i,j,m)\in \bbn \times \bbn \times \{1, 2\} \mid 1\leq j \leq \la_i^{(m)}\}.
\]
We refer to elements of $[\la]$ as \emph{nodes} of $\la$.
We draw the Young diagram of a bipartition as a column vector of Young diagrams $[\la^{(1)}], [\la^{(2)}]$.
We say that a node $A\in[\la]$ is \emph{removable} if $[\la]\setminus \{A\}$ is a Young diagram of a bipartition, while a node $A\not\in[\la]$ is \emph{addable} if $[\la]\cup\{A\}$ is a Young diagram of a bipartition.

If $\la$ is a partition, the \emph{conjugate partition}, denoted $\la'$, is defined by
\[
\la_i'=|\left\{j\geq 1\mid\la_j\geq i\right\}|.
\]
If $\la \in \mptn 2 n$, then we define the conjugate bipartition, also denoted $\la'$, to be $\la' = (\la^{(2)'},\la^{(1)'})$.

\subsection{Tableaux}

Let $\la \in \mptn 2 n$.
Then a \emph{$\la$-tableau} is a bijection $\ttt:[\la] \rightarrow \{1, \dots, n\}$.
We depict a $\la$-tableau $\ttt$ by inserting entries $1, \dots, n$ into the Young diagram $[\la]$ with no repeats; we let $\ttt(i,j,m)$ denote the entry lying in node $(i,j,m) \in [\la]$.
We say that $\ttt$ is \emph{standard} if its entries increase down each column and along each row, within each component, and denote the set of all standard $\la$-tableaux by $\std\la$.

The \emph{column-initial tableau} $\ttt_\la$ is the $\la$-tableau where the entries $1,\dots,n$ appear in order down consecutive columns, working from left-to-right, first in component $2$, then component $1$.

The symmetric group $\mathfrak{S}_n$ acts naturally on the left on the set of $\la$-tableaux.
For $\ttt$ a $\la$-tableau, we define the permutation $w_\ttt \in \mathfrak{S}_n$ by $w_\ttt \ttt_\la =\ttt$.

Suppose $\la \in \mptn 2 n$.
Let $\tts$ and $\ttt$ be $\la$-tableaux with corresponding reduced expressions $w_{\tts}$ and $w_{\ttt}$, respectively.
Then we say that \emph{$\ttt$ dominates $\tts$}, written as $\ttt \dom \tts$, if and only if $w_{\ttt}\geq w_{\tts}$.

\subsection{Residues and degrees}

Fix an $e$-bicharge $\kappa = (\kappa_1,\kappa_2)$.
The \emph{$e$-residue} of a node $A = (i,j,m) \in \mathbb{N}\times\mathbb{N}\times\{1,2\}$ is defined to be
\[
\res A := \kappa_m+j-i \pmod{e}.
\]
We call a node of residue $r$ an $r$-\emph{node}.

Let $\ttt$ be a $\la$-tableau.
If $\ttt(i,j,m) = r$, we set $\res_\ttt (r)=\res (i,j,m)$.
The \emph{residue sequence} of $\ttt$ is defined to be
\[
\bfi_\ttt = (\res_\ttt (1),\dots,\res_\ttt (n)).
\]
We denote the residue sequence of the column-initial tableau $\ttt_\la$ by $\bfi_\la:=\bfi_{\ttt_\la}$.

We now define the degree of a standard tableau, which is the \emph{codegree} as given in \cite[\S3.5]{bkw11}.
For $\la \in \mptn 2 n$ and an $i$-node $A$ of $\la$, we define
\begin{align*}
d^A(\la):&=
\#\left\{
\text{addable $i$-nodes of $\la$ strictly above $A$}
\right\}\\
&-\#\left\{
\text{removable $i$-nodes of $\la$ strictly above $A$}
\right\}.
\end{align*}

Let $\ttt \in \std\la$ with $\ttt^{-1}(n) = A$.
We define the \emph{degree} of $\ttt$, denoted $\deg(\ttt)$, recursively, by setting $\deg(\varnothing):=0$, and  
\[
\deg (\ttt):=d^A(\la)+\deg (\ttt_{<n}),
\]
where $\ttt_{<n}$ is the standard tableau obtained from $\ttt$ by removing the node $A$.

\subsection{Regular bipartitions}

Let $\la \in \mptn 2 n$.
We define the \emph{$i$-signature of $\la$} by reading the Young digram $[\la]$ from the top of the first component down to the bottom of the last component, writing a $+$ for each addable $i$-node and a $-$ for each removable $i$-node.
We obtain the \emph{reduced $i$-signature of $\la$} by successively deleting all adjacent pairs $+-$ from the $i$-signature of $\la$, always of the form $-\dots-+\dots+$.

The removable $i$-nodes corresponding to the $-$ signs in the reduced $i$-signature of $\la$ are called the \emph{normal} $i$-nodes of $\la$, while the addable $i$-nodes corresponding to the $+$ signs in the reduced $i$-signature of $\la$ are called the \emph{conormal} $i$-nodes of $\la$.
The lowest normal $i$-node of $[\la]$, if there is one, is called the \emph{good} $i$-node of $\la$, which corresponds to the last $-$ sign in the $i$-signature of $\la$.
Analogously, the highest conormal $i$-node of $[\la]$, if there is one, is called the \emph{cogood} $i$-node of $\la$, which corresponds to the first $+$ sign in the $i$-signature of $\la$.

We say that a bipartition $\la \in \mptn 2 n$ is \emph{regular}, or \emph{conjugate-Kleshchev}, if $[\la]$ can be obtained by successively adding cogood nodes to $\varnothing$.
That is, we have a sequence $\varnothing = \la(0), \la(1), \dots, \la(n) = \la$ such that $[\la(i)] \cup\{A\} = [\la(i+1)]$, where $A$ is a cogood node of $\la(i)$.
Equivalently, $\la$ is regular if and only if $\varnothing$ can be obtained by successively removing good nodes from $[\la]$.
Observe in level one that the set of all regular partitions coincides with the set of all $e$-regular partitions.

\subsection{Cyclotomic Khovanov--Lauda--Rouquier algebras}

Suppose $\alpha\in Q^+$ has height $n$, and set
\[
I^\alpha = \lset{\bfi = (i_1, i_2, \dots, i_n)\in I^n}{\alpha_{i_1}+\dots+\alpha_{i_n} = \alpha}.
\]
Recalling that $\La = \La_\kappa$, we define $\mathscr{R}_\alpha^\La$ to be the unital associative $\bbf$-algebra with generating set
\[
\lset{e(\bfi)}{\bfi \in I^\alpha}\cup\{y_1,\dots,y_n\}\cup\{\psi_1,\dots,\psi_{n-1}\}
\]
and relations
{\allowdisplaybreaks
\begin{align*}
e(\bfi)e(\bfj) &= \delta_{\bfi,\bfj} e(\bfi);\\
\sum_{i \in I^\alpha} e(\bfi) &= 1;\\
y_re(\bfi) &= e(\bfi)y_r;\\
\psi_r e(\bfi) &= e(s_r\bfi) \psi_r;\\
y_ry_s &= y_sy_r;\\
\psi_r y_s &= \mathrlap{y_s\psi_r}\hphantom{\smash{\begin{cases}(\psi_{r+1}\psi_r\psi_{r+1}+1)e(\bfi)\\\\\\\end{cases}}}\kern-\nulldelimiterspace\text{if } s\neq r,r+1;\\
\psi_r \psi_s &= \mathrlap{\psi_s\psi_r}\hphantom{\smash{\begin{cases}(\psi_{r+1}\psi_r\psi_{r+1}+1)e(\bfi)\\\\\\\end{cases}}}\kern-\nulldelimiterspace\text{if } |r-s|>1;\\
y_r \psi_r e(\bfi) &= (\psi_r y_{r+1} - \delta_{i_r,i_{r+1}})e(\bfi);\\
y_{r+1} \psi_r e(\bfi) &= (\psi_r y_r + \delta_{i_r,i_{r+1}})e(\bfi);\\
\psi_r^2 e(\bfi)&=\begin{cases}
\mathrlap0\phantom{(\psi_{r+1}\psi_r\psi_{r+1}+1)e(\bfi)}& \text{if }i_r=i_{r+1},\\
e(\bfi) & \text{if }i_{r+1}\neq i_r, i_r\pm1,\\
(y_{r+1} - y_r) e(\bfi) & \text{if }i_r = i_{r+1} + 1,\\
(y_r - y_{r+1}) e(\bfi) & \text{if }i_r = i_{r+1} - 1;
\end{cases}\\
\psi_r\psi_{r+1}\psi_re(\bfi) &= \begin{cases}
(\psi_{r+1}\psi_r\psi_{r+1}+1)e(\bfi)& \text{if }i_{r+2} = i_r = i_{r+1} +1,\\
(\psi_{r+1}\psi_r\psi_{r+1}-1)e(\bfi)& \text{if }i_{r+2} = i_r = i_{r+1} - 1,\\
(\psi_{r+1}\psi_r\psi_{r+1})e(\bfi)& \text{otherwise;}
\end{cases}\\
y_1^{(\La, \alpha_{i_1})} e(\bfi) &= 0;
\end{align*}
}for all admissible $r, s, \bfi, \bfj$.
When $e= 2$, we actually have slightly different `quadratic' and `braid' relations, which may be found, for example, in \cite[\S 3.1]{kmr}.
We omit them here, as we will not explicitly calculate with these relations when $e=2$.

\begin{lem}\cite[Corollary 1]{bk09}
There is a unique $\bbz$-grading on $\mathscr{R}_\alpha^{\Lambda}$ such that, for all admissible $r$ and $\bfi$,
\[
\deg(e(\bfi))=0,\quad\deg(y_r)=2,\quad\deg\psi_r(e(\bfi))=-a_{i_r,r_{r+1}}.
\]
\end{lem}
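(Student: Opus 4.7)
The plan is a straightforward generators-and-relations verification. Since $\mathscr{R}_\alpha^{\Lambda}$ is presented by the generators $\{e(\bfi), y_r, \psi_s\}$, any $\bbz$-grading is determined by its values on these generators, which gives uniqueness immediately. For existence, I would declare the degrees as stated, grade the free $\bbf$-algebra on the generators accordingly, and check that every defining relation in the presentation is homogeneous; the grading then descends to the quotient $\mathscr{R}_\alpha^{\Lambda}$.

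A useful preliminary remark is that $\psi_r$ itself has no global degree: one should interpret $\psi_r = \sum_{\bfi} \psi_r e(\bfi)$ as a sum of homogeneous pieces $\psi_r e(\bfi)$ of degree $-a_{i_r, i_{r+1}}$, depending on the local residues. With this convention in place, the shift relation $\psi_r e(\bfi) = e(s_r \bfi) \psi_r$ is homogeneous because the degree of the right-hand piece is $-a_{i_{r+1}, i_r} = -a_{i_r, i_{r+1}}$ by symmetry of the Cartan matrix.

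I would then tick off the remaining relations. The idempotent axioms, polynomial commutations $y_r y_s = y_s y_r$, and the distant $\psi\psi$- and $y\psi$-commutations are trivially homogeneous. The mixed relations $y_r \psi_r e(\bfi) = (\psi_r y_{r+1} - \delta_{i_r, i_{r+1}}) e(\bfi)$ and $y_{r+1} \psi_r e(\bfi) = (\psi_r y_r + \delta_{i_r, i_{r+1}}) e(\bfi)$ need one observation: the scalar correction $\delta_{i_r, i_{r+1}}$ is non-zero only when $i_r = i_{r+1}$, in which case $a_{i_r, i_{r+1}} = 2$ and every surviving term has degree $0$. For the quadratic relation, the left-hand side $\psi_r^2 e(\bfi)$ carries degree $-2a_{i_r, i_{r+1}}$, and one checks that each of the four right-hand sides ($0$ when $i_r = i_{r+1}$; $e(\bfi)$ when $i_r, i_{r+1}$ are disconnected in $\Gamma$; $\pm(y_{r+1} - y_r) e(\bfi)$ when $i_r = i_{r+1} \pm 1$) has matching degree. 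Finally the cyclotomic relation $y_1^{(\Lambda, \alpha_{i_1})} e(\bfi) = 0$ sets a homogeneous element equal to zero, hence is tautologically compatible.

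The only step that demands genuine bookkeeping is the braid-like relation. Threading the idempotents through the three $\psi$'s in both possible orders, I would compute that both $\psi_r \psi_{r+1} \psi_r e(\bfi)$ and $\psi_{r+1} \psi_r \psi_{r+1} e(\bfi)$ carry total degree $-a_{i_r, i_{r+1}} - a_{i_r, i_{r+2}} - a_{i_{r+1}, i_{r+2}}$ (each crossing contributes minus the Cartan entry for the two residues it swaps). The error term $\pm e(\bfi)$ appears only when $i_{r+2} = i_r = i_{r+1} \pm 1$, in which case the sum above evaluates to $-(-1) - 2 - (-1) = 0$, matching the degree of $e(\bfi)$. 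This residue-tracking in the braid relation is the main (and essentially only) obstacle; once it is done, the induced grading on $\mathscr{R}_\alpha^{\Lambda}$ has exactly the asserted degrees on the generators.
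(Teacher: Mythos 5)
Your verification is correct and is exactly the standard argument: the paper itself gives no proof but cites \cite[Corollary~1]{bk09}, where the grading on the quiver Hecke presentation is obtained by precisely this check that every defining relation (including the cyclotomic one) is homogeneous for the declared degrees, with uniqueness and descent to the quotient immediate as you say. The only point you leave aside is $e=2$, where the quadratic and braid relations take a different form -- but the paper omits those relations too, and the same bookkeeping with $a_{i_r,i_{r+1}}=-2$ for adjacent residues goes through unchanged.
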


The \emph{cyclotomic Khovanov--Lauda--Rouquier (KLR) algebra} or \emph{cyclotomic quiver Hecke algebra} $\mathscr{R}_n^\La$ is defined to be the direct sum $\bigoplus_\alpha\mathscr{R}_\alpha^\La$, where the sum is taken over all $\alpha\in Q^+$ of height $n$.

Here we sum over all $\alpha \in Q^+$ of height $n$, though in fact only finitely many of the summands will be non-zero, so (even when $e=\infty$) $\mathscr{R}_n^\La$ is a unital algebra.

These $\bbz$-graded algebras are connected to the Hecke algebras of type $B$ via (a special case of) Brundan and Kleshchev's \emph{Graded Isomorphism Theorem}.

\begin{thm}\cite[Main Theorem]{bkisom}
If $e = \nchar(\bbf)$ or $\nchar(\bbf)\nmid{e}$, then $\mathscr{R}_n^\La$ is isomorphic to the integral Hecke algebra $\hhh(q,Q_1,Q_2)$ of type $B$ with parameters $q \in \bbf$ a primitive $e$th root of unity, $Q_1 = q^{\kappa_1}$, and $Q_2 = q^{\kappa_2}$.
That is, $\hhh(q,Q_1,Q_2)$ has generators $T_0,\dots, T_{n-1}$ satisfying type $B$ Coxeter relations, with the quadratic relations replaced with
\[
(T_0 - q^{\kappa_1}) (T_0 - q^{\kappa_2}) = 0 \quad \text{and} \quad (T_i-q) (T_i+1) = 0 \;\; \forall \; 1\leq i \leq n-1.
\]
\end{thm}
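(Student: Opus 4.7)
The plan is to construct an explicit $\bbf$-algebra homomorphism $\Psi \colon \mathscr{R}_n^\La \to \hhh(q,Q_1,Q_2)$ on generators, verify the defining KLR relations in $\hhh(q,Q_1,Q_2)$, and then establish that $\Psi$ is a bijection via a rank comparison. The bridge between the two algebras is the family of Jucys--Murphy elements $L_1, \ldots, L_n \in \hhh(q,Q_1,Q_2)$, defined by $L_1 := T_0$ and $L_{r+1} := q^{-1} T_r L_r T_r$. These pairwise commute, and the quadratic relation on $T_0$ together with the hypothesis on $\nchar(\bbf)$ guarantees that on any finite-dimensional $\hhh(q,Q_1,Q_2)$-module each $L_r$ acts with generalised eigenvalues lying in $\{q^i \mid i \in I\}$.

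Using this, I would define the images of the KLR generators as follows. For $\bfi \in I^\alpha$, let $\Psi(e(\bfi))$ be the idempotent projecting onto the generalised simultaneous eigenspace on which $L_r$ has generalised eigenvalue $q^{i_r}$ for each $r$; these are pairwise orthogonal and sum to $1$ on the $\alpha$-block. Set $\Psi(y_r) := \sum_{\bfi}(1 - q^{-i_r} L_r) e(\bfi)$, which is nilpotent on each block. Finally, on the $e(\bfi)$-component define $\Psi(\psi_r) := (T_r + P_r(L_r, L_{r+1})) Q_r(L_r, L_{r+1}) \cdot e(\bfi)$, where $P_r$ and $Q_r$ are specific rational expressions in $L_r, L_{r+1}$ (equivalently, power series in the $\Psi(y_j)$'s) chosen so that $\Psi(\psi_r)$ reproduces the KLR quadratic and braid relations. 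The correct formulas can be motivated by first considering the generic semisimple picture, in which $\psi_r$ corresponds to a classical Murphy-type intertwiner, and then carefully clearing the resulting denominators so that the expressions remain valid in arbitrary characteristic.

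The main obstacle is the case-by-case verification of the KLR relations under $\Psi$, most notably the quadratic relation for $\Psi(\psi_r)^2 e(\bfi)$ with its four cases and the braid-like relation $\Psi(\psi_r \psi_{r+1} \psi_r) e(\bfi)$ with its three cases. Each case is handled using the intertwining identity $T_r f(L_r, L_{r+1}) = f(L_{r+1}, L_r) T_r$ for symmetric $f$, the Hecke quadratic relation $(T_r - q)(T_r + 1) = 0$, and standard commutation lemmas for Jucys--Murphy elements; the computations are lengthy but essentially algorithmic once the correct $P_r, Q_r$ are in hand. The cyclotomic relation $\Psi(y_1)^{(\La, \alpha_{i_1})} e(\bfi) = 0$ reduces directly to $(T_0 - q^{\kappa_1})(T_0 - q^{\kappa_2}) = 0$ on $L_1 = T_0$, using the identity $(\La, \alpha_i) = \delta_{i, \kappa_1} + \delta_{i, \kappa_2}$, so that the exponent $(\La, \alpha_i)$ matches the multiplicity of $q^i$ among the roots of that quadratic.

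Once $\Psi$ is known to be a well-defined $\bbf$-algebra homomorphism, I would conclude by a rank comparison: $\hhh(q,Q_1,Q_2)$ is free of rank $2^n n!$ over $\bbf$, and $\mathscr{R}_n^\La$ has the same rank by the graded basis theorem for cyclotomic KLR algebras in level two. It therefore suffices to show that $\Psi$ is surjective, which follows by inverting the defining formulas to express each Hecke generator $T_r$ in terms of the $\Psi$-images of the KLR generators $e(\bfi), y_r, \psi_r$; injectivity is then automatic. The hypothesis on $\nchar(\bbf)$ enters at this last step to guarantee that the Hecke-algebra units involved in these inversions are genuinely invertible.
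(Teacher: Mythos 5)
The paper does not prove this statement at all: it is imported verbatim from Brundan--Kleshchev \cite{bkisom} as background, so there is no internal proof to compare against. Measured against the actual proof in \cite{bkisom}, your outline captures the right skeleton -- Jucys--Murphy elements $L_r$, the fact that their generalised eigenvalues lie in $\{q^i \mid i \in I\}$ (which is exactly where the hypothesis $e = \nchar(\bbf)$ or $\nchar(\bbf)\nmid e$ enters, not at the inversion step as you suggest), the idempotents $e(\bfi)$ as projections onto simultaneous generalised eigenspaces, nilpotent elements $y_r$ built from $1 - q^{-i_r}L_r$, and intertwiner-based formulas for $\psi_r$ with denominators cleared using the nilpotency of the $y_r$. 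Two caveats: you should separate the degenerate case $q=1$ (where $e = \nchar(\bbf)$ and the eigenvalues are the residues $i_r$ themselves rather than $q^{i_r}$, so the formula for $y_r$ changes) from the non-degenerate case, as Brundan--Kleshchev do.

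The genuine gap is your concluding rank comparison. You propose to use ``the graded basis theorem for cyclotomic KLR algebras in level two'' to know that $\mathscr{R}_n^\La$ has dimension $2^n n!$, and then deduce bijectivity from surjectivity. But the standard sources for that dimension formula -- including the Specht-module basis theorem of Kleshchev--Mathas--Ram and Brundan--Kleshchev--Wang quoted in this very paper, and the Hu--Mathas graded cellular basis -- all \emph{rely on} the isomorphism theorem you are trying to prove, so this step is circular unless you invoke an independent proof of the KLR dimension (e.g.\ via the Kang--Kashiwara categorification), which is a far heavier input. Brundan and Kleshchev avoid this entirely: they write down explicit formulas expressing $T_0,\dots,T_{n-1}$ as polynomials in the images of $e(\bfi)$, $y_r$, $\psi_r$, check that these satisfy the Hecke relations, and verify that the two homomorphisms are mutually inverse on generators. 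If you replace your final paragraph with that two-sided construction, the argument goes through; as written, it does not.
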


\subsection{Specht modules labelled by bihooks}

\begin{defn}
We call a bipartition $\la$ a \emph{bihook} if it is of the form $\la = ((a,1^b),((c,1^d))$ for some integers $a,c \geq1$ and $b,d \geq 0$.
\end{defn}

\begin{defn}\cite[Definition 7.11]{kmr}
Let $\la = ((a,1^b),((c,1^d)) \in \mptn 2 n$.
The (column) \emph{Specht module} $\spe\la$ is the cyclic $\mathscr{R}_n^\La$-module generated by $z_\la$ of degree $\deg(z_\la):= \deg(\ttt_\la)$ subject to the relations:
\begin{itemize}
\item $e(\bfi_\la)z_\la = z_\la$;

\item $y_r z_\la = 0$ for all $r \in \{1, \dots, n\}$;

\item $\psi_r z_\la = 0$ for all $r \in \{1, \dots,n-1\} \setminus \{d+1, c+d, b+c+d+1\}$;

\item $\psi_1\psi_2\dots\psi_{d+1} z_\la = 0 = \psi_{c+d+1} \psi_{c+d+2} \dots \psi_{b+c+d+1} z_\la$ (these are the Garnir relations arising from nodes $(1,1,2)$ and $(1,1,1)$, respectively).
\end{itemize}
\end{defn}

For each $w\in\mathfrak{S}_n$, we fix a reduced expression $w = s_{i_1} \dots s_{i_m}$ throughout.
We define the associated element of $\mathscr{R}_n^{\Lambda}$ to be $\psi_w:=\psi_{i_1}\dots{\psi_{i_m}}$, which, in general, depends on the choice of reduced expression for $w$.
For $\la \in \mptn 2 n$ and a $\la$-tableau $\ttt$, we define $v_{\ttt}:=\psi_{w_{\ttt}} z_\la$.

Whilst these vectors $v_{\ttt}$ of $\spe\la$ also depend on the choice of reduced expression in general, the following result does not.

\begin{thm}\cite[Corollary 4.6]{bkw11} and \cite[Proposition 7.14 and Corollary 7.20]{kmr}
For $\la \in \mptn 2 n$, the set of vectors $\{v_{\ttt}\mid{\ttt\in\std\la}\}$ is a homogeneous $\bbf$-basis of $\spe\la$, with $\deg(v_{\ttt})=\deg(\ttt)$.
Moreover, for any $\la$-tableau $\tts$, $v_{\tts}$ is a linear combination of basis elements $v_{\ttt}$ such that $\tts \dom \ttt$.
\end{thm}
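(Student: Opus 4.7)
The plan is to prove the three claims of the theorem — the triangularity statement, the homogeneity with the stated degree formula, and the basis property — in that order, following the general strategy of Kleshchev--Mathas--Ram.

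First I would establish the triangularity: for any $\la$-tableau $\tts$, $v_\tts$ lies in the span of $\{v_\ttt \mid \tts \dom \ttt\}$. The argument is by induction on $\ell(w_\tts)$. If $\tts$ is already standard we are done; otherwise $\tts$ has some column descent, which translates into a reduced expression for $w_\tts$ that we may manipulate via the KLR relations. The key observation is that the defining relations of $\spe\la$ (in particular the Garnir relations $\psi_1 \cdots \psi_{d+1} z_\la = 0$ and $\psi_{c+d+1} \cdots \psi_{b+c+d+1} z_\la = 0$) combined with the braid and quadratic relations let us rewrite $v_\tts$ as a linear combination of $v_\ttu$ with $w_\ttu < w_\tts$ in Bruhat order, each such $\ttu$ satisfying $\tts \dom \ttu$. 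A particular simplifying feature for bihooks is that only those two Garnir relations appear, so the combinatorial bookkeeping is much less intricate than in the general case.

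Next, I would address homogeneity. By construction $z_\la$ is homogeneous of degree $\deg(\ttt_\la)$, and $\psi_r$ is homogeneous with $\deg(\psi_r e(\bfi)) = -a_{i_r, i_{r+1}}$. A straightforward induction on $n$ then gives $\deg(v_\ttt) = \deg(\ttt)$ provided one checks that the recursive definition of $\deg(\ttt)$ in terms of $d^A(\la)$ matches the total $\psi$-degree accumulated along a path from $\ttt_\la$ to $\ttt$. This is a residue-counting identity: at each step when one swaps the positions of two entries to move along such a path, the change in addable-minus-removable residue counts above the relevant node reproduces precisely the value $-a_{i_r,i_{r+1}}$ coming from the KLR grading. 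This is combinatorial but routine.

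Finally, for the basis property, the triangularity already gives that $\{v_\ttt \mid \ttt \in \std\la\}$ spans $\spe\la$ (any $v_\tts$ is a $\bbf$-linear combination of $v_\ttu$ with $\tts \dom \ttu$, and by induction on the dominance order these reduce to standard tableau vectors). For linear independence I would appeal to dimension: the ungraded specialisation of $\spe\la$ coincides, via the Brundan--Kleshchev graded isomorphism theorem, with the classical Dipper--James--Murphy Specht module for the Hecke algebra of type $B$, which has dimension $|\std\la|$. Since the spanning set has this cardinality, it is a basis.

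The main obstacle is the triangularity step: careful use of the KLR relations is needed to ensure that every reduction of a non-standard $v_\tts$ truly produces only strictly more dominant tableau vectors (and not merely incomparable ones), and that the ambiguity in the choice of reduced expression for $w_\ttt$ never produces a term outside the span of $\{v_\ttu \mid \tts \dom \ttu\}$. Everything else — the grading bookkeeping and the independence argument via dimension — is comparatively mechanical.
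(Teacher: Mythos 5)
This theorem is not proved in the paper at all: it is imported verbatim from \cite[Corollary 4.6]{bkw11} and \cite[Proposition 7.14 and Corollary 7.20]{kmr}, so there is no in-paper argument to compare against. Your sketch does follow the broad strategy of those sources (straightening to get triangular spanning, degree bookkeeping, then a dimension count), but as written it has one genuine gap and two smaller soft spots.

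The gap is in the linear independence step. You assert that ``the ungraded specialisation of $\spe\la$ coincides \dots with the classical Dipper--James--Murphy Specht module,'' but $\spe\la$ is defined here by generators and relations, and the whole difficulty is to rule out the possibility that the presented cyclic module is a \emph{proper quotient} of the classical one. The dimension count only gives an upper bound $\dim\spe\la \le |\std\la|$ from your spanning argument; to get the matching lower bound you must exhibit a surjection from $\spe\la$ onto a module known to have dimension $|\std\la|$, which is done by verifying that the generator of the classical (or Brundan--Kleshchev--Wang) Specht module satisfies every relation in the presentation --- the idempotent, $y_r$, $\psi_r$ and Garnir relations. Without that verification the argument is circular. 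Two further points: the statement is for arbitrary $\la\in\mptn 2 n$, whereas your triangularity step leans on the bihook-specific feature that only two Garnir relations occur, so it does not cover the general case (the paper only \emph{states} the presentation for bihooks, but quotes the theorem in full generality); and the identity $\deg(v_\ttt)=\deg(\ttt)$, which you call routine, is a genuine combinatorial lemma relating the recursive $d^A(\la)$-degree to the sum of the local degrees $-a_{i_r,i_{r+1}}$ accumulated along $\psi_{w_\ttt}$, and in particular one must check it is independent of the chosen reduced expression.
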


We record the following useful lemma that we will use frequently.

\begin{lemc}{bkw11}{Lemma 4.4}\label{lem:tabresi}
Let $\la \in \mptn 2 n$, and $\ttt \in \std\la$.
Then $e(\bfi) v_\ttt = \delta_{\bfi,\bfi_\ttt} v_\ttt$.
\end{lemc}

Of particular importance to the present paper is the following result on the decomposability of Specht modules, which is a special case of a result for higher level cyclotomic KLR algebras.

\begin{propc}{fs16}{Corollary 3.12}\label{prop:indecomp}
If $e\neq 2$ and $\kappa_1 \neq \kappa_2$, then the Specht modules $\spe\la$ are indecomposable for all $\la \in \mptn 2 n$.
\end{propc}

The following useful result is obtained from~\cite[Theorems~7.25 and 8.5]{kmr}.

\begin{thm}\label{transdec}
$\spe\la$ is decomposable if and only if $\spe{\la'}$ is.
\end{thm}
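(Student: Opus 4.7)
The plan is to leverage the sign-style graded algebra isomorphism of the KLR algebra to produce a category equivalence that identifies $\spe\la$ with $\spe{\la'}$, so that decomposability transfers. This is essentially the content of \cite[Theorems 7.25 and 8.5]{kmr}, which treat the relationship between row and column Specht modules under conjugation of the bipartition.

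First I would recall (or construct) the graded algebra isomorphism $\sigma\colon \mathscr{R}_n^{\La_\kappa} \to \mathscr{R}_n^{\La_{-\kappa}}$ defined on generators by $e(\bfi) \mapsto e(-\bfi)$, $y_r \mapsto -y_r$, and $\psi_r \mapsto c_r \psi_r$, where the signs $c_r \in \{\pm 1\}$ are selected to preserve the braid relations (and suitably adjusted in the $e=2$ case). Note that conjugation of a bipartition at the level of residues corresponds to sending $\kappa \mapsto -\kappa$ and transposing nodes, so it is natural that the relevant isomorphism of algebras lands in $\mathscr{R}_n^{\La_{-\kappa}}$.

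Next I would verify that the pullback along $\sigma$ of $\spe{\la'}$ (as an $\mathscr{R}_n^{\La_{-\kappa}}$-module) is presented by exactly the defining relations of $\spe\la$ as an $\mathscr{R}_n^{\La_\kappa}$-module. The generator relations $e(\bfi_\la)z_\la = z_\la$, $y_r z_\la = 0$, and the single $\psi_r z_\la = 0$ transfer directly (up to a sign that is absorbed into the generator), while the two Garnir relations arising from the corner nodes $(1,1,1)$ and $(1,1,2)$ swap under conjugation because conjugation exchanges the two components of $\la$. Identifying the cyclic presentation in this way yields an isomorphism $\sigma^*(\spe{\la'}) \cong \spe\la$ of $\mathscr{R}_n^{\La_\kappa}$-modules (possibly up to a grading shift, which is irrelevant for decomposability).

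Finally, because $\sigma$ is a graded algebra isomorphism, the pullback functor $\sigma^*$ is an equivalence of graded module categories, and in particular $\End(\spe\la) \cong \End(\spe{\la'})$ as graded algebras, so the two modules share the same primitive idempotents and hence the same indecomposable decomposition type. The main obstacle in this plan is purely bookkeeping: pinning down the correct signs $c_r$ so that all three braid relations (together with the cyclotomic relation $y_1^{(\La, \alpha_{i_1})}e(\bfi)=0$) are preserved, and handling separately the modified quadratic and braid relations for $e=2$; once $\sigma$ is in hand, the remaining module-theoretic argument is immediate.
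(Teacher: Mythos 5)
Your overall strategy is the right one---the paper itself gives no proof of this statement beyond citing \cite[Theorems~7.25 and~8.5]{kmr}---but your reconstruction has a genuine gap in the third paragraph. The sign isomorphism $\sigma$ does not pull $\spe{\la'}$ back to the \emph{column} Specht module $\spe\la$; it pulls it back to the \emph{row} Specht module $S^\la$ of \cite{kmr} (up to a grading shift). Concretely, the column-initial tableau of $\la'$ reads down the columns of $\la^{(1)'}$ first and then those of $\la^{(2)'}$, which under the node bijection $[\la']\to[\la]$ corresponds to reading $\la^{(1)}$ along rows first and then $\la^{(2)}$ along rows, i.e.\ to the \emph{row}-initial tableau of $\la$. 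Thus $-\bfi_{\la'}$ is the row-reading residue sequence of $\la$, not $\bfi_\la$: for example, with $e=3$, $\kappa=(0,0)$ and $\la=((2),(1))$ one has $\bfi_\la=(0,0,1)$ while $-\bfi_{\la'}=(0,1,0)$. The same mismatch is visible in the $\psi$-relations: for $\la=((a,1^b),(c,1^d))$ the exceptional indices for $\spe\la$ are $\{d+1,\,c+d,\,b+c+d+1\}$, whereas for $\la'=((d+1,1^{c-1}),(b+1,1^{a-1}))$ they are $\{a,\,a+b,\,a+b+c\}$, and $\sigma$ rescales but does not permute the $\psi_r$. So the presentations do not match, and the claimed isomorphism $\sigma^*(\spe{\la'})\cong\spe\la$ fails; the Garnir relations do not simply ``swap''.

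The missing ingredient is precisely the second of the two cited theorems: the graded duality of \cite{kmr} relating the row and column Specht modules of the \emph{same} bipartition, under which $\spe\la$ is the graded dual of $S^{\la}$ up to shift. Combining the sign twist (which gives $\sigma^*(\spe{\la'})\cong S^\la$ up to shift) with this contravariant equivalence yields $\operatorname{End}(\spe{\la'})\cong\operatorname{End}(S^\la)\cong\operatorname{End}(\spe\la)$, and decomposability then transfers exactly as you intend. One further small point: $\sigma$ lands in $\mathscr{R}_n^{\La_{-\kappa}}$, so for $\kappa\ne-\kappa$ the conjugate Specht module must be interpreted over that algebra; this is harmless in the present paper, where the theorem is only applied with $\kappa=(0,0)$, but it should be acknowledged if you state the result for general bicharges.
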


We know from \cite[Theorem 5.10]{bk09} that Specht modules $\spe\la$ indexed by regular bipartitions have simple heads, yielding the following.

\begin{prop}\label{simphd}
If $\la \in \mptn 2 n$ is a regular bipartition, then the Specht module $\spe\la$ is indecomposable.
\end{prop}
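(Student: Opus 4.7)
The plan is to combine the cited Brundan--Kleshchev result on simple heads with the standard observation that a finite-dimensional module with simple head is indecomposable. So the only real content beyond quoting \cite[Theorem 5.10]{bk09} is a short general-module-theoretic argument, and there is no substantive obstacle to overcome.

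First, I would record that $\spe\la$ is finite-dimensional: by the basis theorem quoted earlier in the section, $\spe\la$ has $\bbf$-basis $\{v_\ttt \mid \ttt\in\std\la\}$, and the set $\std\la$ is obviously finite. Next, I would invoke \cite[Theorem 5.10]{bk09} to see that, because $\la$ is regular (conjugate-Kleshchev), the quotient $\spe\la/\rad\spe\la$ is a simple $\mathscr{R}_n^\La$-module.

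To conclude, suppose for contradiction that $\spe\la = M_1 \oplus M_2$ with both $M_i$ non-zero. Each $M_i$ is a non-zero finite-dimensional module over a finite-dimensional $\bbf$-algebra, so $\rad M_i$ is a proper submodule of $M_i$ (by Nakayama, or directly because $\rad M_i$ is the intersection of maximal submodules and at least one maximal submodule exists). Therefore $M_i/\rad M_i \neq 0$ for $i=1,2$. Since $\rad(M_1\oplus M_2) = \rad M_1 \oplus \rad M_2$, we would obtain
\[
\spe\la/\rad\spe\la \;\cong\; M_1/\rad M_1 \;\oplus\; M_2/\rad M_2,
\]
a direct sum of two non-zero modules, contradicting simplicity of the head. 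Hence no such decomposition exists and $\spe\la$ is indecomposable.

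The argument is entirely formal once the simple-head theorem is in hand; there is no calculation, no combinatorial obstacle, and no case split. The only thing to be mindful of in writing it up is to spell out the finite-dimensionality (to justify that the radical is proper in each summand), since the whole point of the deduction is that the head does not split.
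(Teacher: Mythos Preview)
Your proposal is correct and matches the paper's approach exactly: the paper simply records that \cite[Theorem 5.10]{bk09} gives $\spe\la$ a simple head and states the proposition as an immediate consequence, and your write-up just makes explicit the standard fact that a finite-dimensional module with simple head cannot split nontrivially.
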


\begin{lemc}{bkw11}{Lemma 4.9}\label{psiaction}
Let $\la \in \mptn 2 n$, $1 \leq r < n$, and $\ttt \in \std\la$.
If $r$ and $r+1$ lie in the same row or in the same column of $\ttt$, then
\[
\psi_r v_\ttt = \; \sum_{\mathclap{\substack{\tts \in \std\la\\\bfi_\tts = \bfi_{s_r\ttt}\\ \tts \domsby \ttt}}} \; a_\tts v_\tts \quad \text{for some } a_\tts \in \bbf.
\]
\end{lemc}

\begin{defn}
We define
\[
\psid x y = \psi_x \psi_{x-1} \dots \psi_{y} \quad \text{and} \quad \psiu y x = \psi_y \psi_{y+1} \dots \psi_x
\]
if $x \geq y$ and set both equal to $1_\bbf$ if $x<y$.
Furthermore, we use the shorthand
\begin{alignat*}{2}
\psid x y \chaind {x+1} {y+1} \clapdots \chaind {x+c} {y+c} &:= \psid x y \psid {x+1} {y+1} \dots \psid {x+c} {y+c}, \qquad
\psid x y \chaind {x-1} {y-1} \clapdots \chaind {x-c} {y-c} &&:= \psid x y \psid {x-1} {y-1} \dots \psid {x-c} {y-c},\\
\psiu x y \chainu {x+1} {y+1} \clapdots \chainu {x+c} {y+c} &:= \psiu x y \psiu {x+1} {y+1} \dots \psiu {x+c} {y+c},
\qquad
\psiu x y \chainu {x-1} {y-1} \clapdots \chainu {x-c} {y-c} &&:= \psiu x y \psiu {x-1} {y-1} \dots \psiu {x-c} {y-c}.
\end{alignat*}
\end{defn}

\section{Small bihooks}\label{sec:small}

In light of \Cref{prop:indecomp}, we suppose that $\kappa_1=\kappa_2$ throughout \cref{sec:small,sec:generalbihooks}.
In fact, we need only assume that $\kappa=(0,0)$ since residue shifts do not change the isomorphism type of $\mathscr{R}_n^{\Lambda}$.
Here, we begin our examination of Specht modules labelled by bihooks by completely determining which Specht modules are decomposable when $n\leq 2e$.
We first make the following easy observation.

\begin{lem}\label{smalltrivindecomp}
If $k<e$, then $\la = ((k),(k))$ is a regular bipartition.
In particular, $\spe\la$ is indecomposable.
\end{lem}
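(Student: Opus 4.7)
The plan is to verify regularity directly from the definition by exhibiting an explicit sequence $\varnothing = \la(0), \la(1), \dots, \la(2k) = \la$ of bipartitions obtained by successively adding cogood nodes, after which indecomposability is immediate from \Cref{simphd}. The natural choice is to fill the first component left-to-right, then the second component left-to-right: set $\la(m) := ((m), \varnothing)$ for $0 \leq m \leq k$ and $\la(k+m) := ((k), (m))$ for $0 \leq m \leq k$, so that at the $m$-th step we add node $(1,m,1)$ of residue $m-1$, and at the $(k+m)$-th step we add $(1,m,2)$ of residue $m-1$.

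The verification rests on one key observation: since $k<e$ and $\kappa=(0,0)$, the residues $0,1,\dots,k-1$ occurring along a single row of length $k$ are pairwise distinct, and moreover distinct from the residues $e-1$ and $k$ of the addable nodes lying beyond the row. This means that at each intermediate step, the $i$-signature for the relevant residue $i = m-1$ consists essentially of a single $+$ sign coming from the node being added, so it is trivially the cogood node. The only steps requiring a little extra care are the two boundary cases. At the very first step, both $(1,1,1)$ and $(1,1,2)$ are addable $0$-nodes, giving $0$-signature $++$; the cogood node is the higher $+$, namely $(1,1,1)$, so we may start as proposed. At the very last step, from $\la(2k-1) = ((k),(k-1))$, the removable node $(1,k,1)$ of residue $k-1$ sits above the addable node $(1,k,2)$ of residue $k-1$, giving $(k-1)$-signature $-+$; since the reduction rule only cancels $+-$ pairs, this signature is already reduced, so $(1,k,2)$ is the unique (hence highest) conormal $+$, i.e.\ cogood.

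The main obstacle is simply careful bookkeeping of the residues of addable and removable nodes at each stage, and in particular confirming that no cancellation $+-$ occurs to disqualify the intended cogood node; the strict inequality $k<e$ is exactly what prevents any such stray coincidences of residue between the row being filled and the nodes outside it. Once these case-by-case signature checks are in place, the sequence witnesses regularity of $\la$, and \Cref{simphd} then delivers indecomposability of $\spe\la$.
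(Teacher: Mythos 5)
Your proof is correct and takes essentially the same approach as the paper: exhibit an explicit sequence of cogood-node additions witnessing regularity and then invoke \cref{simphd}. The only difference is the ordering — the paper interleaves the two components, adding two cogood $0$-nodes, then two cogood $1$-nodes, and so on, whereas you fill the first component completely before the second — and your signature checks at the boundary steps (the initial $++$ and the final $-+$) are accurate, with $k<e$ used exactly where it is needed.
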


\begin{proof}
Starting from $(\varnothing,\varnothing)$, we may add two cogood $0$-nodes, then two cogood $1$-nodes, and continue in this fashion until we have added two cogood $(k{-}1)$-nodes.
The resulting bipartition is $\la$.
It follows from~\cref{simphd} that $\spe\la$ is indecomposable.
\end{proof}

\begin{rem}
In the above proof, we may go so far as adding two cogood $(e{-}2)$-nodes to reach the regular bipartition $((e-1),(e-1))$, but can go no further.
Adding a cogood $e$-node yields the bipartition $((e),(e-1))$, but adding a second cogood $e$-node results in the bipartition $((e,1),(e-1))$, \emph{not} $((e),(e))$.
One can check that $((k),(k))$ is not regular for any $k\geq e$.
\end{rem}

In fact, it is not difficult to see that we may generalise the previous lemma as follows.

\begin{lem}\label{smallrepeatedindecomp}
Suppose $a\geq1$ and $b\geq 0$ with $a+b < e$.
Then $\la = ((a,1^b),(a,1^b))$ is a regular bipartition, so $\spe\la$ is indecomposable.
\end{lem}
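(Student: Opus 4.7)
The plan is to extend the argument of \cref{smalltrivindecomp} by first building up $((a),(a))$ and then attaching the legs, one pair at a time, while checking at each step that we are indeed adding a cogood node.

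First, since $a \leq a+b < e$, the proof of \cref{smalltrivindecomp} already gives a sequence of cogood-node additions taking $\varnothing$ to $((a),(a))$, namely adding two cogood $0$-nodes, then two cogood $1$-nodes, and so on through two cogood $(a{-}1)$-nodes. If $b=0$ we are done, so assume $b \geq 1$.

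I would then proceed by induction on $c \in \{0, 1, \dotsc, b-1\}$: assuming we have reached $\mu_c := ((a,1^c),(a,1^c))$, I would show that the cogood $(-c-1)$-node of $\mu_c$ is $(c{+}2,1,1)$, and that the cogood $(-c-1)$-node of the resulting bipartition $((a,1^{c+1}),(a,1^c))$ is $(c{+}2,1,2)$, so that $\mu_{c+1}$ is reached after two further cogood additions. The key observation justifying this is that because $a+b<e$, no node in row $1$ of either component can have residue $-c-1 \pmod e$ (the addable row-$1$ node has residue $a$, and the removable row-$1$ node has residue $a-1$, neither of which is $-c-1$ for $c<b$). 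Hence the only addable/removable $(-c-1)$-nodes lie at the bottoms of the first columns, and a direct computation of the $(-c-1)$-signature of $\mu_c$ gives $++$ (top-down: component $1$ then component $2$), and of $((a,1^{c+1}),(a,1^c))$ gives $-+$, in each case with nothing cancelled in the reduced signature, confirming the chosen cogood nodes.

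Having built $\la$ from $\varnothing$ as a chain of cogood additions, $\la$ is regular by definition, and indecomposability of $\spe\la$ follows from \cref{simphd}. The only obstacle worth flagging is the residue bookkeeping at the boundary between row $1$ and the leg, which is exactly where the hypothesis $a+b<e$ intervenes to prevent stray addable or removable nodes of the same residue from appearing and spoiling the claim that our node is cogood; there is no further conceptual difficulty.
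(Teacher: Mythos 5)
Your proof is correct and follows essentially the same route as the paper, which leaves this lemma as an immediate generalisation of \cref{smalltrivindecomp} and spells out the identical sequence of cogood additions (two nodes of each residue $0,1,\dots,a-1$ along the arms, then two of each residue $-1,\dots,-b$ down the legs) in the proof of \cref{repeatedbihooks3}. The only point worth tightening is that when $a\geq 2$ and $c\geq 1$ each component also has the addable node in position $(2,2)$, of residue $0$, which you should note is never congruent to $-c-1$ since $0<c+1\leq b<e$; with that observation your signature computation is complete.
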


Conversely, we will next show that if we instead have $a+b = e$, then the Specht modules $\spe{((a,1^b),(a,1^b))}$ are all decomposable.

\begin{lem}\label{tableauswitch}
Suppose $a\geq1$ and $b\geq 0$ with $a+b \leq e$, and let $\la = ((a,1^b),(a,1^b))$.
Then there is a unique standard $\la$-tableau $\ttt \neq \ttt_\la$ with $\res \ttt = \bfi_\la$.
\end{lem}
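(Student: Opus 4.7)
\medskip

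\noindent\textbf{Proof plan.} First I would compute $\bfi_\lambda$ explicitly. Since $\kappa=(0,0)$ and $\lambda = ((a,1^b),(a,1^b))$ with $a+b\leq e$, the nodes of each component have distinct residues: within a single hook $(a,1^b)$, the arm nodes $(1,2,m),\dots,(1,a,m)$ carry residues $1,2,\dots,a-1$ and the leg nodes $(1,1,m),(2,1,m),\dots,(b+1,1,m)$ carry residues $0,-1,\dots,-b$. Because $a+b\leq e$, these $a+b$ values are pairwise distinct in $\bbz/e\bbz$. Reading off the column-initial filling yields
\[
\bfi_\lambda = (0,-1,\dots,-b,1,2,\dots,a-1,\,0,-1,\dots,-b,1,2,\dots,a-1),
\]
so the entries split naturally into pairs of equal residue: entry $r$ (for $1\le r\le a+b$) is paired with entry $r+a+b$.

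Next I would produce the second tableau. Let $\ttt$ be the tableau obtained from $\ttt_\lambda$ by swapping the roles of the two components, i.e.\ placing $1,\dots,a+b$ in component $1$ in column-initial order and $a+b+1,\dots,2(a+b)$ in component $2$ in column-initial order. This is clearly standard, and since the residue of a node depends only on the node and not on its entry, swapping components of $\ttt_\lambda$ preserves the residue sequence. Thus at least one such $\ttt\neq \ttt_\lambda$ exists.

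For uniqueness, I would argue that any standard $\lambda$-tableau $\ttu$ with $\bfi_\ttu = \bfi_\lambda$ must place $\{1,\dots,a+b\}$ entirely in one component and $\{a+b+1,\dots,2(a+b)\}$ entirely in the other. Since the residues within each component are all distinct, once the component of each entry is fixed, its node inside that component is determined by its residue. The key observation is that entry $1$ must sit at $(1,1,m_0)$ for some $m_0\in\{1,2\}$ (the unique residue-$0$ node in that component), and then entry $a+b+1$ must sit at $(1,1,3-m_0)$. Now for each residue $r$ in the pair, one of the two entries with that residue must go to component $m_0$ and the other to component $3-m_0$. But the $(1,1)$-entry of component $3-m_0$ is $a+b+1$, so by standardness every entry in component $3-m_0$ exceeds $a+b+1$; this forces each smaller entry in a residue-pair to lie in component $m_0$. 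Applied to all the pairs $\{r,\,r+a+b\}$ with $1\le r\le a+b$, this gives the claimed split, and hence at most two tableaux.

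The only step that requires some care is the standardness argument in the last paragraph — specifically, justifying that the smaller of the two equi-residue entries must sit in component $m_0$. Once one has noted that $(1,1,3-m_0)$ is occupied by $a+b+1$ and that every other node of component $3-m_0$ lies strictly below or to the right of it, standardness of $\ttu$ rules out placing any entry less than $a+b+1$ in component $3-m_0$, and the result follows.
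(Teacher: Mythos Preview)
Your proof is correct and follows essentially the same approach as the paper: compute the residue sequence explicitly, observe that the residues within each component are distinct (since $a+b\le e$), and identify the second tableau as the one obtained by swapping the two components of $\ttt_\la$. The paper's proof is a one-liner that leaves the uniqueness as ``an easy consequence'' of the residue sequence, whereas you have spelled out that argument carefully via the placement of the two residue-$0$ entries and the standardness constraint at the $(1,1)$-node of each hook; this is exactly the right justification.
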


\begin{proof}
This is an easy consequence of the fact that $\bfi_\la = (0, e-1, \dots, e-b, 1, 2, \dots, a-1, 0, e-1, \dots, e-b, 1, 2, \dots, a-1)$.
If we write $\ttt_\la = (\ttt^{(1)},\ttt^{(2)})$, then $\ttt = (\ttt^{(2)},\ttt^{(1)})$.
\end{proof}

\begin{lem}\label{smallendo}
Suppose $a \geq 1$ and $b \geq 0$ with $a+b = e$, and let $\la = ((a,1^b),(a,1^b))$.
There is an endomorphism $\phi$ of $\spe\la$ determined by $\phi(z_\la) = v_\ttt$, where $\ttt$ is the tableau in \cref{tableauswitch}.
\end{lem}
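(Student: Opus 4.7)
The plan is to invoke the presentation of $\spe\la$ from \cite[Definition 7.11]{kmr}: an $\mathscr{R}_n^\La$-module homomorphism $\phi \colon \spe\la \to \spe\la$ with $\phi(z_\la) = v_\ttt$ exists if and only if $v_\ttt$ satisfies every defining relation of the cyclic generator $z_\la$. These split into (i) $e(\bfi_\la) v_\ttt = v_\ttt$; (ii) $y_r v_\ttt = 0$ for $r = 1, \ldots, 2e$; (iii) $\psi_r v_\ttt = 0$ for each $r \in \{1, \ldots, 2e-1\} \setminus \{b+1, a+b, a+2b+1\}$; and (iv) the two Garnir relations $\psi_1 \psi_2 \cdots \psi_{b+1} v_\ttt = 0$ and $\psi_{a+b+1} \cdots \psi_{a+2b+1} v_\ttt = 0$. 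Relation (i) is immediate from \cref{lem:tabresi}, since $\bfi_\ttt = \bfi_\la$ by \cref{tableauswitch}.

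The pivotal observation for the rest is that \cref{tableauswitch} in fact pins the weight space $e(\bfi_\la)\spe\la$ down to two dimensions, with basis $\{z_\la, v_\ttt\}$. For (ii), $y_r$ commutes with $e(\bfi_\la)$, so $y_r v_\ttt$ lies in this weight space and equals $\alpha z_\la + \beta v_\ttt$ for some scalars; a comparison of graded degrees, using that $y_r v_\ttt$ is homogeneous of degree $\deg(\ttt) + 2$ while $z_\la, v_\ttt$ are homogeneous of respective degrees $\deg(\ttt_\la), \deg(\ttt)$, together with a direct calculation via the Brundan--Kleshchev--Wang formula confirming $\deg(\ttt) + 2 \notin \{\deg(\ttt_\la), \deg(\ttt)\}$, forces $\alpha = \beta = 0$. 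For (iii), the fact that $\ttt$ is the componentwise swap of $\ttt_\la$ guarantees that $r$ and $r+1$ lie in the same row or column of $\ttt$ exactly when they do in $\ttt_\la$; \cref{psiaction} therefore applies and expresses $\psi_r v_\ttt$ as a sum of $v_\tts$ over $\tts \in \std\la$ with $\tts \domsby \ttt$ and $\bfi_\tts = \bfi_{s_r \ttt}$. The residue sequence $\bfi_{s_r \ttt}$ swaps two adjacent entries of $\bfi_\la$, and a case analysis on the position of $r$ within $\ttt$ shows that no standard $\la$-tableau realises this residue sequence while being strictly dominated by $\ttt$.

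Relation (iv) is the most computational: one must push the Garnir words through $\psi_{w_\ttt}$ in $v_\ttt = \psi_{w_\ttt}z_\la$, using the KLR braid and quadratic relations to reduce ultimately to the Garnir relations on $z_\la$ itself. The main obstacle is the combinatorial case analysis in (iii), tracking how swapping two adjacent residues interacts with the dominance order on $\std\la$, together with the bookkeeping of the $\psi$-moves in (iv); both are feasible here because $n \leq 2e$ and $\la$ has a rigid, symmetric shape, though the argument clearly will not extend to larger bihooks without substantial new input.
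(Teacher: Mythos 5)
Your steps (i)--(iii) follow the paper's proof in substance: the idempotent relation is \cref{lem:tabresi} plus \cref{tableauswitch}; the $y_r$ relations follow from the degree comparison $\deg(y_r v_\ttt)=3\neq 1=\deg(v_\ttt)=\deg(z_\la)$ inside the two-dimensional weight space $e(\bfi_\la)\spe\la$; and the $\psi_r$ relations follow because no standard $\la$-tableau has residue sequence $s_r\bfi_\la$ (the paper verifies this outright, which is cleaner than your version with the additional dominance constraint, but either suffices). These parts are fine, modulo the fact that you assert rather than actually perform the residue case analysis.

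The genuine gap is step (iv). You propose to verify the two Garnir relations by pushing $\psi_1\cdots\psi_{b+1}$ and $\psi_{a+b+1}\cdots\psi_{a+2b+1}$ through $\psi_{w_\ttt}$ using the KLR braid and quadratic relations, ``reducing ultimately to the Garnir relations on $z_\la$ itself''; you do not carry this out, and it is not clear the reduction would go through as described --- the KLR braid relations generate error terms, and precisely this kind of rewriting is what forces the paper to relegate multi-page computations to \cref{sec:calc} for its other results. The paper instead disposes of the Garnir words by the same residue-sequence argument you already used in (iii): since $v_\ttt\in e(\bfi_\la)\spe\la$, the element $\psi_1\cdots\psi_{b+1}v_\ttt$ lies in $e(s_1\cdots s_{b+1}\bfi_\la)\spe\la$, and the sequence $s_1\cdots s_{b+1}\bfi_\la$ begins with residue $1$, which no standard $\la$-tableau can realise (the entry $1$ must occupy a node of residue $0$); similarly $s_{e+1}\cdots s_{e+b+1}\bfi_\la$ places its $0$'s and $1$'s in positions incompatible with any standard tableau. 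No braid-relation computation is needed. Without either that observation or the completed calculation you promise, your treatment of (iv) is incomplete.
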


\begin{proof}
The proof proceeds by checking that the annihilator of $z_\la$ also annihilates $v_\ttt$.

We already know by Lemmas \ref{lem:tabresi} and \ref{tableauswitch} that $e(\bfi) v_\ttt = \delta_{\bfi, \bfi_\la} v_\ttt$.
It is easy to see that $\deg{\ttt} = 1 =\deg{\ttt_\la}$, and therefore that $\deg(y_r v_\ttt) = 3$.
However, $y_r v_\ttt \in e(\bfi_\la)\spe\la = \langle v_{\ttt_\la}, v_\ttt \rangle_\bbf$, so $\deg(y_r v_\ttt) = 1$.
This contradiction gives us that $y_r v_\ttt = 0$ for all $r \in \{1,\dots,n\}$.

Finally, we check the relations involving $\psi$ generators.
We know by \cref{psiaction} that $\psi_{w}v_\ttt=0$ if there exists no standard $\la$-tableau with residue sequence $w \bfi_\la$, for any $w\in\mathfrak{S}_n$.

Observe that for $1\leq{r}\leq{b}$,
\[
s_r \bfi_\la = (0, e-1, e-2, \dots, e-r+2, e-r, e-r+1, i_{r+2}, \dots, i_n).
\]
It is clear that there is no standard $\la$-tableau with residue sequence $s_r \bfi_\la$, and hence $\psi_r v_\ttt = 0$ for all $1\leq{r}\leq{b}$. Similarly, $\psi_r v_\ttt = 0$ for all $r\in \{b+2, \dots, e-1\}\cup\{e+1, \dots, e+b\}\cup\{e+b+2, \dots, 2e-1\}$.

If $a>1$, we must also check the longer Garnir relations arising from the Garnir nodes $(1,1,1)$ and $(1,1,2)$.
Applying $s_1s_2\dots s_{b+1}$ to $\bfi_\la$, we find that the residue sequence $s_1 s_2 \dots s_{b+1} \bfi_\la$ begins with $1$, and thus $\psi_1 \psi_2 \dots \psi_{b+1} z_\la = 0$.
Similarly, if we apply $s_{e+1}s_{e+2} \dots s_{e+b+1}$ to $\bfi_\la$, we find that the residue sequence $s_{e+1} s_{e+2} \dots s_{e+b+1} \bfi_\la$ contains $1$ in its $e$th and $(e{+}1)$th positions, and the $0$ residues occur in the $1$st and $(e{+}2)$nd positions.
Clearly there is no standard $\la$-tableau with this residue sequence, and thus $\psi_{e+1} \psi_{e+2} \dots \psi_{e+b+1} z_\la = 0$.
This completes the proof.
\end{proof}

\begin{prop}\label{smallphivt}
If $\phi(z_\la) = v_\ttt$ as above, then
$\phi(v_\ttt) = \begin{cases}
(-1)^{b+1} 2 v_\ttt &\text{if $a>1$;}\\
(-1)^{e-1} 2 v_\ttt &\text{if $\la=((1^e),(1^e))$.}
\end{cases}$
\end{prop}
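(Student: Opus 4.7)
The plan is to compute $\phi(v_\ttt) = \phi^2(z_\la) = \psi_{w_\ttt}^2 z_\la$ directly from the KLR relations.  Since $e(\bfi_\la) v_\ttt = v_\ttt$, the image $\phi(v_\ttt) = \phi(e(\bfi_\la) v_\ttt) = e(\bfi_\la)\phi(v_\ttt)$ lies inside $e(\bfi_\la)\spe\la$.  By \cref{lem:tabresi} and \cref{tableauswitch}, this subspace is $2$-dimensional with basis $\{z_\la, v_\ttt\}$, so I would write $\phi(v_\ttt) = \alpha z_\la + \beta v_\ttt$ for some $\alpha, \beta \in \bbf$ and prove that $\alpha = 0$ and $\beta = (-1)^{b+1}2$ (or $(-1)^{e-1}2$ in the degenerate case).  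As a sanity check, $\phi$ is homogeneous of degree $\deg v_\ttt - \deg z_\la = 0$, and both $z_\la$ and $v_\ttt$ sit in degree $1$, so no other graded components can intrude.

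Next I would fix a convenient reduced expression for $w_\ttt$.  One checks directly that $w_\ttt$ is the product of commuting transpositions $(1,e{+}1)(2,e{+}2)\cdots(e,2e)$, independent of whether $a=1$ or $a>1$, since $w_\ttt$ globally swaps the two equal-shaped components.  A reduced expression of length $e^2$ is
\[
\psi_{w_\ttt} = \psiu e {2e-1}\chainu {e-1}{2e-2}\clapdots\chainu 1 e,
\]
which is the building block around which the entire simplification of $\psi_{w_\ttt}^2 z_\la$ is organised.

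The main work is the simplification of $\psi_{w_\ttt}^2 z_\la$.  I would repeatedly apply: the vanishing $\psi_r z_\la = 0$ for $r \notin \{b+1, e, e+b+1\}$; the Garnir relations $\psi_1\cdots\psi_{b+1} z_\la = 0 = \psi_{e+1}\cdots\psi_{e+b+1} z_\la$; the quadratic relation $\psi_r^2 e(\bfi)$, which is $0$ when $i_r = i_{r+1}$ (as occurs along the matched halves of $\bfi_\la$) and equals $\pm(y_{r+1}-y_r)e(\bfi)$ (which dies eventually against $y_s z_\la = 0$) when the residues differ by $1$; and crucially the braid relation $\psi_r\psi_{r+1}\psi_r = \psi_{r+1}\psi_r\psi_{r+1} \pm e(\bfi)$, whose $\pm 1$ corrections are the sole source of non-vanishing contributions.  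Each surviving correction shortens the word, and the shortened word is then re-pushed toward $z_\la$ via the same toolkit.  The expected outcome is that essentially all terms telescope away, and precisely two correction terms survive to give $\beta = \pm 2$, while every would-be contribution to the $z_\la$-component vanishes, giving $\alpha = 0$.  The sign discrepancy between the two cases reflects one extra (or one fewer) sign-flipping braid move arising from the different Garnir structure when $a = 1$, where only a single column is present and the "corner" residues $e-b$ and $1$ of the bent hook collapse.

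The main obstacle is clearly the combinatorial bookkeeping: the sheer number of adjacent-residue braid moves in $\psi_{w_\ttt}^2$ (of length $2e^2$) makes it delicate to track which $\pm 1$ corrections survive and which are killed by later applications of the Specht or Garnir relations.  My plan would be to proceed by induction on $e$ (or equivalently on the length of the outer $\psiu{}{}$-chains), peeling off one chain at a time and verifying that at each stage only a bounded contribution is produced; the base case and the bookkeeping for the $a=1$ versus $a>1$ dichotomy would be relegated to \cref{sec:calc}, following the same style used elsewhere in the paper.
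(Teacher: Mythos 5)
Your setup is sound and matches the paper's: reduce to computing $\psi_{w_\ttt}^2 z_\la$ inside the two-dimensional space $e(\bfi_\la)\spe\la = \langle z_\la, v_\ttt\rangle_\bbf$, using a reduced expression for the block-swap involution $w_\ttt$ and the standard KLR toolkit. But what you have written is a plan, not a proof: the entire content of the proposition --- that the $z_\la$-coefficient vanishes and that the $v_\ttt$-coefficient is exactly $(-1)^{b+1}2$ rather than, say, $0$ or $\pm 1$ --- is asserted as an ``expected outcome'' and deferred to bookkeeping you do not carry out. The paper needs several pages of auxiliary lemmas (\cref{smallkeylemmas,smallkeylemmasctd,lem:nextreduction,lem:nexttreduction,lem:penulttreduction,lem:penultreduction,lem:finalreduction}) to perform precisely this bookkeeping, each proved by its own induction on auxiliary indices at fixed $e$; your proposed ``induction on $e$, peeling off one chain at a time'' is not formulated as a checkable inductive statement.

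More seriously, your description of the mechanism is wrong in a way that would derail the computation. You claim the error terms $\pm(y_{r+1}-y_r)e(\bfi)$ arising from the quadratic relation ``die eventually against $y_s z_\la = 0$'' and that the braid-relation corrections are ``the sole source of non-vanishing contributions.'' The opposite is true: after commuting the second copy of the word into the first, the paper funnels everything into a single quadratic relation $\psi_{e+b}^2 e(\bfi) = (y_{e+b+1}-y_{e+b})e(\bfi)$, and the answer is exactly the sum of the two resulting dot terms, each of which survives and equals $(-1)^{b+1}\psid{e+b}{e}z_\la$. The surviving contributions come from the $\delta_{i_r,i_{r+1}}$ corrections in the relations $y_{r+1}\psi_r e(\bfi) = (\psi_r y_r + \delta_{i_r,i_{r+1}})e(\bfi)$ as the dots are pushed toward $z_\la$, working in concert with the braid corrections; the factor $2$ in the statement is literally these two $y$-terms. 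If the $y$-terms all died as you assert, you would obtain $\beta=0$. This part of the argument would need to be redone from scratch, tracking the dot contributions explicitly, before the sign claim for the $a=1$ case (where $(-1)^{b+1}=(-1)^{e}$ must be replaced by $(-1)^{e-1}$) can even be addressed.
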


We prove \cref{smallphivt} by a rather lengthy calculation, which we relegate to~\cref{sec:calc}.

\begin{thm}\label{2edecomps}
Suppose $a \geq 1$ and $b \geq 0$ with $a+b = e$, and let $\la = ((a,1^b),(a,1^b))$.
Then $\spe\la$ is decomposable if and only if $\nchar \bbf \neq 2$.
\end{thm}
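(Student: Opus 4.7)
The plan is to compute the endomorphism ring of $\spe\la$ using the preceding lemmas, and then analyse the structure according to the characteristic of $\bbf$.

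First I would argue that $\End(\spe\la)$ is two-dimensional, with basis $\{\id, \phi\}$. Any endomorphism $\theta$ of $\spe\la$ is determined by $\theta(z_\la)$, and since $e(\bfi_\la) z_\la = z_\la$, we have $\theta(z_\la) \in e(\bfi_\la) \spe\la$. By \cref{tableauswitch} together with \cref{lem:tabresi}, this subspace is spanned by $z_\la = v_{\ttt_\la}$ and $v_\ttt$. Conversely, both $z_\la$ (trivially) and $v_\ttt$ (by \cref{smallendo}) satisfy every defining relation of $z_\la$, so any element of $\bbf z_\la + \bbf v_\ttt$ does too, and therefore gives rise to a well-defined endomorphism of $\spe\la$.

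By \cref{smallphivt}, there is a scalar $c = \pm 2$ such that $\phi(v_\ttt) = c v_\ttt$. Hence $\phi^2(z_\la) = \phi(v_\ttt) = c \phi(z_\la)$, and by cyclicity of $\spe\la$ we obtain the relation $\phi^2 = c \phi$ in $\End(\spe\la)$.

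If $\nchar \bbf \neq 2$, then $c \in \bbf^\times$, so $\psi := c^{-1} \phi$ satisfies $\psi^2 = \psi$. Since $\psi(z_\la) = c^{-1} v_\ttt$ is nonzero and linearly independent from $z_\la$ (as distinct standard basis elements), $\psi$ is a non-trivial idempotent, yielding the proper decomposition $\spe\la = \im \psi \oplus \ker \psi$. Conversely, when $\nchar \bbf = 2$, we have $c = 0$ and $\phi^2 = 0$, so $\End(\spe\la) \cong \bbf[x]/(x^2)$ is a local ring, forcing $\spe\la$ to be indecomposable.

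The main obstacle is pinning down $\End(\spe\la)$ as exactly two-dimensional; once this is in hand, the remainder of the argument is a routine analysis of the two-dimensional algebra $\bbf[x]/(x^2 - cx)$, which is semisimple precisely when $c \neq 0$ and local otherwise.
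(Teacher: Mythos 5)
Your argument is correct and follows essentially the same route as the paper: both use \cref{smallendo,smallphivt} to obtain the endomorphism $\phi$ satisfying $\phi^2 = c\phi$ with $c = \pm 2$, rescale it to a non-trivial idempotent when $\nchar \bbf \neq 2$, and observe that no non-trivial idempotent exists when $\nchar \bbf = 2$. Your explicit identification of $\operatorname{End}(\spe\la)$ as the two-dimensional algebra $\bbf[x]/(x^2-cx)$ simply makes precise what the paper's proof leaves as ``easy to see''.
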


\begin{proof}
If $\nchar \bbf \neq 2$, then there is an endomorphism $\phi'$ of $\spe \la$ determined by $\phi' = (-1)^{(b+1)} \frac{1}{2} \phi$, which is an idempotent.	
%
%
If $\nchar \bbf = 2$, then it is easy to see from \cref{tableauswitch,smallphivt} that $\spe\la$ has no non-trivial idempotent endomorphisms, and the result follows.
\end{proof}

Next, we will generalise \cref{smallrepeatedindecomp} and show that all Specht modules indexed by `small' bihooks not appearing in~\cref{2edecomps} are indecomposable.
More precisely:

\begin{thm}\label{smallindecomp}
Suppose $e\neq 2$, and let $\la = ((a,1^b),(c,1^d)) \in \mptn 2 n$ such that either $a+b+c+d<2e$ or $a+b+c+d=2e$ with $a\neq c$ or $b\neq d$.
Then $\spe\la$ is indecomposable.
\end{thm}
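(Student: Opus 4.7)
My plan is to show that $\operatorname{End}_{\mathscr{R}_n^\La}(\spe\la) \cong \bbf$, which forces indecomposability. Any endomorphism $\phi$ is determined by $\phi(z_\la)$, and the identity $\phi(z_\la) = \phi(e(\bfi_\la) z_\la) = e(\bfi_\la) \phi(z_\la)$ combined with \cref{lem:tabresi} forces $\phi(z_\la) = \sum_{\ttt} c_\ttt v_\ttt$, summed over $\ttt \in \std\la$ with $\bfi_\ttt = \bfi_\la$. The goal is to show $c_\ttt = 0$ for all $\ttt \neq \ttt_\la$, leaving $\phi$ as scalar multiplication by $c_{\ttt_\la}$.

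A first reduction dispatches the symmetric sub-case $(a, b) = (c, d)$: the hypothesis rules out $a+b=e$, and together with $n \leq 2e$ this forces $a + b < e$, so \cref{smallrepeatedindecomp} applies and \cref{simphd} gives indecomposability. Moreover, \cref{transdec} allows me to replace $\la$ by $\la'$ at any stage, which I can exploit to put the ``taller'' component on top whenever convenient.

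For the main case $(a, b) \neq (c, d)$, my approach is combinatorial: for each of the finitely many sub-cases (determined by the sizes of $a+b$ and $c+d$ relative to $e$, and by which residues are shared between the two components), I enumerate all standard $\la$-tableaux $\ttt$ with $\bfi_\ttt = \bfi_\la$, and then use the defining relations of $\spe\la$ to cut down the coefficients $c_\ttt$. Concretely, applying each $y_r$, each of the ``non-corner'' $\psi_r$'s, and the two Garnir relations at $(1,1,1)$ and $(1,1,2)$ to the ansatz $\sum_\ttt c_\ttt v_\ttt$ yields linear constraints on the $c_\ttt$. In many sub-cases $\ttt_\la$ is already the only standard tableau with $\bfi_\ttt = \bfi_\la$ (e.g.\ whenever the residue sets of the two components are disjoint except at forced overlaps), so the conclusion is immediate from \cref{lem:tabresi}; in the remaining sub-cases we verify that the relations force the extra coefficients to vanish.

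The main obstacle is the boundary situation $n = 2e$ with $a + b = c + d = e$ but $(a, b) \neq (c, d)$, where the two components share the same residue multiset $I$ and therefore admit the largest pool of potential ``swap'' tableaux. Here the asymmetry of the column heights $b + 1$ and $d + 1$ should be precisely the ingredient that breaks standardness for each would-be swap, but the verification is unavoidably by hand and case-by-case. As the introduction warns, this tableau-enumeration method is tailored to small $n$ and is not expected to extend to large $n$.
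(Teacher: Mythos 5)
Your strategy is exactly the paper's: restrict $\phi(z_\la)$ to $e(\bfi_\la)\spe\la$ via \cref{lem:tabresi}, enumerate the standard $\la$-tableaux with residue sequence $\bfi_\la$, dispatch $(a,b)=(c,d)$ through \cref{smallrepeatedindecomp,simphd}, use \cref{transdec} to flip components, and kill the remaining coefficients by hitting the ansatz with elements that annihilate $z_\la$. The reduction in your second paragraph is also sound: $(a,b)=(c,d)$ together with the hypotheses does force $a+b<e$.

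The problem is that everything after that is a promissory note. The entire content of the theorem lives in the case analysis you defer: one must actually (a) enumerate, for each configuration of $b$ versus $d$ and of $c+d$ versus $e$, the standard tableaux with residue sequence $\bfi_\la$ (the paper finds up to three nontrivial candidates $\ttr,\tts,\ttt,\ttu$ in the case $b<d$, $d\geq e$, and again several in the case $b\geq d$), and (b) exhibit, for each candidate or linear combination of candidates, a specific generator ($\psi_{e-1}$, $\psi_e$, $\psi_{c+2d+1}$, $\psi_{c+d+e-1}$, \dots) that annihilates $z_\la$ but acts as a ``leading term plus lower order terms'' map on the candidate. Neither step is routine: the enumeration requires tracking exactly when entries can jump between components (which depends on inequalities like $b\geq d-e$ and $c\leq a+e$), and the annihilation step requires \cref{psiaction}-type arguments to control lower-order terms. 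Without this your argument establishes nothing beyond the setup. I would also note that your diagnosis of the hard case is off: the configuration $a+b=c+d=e$ with $(a,b)\neq(c,d)$ falls (after conjugating if necessary) into the comparatively easy case $b<d<e$, where for $b<e-2$ the tableau $\ttt_\la$ is already unique; the genuinely fiddly configurations are those with $d\geq e$ or $b\geq d$, where multiple swap tableaux coexist and one must check that no \emph{linear combination} of them is annihilated by the relevant $\psi_r$'s.
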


\begin{proof}
We determine all of the possible standard $\la$-tableaux with residue sequence
\[
\bfi_\la = (0,-1, \dots, -d, 1, 2, \dots, c-1, 0, -1, \dots, -b, 1, 2, \dots, a-1).
\]
If no standard $\la$-tableau other than $\ttt_\la$ has residue sequence $\bfi_\la$, then there exists no non-trivial endomorphism of $\spe\la$ and thus $\spe\la$ is indecomposable.
For any other standard $\la$-tableau $\ttt$ distinct from $\ttt_\la$ such that $\bfi_\ttt = \bfi_\la$, we show that there exists no non-trivial endomorphisms of $\spe\la$ in each of these cases.

We fill $[\la]$ with $1, \dots, n$ to find all standard $\la$-tableaux with residue sequence $\bfi_\la$.
\begin{enumerate}[label=(\roman*)]
\item\label{smallindecomp1}
Suppose $b < d < e$.
Since $n\leq 2e$, we must have that $b\leq e-2$.
First, we shall assume that $b<e-2$.
Since $b\neq e-1, e-2$, it follows that $-b-1 \not\equiv 0$ or $1 \pmod e$, so we cannot put $1, \dots, b+1$ down the first column of the first component, as $b+2$ would have nowhere to go.
Thus we must have $1,\dots, b+2$ down the first column of the second component, and in fact the only way to fill in the remaining entries yields $\ttt_\la$, so there are no non-trivial endomorphisms of $\spe\la$.

If instead we assume that $b=e-2$, then our conditions give $\la = ((a,1^{e-2}),(c,1^{e-1}))$, for which there are only 3 possible bihooks: $((1^{e-1}),(1^e))$, $((1^{e-1}),(2,1^{e-1}))$, and $((2,1^{e-2}),(1^e))$.
It is easy to check that the first has no standard tableaux of residue $\bfi_\la$ besides $\ttt_\la$, the second also has the standard tableau $\ttt = s_e \ttt_\la$, and the third has the standard tableau $\tts = \s{e}{1} \s{e+1}{2} \dots \s{2e-1}{e} \ttt_\la$.
Since $\psi_{e-1} v_\ttt = v_{s_{e-1} \ttt}$, and $\psi_{e-1} v_\tts = v_{s_{e-1} \tts}$ (plus possibly some lower order terms), while $\psi_{e-1} z_\la = 0$, these cases yield no non-trivial homomorphisms.

\item\label{smallindecomp2}
Suppose $b<d$ and $d\geq e$.
If $b= e-2$, then we must have $\la = ((1^{e-1}),(1^{e+1}))$, for which we may easily see that only $\ttt_\la$ and $s_{e+1} \ttt_\la$ have the correct residue sequence, and  that $\psi_e v_{s_{e+1} \ttt_\la} =  v_{s_e s_{e+1} \ttt_\la}$, and therefore there are no non-trivial homomorphisms.
So we may assume that $b<e-2$.
As in part~\ref{smallindecomp1}, we must place $1,\dots, b+2$ down the first column of the second component, and in fact must place $1,\dots, e-1$ in there.
If we place the entry $e$ in node $(e,1,2)$, we can either place $e+1$ in node $(e+1,1,2)$ or node $(1,1,1)$.
One can check that in the former case, we may only obtain $\ttt_\la$.
In the latter, we must then place $e+2,\dots, d+1$ down the first column of the first component, which is only possible if $b\geq d-e$.
If so, we may continue, placing $d+2,\dots, d+c$ in the first row of the second component, then $d+c+1,\dots, 2d+c-e+1$ down the remaining nodes in the second component, and $2d+c-e+2, \dots, n$ as in $\ttt_\la$.
Call this standard tableau $\ttr$.

If $a\geq c \geq 2$ we could have also placed $d+2,\dots, d+c$ in the first row of the first component, whence we are forced to place $d+c+1,\dots, 2d+c-e+1$ down the first column of the second component, then $2d+c-e+2,\dots, b+c+d+1$ down the first column of the first component, then $b+c+d+2, \dots, b+2c+d$ in the first row of the second component, then $b+2c+d+1,\dots, n$ as in $\ttt_\la$.
Call this standard tableau $\tts$.

Finally, suppose that $c\geq 2$, and we instead placed the entry $e$ in the node $(1,2,2)$.
Then we must put the entries $e+1, \dots, d+1$ down the first column of the first component, which is only possible if $b\geq d-e$.
We must then place $d+2$ in node $(e,1,2)$ and place $d+3, \dots, d+c$ along the first row of the second component, then $d+c+1,\dots, 2d+c-e+1$ down the first column of the second component, then $2d+c-e+2,\dots, n$ are filled as in $\ttt_\la$.
Call this standard tableau $\ttt$.

Then $\psi_e z_\la = 0$, while $\psi_e v_{\ttr} = v_{s_e \ttr}$, $\psi_e v_{\tts} = v_{s_e \tts}$, and $\psi_e v_{\ttt} = v_{s_e \ttt}$ (plus possibly some lower order terms in this final case).
It follows that there is no non-trivial endomorphism of $\spe\la$ in any of these cases.

\item\label{smallindecomp3}
Suppose $b\geq d$.
If $c<a$, the result follows from parts~\ref{smallindecomp1} and \ref{smallindecomp2} by applying \cref{transdec}.
So we may assume that $c\geq a$.
The only bihook for fitting these conditions with $d\geq e-1$ is $((1^e),(1^e))$, so we may further assume that $d\leq e-2$.
If we place $1,\dots, d+1$ down the first column of the second component, then $d+2,\dots, d+c$ must also be in the second component, if $c\leq e$.
If $b\neq e-1$, then the only option at this point is to fill in $d+c+1, \dots, n$ as in $\ttt_\la$.
If $b=e-1$ and $a \geq 2$, we may also obtain the standard tableau $\ttr = s_{b+c+d+1} \ttt_\la$.
If $c > e$ (in which case we also have $b<e-1$ and $c>a$), then we must fill $d+2,\dots, d+e$ in the second component, but can place $d+e+1,\dots, d+c$ along the first row of the first component, if $c \leq a+e$.
In this case, we may obtain a standard tableau $\ttu$ by filling in $d+c+1$ in node $(1,e+1,2)$, then $d+c+2,\dots, d+c+b+1$ down the first column of the first component, then $d+c+b+2, \dots, d+2c+b-e$ along the first row of the second component, with $d+2c+b-e+1, \dots, n$ being in the first component, as in $\ttt_\la$.
Since $\psi_{d+e} v_\ttu = v_{s_{d+e} \ttu}$ (plus possibly lower order terms), there is no homomorphism mapping $z_\la$ to $v_\ttu$.

If instead, we place $1,\dots, d+1$ down the first column of the first component, then $d+2, \dots, d+a$ must be placed in the first row of the first component.
If $c>a$, then we have nowhere left to put $d+a+1$, so there is no such standard tableau.
If $c=a$, then we may place $d+a+1, \dots, 2d+a+1$ down the first column of the second component, then $2d+a+2, \dots d+a+b+1$ down the first column of the first component, and the remaining entries along the first row of the second component, obtaining a standard tableau we shall call $\tts$.
If we further have that $b=e-1$, we also have the standard tableau $\ttt = s_{d+a+b+1} \tts$.
There are no other standard tableaux of the correct residue sequence.

We now show that there exists no homomorphism $\phi$ such that $\phi(z_\la)=\alpha v_{\ttr} +\beta v_{\tts} +\gamma v_{\ttt}$ for some $\alpha,\beta,\gamma\in\mathbb{F}$.
We observe that $\psi_{c+2d+1} v_\ttr = \psi_{c+2d+1} \psi_{c+d+e} z_\la = \psi_{c+d+e} \psi_{c+2d+1} z_\la = 0$, since $d+1 \leq e-1$, and we cannot have that $d=e-2$, $b=e-1$, $c\geq a \geq 2$, while $\psi_{c+2d+1} v_\tts = v_{s_{c+2d+1} \tts}$ (plus lower order terms), and $\psi_{c+2d+1} v_\ttt = v_{s_{c+2d+1} \ttt}$ (plus lower order terms).
Thus $\psi_{c+2d+1}(v_{\ttr}+v_{\tts}+v_{\ttt})
=v_{s_{c+2d+1} \tts}+v_{s_{c+2d+1} \ttt}$ (plus lower order terms), so that there exists no homomorphism mapping $z_\la$ to a linear combination of $v_{\tts}$ and $v_{\ttt}$.
Furthermore, $\psi_{c+d+e-1} v_\ttr = v_{s_{c+d+e-1} \ttr}$, whilst $\psi_{c+d+e-1} z_\la = \psi_{c+2d+1} z_\la = 0$. Hence there is no non-trivial endomorphism of $\spe\la$.
\qedhere
\end{enumerate}
\end{proof}

We summarise the results of this section in the following theorem.


\begin{thm}\label{smallbihooks}
Let $n\leq 2e$ and $\la \in \mptn 2 n$ be a bihook.
If $e\neq 2$, then $\spe\la$ is decomposable if and only if $\nchar \bbf \neq 2$, $n = 2e$ and $\la = ((a,1^b),(a,1^b))$ for some $a\geq 1$, $b\geq 0$.
If $e=2$, then $\spe\la$ is decomposable if and only if $\nchar \bbf \neq 2$ and $\la = ((2),(2))$, $((1^2),(1^2))$, $((2),(1^2))$ or $((1^2),(2))$.
\end{thm}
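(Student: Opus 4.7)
The strategy is to assemble the decomposability and indecomposability results already established in this section and, in the $e=2$ case, fill in the remaining small cases by direct computation. The plan divides naturally into two pieces.

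First, I address the case $e \neq 2$. Here the statement follows at once by combining \cref{2edecomps} and \cref{smallindecomp}. Specifically, \cref{2edecomps} supplies the ``if'' direction (decomposability when $\nchar \bbf \neq 2$) as well as indecomposability in characteristic $2$ for $\la = ((a,1^b),(a,1^b))$ with $a + b = e$, while \cref{smallindecomp} covers indecomposability for every other bihook with $n \leq 2e$. Together these exhaust the possibilities, since any bihook with $n = 2e$ which is not of the ``repeated'' shape $((a,1^b),(a,1^b))$ falls within the hypothesis of \cref{smallindecomp}, as do all bihooks with $n < 2e$.

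Second, I address the case $e = 2$, where $n \leq 2e = 4$, leaving only finitely many bihooks to check. The two ``repeated'' cases $((2),(2))$ and $((1^2),(1^2))$ fit the pattern of \cref{2edecomps} with $a+b=e=2$; although the quadratic and braid relations differ when $e=2$, the construction of \cref{smallendo} and the idempotency calculation of \cref{smallphivt} may be rerun with the $e=2$ relations to yield decomposability precisely when $\nchar \bbf \neq 2$. For the two ``swapped'' cases $((2),(1^2))$ and $((1^2),(2))$, I would again exhibit the unique non-initial standard $\la$-tableau $\ttt$ of residue $\bfi_\la$, define an endomorphism by $z_\la \mapsto v_\ttt$, check by imitating \cref{smallendo} that the Specht module relations are satisfied, and compute the square of this map directly; this produces an idempotent up to scalar, and so a decomposition, exactly when $\nchar \bbf \neq 2$. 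For each remaining bihook with $n \leq 4$, I would argue indecomposability by the two-pronged attack of \cref{smallindecomp}: if $\la$ is regular, then \cref{simphd} applies; otherwise, a short tableau enumeration shows that every candidate $\tts$ with $\bfi_\tts = \bfi_\la$ has some $\psi_r v_\tts \neq 0$ while $\psi_r z_\la = 0$, so no non-trivial endomorphism exists.

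The main obstacle is the $e=2$ bookkeeping: the altered quadratic and braid relations mean that the computations of \cref{smallendo,smallphivt,smallindecomp} do not transfer verbatim, so each of the finitely many cases must be reverified using the correct $\psi^2$ and braid relations. However, since $n \leq 4$, the total number of standard tableaux to track and the lengths of the reduced expressions involved are both very small, so in principle each verification is short and routine; the substance of the argument lies in the ideas already developed for $e \neq 2$.
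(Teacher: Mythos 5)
Your argument for $e\neq 2$ is exactly the paper's: \cref{smallbihooks} is stated there as a summary of \cref{2edecomps} (decomposability of the repeated bihooks with $a+b=e$ precisely when $\nchar\bbf\neq 2$, and indecomposability in characteristic $2$) together with \cref{smallindecomp} (indecomposability of every other bihook with $n\leq 2e$), and your observation that these two results exhaust all cases is correct. For $e=2$ the paper takes a lighter route than yours: rather than re-running the computations of \cref{smallendo,smallphivt} with the modified $e=2$ quadratic and braid relations, it notes (in the remark following the theorem, and again in the proof of \cref{kejefore=2}) that the four Specht modules $\spe{((2),(2))}$, $\spe{((1^2),(1^2))}$, $\spe{((2),(1^2))}$ and $\spe{((1^2),(2))}$ are pairwise isomorphic when $\kappa_1=\kappa_2$ --- their presentations differ only in the idempotent relation, and for $e=2$ the relevant residue sequences coincide --- so only one of the four need be analysed, and it dismisses the indecomposability of the remaining $n\leq 4$ bihooks as an easy check. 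Your plan of verifying each of the four shapes directly and enumerating tableaux for the rest is more laborious but sound, and arguably more self-contained: the written proof of \cref{smallphivt} only treats $e\geq 3$, so your insistence on redoing the scalar computation with the genuine $e=2$ relations closes a step the paper glosses over, at the cost of four small hand calculations where one (plus the isomorphism observation) would do.
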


\begin{rem}
If $e=2$, the four decomposable Specht modules are pairwise isomorphic.
It is easy to check that no other small bihooks label decomposable Specht modules.
\end{rem}

\section{General bihooks}\label{sec:generalbihooks}

This section comprises our main body of work, where we determine several large families of decomposable Specht modules in any characteristic when $e$ is finite.
Throughout this section, we will assume that $\kappa=(0,0)$.
We begin by applying induction and restriction functors to reduce our proof to the case of Specht modules $S_{((ke),(je))}$.
We then show that these are decomposable by exploiting certain endomorphisms, for which we determine at least two distinct eigenvalues, so that the generalised eigenspace decompositions contain at least two summands.
The amalgamation of these results can be stated in our main theorem of this section as follows.

\begin{thm}\label{mainresult}
Let $\la = ((ke + a, 1^b),(je + a, 1^b))$ or $((b+1, 1^{je+a-1}), (b+1, 1^{ke+a-1}))$, for some $j,k\geq 1$, $0< a \leq e$ and $0 \leq b < e$ with $a+b \neq e$, or for $a=b=0$.
\begin{enumerate}[label=(\roman*)]
\item For $j,  k > 1$, if $j+k$ is even and $\nchar \bbf \neq 2$, or if $j+k$ is odd, then $\spe\la$ is decomposable.
		
\item If $j=1$ or $k=1$, then $\spe\la$ is decomposable if and only if $\nchar\bbf \nmid j+k$.
\end{enumerate}
\end{thm}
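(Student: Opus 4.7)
The plan is to reduce \cref{mainresult} to the single family $\la = ((ke),(je))$, and then exhibit endomorphisms of these Specht modules whose generalised eigenspace decompositions have at least two summands.

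First I would invoke \cref{transdec} to identify the two shapes in the statement with one another, since $((ke+a,1^b),(je+a,1^b))'$ is, after interchanging the two components, the conjugate shape $((b+1,1^{ke+a-1}),(b+1,1^{je+a-1}))$; so it suffices to handle the first form. Next I would reduce from general $(a,b)$ to the base case $a=b=0$. The idea is to apply divided power induction/restriction functors $f_i^{(r)}$ and $e_i^{(r)}$ for the cyclotomic KLR algebra $\mathscr{R}_n^\La$: adding strings of cogood $i$-nodes (of residues $1,2,\dots,a-1$, and then $0,e-1,\dots,e-b+1$, twice over) takes $((ke),(je))$ to $((ke+a,1^b),(je+a,1^b))$. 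Because these functors are exact and send Specht modules to Specht modules (after stripping/adding the corresponding cogood strings), decomposability transfers between the two endpoints. This reduces \cref{mainresult} to the case $\la=((ke),(je))$.

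For $\la = ((ke),(je))$, my plan is to mimic the strategy of \cref{smallendo} and \cref{smallphivt} but with an endomorphism that acts as a ``cyclic shift'' of length-$e$ blocks. Writing $n=(j+k)e$, the residue sequence $\bfi_\la$ is $(0,1,\dots,e-1)$ repeated $j+k$ times, so the standard $\la$-tableaux $\tts$ with $\bfi_\tts = \bfi_\la$ are in bijection with the ways of selecting which $j$ of the $j+k$ blocks of size $e$ out of $\{1,\dots,n\}$ appear in the second component. For each such choice I would introduce basis elements $v_{\ttt_r}$ ($0\leq r\leq j+k$), where $\ttt_r$ is obtained from $\ttt_\la$ by cycling the top $r$ blocks from the first component into the second. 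I would then construct a homogeneous endomorphism $\phi$ by setting $\phi(z_\la) = v_{\ttt_1}$, with the verification that $\phi$ is well-defined following the usual checks: $e(\bfi_\la)$-invariance is \cref{lem:tabresi}, killing by the $y_r$'s follows from a degree comparison, and vanishing of $\psi_r v_{\ttt_1}$ for $r$ outside the Garnir positions follows from \cref{psiaction} plus a residue sequence analysis.

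The crux is to compute $\phi$ iteratively, obtaining a formula for $\phi^m(z_\la)$ in terms of the $v_{\ttt_r}$'s (modulo strictly less dominant terms), and to compute three eigenvalues of the resulting action. As in \cref{smallphivt}, each composition $\phi^m$ introduces signs $(-1)^{?}$ and a factor reflecting the ``number of ways'' to shift blocks, leading to eigenvalues that are essentially of the form $0$, $\pm 2$, and (for part (ii)) $\pm(j+k)$ in suitable normalisations. For $j,k>1$, so long as three distinct eigenvalues appear over $\bbf$, $\spe\la$ decomposes via generalised eigenspaces; the only obstruction to having at least two distinct eigenvalues is $\nchar\bbf=2$ when $j+k$ is even. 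For $j=1$ or $k=1$, the tableau count collapses and the endomorphism algebra is two-dimensional, so the usual ``trace'' argument shows $\spe\la$ splits exactly when $j+k$ is invertible in $\bbf$, matching part (ii). The chief obstacle, as in \cref{sec:calc}, will be the lengthy and delicate combinatorial bookkeeping needed both to verify that $\phi$ is well-defined and to extract the precise scalar in $\phi^m(z_\la)$; this is what I would relegate to a technical appendix.
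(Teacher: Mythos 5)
Your reduction steps are fine and essentially identical to the paper's: \cref{transdec} handles the conjugate family, and the divided power functors (this is \cref{inducetriv,inducetrivfurther}) reduce everything to $\la=((ke),(je))$. The gap is in the core construction. You propose $\phi(z_\la)=v_{\ttt_1}$, a single basis vector obtained by shifting one $e$-block into the second component. This map is not well-defined: while \cref{lem:psis} kills $\psi_r v_\ttt$ for $r\not\equiv 0\pmod e$, the generators $\psi_{re}$ with $r\neq j$ do \emph{not} annihilate a single brick-tableau vector (the paper makes this point explicitly just before introducing the $\Psi_r$ notation, and indeed \cref{prop:cancellation} gives $\psi_{re}\Psi_r v=-2\psi_{re}v$, which is generally nonzero). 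The residue-sequence argument you invoke fails here precisely because there do exist standard tableaux with residue sequence $s_{re}\,\bfi_{\ttt_1}$. The paper's fix is to send $z_\la$ to the weighted sum $\sum_{i,l}(j-l)(k-i)\Psiu{j-l}{j-1}\Psid{j+i}{j}z_\la$ of \cref{keje}, whose coefficients are chosen so that for each $r\neq j$ the three surviving contributions, weighted by $(r-1)$, $-2r$ and $(r+1)$ via the cancellation identities of \cref{prop:cancellation}, telescope to zero. Without some such combination your $\phi$ simply does not exist, and this is where the bulk of the work lies.

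Second, your eigenvalues are guessed rather than computed, and they are not the right ones: the paper obtains $-j(k+1)$, $-(j-1)(k+2)$, $-(j-2)(k+3)$ (and $0$ when $j=2<k$), with consecutive differences $j-k-2$ and $j-k-4$; the case analysis in \cref{kejedecomp} (at least two distinct eigenvalues unless $\nchar\bbf=2$ and $j+k$ is even) rests on exactly these differences, not on values like $0$, $\pm2$ or $\pm(j+k)$. For part (ii) your conclusion is correct in spirit, but you would still need to establish that the endomorphism space is spanned by $\id$ and a single $\phi$ satisfying $\phi^2=-(j+k)\phi$, which is the content of \cref{kee} and again depends on the cancellation identities. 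As it stands the proposal identifies the right reduction but not a working endomorphism, so the main theorem is not reached.
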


\begin{conj}\label{enot2conj}
When $e\neq 2$ and $\nchar \bbf \neq 2$, \cref{mainresult,smallbihooks} provides a complete list of decomposable Specht modules indexed by bihooks.
\end{conj}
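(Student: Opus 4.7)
The strategy is to show that every bihook $\la$ outside the list of \cref{mainresult,smallbihooks} has a one-dimensional endomorphism ring $\mathrm{End}(\spe\la) \cong \bbf$, which forces $\spe\la$ to be indecomposable. Any endomorphism $\phi$ is determined by $\phi(z_\la) \in e(\bfi_\la)\spe\la$, and the latter is spanned by $\{v_\ttt : \ttt \in \mathcal{T}_\la\}$, where $\mathcal{T}_\la := \{\ttt \in \std\la : \bfi_\ttt = \bfi_\la\}$. Thus it suffices to show that the only linear combination $\sum_{\ttt \in \mathcal{T}_\la} c_\ttt v_\ttt$ satisfying the defining relations imposed on $z_\la$ -- vanishing of each $y_r$, of $\psi_r$ for $r \notin \{d+1, c+d, b+c+d+1\}$, and of the two Garnir words -- is a scalar multiple of $v_{\ttt_\la}$. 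By \cref{transdec} we may normalise $\la = ((a,1^b),(c,1^d))$ up to conjugation.

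The first step is a combinatorial classification of $\mathcal{T}_\la$. The residue sequence $\bfi_\la$ decomposes into four blocks corresponding to the column of the second component, its arm, the column of the first component, and its arm, consisting of the entries $0, e{-}1, \dots, e{-}d$; then $1, \dots, c{-}1$; then $0, e{-}1, \dots, e{-}b$; then $1, \dots, a{-}1$. A non-trivial $\ttt \in \mathcal{T}_\la$ arises by swapping residue-compatible sub-blocks between the two components in a way that preserves standardness. When $\la$ lies outside the decomposable list -- so that either the arm lengths are incongruent modulo $e$, or the leg lengths differ -- the enumeration of such swaps is significantly restricted, mirroring the three cases of \cref{smallindecomp}.

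The second step is, for each non-trivial $\ttt \in \mathcal{T}_\la$, to produce a word in the annihilator of $z_\la$ that fails to annihilate $v_\ttt$ modulo dominance-lower basis elements. The guiding heuristic -- exhibited throughout the proof of \cref{smallindecomp} -- is that the swap producing $\ttt$ introduces a residue-equal adjacency at some forbidden index $r$, so that via \cref{psiaction}, $\psi_r v_\ttt$ has $v_{s_r \ttt}$ as a leading term with $s_r \ttt$ standard, while $\psi_r z_\la = 0$. When $|\mathcal{T}_\la| > 2$, several such indices must be used together -- as in part~\ref{smallindecomp3} of the proof of \cref{smallindecomp} -- to preclude cancellations among non-trivial linear combinations, via a dominance-ordered linear-independence argument.

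The principal obstacle is making this analysis uniform in $n$, since $|\mathcal{T}_\la|$ grows rapidly once the arm lengths exceed $e$, and the bookkeeping of lower-order dominance terms becomes delicate. A promising route is induction on $n$ via $i$-restriction functors: using the Specht-module filtration of $e_i \spe\la$ from \cite{kmr}, a hypothetical non-trivial idempotent in $\mathrm{End}(\spe\la)$ should descend to a non-trivial idempotent in $\mathrm{End}(\spe\mu)$ for some bihook $\mu$ of strictly smaller size (obtained by removing a good $i$-node), letting the inductive hypothesis force $\mu$ into the known list, and hence $\la$ too after a combinatorial pull-back. The most delicate points are arranging the restriction so that $\mu$ remains a bihook and verifying that the descent of idempotents respects the bihook filtration, perhaps by comparing graded dimensions of the eigenspaces arising in the proof of \cref{mainresult} with those available after restriction.
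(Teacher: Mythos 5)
This statement is \cref{enot2conj}, which the paper presents as a \emph{conjecture}: the authors give no proof, only GAP verification for $e\in\{3,4,5\}$, $\nchar\bbf\in\{0,3,5\}$ and $n\leq 22$, and they explicitly warn that the case-by-case tableau analysis used for the converse direction in \cref{smallindecomp} ``will not readily extend to large $n$.'' Your proposal is therefore not being measured against an existing argument, and as written it is a programme rather than a proof: the two steps you describe (classifying $\mathcal{T}_\la$ uniformly in $n$, and exhibiting annihilating words that separate the non-trivial tableaux) are exactly the parts you acknowledge you have not carried out.

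Beyond being incomplete, the opening strategy has a concrete flaw. You propose to show that every bihook outside the list has $\mathrm{End}(\spe\la)\cong\bbf$. This is false. By \cref{kee}, for $\la=((ke),(e))$ with $k>1$ the Specht module carries a non-trivial endomorphism $\phi$ with $\phi^2=0$ whenever $\nchar\bbf\mid k+1$; taking $k=2$ and $\nchar\bbf=3$ gives a bihook outside the decomposable list (it is indecomposable by \cref{mainresult}(ii)) whose endomorphism algebra is at least two-dimensional. What you actually need is that $\mathrm{End}(\spe\la)$ is \emph{local}, i.e.\ that every non-trivial endomorphism is nilpotent, which is a strictly harder statement to extract from the ``$\psi_r v_\ttt$ has a standard leading term'' heuristic, since that heuristic only rules out endomorphisms outright. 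Your fallback route via $i$-restriction is also unjustified: $e_i\spe\la$ is only Specht-filtered, not a Specht module, so there is no reason an idempotent of $\mathrm{End}(\spe\la)$ induces a non-trivial idempotent on a single smaller Specht module, nor that the smaller bihook $\mu$ lies outside the list exactly when $\la$ does. The paper's own reduction arguments (\cref{inducetriv,inducetrivfurther}) work only in the special aligned situations where the divided-power functors carry one Specht module exactly onto another and preserve composition length; the general restriction does not have this property. The conjecture remains open, and your proposal does not close it.
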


\begin{rem}
We have checked \cref{enot2conj} in GAP for all $e\in\{3,4,5\}$, $\nchar\bbf\in\{0,3,5\}$, and $n\leq 22$.
In characteristic $2$, we have found an extra handful of decomposable Specht modules which our theorem and our methods do not detect -- namely those indexed by $((4e),(2e))$, $((2e),(4e))$, $((8e),(2e))$, $((2e),(8e))$, $((5e),(3e))$, $((3e),(5e))$, their conjugates, and those bihooks obtained from these by our induction functor arguments.
It is tempting to speculate that the extra decomposable Specht modules in characteristic 2 correspond to $\la$ as in \cref{mainresult} with $j\neq k$ and $j-k \equiv 2\mod 4$.
\end{rem}

We begin with a reduction result, greatly simplifying the work we must do to prove \cref{mainresult}.

\begin{prop}\label{inducetriv}
Let $k, j \geq 1$, and $0\leq a < e$.
The Specht module $\spe{((ke),(je))}$ is decomposable if and only if $\spe{((ke+a),(je+a))}$ is.
\end{prop}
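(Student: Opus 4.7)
The plan is to prove the equivalence by induction on $a$. Writing $\la_t := ((ke+t),(je+t))$ for $0 \leq t \leq a$, it suffices to show that $\spe{\la_t}$ is decomposable if and only if $\spe{\la_{t+1}}$ is, for each $t = 0, 1, \dots, a-1$; then chaining these equivalences yields the proposition. The tool is the divided power induction functor $F_i^{(2)}$ (on graded $\mathscr{R}^{\La}$-modules) and its biadjoint restriction functor $E_i^{(2)}$, taken with $i := t \pmod e$.

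First I would record the residue structure of $\la_t$. With $\kappa = (0,0)$ and $t \leq a-1 \leq e-2$, one checks directly that the addable $i$-nodes of $\la_t$ are precisely the two cogood nodes $A_1 := (1,ke+t+1,1)$ and $A_2 := (1,je+t+1,2)$ (the nodes $(2,1,m)$ have residue $e-1 \neq i$ because $t < e-1$), and that $\la_t$ has no removable $i$-nodes (the rightmost nodes of the two long rows have residue $t-1 \neq t$). Under these favourable hypotheses -- $r$ addable $i$-nodes and no removable $i$-nodes -- standard branching results for Specht modules over the cyclotomic KLR algebra (via the categorified crystal structure; see Brundan--Kleshchev, or Kleshchev--Mathas--Ram) give isomorphisms
\[
F_i^{(2)} \spe{\la_t} \;\cong\; \spe{\la_{t+1}} \quad\text{and}\quad E_i^{(2)} \spe{\la_{t+1}} \;\cong\; \spe{\la_t},
\]
up to grading shifts.

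Since $F_i^{(2)}$ and $E_i^{(2)}$ are exact and additive, they carry direct sum decompositions to direct sum decompositions. Combined with the two displayed isomorphisms, a non-trivial splitting $\spe{\la_t} = M \oplus N$ transfers through $F_i^{(2)}$ to a splitting of $\spe{\la_{t+1}}$, and neither summand can be zero because applying $E_i^{(2)}$ recovers $M$ and $N$ (up to shift) inside $\spe{\la_t}$. The converse direction is symmetric, via $E_i^{(2)}$. Iterating for $t = 0, \dots, a-1$ completes the proof.

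The main obstacle is justifying cleanly the identification $F_i^{(2)} \spe{\la_t} \cong \spe{\la_{t+1}}$ (and its restriction analogue): one needs that with $r$ addable $i$-nodes and no removable ones, $F_i^{(r)}$ sends the Specht module to a single Specht module (not merely to a module admitting one as a subquotient in a filtration). This is exactly where the tailored shape of the $\la_t$ is used, and it is the crucial ingredient that makes the reduction to $a = 0$ possible.
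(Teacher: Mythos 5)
Your overall strategy is the same as the paper's: transfer decomposability along the divided power functors $F_i^{(2)}$ and $E_i^{(2)}$, using that $\la_t = ((ke+t),(je+t))$ has exactly two addable $t$-nodes and no removable $t$-nodes, so that $F_t^{(2)}\spe{\la_t}\cong\spe{\la_{t+1}}$ and $E_t^{(2)}\spe{\la_{t+1}}\cong\spe{\la_t}$. Your residue bookkeeping is correct, and running the reduction one residue at a time rather than composing $e_0^{(2)}e_1^{(2)}\cdots e_{a-1}^{(2)}$ in one go, as the paper does, is an immaterial difference. The genuine gap is the clause ``neither summand can be zero because applying $E_i^{(2)}$ recovers $M$ and $N$''. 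The two displayed isomorphisms together with exactness and additivity only yield $E_i^{(2)}F_i^{(2)}M\oplus E_i^{(2)}F_i^{(2)}N\cong M\oplus N$; they do not by themselves rule out the scenario $F_i^{(2)}M=0$ with $E_i^{(2)}F_i^{(2)}N\cong M\oplus N$. Showing that the functor annihilates no nonzero summand is exactly the point on which the whole proposition turns, and it is precisely what the paper's proof supplies: it first notes that $\spe\la$ and $\spe\mu$ have the same composition length, deduces that the composite restriction functor is nonzero on every composition factor of $\spe\mu$, and then uses exactness to conclude it is nonzero on every nonzero submodule.

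Your ``recovery'' claim can be made rigorous by a different route, namely the categorified $\mathfrak{sl}_2$ relations: on the relevant weight space one has a decomposition of $E_i^{(2)}F_i^{(2)}$ into terms $F_i^{(2-s)}E_i^{(2-s)}$ with graded multiplicities, and since $\la_t$ has no removable $i$-nodes we have $E_i\spe{\la_t}=0$, hence $E_iM=0$ for any direct summand $M$, and the relation collapses to $E_i^{(2)}F_i^{(2)}M\cong M$ up to a grading shift. Either this or the paper's composition-length argument closes the gap; as written, the key step is asserted rather than proved.
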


\begin{proof}
Our argument is similar to that in \cite[Theorem 3.2]{ls14}, using the graded `cyclotomic divided power functors' of~\cite[\S4.6]{bk09}, which we denote here by $e_i^{(r)}$ and $f_i^{(r)}$.
Let $\la = ((ke), (je))$ and $\mu = ((ke+a), (je+a))$.
Then $e_0^{(2)} e_1^{(2)} \dots e_{a-1}^{(2)} \spe\mu = \spe\la$ and $f_{a-1}^{(2)} f_{a-2}^{(2)} \dots f_0^{(2)} \spe\la = \spe\mu$.
It follows that $\spe\la$ and $\spe\mu$ have the same composition length, and that $e_0^{(2)} e_1^{(2)} \dots e_{a-1}^{(2)} D \neq 0$ for any composition factor $D$ of $\spe\mu$.
Hence, by exactness, $e_0^{(2)} e_1^{(2)} \dots e_{a-1}^{(2)} M \neq 0$ for any submodule $M \subseteq \spe\mu$.
It follows that if $\spe\mu$ is decomposable, then so is $\spe\la$.
Repeating the argument the other way round completes the proof.
\end{proof}

In fact, we may extend the above \lcnamecref{inducetriv} as follows.

\begin{prop}\label{inducetrivfurther}
Let $k, j\geq 1$, $0 < a \leq e$, and $0 \leq b < e$ with $a+b \neq e$.
The Specht module $\spe{((ke),(je))}$ is decomposable if and only if $\spe{((ke+a,1^b), (je+a,1^b))}$ is.
\end{prop}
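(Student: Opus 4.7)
The plan is to adapt the strategy of~\cref{inducetriv} by employing the graded cyclotomic divided power functors $e_i^{(r)}$ and $f_i^{(r)}$ of~\cite[\S4.6]{bk09} to iteratively add the hook columns one residue at a time. By~\cref{inducetriv}, it suffices to show that $\spe{((ke+a),(je+a))}$ is decomposable if and only if $\spe{((ke+a,1^b),(je+a,1^b))}$ is.

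We would proceed by induction on $b$, the case $b = 0$ being trivial. Writing $\mu_i := ((ke+a, 1^i),(je+a, 1^i))$, the inductive step requires us to show that $\spe{\mu_{i-1}}$ is decomposable if and only if $\spe{\mu_i}$ is. The goal is to establish the identities (up to grading shift)
\[
	e_{e-i}^{(2)}\, \spe{\mu_i} = \spe{\mu_{i-1}} \quad\text{and}\quad f_{e-i}^{(2)}\, \spe{\mu_{i-1}} = \spe{\mu_i},
\]
from which decomposability transfers in both directions by exactness, constancy of composition length, and the argument given in the proof of~\cref{inducetriv}.

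Verifying these identities reduces to a residue calculation. The pair $(i+1,1,1), (i+1,1,2)$ lies at the base of the first column in each component of $\mu_{i-1}$ and both nodes have residue $e-i$; the aim is to show that these are the only conormal $(e{-}i)$-nodes of $\mu_{i-1}$ and that $\mu_i$ has no addable $(e{-}i)$-nodes to interfere. The other candidate addable nodes are $(1,ke+a+1,1), (1,je+a+1,2)$ at residue $a\pmod e$, and $(2,2,m)$ at residue $0$; neither collides with $e-i$ in the relevant range precisely because of the hypothesis $a+b \neq e$ together with $b < e$.

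The main anticipated obstacle is the case $a + b > e$: then at the intermediate step $i = e - a$ the rightmost addable nodes of row $1$ carry residue $e - i$ as well, yielding extra addable (and, in $\mu_i$, removable) $(e{-}i)$-nodes that break the naive signature count. To circumvent this, the plan is to reorder the composite chain of functors --- interleaving the row-extension functors $f_{a-1}^{(2)}\cdots f_1^{(2)} f_0^{(2)}$ of~\cref{inducetriv} with the column-extension functors above --- so that at each intermediate bipartition the working residue has a clean $(e{-}i)$-signature. Confirming that such a reordering always exists under the hypothesis $a + b \neq e$ is the technical heart of the argument, after which the composition length preservation from~\cref{inducetriv} completes the chain of equivalences.
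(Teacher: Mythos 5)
Your reduction to the case of adding the leg, and your treatment of the subcase $a+b<e$, match the paper's argument, and you have correctly located the obstruction when $a+b>e$: at the residue where the arm and the leg collide there are four relevant nodes of the same residue rather than two. However, your proposed remedy --- reordering or interleaving the rank-two divided power functors so that the collision never occurs --- cannot work in general, and you have (rightly) flagged rather than proved it. Consider $a+b=e+1$, for instance $e=3$, $a=b=2$, $\mu=((3k+2,1^2),(3j+2,1^2))$. The \emph{only} removable nodes of $\mu=((ke+a,1^b),(je+a,1^b))$ are the two arm ends $(1,ke+a,1)$, $(1,je+a,2)$ and the two leg feet $(b+1,1,1)$, $(b+1,1,2)$, and all four have residue $a-1$. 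Any chain of functors carrying $\spe\mu$ back down must therefore begin with $e_{a-1}^{(r)}$ for some $r$, and $e_{a-1}^{(2)}\spe\mu$ has a Specht filtration with several subquotients, since there are several ways to delete two of the four removable $(a-1)$-nodes and remain a bipartition; it is not a single Specht module. No reordering of rank-two functors can help here, because the collision is already present at the endpoint $\mu$ itself, not merely at an intermediate stage.

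The idea your plan is missing, and which the paper uses, is to absorb the collision into a single higher divided power: in the case $a+b>e$ the paper inserts $e_{a-1}^{(4)}$ (resp.\ $f_{a-1}^{(4)}$) at the collision residue, removing (resp.\ adding) all four $(a-1)$-nodes simultaneously at a stage where these are precisely the removable (resp.\ addable) $(a-1)$-nodes, and surrounds this with rank-two functors exactly as in your chain. With that modification the composition-length and exactness argument of \cref{inducetriv} goes through; without it, the step you describe as ``the technical heart'' cannot be completed as stated.
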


\begin{proof}
The argument is similar to the proof of \cref{inducetriv}.
By \cref{inducetriv}, we know that $\spe{((ke), (je))}$ is decomposable if and only if $\spe{((ke+1), (je+1))}$ is.

Let $\la = ((ke+1), (je+1))$ and $\mu = ((ke+a,1^b), (je+a,1^b))$.
If $a+b<e$, we find that $e_{1}^{(2)} e_{2}^{(2)} \dots e_{a-1}^{(2)} \cdot e_{e-1}^{(2)} e_{e-2}^{(2)} \dots e_{e-b}^{(2)} \spe\mu = \spe\la$ and $f_{e-b}^{(2)} f_{e-b+1}^{(2)} \dots f_{e-1}^{(2)} \cdot f_{a-1}^{(2)} f_{a-2}^{(2)} \dots f_1^{(2)} \spe\la = \spe\mu$.

If $a+b > e$, we find that 
\[
e_{1}^{(2)} e_{2}^{(2)} \dots e_{a-2}^{(2)} \cdot e_{e-1}^{(2)} e_{e-2}^{(2)} \dots e_{a}^{(2)} \cdot e_{a-1}^{(4)} \cdot e_{a-2}^{(2)} e_{a-3}^{(2)} \dots e_{e-b}^{(2)} \spe\mu = \spe\la
\]
and 
\[
f_{e-b}^{(2)} \dots f_{a-3}^{(2)} f_{a-2}^{(2)} \cdot f_{a-1}^{(4)} \cdot f_{a}^{(2)} f_{a+1}^{(2)} \dots f_{e-1}^{(2)} \cdot f_{a-2}^{(2)} f_{a-3}^{(2)} \dots f_1^{(2)} \spe\la = \spe\mu
\]
where we adopt the convention that $f_{a}^{(2)} f_{a+1}^{(2)} \dots f_{e-1}^{(2)} = \id = e_{e-1}^{(2)} e_{e-2}^{(2)} \dots e_{a}^{(2)}$ if $a=e$.

We may now complete the proof identically to \cref{inducetriv}.
\end{proof}

The following result handles some indecomposable Specht modules when $n$ is reasonably small, essentially extending \cref{smallindecomp}.

\begin{prop}\label{repeatedbihooks3}
Let $1 \leq a \leq e$, and $0 \leq b < e$ with $a+b \neq e$, and let $\la = ((a,1^b),(a,1^b))$.
Then $\spe\la$ is indecomposable.
\end{prop}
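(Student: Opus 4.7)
The proof splits into two cases, depending on whether $a+b<e$ or $a+b>e$ (the case $a+b=e$ is excluded by hypothesis). When $a+b<e$, the result is immediate from \cref{smallrepeatedindecomp}: that lemma gives $\la$ is regular, and \cref{simphd} then yields that $\spe\la$ is indecomposable.

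For $a+b>e$, I again aim to show $\la$ is regular and conclude via \cref{simphd}. By \cref{transdec} I may replace $\la$ by its conjugate $\la'=((b+1,1^{a-1}),(b+1,1^{a-1}))$ if convenient; since this swaps the roles of $a$ and $b+1$ while preserving the form, I may assume without loss of generality that $a \ge b+1$. The plan is to exhibit an explicit path of cogood additions $\varnothing = \la(0) \to \la(1) \to \cdots \to \la(n) = \la$, proceeding in three phases: first, alternately add cogood nodes of residues $0, e-1, e-2, \ldots$ in pairs to both components, building up a symmetric ``staircase'' $((1^k),(1^k))$ for some $k \le b$ (as in the path that underpins \cref{smallrepeatedindecomp}); second, begin extending the first row of both components symmetrically via cogood additions of residues $1, 2, \ldots$; third, before reaching the forbidden intermediate shape $((e-b,1^b),(e-b,1^b))$, break the symmetry and extend the first component all the way to $(a,1^b)$ before catching up the second component by the analogous sequence of additions. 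Removable nodes accumulated in the first component during the asymmetric phase produce $-$ signs in the reduced $i$-signature that cancel the $+$ signs coming from otherwise-unwanted addable nodes in the second component, keeping the desired cogood node available at each substep.

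The \textbf{main obstacle} is verifying cogood-ness around the critical residue $e-b$ in the third phase. The intermediate bipartition $((e-b,1^b),(e-b,1^b))$ has $a'+b'=e$; by \cref{2edecomps} its Specht module is decomposable (when $\nchar\bbf \ne 2$), and hence by the contrapositive of \cref{simphd} it is not regular, so any valid path must bypass it. The hypothesis $a \ge b+1$ combined with $a+b>e$ is exactly what guarantees the asymmetric bypass in the third phase succeeds, since it provides enough ``horizontal room'' in the first component to overshoot the forbidden residue before symmetry is restored. The explicit verification then amounts to careful but routine bookkeeping of the reduced $i$-signature after each substep, depending on the precise values of $(a,b,e)$.
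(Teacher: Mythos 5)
Your approach---exhibiting $\la$ as regular by an explicit path of cogood additions and then invoking \cref{simphd}---is exactly the paper's, whose own proof is an equally brief sketch asserting such a path (first rows, residues $0,\dots,a-1$, then first columns, residues $-1,\dots,-b$, with four cogood nodes added at the residue where row and column overlap) and leaving the signature bookkeeping implicit just as you do. One correction to your stated mechanism: signature reduction only deletes a $+$ immediately followed by a $-$, and first-component signs precede second-component signs in the reading order, so the removable nodes accumulated in the first component do not \emph{cancel} addable nodes of the second component---they merely contribute leading $-$ signs, which pushes the first surviving $+$ (hence the cogood node) down into the second component, which is what makes the ``catch-up'' phase work.
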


\begin{proof}
It is easy to see that $\la$ is regular, from which the result also follows: starting with $\varnothing$, we may add two cogood $0$-nodes, followed by two cogood $1$-nodes, and so on up to adding two cogood $(a-1)$-nodes, then two cogood $(-1)$-nodes, and so on up to adding two cogood $(-b)$-nodes.
If $n>2e$, then after adding the first $2e-2$ cogood nodes in the above, we must add four cogood nodes of the next residue before reverting to adding two at a time.
\end{proof}

In view of the above results, we may assume that $\la = ((ke),(je))$ as we prove our main result.
We fix this choice of $\la$ for the remainder of the section, and compute an endomorphism of $\spe\la$ which we will use to prove our result.

We now introduce notation for the basis vectors $v_{\ttt}$ of $\spe\la$, analogous to \cite[\S5.2]{Sutton17I}.
Observe that a standard $\la$-tableau $\ttt$ is determined by the entries $a_r:=\ttt(1,r,2)$ lying in its second component, for all $r \in \{1, \dots, je\}$.
We can thus write $\ttt = w_\ttt \ttt_\la$, where
\[
w_\ttt = \s{a_1-1}{1} \s{a_2-1}{2} \dots \s{a_{je}-1}{je}\in\mathfrak{S}_n.
\]
It follows that $v_{\ttt} = \psi_\ttt z_\la$ where
\[
\psi_\ttt = \psid{a_1-1}{1} \psid{a_2-1}{2} \dots \psid{a_{je}-1}{je} \in \mathscr{R}_n^\La.
\]
In order to distinguish our standard tableaux compactly, we will often write $v(a_1, a_2, \dots, a_{je})$ for the standard $\la$-tableau with entries $a_1, a_2, \dots, a_{je}$ in the second component.

\begin{defn}
Let $\ttt \in \std\la$.
An \emph{$e$-brick} is a sequence of $e$ adjacent nodes containing entries $je+1, je+2, \dots, (j+1)e$ for $j\geq 0$.
We say that $\ttt$ is an \emph{$e$-brick tableau} if all entries of $\ttt$ lie in $e$-bricks.
We denote the set of all standard $e$-brick $\la$-tableaux by $\calt_e$.
\end{defn}

\begin{eg}
If $e=3$ and $\la = ((6),(6))$ then $\calt_e$ consists of the following six tableaux, obtained by permuting the four $e$-bricks.

\[
\begin{tikzpicture}
\node[align=left] at (-0.7,0.2) {$\ttt_1=$};
\tyoung(0cm,0cm,789<10><11><12>,,123456)
\Yfillopacity{0}
\Ylinethick{2pt}
\tgyoung(0cm,0cm,_3_3,,_3_3)
\node[align=left] at (4.1,0.2) {$\ttt_2=$};
\Ylinethick{0.5pt}
\tyoung(4.8cm,0cm,456<10><11><12>,,123789)
\Ylinethick{2pt}
\tgyoung(4.8cm,0cm,_3_3,,_3_3)
\node[align=left] at (8.9,0.2) {$\ttt_3=$};
\Ylinethick{0.5pt}
\tyoung(9.6cm,0cm,456789,,123<10><11><12>)
\Ylinethick{2pt}
\tgyoung(9.6cm,0cm,_3_3,,_3_3)
\node[align=left] at (-0.7,-2) {$\ttt_4=$};
\Ylinethick{0.5pt}
\tyoung(0cm,-2.2cm,123<10><11><12>,,456789)
\Ylinethick{2pt}
\tgyoung(0cm,-2.2cm,_3_3,,_3_3)
\node[align=left] at (4.1,-2) {$\ttt_5=$};
\Ylinethick{0.5pt}
\tyoung(4.8cm,-2.2cm,123789,,456<10><11><12>)
\Ylinethick{2pt}
\tgyoung(4.8cm,-2.2cm,_3_3,,_3_3)
\node[align=left] at (8.9,-2) {$\ttt_6=$};
\Ylinethick{0.5pt}
\tyoung(9.6cm,-2.2cm,123456,,789<10><11><12>)
\Ylinethick{2pt}
\tgyoung(9.6cm,-2.2cm,_3_3,,_3_3)
\end{tikzpicture}
\]
These tableaux correspond to $v_{\ttt_1}=v(1,2,3,4,5,6)$, $v_{\ttt_2}=v(1,2,3,7,8,9)$, $v_{\ttt_3}=v(1,2,3,10,11,12)$, $v_{\ttt_4}=v(4,5,6,7,8,9)$, $v_{\ttt_5}=v(4,5,6,10,11,12)$ and $v_{\ttt_6}=v(7,8,9,10,11,12)$ in $\spe{((6),(6))}$.

\end{eg}

The following easy lemma is our motivation for introducing this definition.

\begin{lem}\label{lem:eis}
For any $\ttt \in \std\la$, $v_\ttt \in e(\bfi_\la) \spe\la$ if and only if $\ttt \in \calt_e$.
\end{lem}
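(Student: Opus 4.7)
The plan is to invoke \cref{lem:tabresi}, which tells us that $v_\ttt \in e(\bfi_\la) \spe \la$ if and only if $\bfi_\ttt = \bfi_\la$, and then to identify combinatorially when this residue condition holds. First I would compute $\bfi_\la$ explicitly: since $\la = ((ke), (je))$ consists of two single-row components and $\kappa = (0,0)$, the residue of a node $(1,c,m)$ is $(c-1) \bmod e$. Reading the column-initial tableau $\ttt_\la$ down component $2$ and then component $1$, this yields $\bfi_\la = (0, 1, \ldots, e-1, 0, 1, \ldots, e-1, \ldots, 0, 1, \ldots, e-1)$ with $j+k$ copies of the pattern. Thus the condition $\bfi_\ttt = \bfi_\la$ is equivalent to requiring that, for every $r \in \{1, \dots, n\}$, the entry $r$ of $\ttt$ sits in a column $c_r$ of its component satisfying $c_r \equiv r \pmod e$.

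For the ($\Leftarrow$) direction, assume $\ttt \in \calt_e$. Since each component of $\la$ is a single row of length divisible by $e$, and the $e$-bricks partition the entries of $\ttt$, these bricks must partition the columns of each component into consecutive groups of size $e$, necessarily beginning at columns $1, e+1, 2e+1, \dots$ in that component. The entry $(j'{-}1)e + r$ in the $j'$th such brick (within its component) then sits in a column congruent to $r \pmod e$, matching $(\bfi_\la)_{(j'{-}1)e+r} = r-1 \pmod e$. Hence $\bfi_\ttt = \bfi_\la$.

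For the ($\Rightarrow$) direction, I would argue by induction on $m$ that if $\bfi_\ttt = \bfi_\la$, then the entries $1, 2, \dots, me$ of $\ttt$ have been placed as $m$ complete $e$-bricks, each within a single component. Since each component is a single row, entries in a given component occupy its columns $1, 2, \dots$ in order of size. The base case is vacuous. For the inductive step, entry $me+1$ must occupy the next unfilled position in one of the two components; by the induction hypothesis, that position is a column congruent to $1 \pmod e$, so the residue condition is satisfied and a new brick is started. For $1 < r \le e$, the entry $me+r$ cannot instead begin a brick in the opposite component, since that would place it at a column congruent to $1 \pmod e$, forcing residue $0$, whereas $(\bfi_\la)_{me+r} = r-1 \neq 0$. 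Hence $me+r$ must continue in the current component, completing the $(m+1)$st brick.

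The main obstacle is really just organising the case analysis in the ($\Rightarrow$) direction cleanly; once the observation that each component is a single row is used to pin down the possible column positions, the residue constraint forces the $e$-brick structure immediately, so no further subtlety is expected.
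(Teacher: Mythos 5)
Your proof is correct, and it is exactly the argument the paper has in mind: the paper states this lemma without proof (introducing it as ``easy''), the intended justification being precisely the combination of \cref{lem:tabresi} with the observation that $\bfi_\la$ repeats the pattern $(0,1,\dots,e-1)$ and hence forces the entries of $\ttt$ into aligned blocks of $e$ consecutive columns. Both directions of your argument check out, including the key point in the forward direction that for $2\leq r\leq e$ the residue $r-1\not\equiv 0\pmod e$ rules out starting a new brick in the other component.
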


In particular, this lemma tells us that for any endomorphism $\phi$ of $\spe\la$,
\[
\phi(z_\la) = \sum_{\ttt \in \calt_e} a_\ttt v_\ttt \quad \text{for some } a_\ttt\in\bbf.
\]

\begin{lem}
For all $\ttt \in \calt_e$, $\deg\ttt = j$.
\end{lem}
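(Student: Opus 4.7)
The plan is to exploit the recursive definition of $\deg$ to show that the degree is additive over bricks, and that each brick contributes in a way that only depends on which component it sits in. Unpacking the recursion, we have
\[
\deg \ttt = \sum_{r=1}^n d^{A_r}(\nu_r),
\]
where $A_r = \ttt^{-1}(r)$ and $\nu_r$ is the shape of the subtableau $\ttt_{\leq r}$. For an $e$-brick tableau, the shape after each complete brick is of the form $((me),(le))$ for some $m,l$; within the $s$-th node of a brick ($1 \leq s \leq e$), the shape is $((me+s),(le))$ or $((me),(le+s))$, since each component of $\la$ is a single row.

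The central computation is to evaluate the contribution of a single $e$-brick to $\deg \ttt$, summed over its $e$ nodes. The residues of the nodes in a brick are $0,1,\dots,e-1$ in order. For a brick placed in the \emph{first} component, every added node $A$ lies in row $1$ of component $1$, so nothing in $[\la]$ is strictly above $A$; hence $d^A(\nu_r)=0$ for all $e$ nodes, and the brick contributes $0$. For a brick placed in the \emph{second} component, strictly above is all of component $1$ (which is a one-row partition $(me)$ throughout the addition of this brick). A straightforward case check on residues shows the only possible contributions come from the addable node $(1,me+1,1)$ of residue $0$, the addable node $(2,1,1)$ of residue $e-1$, and, when $m \geq 1$, the removable node $(1,me,1)$ of residue $e-1$. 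For $s=1$ this yields $d^A=+1$; for $s=e$ the addable and removable cancel (or both vanish when $m=0$), giving $d^A=0$; and for intermediate $s$ we get $d^A=0$. Hence a brick in component $2$ contributes exactly $+1$, regardless of $m$ and $l$.

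Summing over all $j+k$ bricks, we conclude
\[
\deg \ttt = \#\{\text{bricks in the second component of }\ttt\} = j,
\]
as required. The only slightly delicate step is ensuring the contribution of each brick is genuinely independent of the surrounding shape; but since both components of $\la$ are single rows, the relevant addable/removable nodes are confined to the four candidates listed above, so the case check is routine rather than the main obstacle.
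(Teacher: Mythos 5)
Your proof is correct and follows essentially the same route as the paper's: bricks in the first component contribute nothing since no node lies strictly above them, while each brick in the second component contributes exactly $+1$, coming from the addable $0$-node of the first component, with the addable and removable $(e{-}1)$-nodes cancelling (or both absent when the first component is empty). Your version simply makes the paper's case check on residues explicit.
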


\begin{proof}
Any nodes in the first component of $\ttt$ cannot contribute to the degree, since there can't be any nodes above them.
In the second component, each $e$-brick contributes $+1$ to the degree.
If the first component is empty when adding such a brick, this comes from only having an addable $0$-node higher up in the diagram.
Otherwise, there is an addable $0$-node, as well as one addable $(e{-}1)$-node and one removable $(e{-}1)$-node.
\end{proof}

\begin{lem}\label{lem:ys}
For all $\ttt \in \calt_e$ and $1\leq r \leq n$, $y_r v_\ttt = 0$.
\end{lem}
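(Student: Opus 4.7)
The plan is to prove this as a clean consequence of the preceding two lemmas, by a degree argument, thereby avoiding any direct manipulation with the Khovanov--Lauda--Rouquier generators past $\psi_\ttt$.

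First I would observe that by \cref{lem:eis}, since $\ttt \in \calt_e$, the vector $v_\ttt$ satisfies $e(\bfi_\la) v_\ttt = v_\ttt$. Combining this with the defining relation $y_r e(\bfi) = e(\bfi) y_r$, we obtain
\[
y_r v_\ttt \;=\; y_r e(\bfi_\la) v_\ttt \;=\; e(\bfi_\la) y_r v_\ttt \;\in\; e(\bfi_\la) \spe\la.
\]
Hence, applying \cref{lem:eis} in the other direction, $y_r v_\ttt$ lies in the $\bbf$-span of $\{v_\tts \mid \tts \in \calt_e\}$.

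Next, the preceding lemma tells us that every $\tts \in \calt_e$ satisfies $\deg \tts = j$, so by the basis theorem of \cite{bkw11,kmr}, every basis vector $v_\tts$ appearing in an expansion of an element of $e(\bfi_\la) \spe\la$ has degree exactly $j$. On the other hand, $y_r$ is a homogeneous element of degree $2$, so $y_r v_\ttt$ is homogeneous of degree $\deg \ttt + 2 = j + 2 \neq j$. The only element of $e(\bfi_\la) \spe\la$ that is simultaneously a linear combination of degree-$j$ vectors and homogeneous of degree $j+2$ is zero, so $y_r v_\ttt = 0$.

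Since every step is essentially a direct appeal to the lemmas already in hand (\cref{lem:eis}, the degree computation for $\calt_e$, and the homogeneity of $y_r$), there is no serious obstacle; the only thing to double-check is that the invocation of the graded basis theorem is valid in the present setting, which it is because $\spe\la$ is a graded module and $e(\bfi_\la) \spe\la$ inherits the induced grading with graded basis $\{v_\tts \mid \tts \in \calt_e\}$.
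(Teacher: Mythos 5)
Your argument is exactly the paper's: the paper also notes that $y_r v_\ttt$ has degree $j+2$ but lies in $e(\bfi_\la)\spe\la$, which by the preceding lemmas is spanned by vectors of degree $j$, forcing $y_r v_\ttt = 0$. Your write-up just spells out the intermediate appeals to \cref{lem:eis} and the graded basis theorem a little more explicitly; the approach is the same and correct.
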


\begin{proof}
Since $v_\ttt$ has degree $j$, $\deg(y_r v_\ttt) = j+2$.
But $y_r v_\ttt \in e(\bfi_\la)\spe\la$, so $\deg(y_r v_\ttt) = j$.
This contradiction yields the result.
\end{proof}

Similarly, the following result computes the actions of many $\psi_r$ generators on $v_\ttt \in e(\bfi_\la)\spe\la$.

\begin{lem}\label{lem:psis}
For all $\ttt \in \calt_e$ and $1\leq r < n$ with $r \not\equiv 0 \pmod e$, $\psi_r v_\ttt = 0$.
\end{lem}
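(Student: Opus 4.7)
The plan is to show that $\psi_r v_\ttt$ lies in a subspace which turns out to be zero. Since $\ttt\in\calt_e$, combining \cref{lem:eis,lem:tabresi} gives $e(\bfi_\la) v_\ttt = v_\ttt$, and then the KLR relation $\psi_r e(\bfi_\la) = e(s_r \bfi_\la) \psi_r$ yields $\psi_r v_\ttt \in e(s_r\bfi_\la)\spe\la$. By \cref{lem:tabresi}, the latter has basis $\{v_\tts \mid \tts\in\std\la,\,\bfi_\tts=s_r\bfi_\la\}$, so it suffices to show that no standard $\la$-tableau has residue sequence $s_r\bfi_\la$.

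Write $r = le + s$ with $l \geq 0$ and $1 \leq s \leq e-1$. Since $\kappa=(0,0)$ and both components of $\la$ are single rows, $\bfi_\la$ is simply the block $(0,1,\dots,e-1)$ repeated $j+k$ times, so $s_r\bfi_\la$ agrees with $\bfi_\la$ except that the residues $s-1,\,s$ at positions $r,\,r+1$ are swapped to $s,\,s-1$. Suppose, for contradiction, that $\tts\in\std\la$ has $\bfi_\tts = s_r\bfi_\la$. Then $\tts$ is specified by which entries lie in which component, and the residue of entry $m$ equals $(c-1) \bmod e$, where $c$ is the rank of $m$ within its component.

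I will track the placement of entries $1,\dots,r-1$, whose required residues agree with those in $\bfi_\la$ at these positions. An inductive argument essentially identical to the one underpinning \cref{lem:eis} (working one entry at a time and noting the only way for the next required residue to be realised) shows that these entries must fill exactly $l$ complete $e$-bricks, freely distributed between the two components, followed (when $s\geq 2$) by a partial brick of length $s-1$ sitting in a single component $p^*$. Consequently, placing entry $r$ in either component gives residue $s-1 \pmod e$ (continuing the partial brick in $p^*$, when $s\geq2$) or $0\pmod e$ (starting a fresh brick), and neither equals $s \bmod e$, since $1\leq s\leq e-1$. This contradicts $\bfi_\tts = s_r\bfi_\la$, completing the proof.

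The main piece of bookkeeping is the inductive brick-structure argument for entries $1,\dots,r-1$, but since this just recycles the reasoning behind \cref{lem:eis} applied to a partial residue sequence, I do not anticipate any substantive obstacle; the $e=2$ case also falls out, since $s=1$ is the only admissible value of $s$ and the argument above handles it without modification.
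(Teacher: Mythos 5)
Your proof is correct and follows essentially the same route as the paper: both observe that $\psi_r v_\ttt$ lies in $e(s_r\bfi_\la)\spe\la$, which is spanned by basis vectors indexed by standard tableaux with residue sequence $s_r\bfi_\la$, and that no such tableau exists when $r\not\equiv 0\pmod e$. The only difference is that you spell out the brick-counting reason for the nonexistence, which the paper leaves implicit.
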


\begin{proof}
We know that
\[
\psi_r v_\ttt = \; \sum_{\mathclap{\substack{\ttt \in \std\la\\ \bfi_\tts = \bfi_{s_r \ttt}}}} \; a_\tts v_\tts \quad \text{for some } a_\tts \in \bbf,
\]
and the result follows since no standard $\la$-tableau can have residue sequence $\bfi_{s_r \ttt}$.
\end{proof}

In order to calculate an endomorphism of $\spe\la$, it remains to understand the action of the generators $\psi_{re}$ on basis vectors $v_\ttt$.
In general, $\psi_{re}$ does not annihilate $v_\ttt$.
We will find an endomorphism of $\spe\la$ which maps $z_\la$ to a \emph{linear combination} of elements $v_\ttt$ which we will show is annihilated by $\psi_{re}$ if $r\neq j$.
First, we will introduce some necessary notation for working with tableaux in $\calt_e$.

For any $\ttt \in \calt_e$, we number the $e$-bricks in the order of their entries, i.e.~$\ttt$ comprises of bricks $1, 2, \dots, j+k$.
Then we have brick transpositions and their corresponding $\psi$ expressions, which we will denote by $\Psi_r$.
In particular, the brick transposition which transposes the $r$th and $(r+1)$th bricks corresponds to
\[
\Psi_r = \psid{re}{(r-1)e+1} \chaind{re+1}{(r-1)e+2} \clapdots \chaind{(r+1)e-1}{re}.
\]
As with our $\psi$ generators, we introduce the shorthand $\Psid xy = \Psi_x \Psi_{x-1} \dots \Psi_y$ and $\Psiu yx = \Psi_y \Psi_{y+1} \dots \Psi_x$.

Note that for any $\ttt \in \calt_e$, $w_\ttt$ is fully commutative since the reading word is 321-avoiding.
We can write $v_{\ttt}$ in terms of the $e$-bricks lying in the second component of $\ttt$, i.e.~we may write $v_\ttt = v(B_{i_1}, B_{i_2},\dots, B_{i_j})$, for $1 \leq i_1 < i_2 < \dots < i_j \leq j+k$.
Then
\[
v_\ttt = \Psid{i_1-1}{1} \Psid{i_2-1}{2} \dots \Psid{i_j-1}{j} z_\la.
\]
Analogously, we write $\underline{v}(B_{i_1}, B_{i_2}, \dots, B_{i_k})$ for the standard basis vector of $\spe\la$ indexed by the standard $\la$-tableau that has the $e$-bricks $B_{i_1}, B_{i_2}, \dots, B_{i_k}$ lying in its \emph{first} component.

\begin{eg}
As in the previous example, let $e=3$ and $\la = ((6),(6))$.
Then, for example,
\[
\Psi_1 = \psid{3}{1} \psid{4}{2} \psid{5}{3} = \psi_3 \psi_2 \psi_1 \psi_4 \psi_3 \psi_2 \psi_5 \psi_4 \psi_3.
\]
The six tableaux in $\calt_e$, given in the previous example, are determined by which two bricks (from the available bricks $1,2,3,4$) are in the second component, and correspond to the basis elements
\begin{alignat*}3
&z_\la, \qquad\qquad && \Psi_2 z_\la, \qquad\qquad & \Psid32 z_\la &= \Psi_3 \Psi_2 z_\la,\\
\Psi_1 &\Psi_2 z_\la, \qquad\qquad & \Psi_1 \Psid32 z_\la &= \Psi_1 \Psi_3 \Psi_2 z_\la, \qquad\qquad & \Psid21 \Psid32 z_\la &= \Psi_2 \Psi_1 \Psi_3 \Psi_2 z_\la.
\end{alignat*}
\end{eg}

The following lemma is easy to see from the relations in the KLR algebras and their Specht modules, and we will use it frequently without reference.

\begin{lem}
\begin{enumerate}[label=(\roman*)]
\item If $|r-s|>1$, then $\Psi_r \Psi_s = \Psi_s \Psi_r$.

\item If $\la =((ke),(je))$, for some $j,k \geq1$ and $r\neq j$, then $\Psi_r z_\la = 0$.
\end{enumerate}
\end{lem}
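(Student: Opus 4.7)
\medskip

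\noindent\textbf{Plan.} Both parts follow directly from unpacking the definition of $\Psi_r$ and using the basic KLR and Specht relations; I do not expect any serious obstacle.

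\medskip

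For part (i), the plan is to identify precisely which $\psi$-generators appear in $\Psi_r$. Expanding the shorthand, $\Psi_r$ is a product of the factors $\psid{re+t}{(r-1)e+1+t}$ for $t = 0,1,\dots,e-1$, and each such factor uses only generators $\psi_i$ with $(r-1)e+1 \leq i \leq (r+1)e-1$. Hence the generators occurring in $\Psi_r$ have indices in the interval $[(r-1)e+1,\,(r+1)e-1]$, and similarly for $\Psi_s$. When $|r-s|>1$, say $s \geq r+2$, the smallest index appearing in $\Psi_s$ is $(s-1)e+1 \geq (r+1)e+1$, which exceeds the largest index $(r+1)e-1$ appearing in $\Psi_r$ by at least $2$. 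Thus every generator of $\Psi_r$ commutes with every generator of $\Psi_s$ via the KLR relation $\psi_i\psi_j = \psi_j\psi_i$ whenever $|i-j|>1$, and the claim follows.

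\medskip

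For part (ii), I will first translate the defining relations of $\spe\la$ into the present setting. With $\la = ((ke),(je))$ we have $a=ke$, $c=je$, $b=d=0$, so the exceptional indices $\{d+1,\,c+d,\,b+c+d+1\} = \{1,\,je,\,je+1\}$ appear in the Specht relations. The generator relation says $\psi_s z_\la = 0$ for $s \notin \{1,je,je+1\}$, while the two Garnir relations degenerate (since $d+1 = 1$ and $c+d+1 = je+1 = b+c+d+1$) to $\psi_1 z_\la = 0$ and $\psi_{je+1} z_\la = 0$. Combining these, $\psi_s z_\la = 0$ for every $s \in \{1,\dots,n-1\}\setminus\{je\}$.

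\medskip

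The key observation to finish (ii) is that the rightmost $\psi$-generator appearing in the expansion of $\Psi_r$ is $\psi_{re}$: the last factor of $\Psi_r$ is $\psid{(r+1)e-1}{re} = \psi_{(r+1)e-1}\psi_{(r+1)e-2}\cdots\psi_{re}$, whose final term is $\psi_{re}$. Writing $\Psi_r = X\,\psi_{re}$ for the corresponding prefix $X$, we obtain $\Psi_r z_\la = X \psi_{re} z_\la$. Whenever $r \neq j$ and $1 \leq r \leq j+k-1$, we have $re \in \{1,\dots,n-1\}\setminus\{je\}$, so $\psi_{re} z_\la = 0$ and therefore $\Psi_r z_\la = 0$, which completes the argument.
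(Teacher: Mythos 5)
Your argument is correct: the paper states this lemma without proof ("easy to see from the relations in the KLR algebras and their Specht modules"), and your unpacking of the index ranges in $\Psi_r$ for (i) and of the degenerate Specht/Garnir relations for $\la=((ke),(je))$ together with the observation that $\Psi_r$ ends in $\psi_{re}$ for (ii) is exactly the intended routine verification. No issues.
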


The following proposition will be crucial in our computations, particularly when $e>2$.
For $e=2$, it follows from the proof of \cite[Lemma 5.5]{ls14}, where the first author proves the result for hook partitions in level one, assuming that the residue sequence $\bfi_\la$ alternates between $0$ and $1$.
Its proof for $e>3$ is long and technical, requiring many preliminary lemmas, and we relegate this to~\cref{sec:calc}.

\newpage

\begin{prop}\label{prop:cancellation}
Suppose that $e\in\{2,3,\dots\}$, and let $\la = ((ke),(je))$, for some $j, k \geq 1$.
If $v \in e(\bfi_\la) \spe\la$ and $1\leq r \leq j+k-1$, then
\begin{enumerate}[label=(\roman*)]
\item\label{prop:cancellation1} $\psi_{re} \Psi_r v = -2 \psi_{re} v$;

\item\label{prop:cancellation2} for $r<j+k-1$, $\psi_{re} \Psi_{r+1} \Psi_r v = \psi_{re} v$;

\item\label{prop:cancellation3} for $r>1$, $\psi_{re} \Psi_{r-1} \Psi_r v = \psi_{re} v$.
\end{enumerate}
\end{prop}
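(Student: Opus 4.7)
The plan is to verify each identity on basis vectors $v = v_\ttt$ with $\ttt \in \calt_e$, since these span $e(\bfi_\la)\spe\la$ by \cref{lem:eis}. Throughout, the proof will use: the commutation relations $\psi_s\psi_t = \psi_t\psi_s$ for $|s-t|>1$; the quadratic relation for $\psi_s^2 e(\bfi)$; the braid relations for $\psi_s\psi_{s+1}\psi_s e(\bfi)$; and crucially, \cref{lem:ys}, which gives $y_s v = 0$ for every $v \in e(\bfi_\la)\spe\la$. The last fact allows us to discard the error terms $\pm(y_s - y_{s+1})$ coming from the quadratic relation, so that effectively $\psi_s^2 v = 0$ whenever the residues $i_s, i_{s+1}$ are $\Gamma$-adjacent; this is the principal cancellation tool.

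For part~(i), I observe that $\Psi_r$ begins with the generator $\psi_{re}$, so $\psi_{re}\Psi_r = \psi_{re}^2 \cdot X$, where $X$ denotes the product of the remaining generators in $\Psi_r$. The plan is to commute the extra $\psi_{re}$ rightward through $X$ by the commutation relations, and whenever it meets a $\psi_{re \pm 1}$, to apply the braid relation $\psi_{re}\psi_{re\pm 1}\psi_{re}e(\bfj) = \psi_{re\pm 1}\psi_{re}\psi_{re\pm 1}e(\bfj) \pm e(\bfj)$, which produces a constant term. The $\psi_s^2$-contributions from subsequent encounters vanish on $v$ by the principal observation, and summing the surviving constant contributions gives exactly $-2\psi_{re}v$; the coefficient $-2$ reflects two braid applications, each contributing $-\psi_{re}v$ after the intermediate dust settles.

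For parts~(ii) and~(iii), a similar strategy applies, but one must additionally commute a single copy of $\psi_{re}$ through $\Psi_{r+1}$ or $\Psi_{r-1}$. In part~(ii), $\Psi_{r+1}$ is built from $\psi_s$ for $s \in \{re+1, \ldots, (r+2)e-1\}$, so $\psi_{re}$ commutes with all of its factors except for a single occurrence of $\psi_{re+1}$. Isolating that copy yields an expression containing $\psi_{re}\psi_{re+1}\psi_{re}$, after which the braid relation together with the cancellation established in part~(i) collapses everything to $\psi_{re}v$. A mirror argument, focusing on the unique $\psi_{re-1}$ inside $\Psi_{r-1}$ that obstructs commutation with $\psi_{re}$, handles part~(iii).

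The main obstacle is the careful bookkeeping of the residue sequence at each intermediate step: as $\psi$-generators are commuted or braid relations are applied, the idempotents $e(\bfj)$ governing which KLR relation applies will vary, and one must ensure the quadratic and braid relations take their claimed forms. For this reason, the full details are likely to require a family of auxiliary lemmas computing the action of specific $\psi$-subwords of $\Psi_r$ and $\Psi_{r\pm 1}$ on $e(\bfi_\la)\spe\la$, which are then assembled into the three identities. The case $e = 2$ needs to be handled separately since both the quadratic and the braid relations differ; the authors note this case follows from the level-one argument of \cite[Lemma 5.5]{ls14}.
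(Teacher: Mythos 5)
Your overall architecture---check the identities on the spanning set $\{v_\ttt : \ttt\in\calt_e\}$ of $e(\bfi_\la)\spe\la$ and grind through the KLR relations with a family of auxiliary lemmas tracking residues---is indeed how the paper proceeds (part (i) is the proof of \cref{smallphivt} with indices shifted by multiples of $e$; parts (ii) and (iii) rest on \cref{cor:psiaction,lem:psipsi,cor:psidownaction,lem:newyaction}). But your central cancellation mechanism is wrong at precisely the point where the coefficient $-2$ is produced. Your ``principal cancellation tool'' discards every error term $\pm(y_s-y_{s+1})$ arising from the quadratic relation by appealing to \cref{lem:ys}; however, that lemma only applies to vectors lying in the weight space $e(\bfi_\la)\spe\la$, whereas the quadratic relation is applied mid-word to vectors of the form $\psi_w v$ for proper subwords $w$ of $\Psi_r$ (or of $\Psi_{r\pm1}\Psi_r$), which lie in other weight spaces. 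The paper needs a separate and substantially harder statement (\cref{lem:newyaction}, proved by a triple induction together with \cref{lem:strings}) just to decide when $y_s$ annihilates such intermediate vectors---and the answer is that sometimes it does not.

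In fact the $-2$ does not come from two braid relations. In the paper's computation, $\psi_{re}\Psi_r v$ is reduced until a single $\psi_s^2$ with $\Gamma$-adjacent residues appears, producing a genuine $(y_s-y_{s+1})$ term; each of the two $y$'s is then pushed through the remaining word via $y_s\psi_s e(\bfi)=(\psi_s y_{s+1}-\delta_{i_s,i_{s+1}})e(\bfi)$ and its companion relation, picks up exactly one $\delta=1$ constant along the way, and contributes $\mp\psi_{re}v$, while the leading terms still carrying a $y$ are shown to vanish by the auxiliary lemmas. The sum of these two non-vanishing $y$-contributions is the $-2$; the same mechanism, with one of the two $y$-terms now genuinely killed by \cref{lem:ys}, yields the $+1$ in parts (ii) and (iii). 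So the constants you need are generated by exactly the terms you propose to discard, and the asserted ``two braid applications each contributing $-\psi_{re}v$'' is neither substantiated nor consistent with where the constants actually arise. To repair the argument you would need to (a) drop the claim that $y$-error terms vanish on intermediate vectors, (b) prove a criterion in the spirit of \cref{lem:newyaction} for when they do, and (c) redo the bookkeeping so that the surviving $\delta_{i_s,i_{s+1}}$ constants from the $y$--$\psi$ commutation relations are what sum to $-2$ (resp.\ $+1$). Your remark that $e=2$ must be treated separately via \cite[Lemma 5.5]{ls14} is correct and matches the paper.
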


%
%
%
%
%
%
%
%

\begin{thm}\label{keje}
Suppose that $k\geq j \geq 1$.
\begin{enumerate}[label=(\roman*)]
\item\label{keje1} Let $\la=((ke),(je))$.
Then there is an endomorphism of $\spe\la$ defined by
\[
\phi(z_\la) = \;
\sum_{\mathclap{\substack{0\leq i \leq k-1\\0 \leq l \leq j-1}}} \;
(j-l)(k-i)\Psiu{j-l}{j-1}\Psid{j+i}{j} z_\la.
\]

\item\label{keje2} Let $\la=((je),(ke))$.
Then there is an endomorphism of $\spe\la$ defined by
\[
\phi(z_\la) = \;
\sum_{\mathclap{\substack{0\leq i \leq j-1\\0 \leq l \leq k-1}}} \;
(k-l)(j-i)\Psiu{k-l}{k-1}\Psid{k+i}{k} z_\la.
\]

\end{enumerate}
\end{thm}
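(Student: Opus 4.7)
The plan is to verify that the element $\phi(z_\la) \in \spe\la$ as defined satisfies every relation that annihilates $z_\la$ in the presentation of $\spe\la$; since $z_\la$ generates $\spe\la$, this will extend to a well-defined endomorphism. I focus on part~(i), as part~(ii) follows by the same argument with the roles of $j$ and $k$ interchanged (or equivalently by \cref{transdec}). For $\la = ((ke),(je))$ we have $b = d = 0$ in the bihook notation, so the relations defining $\spe\la$ from the cyclic generator reduce to the three checks: (a) $e(\bfi_\la)\phi(z_\la) = \phi(z_\la)$, (b) $y_r \phi(z_\la) = 0$ for all $r$, and (c) $\psi_r \phi(z_\la) = 0$ for every $r \neq je$ (the Garnir relations $\psi_1 z_\la = 0$ and $\psi_{je+1} z_\la = 0$ are absorbed into (c)).

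For (a) and (b): each summand $\Psiu{j-l}{j-1}\Psid{j+i}{j} z_\la$ is (up to sign) a basis vector $v_\ttt$ for the brick tableau $\ttt \in \calt_e$ whose second component contains the bricks $\{1,\dots,j\} \setminus \{j-l\} \cup \{j+i+1\}$. Hence $\phi(z_\la) \in e(\bfi_\la)\spe\la$ by \cref{lem:eis}, which settles (a), and (b) is immediate from \cref{lem:ys}. For (c) with $r \not\equiv 0 \pmod e$, \cref{lem:psis} yields $\psi_r \phi(z_\la) = 0$ summand-by-summand.

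The substantive step is (c) in the case $r = se$ with $s \neq j$. Reparametrising by $m := j-l$ and $M := j+i+1$, one writes $\phi(z_\la) = \sum_{m=1}^{j}\sum_{M=j+1}^{j+k} m(j+k+1-M)\,v_{m,M}$, where $v_{m,M} := \Psiu{m}{j-1}\Psid{M-1}{j} z_\la$, and must establish that $\sum m(j+k+1-M)\,\psi_{se}v_{m,M} = 0$. Since $\psi_{se}$ commutes with $\Psi_t$ whenever $|s-t|\geq 2$, the summands with $s \leq m-2$ or $s \geq M+1$ vanish outright: $\psi_{se}$ slides past every $\Psi$ to reach $z_\la$, which it kills because $s \neq j$. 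For the remaining summands I would apply \cref{prop:cancellation} to rewrite $\psi_{se}v_{m,M}$ as a scalar times $\psi_{se}$ applied to a brick vector of reduced ``length'', the scalar and reduction depending on whether $s$ sits in the interior or at a boundary of the index range $\{m,\dots,M-1\}$ featuring in $v_{m,M}$.

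The principal obstacle is the case analysis in this final step: the interior reductions via parts \ref{prop:cancellation2} and \ref{prop:cancellation3} of \cref{prop:cancellation} each contribute a factor of $+1$, while the boundary cases $s \in \{m-1,m,j-1,j+1,M-1,M\}$ invoke part \ref{prop:cancellation1} and contribute a factor of $-2$, alongside shift terms that match neighbouring summands. The weights $m(j+k+1-M)$ are engineered precisely so that the resulting contributions telescope to zero after summation, which reduces to a short family of combinatorial identities (one for each admissible $s$). This is exactly the sort of technical bookkeeping that it is natural to defer to \cref{sec:calc}.
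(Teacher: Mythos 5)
Your proposal is correct and follows the paper's own proof essentially step for step: \cref{lem:eis,lem:ys,lem:psis} reduce everything to showing $\psi_{se}\phi(z_\la)=0$ for $s\neq j$, and \cref{prop:cancellation} is then applied summand by summand exactly as you describe, with the weights chosen so that the surviving contributions cancel. The one point to tighten is your accounting of the final step: the factor $-2$ from part~\ref{prop:cancellation1} occurs only for $s=m$ (when $s<j$) or $s=M-1$ (when $s>j$), and the summands that genuinely survive are, for each fixed value of the complementary index, precisely the three with $m$ (resp.\ $M-1$) equal to $s-1$, $s$, $s+1$ --- all reducing to the same vector with weights in arithmetic progression, so the combination $1\cdot(s-1)-2\cdot s+1\cdot(s+1)=0$ kills them --- whereas a summand with $s$ lying in the interior of $\{m,\dots,M-1\}$ but not adjacent to the relevant end vanishes not by commutation alone but because stripping $\Psi_{s\pm1}\Psi_s$ via part~\ref{prop:cancellation2} or~\ref{prop:cancellation3} disconnects the word and leaves some $\Psi_t$ with $t\neq j$ free to slide onto $z_\la$.
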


\begin{proof}
\begin{enumerate}[label=(\roman*)]
\item 
By \cref{lem:eis,lem:ys,lem:psis}, we just need to show that $\psi_{re}\phi(z_\la) = 0$, for all $r\in\{1,\dots,j-1\}\cup\{j+1,\dots,k+j-1\}$.
So we shall fix $r$ and look at the action of $\psi_{re}$ on each summand of $\phi(z_\la)$.
We will use \cref{prop:cancellation} many times in this proof, without further reference.

We first suppose that $r\in\{1,\dots,j-1\}$.
If $j-l<r-1$, we observe that
\[
\psi_{re}\Psiu{j-l}{j-1}\Psid{j+i}{j} z_\la
=\psi_{re}\Psi_{j-l}\Psi_{j-l+1} \dots\Psi_{r-2}
\cancel{\Psi_{r-1}\Psi_{r}} \Psi_{r+1} \Psi_{r+2} \dots \Psi_{j-1} \Psid{j+i}{j} z_\la = 0.
\]
Similarly, if $j-l>r+1$, we have
\[
\psi_{re}\Psiu{j-l}{j-1}\Psid{j+i}{j} z_\la
=\Psiu{j-l}{j-1}\Psid{j+i}{j}\psi_{re} z_\la=0.
\]

If $j-l = r-1$, we obtain the reduced expression
\[
\psi_{re} \Psi_{r-1} \Psi_{r}
\Psi_{r+1}\Psi_{r+2}\dots\Psi_{j-1}\Psid{j+i}{j} z_\la
=\psi_{re}
\Psi_{r+1}\Psi_{r+2}\dots\Psi_{j-1}\Psid{j+i}{j} z_\la.
\]
If $j-l=r$, we obtain the reduced expression
\[
\psi_{re}\Psi_{r}
\Psi_{r+1}\Psi_{r+2}\dots\Psi_{j-1}\Psid{j+i}{j} z_\la
=-2\psi_{re}
\Psi_{r+1}\Psi_{r+2}\dots\Psi_{j-1}\Psid{j+i}{j} z_\la.
\]
If $j-l=r+1$, we immediately obtain the reduced expression
\[
\psi_{re}\Psi_{r+1}\Psi_{r+2}\dots\Psi_{j-1}\Psid{j+i}{j} z_\la.
\]

Thus the only summands of $\phi(z_\la)$ which are not killed by $\psi_{re}$ are those corresponding to $j-l \in \{r-1, r, r+1\}$, and for a fixed $i$ all three yield the same basis vector, so we must check the coefficients to show that they cancel.
If $r>1$, then the corresponding coefficients are $(r-1)(k-i)$, $-2r(k-i)$, and $(r+1)(k-i)$, respectively, so they cancel.
If $r=1$, we do not have a term corresponding to $j-l=r-1$, so we only have the latter two terms, which clearly cancel.

We now suppose that $r\in\{j+1, \dots, j+k-1\}$.
If $j+i < r-1$, we have
\[
\psi_{re}\Psiu{j-l}{j-1}
\Psi_{j+i}\Psi_{j+i-1}\dots\Psi_{j} z_\la
=\Psiu{j-l}{j-1}
\Psi_{j+i}\Psi_{j+i-1}\dots\Psi_{j}\psi_{re} z_\la
=0.
\]
Similarly, if $j+i > r+1$, we have
\[
\psi_{re} \Psiu{j-l}{j-1}
\Psid{j+i}{r+2} \Psi_{r+1} \Psi_r
\Psid{r-1}{j} z_\la
= \psi_{re} \Psiu{j-l}{j-1}
\Psid{j+i}{r+3}
\Psid{r-1}{j} \Psi_{r+2} z_\la
= 0.
\]

If $j+i=r-1$, we see that the following expression is reduced
\[
\psi_{re} \Psiu{j-l}{j-1}
\Psi_{r-1} \Psi_{r-2} \dots \Psi_{j} z_\la.
\]
If $j+i=r$, we obtain the reduced expression
\[
\psi_{re} \Psiu{j-l}{j-1}
\Psi_{r} \Psi_{r-1} \dots \Psi_{j} z_\la
=-2 \psi_{re} \Psiu{j-l}{j-1}
\Psi_{r-1} \dots \Psi_{j} z_\la.
\]
If $j+i=r+1$, we obtain the reduced expression
\[
\psi_{re}\Psiu{j-l}{j-1}
\Psi_{r+1} \Psi_{r} \Psi_{r-1} \Psi_{r-2} \dots \Psi_{j} z_\la
= \psi_{re} \Psiu{j-l}{j-1}
\Psi_{r-1} \Psi_{r-2} \dots \Psi_{j} z_\la.
\]

As in the previous case, it is an easy check to verify that the coefficients $(j-l)(k-r+j+1)$, $-2(j-l)(k-r+j)$ and $(j-l)(k-r+j-1)$ cancel.
We note that here $r = j+k-1$ is the exceptional case, for which there is no term corresponding to $j+i = r+1$, but the $j+i = r-1$ and $j+i = r$ terms contribute $2(j-l)$ and $-2(j-l)$, respectively, so that the terms still cancel.

\item Similar to the first part.\qedhere
\end{enumerate}
\end{proof}

\begin{eg}
Let $e=3$, $\kappa=(0,0)$ and $\la=((9),(9))$.
Then we have the following endomorphism of $\spe\la$
\begin{align*}
\phi(z_\la)
= \; 
&9 \Psi_3 z_\la + 6 \Psi_4 \Psi_3 z_\la + 3 \Psi_5 \Psi_4 \Psi_3 z_\la + 6 \Psi_2 \Psi_3 z_\la + 4 \Psi_2 \Psi_4 \Psi_3 z_\la\\
&+ 2 \Psi_2 \Psi_5 \Psi_4 \Psi_3 z_\la
+ 3 \Psi_1 \Psi_2 \Psi_3 z_\la + 2 \Psi_1 \Psi_2 \Psi_4 \Psi_3 z_\la + \Psi_1 \Psi_2 \Psi_5 \Psi_4 \Psi_3 z_\la,
\end{align*}

where the summands correspond to the tableaux

\[
\begin{tikzpicture}
\node at (0,0.2) [text width = 0.8cm] {9} ;
\tyoung(0cm,0cm,789<13><14><15><16><17><18>,,123456<10><11><12>)
\Yfillopacity{0}
\Ylinethick{2pt}
\tgyoung(0cm,0cm,_3_3_3,,_3_3_3)
\Ylinethick{0.5pt}
\node at (5.3,0.2) [text width = 0.8cm] {6} ;
\tyoung(5.3cm,0cm,789<10><11><12><16><17><18>,,123456<13><14><15>)
\Yfillopacity{0}
\Ylinethick{2pt}
\tgyoung(5.3cm,0cm,_3_3_3,,_3_3_3)
\Ylinethick{0.5pt}
\node at (10.6,0.2) [text width = 0.8cm] {3} ;
\tyoung(10.6cm,0cm,789<10><11><12><13><14><15>,,123456<16><17><18>)
\Yfillopacity{0}
\Ylinethick{2pt}
\tgyoung(10.6cm,0cm,_3_3_3,,_3_3_3)
\Ylinethick{0.5pt}
\node at (0,-2) [text width = 0.8cm] {6} ;
\tyoung(0cm,-2.2cm,456<13><14><15><16><17><18>,,123789<10><11><12>)
\Yfillopacity{0}
\Ylinethick{2pt}
\tgyoung(0cm,-2.2cm,_3_3_3,,_3_3_3)
\Ylinethick{0.5pt}
\node at (5.3,-2) [text width = 0.8cm] {4} ;
\tyoung(5.3cm,-2.2cm,456<10><11><12><16><17><18>,,123789<13><14><15>)
\Yfillopacity{0}
\Ylinethick{2pt}
\tgyoung(5.3cm,-2.2cm,_3_3_3,,_3_3_3)
\Ylinethick{0.5pt}
\node at (10.6,-2) [text width = 0.8cm] {2} ;
\tyoung(10.6cm,-2.2cm,456<10><11><12><13><14><15>,,123789<16><17><18>)
\Yfillopacity{0}
\Ylinethick{2pt}
\tgyoung(10.6cm,-2.2cm,_3_3_3,,_3_3_3)
\Ylinethick{0.5pt}
\node at (0,-4.2) [text width = 0.8cm] {3} ;
\tyoung(0cm,-4.4cm,123<13><14><15><16><17><18>,,456789<10><11><12>)
\Yfillopacity{0}
\Ylinethick{2pt}
\tgyoung(0cm,-4.4cm,_3_3_3,,_3_3_3)
\Ylinethick{0.5pt}
\node at (5.3,-4.2) [text width = 0.8cm] {2} ;
\tyoung(5.3cm,-4.4cm,123<10><11><12><16><17><18>,,456789<13><14><15>)
\Yfillopacity{0}
\Ylinethick{2pt}
\tgyoung(5.3cm,-4.4cm,_3_3_3,,_3_3_3)
\Ylinethick{0.5pt}
\node at (10.6,-4.2) [text width = 0.8cm] {1} ;
\tyoung(10.6cm,-4.4cm,123<10><11><12><13><14><15>,,456789<16><17><18>)
\Yfillopacity{0}
\Ylinethick{2pt}
\tgyoung(10.6cm,-4.4cm,_3_3_3,,_3_3_3)
\end{tikzpicture}
\]

\end{eg}

\begin{prop}\label{kee}
Suppose that $\la=((e),(ke))$ or $\la=((ke),(e))$ with $k>1$.
Then $\spe\la$ is decomposable if and only if $\nchar\bbf\nmid k+1$. 
\end{prop}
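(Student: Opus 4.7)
The two cases $\la=((ke),(e))$ and $\la=((e),(ke))$ are handled in parallel by specialising Theorem \ref{keje}(i) and (ii) respectively at $j=1$, and the arguments are essentially symmetric; I concentrate on the first case. Taking $j=1$ in Theorem \ref{keje}(i) gives an endomorphism $\phi$ of $\spe{((ke),(e))}$ with
\[
\phi(z_\la) \;=\; \sum_{i=0}^{k-1}(k-i)\,\Psid{i+1}{1}z_\la \;=\; \sum_{m=2}^{k+1}(k+2-m)\,u_m,
\]
where $u_m := \Psid{m-1}{1}z_\la$ (with $u_1=z_\la$) form a basis of $e(\bfi_\la)\spe\la$ indexed by the $k+1$ placements of a single brick in the second component.

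The core step is to prove the quadratic identity $\phi^2(z_\la)=-(k+1)\,\phi(z_\la)$, which by cyclicity of $\spe\la$ over $\mathscr{R}_n^{\La}$ yields $\phi\,(\phi+(k+1)\id)=0$ on all of $\spe\la$. One expands
\[
\phi^2(z_\la) \;=\; \sum_{m,r=2}^{k+1}(k+2-m)(k+2-r)\,\Psid{m-1}{1}\Psid{r-1}{1}\,z_\la,
\]
then simplifies each product $\Psid{m-1}{1}\Psid{r-1}{1}$ by a case analysis in $|m-r|$, using $\Psi_a\Psi_b=\Psi_b\Psi_a$ for $|a-b|>1$ together with the three cancellation identities of Proposition \ref{prop:cancellation}, before collecting the result in the basis $\{u_m\}$. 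This combinatorial reduction is the main technical hurdle, paralleling in spirit the level-one calculations in \cite{ls14}.

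With the identity in hand, the minimal polynomial of $\phi$ divides $x(x+k+1)$. Since $\phi(z_\la)\notin\bbf z_\la$, $\phi$ is neither zero nor $-(k+1)\id$, so precisely when $\nchar\bbf\nmid k+1$ its two eigenvalues $0$ and $-(k+1)$ are distinct, producing the non-trivial generalised eigenspace decomposition $\spe\la=\ker\phi\oplus\ker(\phi+(k+1)\id)$ and hence decomposability.

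When $\nchar\bbf\mid k+1$, the identity collapses to $\phi^2=0$, so $\phi$ alone cannot decompose $\spe\la$. In this case I would argue that $\operatorname{End}(\spe\la)$ is two-dimensional, namely $\bbf\cdot\id+\bbf\cdot\phi$: any endomorphism sends $z_\la$ into $e(\bfi_\la)\spe\la=\bigoplus_{m=1}^{k+1}\bbf u_m$ by Lemma \ref{lem:eis}, and the remaining Specht relations that $z_\la$ satisfies (in particular the $\psi_{se}$-annihilations for $2\leq s\leq k$, whose linear independence as constraints may be verified via Proposition \ref{prop:cancellation}) cut this span down to $\bbf z_\la+\bbf\phi(z_\la)$. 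Combined with $\phi^2=0$, this identifies $\operatorname{End}(\spe\la)$ with the local ring $\bbf[x]/(x^2)$, and indecomposability follows.
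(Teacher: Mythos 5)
Your proposal follows essentially the same route as the paper: it takes the $j=1$ specialisation of Theorem~\ref{keje} as the endomorphism $\phi$, establishes the quadratic relation $\phi^2=-(k+1)\phi$ via \cref{prop:cancellation} to get an idempotent when $\nchar\bbf\nmid k+1$, and for $\nchar\bbf\mid k+1$ shows $\operatorname{End}(\spe\la)=\bbf\id\oplus\bbf\phi$ with $\phi^2=0$ by using the $\psi_{se}$-annihilation constraints — exactly the recurrence argument ($2a_j=a_{j-1}+a_{j+1}$, $a_2=2a_1$) the paper uses to pin down the unique non-trivial endomorphism. The two computational steps you defer (the quadratic identity and the dimension count) are carried out in the paper by precisely the methods you describe, so the plan is sound.
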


\begin{proof}
We first let $\la=((e),(ke))$.
For any non-trivial endomorphism $\phi$ of $\spe\la$, we have
\[
\phi(z_\la) = \sum_{j=1}^k a_j  \Psiu{j}{k} z_\la \text{ for some } a_j \in \bbf.
\]
Since $\psi_{re} \phi(z_\la) = 0$ for all $r\in \{1,2,\dots,k-1\}$, and by \cref{prop:cancellation} we have
\[
\psi_{re} \Psiu{j}{k} z_\la = \begin{cases}
\psi_{re} \Psiu{j}{k} z_\la & \text{if } j=r-1 \text{ or } r+1,\\
-2 \psi_{re} \Psiu{j}{k} z_\la & \text{if } j=r,\\
0 & \text{otherwise,}
\end{cases}
\]
it follows that $a_1 = \frac{1}{2} a_2$ and $2 a_j = a_{j-1} + a_{j+1}$ for $j\in \{2,3, \dots, k-1\}$.
One can check that, up to scalar multiplication, the only non-trivial endomorphism of $\spe\la$ is thus the one given in \cref{keje}, which simplifies to
\[
\phi(z_\la) = \sum_{j=1}^k j  \Psiu{j}{k} z_\la.
\]
Using \cref{prop:cancellation}, we find that 
\begin{align*}
\phi^2(z_\la) &= (\sum_{j=1}^k j  \Psiu{j}{k}) (\sum_{j=1}^k j  \Psiu{j}{k}) z_\la\\
&= (\sum_{j=1}^k j  \Psiu{j}{k}) ((k{-}1) \Psiu{k-1}{k} \!\!\! + k \Psi_k) z_\la\\
&= (k{-}1) \phi(z_\la) - (2k) \phi(z_\la),
\end{align*}
and so $-\frac{1}{k+1}\phi$ is an idempotent when $\nchar\bbf\nmid k+1$, or equivalently, when $(k+1,\nchar\bbf)=1$.
Moreover, it is clear that $\phi^2=0$ when $\nchar\bbf\mid k+1$, so that there are no non-trivial idempotent endomorphisms.

We now let $\la=((ke),(e))$.
We similarly find that there exists only a single non-trivial endomorphism of $\spe\la$, up to scalar multiplication, which is defined by
\[
\phi(z_\la) = \sum_{j=1}^k (k+1-j) \Psid{j}{1} z_\la.
\]

Analogously, $-\frac{1}{k+1}\phi$ is an idempotent when $\nchar\bbf\nmid k+1$ and $\phi^2=0$ when $\nchar \bbf \mid k+1$.
\end{proof}

\begin{cor}\label{cor:nonrepdecomp}
Let $1\leq a<e$ and $0\leq b<e$ such that $a+b<e$.
Then $\spe{((e+a,1^b),(ke+a,1^b))}$ and $\spe{((ke+a,1^b),(e+a,1^b))}$ are decomposable if and only if $(k+1,\nchar\bbf)=1$ with $k>1$.
\end{cor}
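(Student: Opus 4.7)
The plan is to combine the two preceding results directly. First, I would apply \cref{inducetrivfurther} with $j=1$: since the hypotheses $1\leq a\leq e$, $0\leq b<e$, $a+b\neq e$ of that proposition are satisfied under our stronger assumptions $1\leq a<e$, $0\leq b<e$, $a+b<e$, we obtain that $\spe{((e+a,1^b),(ke+a,1^b))}$ is decomposable if and only if $\spe{((e),(ke))}$ is. Then I would apply \cref{kee} to conclude that, for $k>1$, this Specht module is decomposable if and only if $\nchar\bbf\nmid k+1$, which is the same condition as $(k+1,\nchar\bbf)=1$.

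The second family is handled identically, using the other half of \cref{kee}: the module $\spe{((ke+a,1^b),(e+a,1^b))}$ reduces via \cref{inducetrivfurther} to $\spe{((ke),(e))}$, which is decomposable for $k>1$ precisely when $\nchar\bbf\nmid k+1$.

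There is no real obstacle here — the work has already been done in \cref{inducetrivfurther} (which handles the extra rows and columns via divided-power induction/restriction) and \cref{kee} (which computes the essentially unique non-trivial endomorphism of $\spe{((e),(ke))}$ and evaluates its square as a scalar multiple of itself with coefficient $-(k+1)$). The only thing to remark on is that the divided-power functors preserve decomposability in both directions, as already noted in the proof of \cref{inducetriv}, so the equivalence transfers cleanly through the reduction.
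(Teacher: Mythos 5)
Your proposal is correct and matches the paper's proof exactly: the paper derives the corollary from \cref{kee} by applying \cref{inducetrivfurther}, precisely the two-step reduction you describe. (The only quibble is a harmless index swap — reducing $\spe{((e+a,1^b),(ke+a,1^b))}$ to $\spe{((e),(ke))}$ uses the proposition with its first parameter equal to $1$ and its second equal to $k$, not "$j=1$" — but this does not affect the argument.)
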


\begin{proof}
The result follows from \cref{kee} by applying \cref{inducetrivfurther}.
\end{proof}

\begin{thm}\label{thm:kejeeigen}
Suppose that $k\geq j>1$.
Let $\la=((ke),(je))$, and let $\phi$ be the endomorphism of $\spe\la$ from \cref{keje}\ref{keje1}.
Then
	\begin{enumerate}[label=(\roman*)]

		\item $\phi$ has an eigenvalue $-j(k+1)$ with corresponding eigenvector
		\[
		v(B_{k+1},B_{k+2},B_{k+3},\dots,B_{k+j});
		\]

		\item $\phi$ has an eigenvalue $-(j-1)(k+2)$ with corresponding eigenvector

		\vspace{5pt}

		\begin{itemize}
			\setlength\itemsep{5pt}
			\item $v(B_2,B_3,B_{k+2},B_{k+3},\dots,B_{2k-1})
			-v(B_1,B_{k+2},B_{k+3},\dots,B_{2k})$ if $k=j$,
			\item $\displaystyle{\sum_{i=1}^{k-j+2}iv(B_2,B_4,B_6,\dots,B_{2j-2},B_{k+j-i+1})}$ if $k>j$;
		\end{itemize}

		\vspace{5pt}

		\item\label{thm:kejeeigen3} if $k>j>2$, then $\phi$ has an eigenvalue $-(j-2)(k+3)$ with corresponding eigenvector
		\begin{align*}
		&\sum_{i=1}^{k-j+3}\frac{1}{2}i(i+1)
		v(B_2,B_4,B_6,\dots,B_{2j-4},B_{k+j-i},B_{k+j-i+1})\\
		&+\sum_{i=1}^{k-j+2}i(i+1)v(B_2,B_4,B_6,\dots,B_{2j-4},B_{k+j-i-1},B_{k+j-i+1})\\
		&+\sum_{i=1}^{k-j+1}\sum_{l=1}^i
		l(i+2)v(B_2,B_4,B_6,\dots,B_{2j-4},B_{k+j-i-2},B_{k+j-l+1});
		\end{align*}

		\item if $k>j=2$, then $\phi$ has an eigenvalue $0$ with corresponding eigenvector
		\[
		\sum_{i=0}^k
		\frac{1}{2}(i+1)(i+2)
		v(B_{k-i+1},B_{k-i+2})
		+\sum_{l=0}^{k-1}\sum_{i=1}^{k-l-1}
		(i+1)(k-l+1)
		v(B_{l+1},B_{k-i+2}).
		\]

	\end{enumerate}
\end{thm}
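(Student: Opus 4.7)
The plan is to verify each eigenvalue--eigenvector pair by direct computation in $\spe\la$. Since $\spe\la$ is cyclic with generator $z_\la$ and $\phi$ is an $\mathscr{R}_n^\La$-module endomorphism, $\phi$ is determined by $\phi(z_\la)$; for any $v = X z_\la$ with $X \in \mathscr{R}_n^\La$, we have $\phi(v) = X \phi(z_\la)$. The general approach for each claim is therefore: write the candidate eigenvector $v$ in the form $X z_\la$ via the identification $v(B_{i_1}, \dots, B_{i_j}) = \Psid{i_1 - 1}{1} \Psid{i_2 - 1}{2} \cdots \Psid{i_j - 1}{j} z_\la$; distribute $X$ over the summands in the explicit formula for $\phi(z_\la)$ from \cref{keje}; simplify each resulting product of brick transpositions using the relations $\Psi_r z_\la = 0$ for $r \neq j$, $\Psi_r \Psi_s = \Psi_s \Psi_r$ for $|r - s| > 1$, and the identities in \cref{prop:cancellation}; and verify that the result is $\mu v$ by comparing coefficients in the brick tableau basis $\{v_\tts : \tts \in \calt_e\}$.

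For the first eigenvector $w = v(B_{k+1}, \dots, B_{k+j})$, we have $w = X z_\la$ with $X = \Psid{k}{1} \Psid{k+1}{2} \cdots \Psid{k+j-1}{j}$. The idea is that most of the terms obtained by distributing $X$ over $\phi(z_\la)$ vanish: when a brick transposition meets two bricks already in the same component of the partially-swapped tableau, the resulting vector is annihilated (as it would correspond to a non-standard brick tableau, and a Garnir-type relation applies). The surviving contributions collapse via \cref{prop:cancellation} into a scalar multiple of $w$, with coefficient $-j(k+1)$ arising from an elementary signed summation over the surviving indices. For the higher eigenvectors in parts (ii)--(iv), the analogous computation is carried out term-by-term: each summand of the eigenvector is expressed as $X_\tts z_\la$, $X_\tts \phi(z_\la)$ is computed and simplified, and the results are added together with the given internal coefficients. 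The coefficients $i$, $\tfrac{1}{2}i(i+1)$, and $l(i+2)$ appearing in the eigenvectors are precisely those needed for contributions from neighbouring indices to telescope into a scalar multiple of the original vector.

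The main obstacle is the sheer volume of combinatorial bookkeeping, particularly for the triple-sum eigenvector in part (iii): one must track the coefficient of every brick tableau basis vector produced in the expansion and verify that it equals $-(j-2)(k+3)$ times the corresponding coefficient in the original eigenvector. Beyond this, several technical subtleties require separate treatment. First, \cref{prop:cancellation} does not apply at boundary indices $r = 1$ or $r = j + k - 1$, so the extreme terms of the telescoping sums must be handled by hand. Second, the case $k = j$ in part (ii) has a qualitatively different eigenvector (with a sign and only two summands) from the generic $k > j$ case, reflecting the more symmetric arrangement of bricks. Third, the degeneration $j = 2$ in part (iv) must be verified separately, since the general summation over $2j - 4 = 0$ fixed bricks in part (iii) becomes trivial and the overall structure simplifies. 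Throughout, once the basis-vector coefficients have been computed, the verification of each eigenvalue reduces to elementary polynomial summation identities in the summation variables.
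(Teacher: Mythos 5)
Your proposal follows essentially the same route as the paper's proof: the paper writes each summand of the eigenvector as $\psi_{w_{(r,s)}} z_\la$ for explicit products of brick transpositions, computes $\psi_{w_{(r,s)}}\cdot\phi(z_\la)$ by distributing over the summands of $\phi(z_\la)$ and applying \cref{prop:cancellation} to get the $-2,1,1,0$ pattern that telescopes to $-j(k+1)$ in the generic case, and then treats the boundary (``exceptional'') index values separately to recover the corrected eigenvalue $-(j-2)(k+3)$, exactly as you outline. The obstacles you identify (boundary indices, the $k=j$ and $j=2$ degenerations, and the coefficient bookkeeping for the triple sum) are precisely where the paper expends its effort.
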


Analogously, we also have the following theorem.

\begin{thm}\label{thm:jekeeigen}

Suppose that $k\geq j>1$.
Let $\la =((je),(ke))$, and let $\phi$ be the endomorphism of $\spe\la$ from \cref{keje}\ref{keje2}.
Then
	\begin{enumerate}[label=(\roman*)]

		\item $\phi$ has an eigenvalue $-j(k+1)$ with corresponding eigenvector
		\[
		\underline{v}(B_{k+1},B_{k+2},B_{k+3},\dots,B_{k+j});
		\]

		\item $\phi$ has an eigenvalue $-(j-1)(k+2)$ with corresponding eigenvector

		\vspace{5pt}

		\begin{itemize}
			\setlength\itemsep{5pt}
			\item $\underline{v}(B_2,B_3,B_{k+2},B_{k+3},\dots,B_{2k-1})
			-\underline{v}(B_1,B_{k+2},B_{k+3},\dots,B_{2k})$ if $k=j$,
			\item $\displaystyle{\sum_{i=1}^{k-j+2}i
				\underline{v}(B_2,B_4,B_6,\dots,B_{2j-2},B_{k+j-i+1})}$ if $k>j$;
		\end{itemize}
 
		\vspace{5pt}

		\item if $k>j>2$, then $\phi$ has an eigenvalue $-(j-2)(k+3)$ with corresponding eigenvector
		\begin{align*}
		&\sum_{i=1}^{k-j+3}\frac{1}{2}i(i+1)
		\underline{v}(B_2,B_4,B_6,\dots,B_{2j-4},B_{k+j-i},B_{k+j-i+1})\\
		&+\sum_{i=1}^{k-j+2}i(i+1)\underline{v}(B_2,B_4,B_6,\dots,B_{2j-4},B_{k+j-i-1},B_{k+j-i+1})\\
		&+\sum_{i=1}^{k-j+1}\sum_{l=1}^i
		l(i+2)\underline{v}(B_2,B_4,B_6,\dots,B_{2j-4},B_{k+j-i-2},B_{k+j-l+1});
		\end{align*}

		\item if $k>j=2$, then $\phi$ has an eigenvalue $0$ with corresponding eigenvector
		\[
		\sum_{i=0}^k
		\frac{1}{2}(i+1)(i+2)
		\underline{v}(B_{k-i+1},B_{k-i+2})
		+\sum_{l=0}^{k-1}\sum_{i=1}^{k-l-1}
		(i+1)(k-l+1)
		\underline{v}(B_{l+1},B_{k-i+2}).
		\]
		\end{enumerate}
\end{thm}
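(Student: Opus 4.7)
The plan is to run exactly the same argument used for \cref{thm:kejeeigen}, with the bricks living in the first component rather than the second. Throughout, the basis vectors $\underline v(B_{i_1},\dots,B_{i_k})$ for $\la = ((je),(ke))$ play the role that $v(B_{i_1},\dots,B_{i_j})$ did for $((ke),(je))$, and the endomorphism $\phi$ of \cref{keje}\ref{keje2} is a mirror image of the one in \cref{keje}\ref{keje1}: it is built by chaining $\Psi$'s around the index $k$ (the size of the first component) instead of around the index $j$.

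The first step is to verify that the auxiliary results \cref{lem:eis,lem:ys,lem:psis}, and most importantly \cref{prop:cancellation}, apply verbatim to the $\underline v$ vectors. These statements rely only on degree counting, the residue sequence $\bfi_\la$, and the KLR relations, none of which distinguish between the two components when $\kappa_1=\kappa_2=0$. Hence the brick transposition operators $\Psi_r$ satisfy, on each $\underline v(\cdot)$ lying in $e(\bfi_\la)\spe\la$, the same quadratic-type cancellation relations modulo $\psi_{re}$ that were used to prove \cref{thm:kejeeigen}.

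The second step is to apply $\phi$ to each proposed eigenvector. Writing
\[
\phi(z_\la) = \sum_{\substack{0\leq i \leq j-1\\0 \leq l \leq k-1}} (k-l)(j-i)\Psiu{k-l}{k-1}\Psid{k+i}{k} z_\la
\]
and expanding a general summand $\Psiu{k-l}{k-1}\Psid{k+i}{k}$ applied to an $\underline v(\cdot)$, one uses \cref{prop:cancellation}\ref{prop:cancellation1}--\ref{prop:cancellation3} to reduce products of three consecutive brick transpositions. For part (i), almost all summands of $\phi$ are killed outright because the $\Psi$'s anticommute past the bricks already in the first component of $\underline v(B_{k+1},\dots,B_{k+j})$, leaving a single surviving family whose coefficients sum to $-j(k+1)$. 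Parts (ii) and (iv) follow the same template as the corresponding steps of \cref{thm:kejeeigen}, with the telescoping identities used there transplanted to this setting.

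The main obstacle, as in \cref{thm:kejeeigen}\ref{thm:kejeeigen3}, is the combinatorial bookkeeping in part (iii): the triple-sum eigenvector forces one to track which summands of $\phi$ contribute nontrivially to each $\underline v(B_2,B_4,\dots,B_{2j-4},B_a,B_b)$ and to verify several telescoping cancellations among coefficients of the form $\tfrac12 i(i+1)$, $i(i+1)$, and $l(i+2)$. Because the bricks now live in the first component, one must keep careful track of the direction in which the $\Psi$-chains act, but no identities beyond those already used for \cref{thm:kejeeigen}\ref{thm:kejeeigen3} are needed. Once the mirror correspondence between the two setups is fixed, the remainder is a direct translation, which, in the spirit of the rest of the paper, is most naturally relegated to \cref{sec:calc}.
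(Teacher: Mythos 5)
Your proposal matches the paper's treatment: the paper proves \cref{thm:kejeeigen}\ref{thm:kejeeigen3} in detail and then asserts that "similar calculations prove \cref{thm:jekeeigen}," relying exactly on the mirror symmetry between the two components (valid since $\kappa_1=\kappa_2$) and on the fact that \cref{prop:cancellation} and the auxiliary lemmas apply verbatim. This is the same approach, so no further comparison is needed.
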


Below, we will prove \cref{thm:kejeeigen}\ref{thm:kejeeigen3}.
This is the most difficult part of \cref{thm:kejeeigen} to prove, and the others are proved analogously.
Likewise, similar calculations prove \cref{thm:jekeeigen}.

\begin{proof}[Proof of \cref{thm:kejeeigen}\ref{thm:kejeeigen3}]

Let $\eta = \Psi_1 \Psid{3}{2} \Psid{5}{3} \dots \Psid{2j-7}{j-3}\Psid{2j-5}{j-2}$, and

\begin{align*}
\psi_{w_{(1,s)}} &:=\eta\cdot\Psid{k+j-s-1}{j-1}\Psid{k+j-s}{j}
\quad\left(1\leq s \leq k-j+3\right),\\
\psi_{w_{(2,s)}} &:=\eta\cdot\Psid{k+j-s-2}{j-1}\Psid{k+j-s}{j}
\quad\left(1\leq s \leq k-j+2\right),\\
\psi_{w_{(r,s)}} &:=\eta\cdot
\Psid{k+j-s-3}{j-1}\Psid{k+j-r+2}{j}
\quad\left( 1\leq s\leq k-j+1, 3\leq r \leq s+2 \right),
\end{align*}
and define $v_{r_s} = \psi_{w_{(r,s)}} z_\la$ in all cases above.
We can now write
\begin{align*}
&\sum_{s=1}^{k-j+3}\tfrac{1}{2}s(s+1)
v(B_2,B_4,B_6,\dots,B_{2j-4},B_{k+j-s},B_{k+j-s+1})\\
&+\sum_{s=1}^{k-j+2}s(s+1)v(B_2,B_4,B_6,\dots,B_{2j-4},B_{k+j-s-1},B_{k+j-s+1})\\
&+\sum_{s=1}^{k-j+1}\sum_{r=3}^{s+2}
(r-2)(s+2)v(B_2,B_4,B_6,\dots,B_{2j-4},B_{k+j-s-2},B_{k+j-r+3})\\
&=\sum_{s=1}^{k-j+3}\tfrac{1}{2}s(s+1)v_{1_s}
+\sum_{s=1}^{k-j+2}s(s+1)v_{2_s}
+\sum_{s=1}^{k-j+1}\sum_{r=3}^{s+2}(r-2)(s+2)v_{r_s}.
\end{align*}
We want to show that
\begin{align*}
&\left(\sum_{s=1}^{k-j+3}\tfrac{1}{2}s(s+1) \psi_{w_{(1,s)}}
+\sum_{s=1}^{k-j+2}s(s+1) \psi_{w_{(2,s)}}
+\sum_{s=1}^{k-j+1}\sum_{r=3}^{s+2}(r-2)(s+2) \psi_{w_{(r,s)}}\right)
\cdot
\phi(z_\la)\\
&=-(j-2)(k+3)
\left(\sum_{s=1}^{k-j+3}\tfrac{1}{2}s(s+1)v_{1_s}
+\sum_{s=1}^{k-j+2}s(s+1)v_{2_s}
+\sum_{s=1}^{k-j+1}\sum_{r=3}^{s+2}(r-2)(s+2)v_{r_s}\right).
\end{align*}
First suppose we are in one of the following cases, where $l$ is the index in the summation form for $\phi(z_\la)$ from \cref{keje}:
\begin{itemize}
	\item $s\leq k-j$ for all $r$ and $l$;
	\item $s = k-j+1$ and  $r=1$ or $r=2$ or ($r\geq 3$ and $l\leq j-2$);
	\item $s=k-j+2$ and $l\leq j-2$;
	\item $s=k-j+3$ and $l\leq j-3$.
\end{itemize}
In these cases, we will show that
\begin{align*}
\psi_{w_{(r,s)}}\cdot\Psiu{j-l}{j-1}\Psid{j+i}{j} z_\la=
\begin{cases}
-2v_{r_s}&\text{if $l=i$,}\\
v_{r_s}&\text{if $l=i \pm 1$,}\\
0&\text{otherwise.}
\end{cases}
\end{align*}
Suppose that $l=0$.
If $i=0$ or $1$, we have
\[
\psi_{w_{(r,s)}}\cdot\Psi_jz_\la=-2v_{r_s} \quad \text{or} \quad \psi_{w_{(r,s)}}\cdot\cancel{\Psi_{j+1}\Psi_j} z_\la
=v_{r_s}, \quad \text{respectively.}
\]
If $i>1$, we have
\[
\psi_{w_{(r,s)}}\cdot\Psid{j+i}{j} z_\la
=\eta\cdot\Psid{i_{j-1}-1}{j-1}
\Psid{i_j-1}{j}\Psid{j+i}{j+2}
\cancel{\Psi_{j+1}\Psi_j} z_\la
=0.
\]
Now suppose $l=1$.
If $i=0$, we have
\begin{align*}
\psi_{w_{(r,s)}}\cdot\cancel{\Psi_{j-1}\Psi_j} z_\la
=v_{r_s}.
\end{align*}
If $i=1$, we have
\begin{align*}
\psi_{w_{(r,s)}} \cdot \Psi_{j-1} \Psi_{j+1} \Psi_j z_\la
&= \eta \cdot \Psid{i_{j-1}-1}{j-1} \Psid{i_j-1}{j+1}
\cancel{\Psi_j \Psi_{j+1}} \Psi_{j-1}\Psi_j z_\la
=-2\eta \cdot \Psid{i_{j-1}-1}{j-1} \Psid{i_j-1}{j} z_\la
=-2v_{r_s}.
\end{align*}
If $i=2$, we have
\begin{align*}
\psi_{w_{(r,s)}}\cdot \Psi_{j-1} \Psi_{j+2} \Psi_{j+1} \Psi_j z_\la
&=\eta \cdot
\Psid{i_{j-1}-1}{j-1} \Psid{i_j-1}{j+2} \cancel{\Psi_{j+1} \Psi_{j+2}} \Psi_j
\Psi_{j-1}\Psi_{j+1} \Psi_j z_\la\\
&=\eta\cdot
\Psid{i_{j-1}-1}{j-1}\cancel{\Psi_j\Psi_{j-1}}
\Psid{i_j-1}{j} z_\la\\
&=v_{r_s}.
\end{align*}
If $i>2$, we have
\begin{align*}
\psi_{w_{(r,s)}} \Psi_{j-1} \Psid{j+i}{j} z_\la
&=\eta\cdot
\Psid{i_{j-1}-1}{j-1} \Psid{i_j-1}{j+i}
\cancel{\Psi_{j+i-1} \Psi_{j+i}}\Psid{j+i-2}{j-1}
\Psid{j+i-1}{j} z_\la\\
&=\eta\cdot
\Psid{i_{j-1}-1}{j}
\Psid{j+i-2}{j+1}\Psi_{j-1}
\cancel{\Psi_j\Psi_{j-1}}
\Psid{i_j-1}{j} z_\la\\
&=\eta\cdot
\Psid{i_{j-1}-1}{j-1}
\Psid{j+i-2}{j+2}
\Psid{i_j-1}{j+3}
\Psi_{j+1}\cancel{\Psi_{j+2}\Psi_{j+1}}\Psi_jz_\la\\
&=0.
\end{align*}

Now assuming that $l>1$, we have
\begin{align*}
&\psi_w{_{(r,s)}}\cdot
\Psiu{j-l}{j-1}\Psid{j+i}{j} z_\la\\
= \; &\Psi_1 \Psid{3}{2} \Psid{5}{3} \dots
\Psid{2j-2l-1}{j-l} \Psid{2j-2l+1}{j-l} \Psid{2j-2l+3}{j-l+1} \dots \Psid{2j-5}{j-3} \Psid{i_{j-1}-1}{j-2}
\Psid{i_j-1}{j-1} \Psid{j+i}{j} z_\la\\
= \; &\Psi_1 \Psid{3}{2} \Psid{5}{3} \dots
\Psid{2j-2l-1}{j-l+1} \Psid{2j-2l+1}{j-l+2}
\Psi_{j-l} \cancel{\Psi_{j-l+1}\Psi_{j-l}}
\Psid{2j-2l+3}{j-l+1} \dots \Psid{2j-5}{j-3}
\Psid{i_{j-1}-1}{j-2} \Psid{i_j-1}{j-1}
\Psid{j+i}{j} z_\la\\
= \; &\Psi_1 \Psid{3}{2} \Psid{5}{3} \dots \Psid{2j-2l-1}{j-l}
\Psid{2j-2l+1}{j-l+3}
\Psid{2j-2l+3}{j-l+4}
\Psi_{j-l+2} \cancel{\Psi_{j-l+3} \Psi_{j-l+2}} \Psi_{j-l+1}
\Psid{2j-2l+5}{j-l+2} \dots\\
&\dots \Psid{2j-5}{j-3}
\Psid{i_{j-1}-1}{j-2} \Psid{i_j-1}{j-1}
\Psid{j+i}{j} z_\la\\
= \; &\Psi_1 \Psid{3}{2} \Psid{5}{3} \dots \Psid{2j-2l+1}{j-l+1}
\Psid{2j-2l+3}{j-l+5} \Psid{2j-2l+5}{j-l+6}
\Psi_{j-l+4} \cancel{\Psi_{j-l+5}\Psi_{j-l+4}}
\Psid{j-l+3}{j-l+2}
\Psid{2j-2l+7}{j-l+3}
\dots\\
&\dots\Psid{2j-5}{j-3}
\Psid{i_{j-1}-1}{j-2} \Psid{i_j-1}{j-1}
\Psid{j+i}{j} z_\la\\
\vdots \; \; &\\
=
\; &\Psi_1 \Psid{3}{2} \Psid{5}{3} \dots \Psid{2j-7}{j-3}
\Psid{2j-5}{j+l-3}
\Psid{i_{j-1}-1}{j+l-2}
\Psi_{j+l-4} \cancel{\Psi_{j+l-3} \Psi_{j+l-4}}
\Psid{j+l-5}{j-2} \Psid{i_j-1}{j-1}
\Psid{j+i}{j} z_\la\\
=
\; &\Psi_1 \Psid{3}{2} \Psid{5}{3} \dots \Psid{2j-7}{j-3}
\Psid{2j-5}{j-2}
\Psid{i_{j-1}-1}{j+l-1}
\Psid{i_j-1}{j+l}
\Psi_{j+l-2} \cancel{\Psi_{j+l-1} \Psi_{j+l-2}}
\Psid{j+l-3}{j-1}
\Psid{j+i}{j} z_\la\\
=
\; &\Psi_1 \Psid{3}{2} \Psid{5}{3} \dots \Psid{2j-7}{j-3}
\Psid{2j-5}{j-2}
\Psid{i_{j-1}-1}{j-1}
\Psid{i_j-1}{j+l}
\Psid{j+i}{j} z_\la.
\end{align*}

If $l=i$, observe
\[
\Psid{i_j-1}{j+i} \Psid{j+i}{j} z_\la
=-2\Psid{i_j-1}{j} z_\la.
\]

If $l=i-1$, observe
\[
\Psid{i_j-1}{j+i-1}\cancel{\Psi_{j+i} \Psi_{j+i-1}} \Psid{j+i-2}{j} z_\la
=\Psid{i_j-1}{j} z_\la.
\]

If $l=i+1$, observe
\[
\Psid{i_j-1}{j+i+1} \Psid{j+i}{j} z_\la = \Psid{i_j-1}{j} z_\la.
\]

If $l>i+1$, observe
\[
\Psid{i_j-1}{j+l} \Psid{j+i}{j} z_\la
=\Psid{i_j-1}{j+l+1} \Psid{j+i}{j} \Psi_{j+l} z_\la
=0.
\]

If $l<i-1$, observe
\begin{align*}
\Psid{i_j-1}{j+l} \Psid{j+i}{j} z_\la
&=\Psid{i_j-1}{j+l} \Psid{j+i}{j+l+2}
\cancel{\Psi_{j+l+1} \Psi_{j+l}}
\Psid{j+l-1}{j} z_\la
=\Psid{i_j-1}{j+l} \Psid{j+i}{j+l+3}
\Psid{j+l-1}{j} \Psi_{j+l+2} z_\la
=0.
\end{align*}

We now suppose that we are not in the listed cases and that we lie in the \emph{exceptional cases}.

First let $r=1$, $s=k-j+3$ and $l=j-2$.
Similarly to above, we obtain
\begin{align*}
\psi_w{_{(r,s)}}\cdot
\Psiu{j-l}{j-1}\Psid{j+i}{j} z_\la = \;&\dots = 
\Psi_1\Psid{3}{2}\Psid{5}{4}\dots\Psid{2j-7}{j-3}
\Psi_{2j-5}\Psi_{2j-6}
\Psid{2j-4}{j-2}
\Psid{2j-3}{j-1}\Psid{j+i}{j} z_\la\\
=\; &\Psi_1\Psid{3}{2}\Psid{5}{4}\dots\Psid{2j-7}{j-3}
\Psi_{2j-5}
\Psi_{2j-4}\Psi_{2j-6}\cancel{\Psi_{2j-5}\Psi_{2j-6}}
\Psid{2j-7}{j-2}
\Psid{2j-3}{j-1}\Psid{j+i}{j} z_\la\\
=\; &\Psi_1\Psid{3}{2}\Psid{5}{4}\dots\Psid{2j-7}{j-3}
\Psid{2j-5}{j-2}
\Psi_{2j-4}
\cancel{\Psi_{2j-3}\Psi_{2j-4}}
\Psid{2j-5}{j-1}\Psid{j+i}{j} z_\la\\
=\; &\Psi_1\Psid{3}{2}\Psid{5}{4}\dots\Psid{2j-7}{j-3}
\Psid{2j-5}{j-2}
\Psid{2j-4}{j-1}\Psid{j+i}{j} z_\la.
\end{align*}
If $i\geq j-3$, then this expression is clearly reduced.
If $i<j-3$, we have
\begin{align*}
&\Psi_1\Psid{3}{2}\Psid{5}{4}\dots\Psid{2j-7}{j-3}
\Psid{2j-5}{j-2}
\Psid{2j-4}{j-1}\Psid{j+i}{j} z_\la\\
=\; &\Psi_1\Psid{3}{2}\Psid{5}{4}\dots\Psid{2j-7}{j-3}
\Psid{2j-5}{j-2}
\Psid{2j-4}{j+i+1}
\Psi_{j+i}\cancel{\Psi_{j+i-1}\Psi_{j+i}}
\Psid{j+i-2}{j-1}\Psid{j+i-1}{j} z_\la\\
=\; &\Psi_1\Psid{3}{2}\Psid{5}{4}\dots\Psid{2j-7}{j-3}
\Psid{2j-5}{j+i-1}
\Psi_{j+i-2}\cancel{\Psi_{j+i-3}\Psi_{j+i-2}}
\Psid{j+i-4}{j-2}
\Psid{j+i-3}{j-1}\Psid{2j-4}{j} z_\la\\
\vdots \; \; &\\
=\; &\Psi_1\Psid{3}{2}\Psid{5}{4}\dots
\Psid{2j-2i-5}{j-i-2}
\Psid{2j-2i-3}{j-i+1}
\Psi_{j-i}\cancel{\Psi_{j-i-1}\Psi_{j-i}}
\Psid{2j-2i-1}{j-i+1}\dots \Psid{2j-5}{j-1}\Psid{2j-4}{j} z_\la\\
=\; &\Psi_1\Psid{3}{2}\Psid{5}{4}\dots
\Psid{2j-2i-7}{j-i-3} \Psid{2j-2i-5}{j-i-1}
\Psid{2j-2i-3}{j-i}
\Psid{2j-2i-1}{j-i+1}\dots \Psid{2j-5}{j-1}\Psid{2j-4}{j}
\Psi_{j-i-2} z_\la\\
=\; &0.
\end{align*}

If instead $l=j-1$, we similarly obtain
\begin{align*}
&\psi_{w_{(r,s)}} \cdot \Psiu{j-l}{j-1} \Psid{j+i}{j} z_\la
=
\eta \cdot \Psid{2j-3}{j-1}\Psid{j+i}{j} z_\la,
\end{align*}
which is clearly reduced if $i\geq j-2$.
If $i< j-2$, then the expression becomes zero.

Let $r=1$, $s=k-j+2$, and $l=j-1$.
Then the above expression becomes
\[
\eta\cdot\Psid{2j-3}{j-1}\Psid{j+i}{j} z_\la,
\]
which is clearly reduced if $i \geq j-2$.
If $i< j-2$, then the expression becomes zero, as before.

Now let $r=2$, $s=k-j+2$, and $l = j-1$.
Then the above expression becomes
\[
\eta\cdot\Psid{2j-2}{j-1}\Psid{j+i}{j} z_\la,
\]
which is clearly reduced if $i \geq j-1$.
If $i< j-2$, then the expression becomes zero, while for $i = j-2$ we get
\[
\eta \cdot \Psid{2j-4}{j-1}\Psid{2j-2}{j} z_\la.
\]

Finally, let $r>2$, $s=k-j+1$, and $l=j-1$.
Then the above expression becomes
\begin{align*}
&\eta\cdot\Psid{k+j-r+2}{j-1}\Psid{j+i}{j} z_\la,
\end{align*}
which is clearly reduced if $i\geq k-r+3$.
If $j-2 \leq i \leq k-r+2$, then
\begin{align*}
\eta \cdot \Psid{k+j-r+2}{j-1} \Psid{j+i}{j} z_\la
&=\eta \cdot \Psid{k+j-r+2}{j+i+1}
\Psi_{j+i} \cancel{\Psi_{j+i-1} \Psi_{j+i}}
\Psid{j+i-2}{j-1} \Psid{j+i-1}{j} z_\la
=\eta\cdot\Psid{j+i-2}{j-1}\Psid{k+j-r+2}{j} z_\la,
\end{align*}
which is reduced.
If $i < j-2$, then the expression becomes zero.

We summarise the exceptional cases.
First suppose $r=1$, $s=k-j+3$, and $l=j-2$.
Then

\begin{align*}
\psi_{w_{(1,s)}} \cdot \Psiu{2}{j-1} \Psid{j+i}{j} z_\la
=\begin{cases}
0&\text{if $i\leq j-4$,}\\
v_{1_{s}}&\text{if $i=j-3$,}\\
v_{2_{s-1}}&\text{if $i=j-2$,}\\
v_{(k-i+2)_{s-2}}&\text{if $i\geq j-1$.}
\end{cases}
\end{align*}
If $s=k-j+2$ or $s=k-j+3$ and $l=j-1$, then
\begin{align*}
\psi_{w_{(1,s)}}\cdot \Psiu{1}{j-1}\Psid{j+i}{j} z_\la
=\begin{cases}
0&\text{if $i\leq j-3$,}\\
v_{1_{k-j+2}}&\text{if $i=j-2$,}\\
v_{2_{k-j+1}}&\text{if $i=j-1$,}\\
v_{(k-i+2)_{k-j}}&\text{if $i\geq j$.}
\end{cases}
\end{align*}

Now suppose that $r=2$.
If $s=k-j+2$ and $l=j-1$, then
\begin{align*}
\psi_{w_{(2,s)}}\cdot\Psiu{1}{j-1}\Psid{j+i}{j} z_\la
=\begin{cases}
0&\text{if $i\leq j-3$,}\\
v_{2_{s}}&\text{if $i=j-2$,}\\
v_{1_{s-1}}&\text{if $i=j-1$,}\\
v_{2_{s-2}}&\text{if $i=j$,}\\
v_{(k-i+2)_{s-3}}&\text{if $i\geq j+1$.}
\end{cases}
\end{align*}

Finally, suppose that $s=k-j+1$.
If $r\in\{3,\dots,k-j+3\}$ and $l=j-1$, then
\begin{align*}
\psi_{w_{(r,s)}}\cdot\Psiu{1}{j-1}\Psid{j+i}{j} z_\la
=\begin{cases}
0&\text{if $i\leq j-3$,}\\
v_{r_{k-i-1}}&\text{if $j-2\leq i \leq k-r+1$,}\\
v_{2_{r-2}}&\text{if $i=k-r+2$,}\\
v_{1_{r-3}}&\text{if $i=k-r+3$,}\\
v_{2_{r-4}}&\text{if $i=k-r+4$,}\\
v_{(k-i+2)_{r-5}}&\text{if $i\geq k-r+5$.}
\end{cases}
\end{align*}

In the non-exceptional cases, we determine the coefficient $\alpha \in \bbz$ in
\[
\psi_{w_{(r,s)}} \cdot \phi(z_\la)
= \alpha v_{r_s}.
\]
If $s\leq k-j$, we obtain
\begin{align*}
\psi_{w_{(r,s)}}\cdot\phi(z_\la)
&=
\psi_{w_{(r,s)}}\cdot
\; \sum_{\mathclap{\substack{0\leq i \leq k-1\\0 \leq l \leq j-1}}}\; (j-l)(k-i)
\Psiu{j-l}{j-i} \Psid{j+i}{j} z_\la\\
&=
\sum_{l=0}^{j-1}
-2(j-l)(k-l)v_{r_s}
+\sum_{l=0}^{j-1}
(j-l)(k-l-1)v_{r_s}
+\sum_{l=1}^{j-1}
(j-l)(k-l+1) v_{r_s}\\
&=
-j(k+1)v_{r_s}.
\end{align*}
Similarly, we also obtain $\alpha=-j(k+1)$ in the other non-exceptional cases.

By combining this with the exceptional cases, we now determine $\alpha_{r_s}$ where
\begin{align*}
&\left(\sum_{s=1}^{k-j+3}\tfrac{1}{2}s(s+1)\psi_{w_{(1,s)}}
+\sum_{s=1}^{k-j+2}s(s+1)\psi_{w_{(2,s)}}
+\sum_{s=1}^{k-j+1}\sum_{r=3}^{s+2}(r-2)(s+2)\psi_{w_{(r,s)}}\right)
\cdot
\phi(z_\la)\\
&=
\sum_{s=1}^{k-j+3}\alpha_{1_s}v_{1_s}
+\sum_{s=1}^{k-j+2}\alpha_{2_s}v_{2_s}
+\sum_{s=1}^{k-j+1}\sum_{r=3}^{s+2}\alpha_{r_s}v_{r_s}.
\end{align*}

We first let $r=1$.
If $s=k-j+3$, we know from above that
\begin{align*}
\alpha_{1_s}v_{1_s}
&=\tfrac{1}{2}s(s+1)
\psi_{w_{(1,s)}}\cdot
\left(\ \ \sum_{\mathclap{\substack{0\leq i \leq k-1\\0 \leq l \leq j-3}}}\ \ 
(j-l)(k-i)\Psiu{j-l}{j-1}\Psid{j+i}{j} z_\la
+2s\Psiu{1}{j-1}\Psid{2j-3}{j} z_\la
\right)\\
&=\tfrac{1}{2}s(s+1)
\left( -j(k+1) v_{1_{s}} + 2s v_{1_{s}} \right)\\
&=-(j-2)(k+3)\cdot\tfrac{1}{2}s(s+1) v_{1_{s}},
\end{align*}
hence $\alpha_{1_{s}}=-(j-2)(k+3)\cdot\frac{1}{2}s(s+1)$, as required.

If $s=k-j+2$, then we have
\begin{align*}
\alpha_{1_s} v_{1_s}
&=\tfrac{1}{2}s(s+1)
\psi_{w_{(1,s)}}\cdot
\left(\ \ \sum_{\mathclap{\substack{0\leq i \leq k-1\\0 \leq l \leq j-2}}}\ \ 
(j-l)(k-i)\Psiu{j-l}{j-1}\Psid{j+i}{j} z_\la
+s\Psiu{1}{j-1}\Psid{2j-2}{j} z_\la
\right)\\
&\phantom{=} \; +\tfrac{1}{2}(s+1)(s+2)\psi_{w_{(1,s+1)}}
\cdot s\Psiu{1}{j-1}\Psid{2j-2}{j} z_\la\\
&= \tfrac{1}{2}s(s+1)
\left( -j(k+1)+s \right)v_{1_s}
+\tfrac{1}{2}s(s+1)(s+2)v_{1_s}\\
&= -(j-2)(k+3)\cdot\tfrac{1}{2}(s+1)(s+2)v_{1_s}.
\end{align*}

If $s=k-j+1$, then we have
\begin{align*}
\alpha_{1_s}v_{1_s}
&=\tfrac{1}{2}s(s+1)\psi_{w_{(1,s)}}
\cdot\phi(z_\la)
+(s+1)(s+2)\psi_{w_{(2,s+1)}}
\cdot s
\Psiu{1}{j-1}\Psid{2j-1}{j} z_\la\\
&=
-(j-2)(k+3)\cdot\tfrac{1}{2}s(s+1)v_{1_s}.
\end{align*}

For $1\leq{s}\leq k-j$, we have
\begin{align*}
\alpha_{1_s}v_{1_s}
&=\tfrac{1}{2}s(s+1)\psi_{w_{(1,s)}}\cdot\phi(z_\la)
+(s+1)(k-j+3)\psi_{w_{(s+3,k-j+1)}}\cdot
s\Psiu{1}{j-1}\Psid{j+k-s}{j} z_\la\\
&=-(j-2)(k+3)\cdot\tfrac{1}{2}s(s+1)v_{1_s}.
\end{align*}

We now suppose that $r=2$.
If $s=k-j+2$, then

\begin{align*}
\alpha_{2_s}v_{2_s}
&= s(s+1)\psi_{w_{(2,s)}}\cdot
\ \ \sum_{\mathclap{\substack{0\leq i \leq k-1\\0 \leq l \leq j-2}}}\ \ 
(j-l)(k-i)\Psiu{j-l}{j-1}\Psid{j+i}{j} z_\la\\
&\phantom{=} \; +\tfrac{1}{2}(s+1)(s+2)\psi_{w_{(1,s+1)}}
\cdot 2s\Psiu{2}{j-1}\Psid{2j-2}{j} z_\la
+s(s+1)\psi_{w_{(2,s)}}\cdot
s\Psiu{1}{j-1}\Psid{2j-2}{j} z_\la\\
&=-s(s+1)j(k+1) v_{2_s} + s(s+1)(s+2)v_{2_s} + s^2(s+1)v_{2_s}\\
&=-(j-2)(k+3)\cdot s(s+1)v_{2_s}.
\end{align*}

If $s=k-j+1$, then
\begin{align*}
\alpha_{2_s}v_{2_s}
&=
s(s+1)\psi_{w_{(2,k-j+1)}}\cdot\phi(z_\la)
+\tfrac{1}{2}(s+1)(s+2)\psi_{w_{(1,s+1)}}\cdot s\Psiu{1}{j-1}\Psid{2j-1}{j} z_\la\\
&\phantom{=} \; +\tfrac{1}{2}(s+2)(s+3)\psi_{w_{(1,s+2)}}\cdot s\Psiu{1}{j-1}\Psid{2j-1}{j} z_\la
+s(s+2)\psi_{w_{(s+2,s)}}\cdot s\Psiu{1}{j-1}\Psid{2j-1}{j} z_\la\\
&=
-j(k+1)s(s+1)v_{2_s}
+s(s+2)\left(\tfrac{1}{2}(s+1)+\tfrac{1}{2}(s+3)+s\right)v_{2_s}\\
&=-(j-2)(k+3)\cdot s(s+1)v_{2_s}.
\end{align*}

If $s=k-j$, then
\begin{align*}
\alpha_{2_s}v_{2_s}
&=
s(s+1)\psi_{w_{(2,s)}}\cdot\phi_{z_\la}
+(s+2)(s+3)\psi_{w_{(2,s+2)}}\cdot s\Psiu{1}{j-1}\Psid{2j}{j} z_\la\\
&\phantom{=} \; +s(s+3)\psi_{w_{(s+2,s+1)}}\cdot s\Psiu{1}{j-1}\Psid{2j}{j} z_\la\\
&=-j(k+1)s(s+1)v_{2_s}
+s(s+3)\left(2s+2\right)v_{2_s}\\
&=-(j-2)(k+3)\cdot s(s+1)v_{2_s}.
\end{align*}

If $s\in\{1,\dots,k-j-1\}$, then
\begin{align*}
\alpha_{2_s}v_{2_s}
&=s(s+1)\psi_{w_{(2,s)}}\cdot\phi(z_\la)
+s(k-j+3)\phi_{w_{(s+2,k-j+1)}}\cdot s\Psiu{1}{j-1}\Psid{j+k-s}{j} z_\la\\
&\phantom{=} \; +(s+2)(k-j+3)\psi_{w_{(s+4,k-j+1)}}\cdot s\Psiu{1}{j-1}\Psid{j+k-s}{j} z_\la\\
&=
-j(k+1)s(s+1)v_{2_s}
+s(k-j+3)(2s+2)v_{2_s}\\
&=
-(j-2)(k+3)\cdot s(s+1)v_{2_s}.
\end{align*}

Finally, suppose that $r\in\{3,\dots,k-j+3\}$.
If $s=k-j+1$, then
\begin{align*}
\alpha_{r_s}v_{r_s}
&=(r-2)(s+2)\psi_{w_{(r,s)}}\cdot
\sum_{\mathclap{\substack{0\leq i \leq k-1\\0 \leq l \leq j-2}}}(j-l)(k-i)\Psiu{j-l}{j-1}\Psid{j+i}{j} z_\la\\
&\phantom{=} \; +\tfrac{1}{2}(s+2)(s+3)\psi_{w_{(1,s+2)}}\cdot 2(r-2)
\Psiu{2}{j-1}\Psid{j+k-r+2}{j} z_\la\\
&\phantom{=} \; +(r-2)(s+2)\psi_{w_{(r,s)}}\cdot
(s+1)\Psiu{1}{j-1} \Psid{2j-2}{j} z_\la\\
&=-(j-2)(k+3)\cdot (r-2)(s+2)v_{r_{k-j+1}}.
\end{align*}

If $s=k-j$, then
\begin{align*}
\alpha_{r_s}v_{r_s}
&=(r-2)(s+2) \psi_{w_{(r,s)}} \cdot \phi(z_\la)\\
&\phantom{=} \; +\tfrac{1}{2}(s+2)(s+3)\psi_{w_{(1,s+2)}} \cdot
(r-2)\Psiu{1}{j-1} \Psid{j+k-r+2}{j} z_\la\\
&\phantom{=} \; +\tfrac{1}{2}(s+3)(s+4) \psi_{w_{(1,s+3)}}
\cdot(r-2) \Psiu{1}{j-1} \Psid{j+k-r+2}{j} z_\la\\
&\phantom{=} \; +(r-2)(s+3) \psi_{w_{(r,s+1)}}\cdot
(s+1)\Psiu{1}{j-1} \Psid{2j-1}{j} z_\la\\
&=
(r-2)(s+2)\left(-j(k+1)+2(s+3)\right) v_{r_s}\\
&=
-(j-2)(k+3)\cdot (r-2)(s+2)v_{r_s}.
\end{align*}

If $s=k-j-1$, then
\begin{align*}
\alpha_{r_s}v_{r_s}
&=(r-2)(s+2) \psi_{w_{(r,s)}} \cdot \phi(z_\la)
+(s+3)(s+4) \psi_{w_{(2,s+3)}} \cdot
(r-2) \Psiu{1}{j-1} \Psid{j+k-r+2}{j} z_\la\\
&\phantom{=} \; +(r-2)(s+4) \psi_{w_{(r,s+2)}}\cdot
(k-j) \Psiu{1}{j-1} \Psid{2j-1}{j} z_\la\\
&=-(j-2)(k+3) \cdot (r-2)(s+2) v_{r_s}.
\end{align*}

If $s\in\{1,\dots,k-j-2\}$, then
\begin{align*}
\alpha_{r_s} v_{r_s}
&=(r-2)(s+2) \psi_{w_{(r,s)}} \cdot \phi_{z_\la}\\
&\phantom{=} \; +(r-2)(k-j+3) \psi_{w_{(r,k-j+1)}}\cdot
(s+1)\Psiu{1}{j-1} \Psid{j+k-s-1}{j} z_\la\\
&\phantom{=} \; +(s+3)(k-j+3) \psi_{w_{(s+5,k-j+1)}} \cdot
(r-2) \Psiu{1}{j-1} \Psid{j+k-r+2}{j} z_\la\\
&=(r-2)\left(-j(k+1)(s+2)+(k-j+3)(2s+4)\right) v_{r_s}\\
&=-(j-2)(k+3)\cdot (r-2)(s+2) v_{r_s}.
\end{align*}

Hence
\[
\alpha_{r_s}
=
-(j-2)(k+3)\cdot
\begin{cases}
\tfrac{1}{2}s(s+1)
&\text{if $r=1$;}\\
s(s+1)
&\text{if $r=2$;}\\
(r-2)(s+2)
&\text{if $r\geq 3$,}\\
\end{cases}
\]
for all $s$, as required.
\end{proof}

\begin{cor}\label{kejedecomp}
Let $\la = ((ke),(je))$ or $((1^{ke}),(1^{je}))$ with $k, j > 1$.
\begin{enumerate}[label=(\roman*)]
\item If $j+k$ is odd, then $\spe\la$ is decomposable.

\item If $j+k$ is even and $\nchar \bbf \neq 2$, then $\spe\la$ is decomposable.
\end{enumerate}	
\end{cor}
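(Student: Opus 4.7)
The plan is to exploit the endomorphism $\phi$ of $\spe\la$ from \cref{keje} together with the eigenvalue information in \cref{thm:kejeeigen,thm:jekeeigen}. As soon as $\phi$ has two distinct eigenvalues in $\bbf$, the associated generalised eigenspace decomposition provides a non-trivial direct sum splitting of the finite-dimensional $\mathscr{R}_n^\La$-module $\spe\la$, proving it decomposable. First, by \cref{transdec}, the column-form case $\la = ((1^{ke}),(1^{je}))$ reduces to $\la' = ((je),(ke))$; thus after possibly interchanging the roles of $j$ and $k$, I may assume $k\geq j$ and $\la \in \{((ke),(je)), ((je),(ke))\}$, so that \cref{thm:kejeeigen} or \cref{thm:jekeeigen} applies.

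The heart of the proof is then elementary arithmetic on the eigenvalues. In the generic case $k > j > 2$, $\phi$ has eigenvalues
\[
E_1 = -j(k+1), \qquad E_2 = -(j-1)(k+2), \qquad E_3 = -(j-2)(k+3),
\]
which satisfy $E_1 - E_2 = j-k-2$ and $E_2-E_3 = j-k-4$, so that the two successive differences themselves differ by $2$. Consequently no prime $p\neq 2$ can divide both, so for $\nchar\bbf \neq 2$ at least two of $E_1, E_2, E_3$ are distinct in $\bbf$; and when $\nchar\bbf = 2$ with $j+k$ odd, $j-k-2$ is odd, whence $E_1 \neq E_2$ in $\bbf$. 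This already yields a non-trivial generalised eigenspace decomposition in every case covered by the corollary.

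The remaining work is to verify the same dichotomy in the boundary cases, which is straightforward. When $j=2$ and $k>2$, the three eigenvalues become $-2(k+1)$, $-(k+2)$ and $0$, with successive differences $-k$ and $-(k+2)$ again differing by $2$, so the same argument applies. When $j=k$ only $E_1$ and $E_2$ are available, but $E_1-E_2 = -2$, which is nonzero in $\bbf$ unless $\nchar\bbf = 2$; since $j=k$ forces $j+k$ to be even, this case falls under part (ii), where $\nchar\bbf \neq 2$ is assumed. The only real obstacle here is bookkeeping across these corner configurations, but each reduces to the single observation that consecutive eigenvalue gaps differ by $2$, and therefore cannot simultaneously vanish in odd characteristic, while an odd-parity condition on $j+k$ handles characteristic $2$.
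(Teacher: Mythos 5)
Your proposal is correct and follows essentially the same route as the paper: invoke the eigenvalues $-j(k+1)$, $-(j-1)(k+2)$, and (when available) $-(j-2)(k+3)$ or $0$ from \cref{thm:kejeeigen,thm:jekeeigen}, observe that the successive differences $j-k-2$ and $j-k-4$ differ by $2$ so cannot both vanish in odd characteristic, and use the parity of $j+k$ to handle characteristic $2$, concluding via the generalised eigenspace decomposition. The only cosmetic difference is that you make the reduction of $((1^{ke}),(1^{je}))$ to $((je),(ke))$ via \cref{transdec} explicit, which the paper leaves implicit by citing both eigenvalue theorems.
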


\begin{proof}
By \cref{thm:kejeeigen,thm:jekeeigen} parts (i) and (ii), there is an endomorphism $\phi$ of $\spe\la$ with two eigenvalues differing by $j-k-2$.
If $j=k$, then $j+k$ is even and these eigenvalues differ by $2$, and are hence these are distinct if $\nchar \bbf \neq 2$.

If $k>j>2$, then by part (iii), we have a third eigenvalue differing from the second by $j-k-4$.
If $\nchar \bbf \mid (j-k-2)$, then the first two eigenvalues are equal, but the third is distinct if $\nchar \bbf \neq 2$.
If $\nchar \bbf = 2$, then $\nchar \bbf \nmid (j-k-2)$ when $j+k$ is odd.

If $k> j = 2$, then by part (iv), we have $0$ as our third eigenvalue.
If $k$ is odd, we have at least two distinct eigenvalues in any characteristic.
Likewise, if $k$ is even and $\nchar \bbf \neq 2$, we have at least two distinct eigenvalues.

Thus, in each case, the generalised eigenspace decomposition of $\spe\la$ has at least two direct summands, which are easily seen to be $\mathscr{R}_n^\La$-modules.
\end{proof}

By applying \cref{transdec} and the method in the proof of \cref{inducetriv,inducetrivfurther}, and combining with \cref{smallbihooks,kee,cor:nonrepdecomp}, we obtain \cref{mainresult}.

\section{Quantum characteristic two}\label{sec:e=2}

We now focus our attention on the case where $e=2$.

Thankfully most of the difficult work here is already done, and we are able to use results from \cite{ls14,gm80,fs16} on Specht modules over the level one cyclotomic quiver Hecke algebra to obtain a lot of decomposable Specht modules with little effort.
We collect the results we will need.

\begin{thmc}{gm80}{Theorem 4.5}\label{lev1murph}
If $e = \nchar \bbf = 2$, $a\geq b$, and either $\la$ or $\la'$ is $(a,1^b)$, then $\spe\la$ is decomposable if and only if $a+b$ is odd and $a-1 \not\equiv b \pmod{2^L}$, where $2^{L-1} \leq b < 2^L$.
\end{thmc}

\begin{thmc}{ls14}{Theorem 6.8}\label{lev1spey}
If $e=2$, $\nchar \bbf \neq 2$, $a>b$, and either $\la$ or $\la'$ is $(a,1^b)$, then $\spe\la$ is decomposable if and only if $a+b$ is odd and either
\begin{enumerate}[label=(\roman*)]
\item $a > b \geq 4$; or

\item $a > b = 2$ or $3$, and $\nchar \bbf \nmid \lceil \frac{a}{2}\rceil$.
\end{enumerate}
\end{thmc}

When looking for decomposable Specht modules, the cases $\kappa = (0,1)$ and $\kappa = (0,0)$ must be treated separately.
However, our first result is independent of this.

\begin{thm}\label{lev1tolev2}
Let $\kappa$ be arbitrary, with corresponding $\La = \La_\kappa$.
Let $\mu$ be a hook partition of $n$ such that $\spe \mu$ is a decomposable $\mathscr{R}_n^{\La_0}$-module (cf.~\cref{lev1murph,lev1spey}).
Then for any partition $\nu$ of $m$, the Specht modules $\spe{(\mu, \nu)}$ and $\spe{(\nu, \mu)}$ are decomposable $\mathscr{R}_{m+n}^{\La}$-modules.
\end{thm}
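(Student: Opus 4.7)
First I would reduce to a single case. By \cref{transdec}, $\spe{(\mu,\nu)}$ is decomposable iff $\spe{(\mu,\nu)'} = \spe{(\nu', \mu')}$ is, and one sees from \cref{lev1murph,lev1spey} (or by applying the level-one version of \cref{transdec}) that the conjugate of a decomposable hook is again a decomposable hook. So it suffices to prove the theorem for $\spe{(\nu, \mu)}$; applying that case to the pair $(\nu', \mu')$ then handles $\spe{(\mu, \nu)}$. Furthermore, residue shifts do not change the isomorphism type of $\mathscr{R}_n^\La$, so I may assume $\kappa_1 = 0$, in which case the entries $1,\dots,n$ of the column-initial tableau $\ttt_{(\nu,\mu)}$ fill the $\mu$ component in the exact positions and residues (up to the inessential involution $0 \leftrightarrow 1$ when $\kappa_2 = 1$) of $\ttt_\mu$ at level one.

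The idea is then to lift a non-trivial idempotent $\phi_0 \in \End_{\mathscr{R}_n^{\La_0}}(\spe \mu)$, which exists by decomposability, to one on $\spe{(\nu,\mu)}$. Writing $\phi_0(z_\mu) = \sum_\ttt a_\ttt \psi_{w_\ttt} z_\mu$ with $\ttt$ ranging over $\std\mu$ and $w_\ttt \in \mathfrak{S}_n$, I would define
\[
\tilde v = \sum_\ttt a_\ttt \psi_{w_\ttt}\, z_{(\nu,\mu)} \in \spe{(\nu,\mu)},
\]
interpreting each $w_\ttt$ as an element of $\mathfrak{S}_n \subset \mathfrak{S}_{n+m}$. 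Each $\psi_{w_\ttt} z_{(\nu,\mu)}$ is precisely the basis vector $v_{\ttt'}$ for the standard $(\nu,\mu)$-tableau $\ttt'$ obtained by placing $\ttt$ in the $\mu$ component and filling $\nu$ in column-initial order, so $\tilde v$ is a genuine non-trivial linear combination of basis vectors. I would then claim that $z_{(\nu,\mu)} \mapsto \tilde v$ defines an endomorphism $\Phi$ of $\spe{(\nu,\mu)}$, and that $\Phi$ is a non-trivial idempotent.

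To prove the claim, I would verify the defining relations of $\spe{(\nu,\mu)}$ on $\tilde v$ by splitting into two types. Relations involving only generators $y_r, \psi_s, e(\bfi)$ with indices $r, s \leq n$ (which includes the Garnir relations coming from nodes in $\mu$) are identical to the corresponding level-one relations for $z_\mu$, and hence hold on $\tilde v$ because $\phi_0(z_\mu)$ satisfies them in $\spe\mu$ via a KLR computation that is insensitive to the cyclotomic level. Relations involving generators with indices $> n$ hold trivially: such generators commute past every $\psi_{w_\ttt}$ (whose support lies in positions $\leq n$) and annihilate $z_{(\nu,\mu)}$ directly. Non-triviality of $\Phi$ follows from the linear independence of the $v_{\ttt'}$ and the fact that $\phi_0(z_\mu)$ is neither $0$ nor $z_\mu$. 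Finally, $\Phi^2 = \Phi$ lifts from $\phi_0^2 = \phi_0$ by the same mechanism: the latter is a KLR-algebraic identity whose derivation uses only relations that carry over to level two. The main obstacle is the careful bookkeeping at the boundary index $r = n$, where $\psi_n$ does not annihilate $z_{(\nu,\mu)}$; one must check that no derivation of a level-one relation being lifted implicitly uses a relation that fails at level two, which reduces to the fact that every level-one Specht relation for $\spe\mu$ has a direct level-two analog restricted to the $\mu$-part of $\spe{(\nu,\mu)}$.
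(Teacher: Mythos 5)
Your proof is correct, and its core mechanism --- writing the level-one idempotent as $\phi_0(z_\mu)=\sum_\ttt a_\ttt\psi_{w_\ttt}z_\mu$ and pushing it through the embedding $\mathscr{R}_n\hookrightarrow\mathscr{R}_{m+n}$ to act on $z_{(\nu,\mu)}$ --- is exactly the paper's argument for $\spe{(\nu,\mu)}$. The only real divergence is the treatment of $\spe{(\mu,\nu)}$: the paper handles it directly by replacing the embedding with the shift map $\operatorname{shift}_m\colon\mathscr{R}_n\hookrightarrow\mathscr{R}_{m+n}$ (so the lifted element is $\sum_\ttt a_\ttt\operatorname{shift}_m(\psi_\ttt)z_{(\mu,\nu)}$), whereas you reduce that case to the first one via conjugation and \cref{transdec}, using that the level-one conjugate of a decomposable hook Specht module is again decomposable. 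Both routes work; the shift-map argument is self-contained and symmetric with the first case, while your reduction trades that for a dependence on \cref{transdec} (and, implicitly, on the fact that conjugation may change the bicharge --- harmless here since $\kappa$ is arbitrary). Your verification of the lifted endomorphism is in fact more explicit than the paper's ``it is easy to see'': the split into relations supported on indices $\le n$ (which coincide, node for node and residue for residue, with the level-one presentation of $\spe\mu$ sitting in the second component) and relations supported on indices $>n$ (which commute past every $\psi_{w_\ttt}$ and kill $z_{(\nu,\mu)}$) is the right bookkeeping, and your closing caveat about the boundary index correctly identifies the one point both write-ups compress, namely that the level-one computation establishing $\phi_0$ transfers because the restricted annihilator of $z_{(\nu,\mu)}$ contains that of $z_\mu$.
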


\begin{proof}
Since $\spe\mu$ is decomposable, it has a non-trivial idempotent endomorphism $\phi$ determined by
\[
\phi(z_\mu) = \; \sum_{\mathclap{\ttt \in \std \mu}} \; a_\ttt v_\ttt \quad \text{for some } a_\ttt \in \bbf.
\]
Via the embedding $\mathscr{R}_{m} \hookrightarrow \mathscr{R}_{m+n}$, it is easy to see that $\phi$ also defines an idempotent endomorphism of $\spe{(\nu, \mu)}$.
For the other Specht module,
\[
\hat\phi(z_{(\mu,\nu)}) = \; \sum_{\mathclap{\ttt \in \std \mu}} \; a_\ttt \operatorname{shift}_m(\psi_\ttt) z_{(\mu,\nu)}
\]
is a non-trivial idempotent endomorphism of $\spe{(\mu,\nu)}$, where $\operatorname{shift}_m:\mathscr{R}_{m} \hookrightarrow \mathscr{R}_{m+n}$ is the \emph{shift map} (cf.~\cite[\S 2.6.1]{fs16}).
\end{proof}

\begin{rem}
\cref{lev1tolev2} readily extends to higher levels, i.e.~we can use \cref{lev1murph,lev1spey} to construct many decomposable Specht modules in higher levels, regardless of the chosen $\kappa$.
Similarly, for any finite $e$, we may embed the bihooks of \cref{mainresult} into higher levels whenever we have a repeat in the $e$-multicharge.
\end{rem}

The following result is the $e=2$ extension of \cref{mainresult}.

\begin{thm}\label{kejefore=2}
Let $\kappa = (0,0)$, and let $\la = ((2k + a, 1^b),(2j + a, 1^b))$, $((b+1, 1^{2j+a-1}), (b+1, 1^{2k+a-1}))$, $((2k+a, 1^b),(a, 1^{2j+b}))$, or $((a,1^{2k+b}),(2j+a, 1^b))$ for some $j,k\geq 1$, $0< a \leq 2$ and $0 \leq b < 2$ with $a+b \neq 2$, or for $a=b=0$.
\begin{enumerate}[label=(\roman*)]
\item For $j, k > 1$, if $j+k$ is even and $\nchar \bbf \neq 2$, or if $j+k$ is odd, then $\spe\la$ is decomposable.

\item If $j=1$ or $k=1$, then $\spe\la$ is decomposable if and only if $\nchar\bbf \nmid j+k$.
\end{enumerate}
\end{thm}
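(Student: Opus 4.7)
The plan is to follow the architecture of the proof of \cref{mainresult}, exploiting the fact that the principal tools of Section~\ref{sec:generalbihooks} --- namely \cref{prop:cancellation,keje,thm:kejeeigen,kejedecomp,kee} --- are stated for all finite $e \geq 2$. For the shapes $\la_1 = ((2k+a, 1^b), (2j+a, 1^b))$ and $\la_2 = ((b+1, 1^{2j+a-1}), (b+1, 1^{2k+a-1}))$ the proof carries over with essentially no modification: \cref{inducetrivfurther} (whose divided-power argument is valid for any finite $e$) reduces to the base bihook $((2k), (2j))$, which is then handled by \cref{kejedecomp} when $j, k > 1$ and by \cref{kee} when $j = 1$ or $k = 1$. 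For $\la_2$, \cref{transdec} reduces to the $\la_1$ case.

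For the shapes $\la_3 = ((2k+a, 1^b), (a, 1^{2j+b}))$ and $\la_4 = ((a, 1^{2k+b}), (2j+a, 1^b))$, which are genuinely new to $e = 2$, I would first establish a variant of \cref{inducetrivfurther} reducing them to the base cases $\spe{((2k), (1^{2j}))}$ and $\spe{((1^{2k}), (2j))}$. Since $((2k), (1^{2j}))' = ((2j), (1^{2k}))$, \cref{transdec} relates these two (with $j,k$ swapped), so it suffices to treat $\spe{((2k), (1^{2j}))}$.

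For this key base case, my strategy is to exploit that $e = 2$ and $\kappa = (0,0)$ make the bihook $((2k), (1^{2j}))$ lie in the same block as $((2k), (2j))$: both have $k + j$ nodes of each residue, and in fact the residue sequence $\bfi_\la$ of the column-initial tableau agrees in both cases (since for $e = 2$ residues alternate identically down a column and along a row). I would then construct an endomorphism of $\spe{((2k), (1^{2j}))}$ modelled on the $\phi$ of \cref{keje}, but with the bricks in the second component taken to be \emph{vertical} rather than horizontal. The cancellation identities of \cref{prop:cancellation} persist for $e = 2$ (they were derived there from \cite[Lemma 5.5]{ls14}, whose hypotheses allow mixed brick orientations), and the eigenvalue computations of \cref{thm:kejeeigen} should transfer with suitable reindexing, yielding distinct eigenvalues precisely under the numerical conditions of (i) and (ii).

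The main obstacle is executing this transfer cleanly: one must rewrite the brick-swap operators $\Psi_r$ for the vertical bricks in the second component and reverify all three eigenvalue computations of \cref{thm:kejeeigen} in the modified calculus. An attractive alternative --- which would sidestep redoing the eigenvalue analysis --- is to construct an explicit chain of divided-power functors connecting $\spe{((2k), (1^{2j}))}$ to $\spe{((2k), (2j))}$ via intermediate hook shapes of the form $((2k), (2j-c, 1^c))$, and to argue, as in \cref{inducetriv}, that each functor is exact and non-vanishing on every composition factor, so that decomposability transfers in both directions. Designing such a chain explicitly and verifying these non-vanishing conditions at each step is likely to be the most delicate part of the argument.
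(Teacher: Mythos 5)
Your reduction to the base cases is the paper's: \cref{inducetrivfurther} and \cref{transdec} handle $\la_1$ and $\la_2$, and the new shapes are pushed down to $\spe{((2k),(1^{2j}))}$ by the same divided-power argument. Where you diverge is at that base case, and you stop one step short of the observation the paper actually uses. You correctly note that for $e=2$ and $\kappa_1=\kappa_2$ the residue sequence $\bfi_\la$ of the column-initial tableau of $((2k),(1^{2j}))$ coincides with that of $((2k),(2j))$; but the decisive point is that the two cyclic presentations of these Specht modules then agree (the only relations that could differ are the idempotent relation, which is governed by $\bfi_\la$, and the Garnir relation from the node $(1,1,2)$, which is forced by residue considerations in both modules), so the evident surjection between them is an isomorphism of modules of equal dimension $\binom{2(j+k)}{2j}$. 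That single line, $\spe{((2k),(2j))}\cong\spe{((2k),(1^{2j}))}$, is the paper's whole proof for the new shapes, and it makes your proposed rederivation of \cref{keje} and all three eigenvalue computations of \cref{thm:kejeeigen} in a ``vertical brick'' calculus unnecessary (that route would likely succeed, but at enormous cost).

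Your fallback route, however, contains a genuine error: there is no chain of divided-power functors through the shapes $((2k),(2j-c,1^c))$, because all of these are bipartitions of the \emph{same} integer $n=2k+2j$, whereas $e_i^{(r)}$ and $f_i^{(r)}$ change $n$ by $r$; these functors add or remove nodes of a fixed residue, they cannot move a node from the arm of a hook to its leg. Nor can you fix this by restricting and then inducing: for $e=2$ the bipartition obtained by removing the two removable $1$-nodes of $((2k),(2j))$ has \emph{four} addable $1$-nodes, so $f_1^{(2)}$ applied to its Specht module is not a single Specht module, and the non-vanishing/composition-length argument of \cref{inducetriv} (which relies on $f_i^{(2)}\spe\mu$ being exactly $\spe\la$) does not apply. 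So if you do not invoke the isomorphism of presentations, you are committed to the vertical-brick computation; the functorial shortcut is not available.
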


\begin{proof}
We extend \cref{mainresult} and use the fact that $\spe{((2k),(2j))} \cong \spe{((2k),(1^{2j}))}$ when $e=2$, $\kappa_1=\kappa_2$.
(In general, their Specht presentations only differ in the idempotent relation, and if $e=2$ then the residues in the second components coincide.)
\end{proof}

\begin{conj}\label{e2conj}
Let $e=2$.
If $\kappa_1 \neq \kappa_2$, then \cref{lev1tolev2} provides a complete list of decomposable Specht modules indexed by bihooks.
If $\kappa_1=\kappa_2$, and $\nchar \bbf = 0$, then \cref{smallbihooks,lev1tolev2,kejefore=2} provide a complete list of decomposable Specht modules indexed by bihooks.
\end{conj}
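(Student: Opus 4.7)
The conjecture has two parts, and in each case the forward direction (that the listed bihooks give decomposable Specht modules) is already established in the cited theorems. The content of the conjecture is the converse: every bihook not appearing in the given lists should index an \emph{indecomposable} Specht module. The plan is to prove this converse via a combination of reductions, regularity arguments, and direct endomorphism-space calculations analogous to those in \cref{sec:small}.

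For part (i), where $e=2$ and $\kappa_1 \neq \kappa_2$, I would take $\kappa = (0,1)$ without loss of generality and observe that the residues of the two components of any bihook are rigidly shifted relative to each other. Concretely, for $\la = (\la^{(1)},\la^{(2)})$, the entries of any standard $\la$-tableau $\ttt$ with $\bfi_\ttt = \bfi_\la$ should be constrained to stay inside their original components, apart from the level-one swaps that are already responsible for the decomposables in \cref{lev1tolev2}. I would argue this case by case, distinguishing which (if any) of $\la^{(1)}, \la^{(2)}$ is itself a hook whose level-one Specht module is decomposable as in \cref{lev1murph,lev1spey}, and then showing that the shift embedding $\mathscr{R}_m^{\La_0} \hookrightarrow \mathscr{R}_{m+n}^\La$ used in \cref{lev1tolev2} already captures every non-scalar endomorphism of $\spe\la$.

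For part (ii), with $\kappa_1 = \kappa_2$ and $\nchar \bbf = 0$, my strategy is: (a) reduce to bihooks of the form $\la = ((2k),(2j))$ using \cref{inducetriv,inducetrivfurther} together with \cref{transdec}; (b) dispose of regular bihooks via \cref{simphd}; (c) for the remaining cases, combine the explicit endomorphism of \cref{keje} with the eigenspace computations of \cref{thm:kejeeigen,thm:jekeeigen} and show, in characteristic zero, that the resulting generalised eigenspace decomposition is already the full block decomposition of $\spe\la$. In characteristic zero, the decomposition numbers are computable from Uglov's canonical basis at $v=1$, and combining this with Brundan--Kleshchev's grading provides bounds on $\dim \End(\spe\la)$ that should match the number of summands predicted by \cref{kejefore=2} for every non-listed $\la$.

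The main obstacle will be step (c). After the reductions, one is still left with an infinite family of bihooks $((2k),(2j))$ for which one must verify that $\dim \End(\spe\la)$ equals \emph{exactly} the number of summands predicted by \cref{kejefore=2}, and not more. The calculations in \cref{sec:calc} already show that the KLR relations interact with brick combinatorics in subtle, characteristic-sensitive ways, and promoting the explicit eigenvectors of \cref{thm:kejeeigen} into a full basis of $\End(\spe\la)$ appears to require either a uniform closed-form enumeration of all standard tableaux of residue $\bfi_\la$ or a categorical input (for example a quasi-hereditary cover analogous to that of Rouquier) that pins down the endomorphism rings over $\bbq$. The characteristic-zero assumption is presumably essential in order to avoid the sporadic extra decomposables that the authors observe in characteristic two, but extracting a clean uniform proof for infinite families of bihooks is the principal difficulty.
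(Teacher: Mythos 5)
The statement you are trying to prove is \cref{e2conj}, which the paper explicitly leaves as a \emph{conjecture}: the authors give no proof, only a GAP verification for $n\leq 14$ (resp.\ $n\leq 13$) in characteristics $0,2,3,5$, recorded in the remark that follows it. So there is no argument in the paper to compare yours against, and your proposal, as you yourself concede at step (c), is a plan rather than a proof.

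Beyond the admitted gap, two concrete steps in your outline would fail. First, your reduction (a) to bihooks of the form $((2k),(2j))$ does not cover the cases that matter: \cref{inducetriv,inducetrivfurther} only relate $((ke),(je))$ to bihooks of the very special shape $((ke+a,1^b),(je+a,1^b))$ (both components deformed identically), and even the $e=2$ extension in \cref{kejefore=2} only adds the shapes $((2k+a,1^b),(a,1^{2j+b}))$ and transposes. A generic bihook $((a,1^b),(c,1^d))$ with $e=2$ is not in the image of these induction-functor arguments, and it is precisely for such bihooks that indecomposability must be established; the case analysis of \cref{smallindecomp} shows how delicate this is even for $n\leq 2e$, and the authors explicitly warn that that method does not extend to large $n$. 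Second, your criterion in step (c) is not the right one: matching $\dim\End(\spe\la)$ against a predicted number of summands does not decide decomposability, since an indecomposable module can have an endomorphism ring of any dimension (what is needed is that $\End(\spe\la)$ is local, i.e.\ has no non-trivial idempotents). Your part (i) has the same issue: even granting that every endomorphism comes from the shift embedding of a level-one endomorphism, one must still show that no linear combination of these is a non-trivial idempotent when the level-one Specht module is indecomposable. None of these points is addressed, so the proposal does not establish the conjecture, and indeed the problem remains open in the paper.
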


\begin{rem}
For $\nchar \bbf \in \{0,2,3,5\}$, we have checked in GAP that \cref{e2conj} holds for all i) $n\leq 14$ if $\kappa_1 \neq \kappa_2$; ii) $n\leq 13$ if $\kappa_1 = \kappa_2$.
If $\kappa_1 = \kappa_2$ and $\nchar \bbf = 2$, we note that the decomposables appearing in the remark after \cref{enot2conj} are also decomposable here.
However, we have also found further examples of decomposable Specht modules not accounted for.
Namely the bihooks $((3,1^2),(3))$, $((7,1^2),(3))$, $((5,1^4),(3))$, $((7),(3,1^2))$, and $((5,1^4),(5))$ if $\nchar \bbf = 2$, $((5,1^2),(3))$, $((5,1^2),(5))$, and $((5,1^2),(7))$ if $\nchar \bbf = 3$, and $((9,1^2),(3))$ if $\nchar \bbf = 5$, along with all bihooks obtained from these by conjugating, transposing, and our induction arguments, index decomposable Specht modules if $\kappa_1 = \kappa_2$.
This list is exhaustive for $\nchar\bbf\in\{0,2,3,5\}$ and $n\leq 13$.
\end{rem}

\section{Proof of \texorpdfstring{\cref{smallphivt,prop:cancellation}}{Propositions~\ref{smallphivt} and~\ref{prop:cancellation}}}
\label{sec:calc}

In this section, we complete the long calculations necessary in proving \cref{smallphivt,prop:cancellation}.
We begin by setting out some notation which will hopefully help the reader follow the calculations.

For any reduced expression $w = s_{r_1}\dots s_{r_m}\in \sss$, observe that

\begin{align*}
\psi_r\psi_{r\pm 1}\psi_r\psi_wz_\la
&=\psi_r\psi_{r\pm 1}\psi_r\psi_w e(\bfi_\la)z_\la\\
&=\psi_r\psi_{r\pm 1}\psi_r e(s_{r_1}\dots s_{r_m}\cdot \bfi_\la)\psi_wz_\la\\
&=\psi_r\psi_{r\pm 1}\psi_r e(i_1,\dots,i_r,i_{r+1},i_{r+2},\dots,i_n)\psi_wz_\la.
\end{align*}

Since the KLR `braid relations' only depend on the residues $i_r$, $i_{r+1}$ and $i_{r+2}$ of the idempotent $e(s_{r_1}\dots s_{r_m}\cdot \bfi_\la)$, we will instead write the above expression as
\[
(\psi_r\psi_{r\pm 1}\psi_r (i_r,i_{r+1},i_{r+2}))\psi_w z_\la.
\]
Similarly, since $y_{r+1}\psi_r e(\bfi)\psi_w z_\la$ only depends on the residues $i_r$ and $i_{r+1}$, we will write this expression as $(y_{r+1}\psi_r(i_r, i_{r+1}))\psi_w z_\la$.

In fact, whenever we apply the KLR relations in our computations, we will analogously abbreviate idempotents to the two or three necessary, consecutive residues, to help the reader identify which relations are being applied, and which case of the relation is applicable.

For both propositions, we will break the calculation apart with several preliminary lemmas.
First, we will focus on \cref{smallphivt}.

\subsection{Proof of \texorpdfstring{\cref{smallphivt}}{Proposition~\ref{smallphivt}}}
\begin{lem}\label{smallkeylemmas}
Suppose that $e>3$.

\begin{enumerate}[label=(\roman*)]

\item\label{smallkeylemmas1} Let $1\leq r\leq a-2$ and $1\leq s\leq e-r-1$.
Then
\[
\psi_{2e-2r-s} \psid{2e-2r-s-1}{e-r} \chaind {2e-2r-s} {e-r+1} \clapdots \chaind {2e-r-s-1} {e} z_\la = 0.
\]

\item\label{smallkeylemmas2} Let $1\leq r\leq a-1$.
Then
\[
\psid{2e-r-1}{2e-2r+1} \psid{2e-2r+1}{e-r+2} \chaind{2e-2r+2}{e-r+3} \clapdots \chaind{2e-r-1}{e} z_\la
=
\psid{2e-2r}{e-r+2} \chaind{2e-2r+1}{e-r+3} \clapdots \chaind{2e-r-2}{e} z_\la.
\]

\item\label{smallkeylemmas3} Let $1\leq r\leq a-1$.
Then
\[
\psid{e+b+1}{b+2} \chaind{e+b+2}{b+3} \clapdots \chaind{2e-r-1}{e-r} \psid{2e-2r}{e-r+1} \chaind{2e-2r+1}{e-r+2} \clapdots \chaind{2e-r-1}{e} z_\la
=
\psid{2b+2}{b+2} \chaind{2b+3}{b+3} \clapdots \chaind{e+b}{e} z_\la.
\]
\end{enumerate}
\end{lem}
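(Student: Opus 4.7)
The plan is to prove each part of \cref{smallkeylemmas} by combining an induction with direct application of the KLR relations, paying close attention to the residue sequences so as to know which case of the braid and quadratic relations applies at every step. Throughout, I will rely on the Specht presentation of $\spe\la$: namely $\psi_r z_\la = 0$ for $r \notin \{b+1,e,e+b+1\}$, the two Garnir relations $\psi_1\psi_2\cdots\psi_{b+1}z_\la = 0$ and $\psi_{e+1}\psi_{e+2}\cdots\psi_{e+b+1}z_\la = 0$, and $y_r z_\la = 0$. As a preliminary step, I would write out
\[
\bfi_\la = (0,-1,\ldots,-b,1,2,\ldots,a-1,0,-1,\ldots,-b,1,2,\ldots,a-1) \pmod e,
\]
which, because $a+b=e$, repeats with period $e$. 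This lets me read off the residue at any position after a partial product of simple transpositions has acted, which is the key data for choosing the correct case of each relation.

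For part \ref{smallkeylemmas1}, I would argue that pushing the leading $\psi_{2e-2r-s}$ to the right through the chain via the commutation relations (valid whenever the $\psi$-indices differ by more than one) eventually concentrates this generator next to a $\psi$ whose residues allow either a braid reduction or a quadratic reduction to a $y$-term. Tracking the cumulative permutation shows that after these moves, one arrives at an expression containing $\psi_t z_\la$ for some $t \notin \{b+1,e,e+b+1\}$, or directly at a Garnir word, so the entire product vanishes on $z_\la$. The range conditions $1\leq r\leq a-2$ and $1\leq s\leq e-r-1$ are exactly what is needed so that this reduction does not fall outside the allowed index set.

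For part \ref{smallkeylemmas2}, the crucial observation is that the first factor $\psid{2e-r-1}{2e-2r+1}$ ends with $\psi_{2e-2r+1}$, while the second factor $\psid{2e-2r+1}{e-r+2}$ begins with $\psi_{2e-2r+1}$. After commuting intervening $\psi$'s whose indices differ by at least two, the two copies of $\psi_{2e-2r+1}$ become adjacent. The residue pattern read off from $\bfi_\la$ at position $2e-2r+1$ (tracked through the partial permutation) puts the quadratic relation in the case $\psi_r^2 = \pm(y_r-y_{r+1})e(\bfi)$ or $\psi_r^2=0$; in the former case, the residual $y$-terms are killed when they meet $z_\la$ via $y_t z_\la = 0$, while the sign can be absorbed, leaving the shorter product on the RHS. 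For part \ref{smallkeylemmas3}, which is the longest of the three, I would use induction on $r$, taking the case $r=a-1$ (in which the first block is essentially empty) as the base, and for the inductive step peel off one factor from each of the two blocks using braid moves on the junction where the two blocks meet. Matching what remains to the $r\to r-1$ (or $r+1$) instance of \ref{smallkeylemmas3} then closes the induction.

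The principal obstacle is bookkeeping, rather than conceptual: at each braid application, one must know whether the two relevant residues are equal, differ by $\pm 1$, or neither, and this depends on the cumulative permutation applied to $\bfi_\la$ up to that point. Part \ref{smallkeylemmas3} is particularly delicate because the lengths of the two chains vary with $r$ and $b$, so the induction has to reindex both blocks simultaneously; and in parts \ref{smallkeylemmas1} and \ref{smallkeylemmas2} the case analysis has to treat separately the boundary values of $r$ and $s$, where the chain degenerates or a $\psi$ lands precisely on one of the ``allowed'' indices $b+1$, $e$, or $e+b+1$. Provided this bookkeeping is carried out carefully, each identity reduces to a transparent application of the KLR relations together with the annihilating relations of $z_\la$.
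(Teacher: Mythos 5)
Your overall architecture --- inductions on $s$ and $r$ with careful residue bookkeeping, reducing everything to the defining relations of $z_\la$ --- is the right shape, and your base case $r=a-1$ for part~\ref{smallkeylemmas3} matches the paper's (reverse) induction. But there are two concrete gaps. First, in part~\ref{smallkeylemmas2} you have the wrong case of the quadratic relation. When the two copies of $\psi_{2e-2r+1}$ meet, the relevant residues are $a-r+1$ and $a-r-1$, which differ by exactly $2$; since $e>3$ these are neither equal nor adjacent, so $\psi_{2e-2r+1}^2 e(\bfi)=e(\bfi)$ and the square simply disappears --- this is precisely what yields the claimed equality, and it is where the hypothesis $e>3$ enters. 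Your two proposed cases, $\psi^2=0$ or $\psi^2=\pm(y_r-y_{r+1})e(\bfi)$, would give either $0$ or extra terms, not the stated identity; and even in the latter case the claim that ``the residual $y$-terms are killed when they meet $z_\la$'' is not automatic, because $y_r$ does not commute past $\psi_r$ (it produces $\pm\delta_{i_r,i_{r+1}}$ correction terms), so a dot cannot simply be slid onto $z_\la$.

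Second, in part~\ref{smallkeylemmas3} the braid move at the junction of the two blocks occurs on residues $(a-r-1,\,a-r,\,a-r-1)$, i.e.\ the configuration $i_{r+2}=i_r=i_{r+1}-1$, so it produces $\psi_{r+1}\psi_r\psi_{r+1}-1$: two terms, not one. The paper shows the honest braid-word term vanishes by part~\ref{smallkeylemmas1}, and the error term is rewritten using part~\ref{smallkeylemmas2} before the induction hypothesis of~\ref{smallkeylemmas3} can be applied. Your sketch treats~\ref{smallkeylemmas3} as a self-contained induction and never accounts for the error term nor for the dependence of~\ref{smallkeylemmas3} on the other two parts; without that, the inductive step does not close. (Part~\ref{smallkeylemmas1} is described only impressionistically, but the intended reduction --- a reverse induction on $s$ in which each braid move is error-free and the base case ends in $\psi_{e-r}z_\la=0$ --- is at least consistent with what you wrote.)
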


\begin{proof}
\begin{enumerate}[label=(\roman*)]
\item We argue by reverse induction on $s$.
For the base case, we let $s = e-r-1$.
Then
\begin{align*}
&\psi_{2e-2r-s} \psid{2e-2r-s-1}{e-r} \chaind {2e-2r-s} {e-r+1} \clapdots \chaind {2e-r-s-1} {e} z_\la\\
= \; & (\psi_{e-r+1} \psi_{e-r} \psi_{e-r+1} (a{-}r{-}1,a{-}r,0)) \psi_{e-r+2} \psi_{e-r+3} \dots \psi_e z_\la\\
= \; & \psi_{e-r} \psi_{e-r+1} \dots \psi_e \psi_{e-r} z_\la\\
= \; & 0.
\end{align*}
Now suppose that $s<e-r-1$.
Then
\begin{align*}
&\psi_{2e-2r-s} \psid{2e-2r-s-1}{e-r} \chaind {2e-2r-s} {e-r+1} \clapdots \chaind {2e-r-s-1} {e} z_\la\\
= \; & (\psi_{2e-2r-s} \psi_{2e-2r-s-1} \psi_{2e-2r-s} (a{-}r{-}1, a{-}r, a{-}r{-}s{-}1)) \psid{2e-2r-s-2}{e-r} \chaind {2e-2r-s-1} {e-r+1} \cdot\\
&\psid{2e-2r-s+1}{e-r+2} \chaind {2e-2r-s+2} {e-r+3} \clapdots \chaind {2e-r-s-1} {e} z_\la\\
= \; & \psi_{2e-2r-s-1} \psi_{2e-2r-s} \dots \psi_{2e-r-s-1} \psi_{2e-2r-s-1} \psid{2e-2r-s-2}{e-r} \chaind {2e-2r-s-1} {e-r+1} \clapdots \chaind {2e-r-s-2} {e} z_\la\\
= \; & 0 \text{ by the induction hypothesis.}
\end{align*}

\item We argue by induction on $r$.
For the base case, when $r=1$, both sides of the inequality are equal to $z_\la$ by definition.
Thus, we may assume that $r>1$.
Then
\begin{align*}
&\psid{2e-r-1}{2e-2r+1} \psid{2e-2r+1}{e-r+2} \chaind{2e-2r+2}{e-r+3} \clapdots \chaind{2e-r-1}{e} z_\la\\
= \; & \psid{2e-r-1}{2e-2r+2} (\psi_{2e-2r+1}^2 (a{-}r{+}1,a{-}r{-}1)) \psid{2e-2r}{e-r+2} \psid{2e-2r+2}{e-r+3} \chaind{2e-2r+3}{e-r+4} \clapdots \chaind{2e-r-1}{e} z_\la\\
= \; & \psid{2e-2r}{e-r+2} \psid{2e-r-1}{2e-2r+2} \psid{2e-2r+2}{e-r+3} \chaind{2e-2r+3}{e-r+4} \clapdots \chaind{2e-r-1}{e} z_\la\\
= \; & \psid{2e-2r}{e-r+2} \chaind{2e-2r+1}{e-r+3} \clapdots \chaind{2e-r-2}{e} z_\la \text{ by the induction hypothesis.}
\end{align*}

\item We argue by reverse induction on $r$.
For the base case, when $r = a-1$, both sides of the inequality are equal by definition, so there is nothing to prove.
So we assume that $r<a-1$.
Then
\begin{align*}
&\psid{e+b+1}{b+2} \chaind{e+b+2}{b+3} \clapdots \chaind{2e-r-1}{e-r} \psid{2e-2r}{e-r+1} \chaind{2e-2r+1}{e-r+2} \clapdots \chaind{2e-r-1}{e} z_\la\\
= \; & \psid{e+b+1}{b+2} \chaind{e+b+2}{b+3} \clapdots \chaind{2e-r-2}{e-r-1} \psid{2e-r-1}{2e-2r+1}
(\psi_{2e-2r} \psi_{2e-2r-1} \psi_{2e-2r}(a{-}r{-}1, a{-}r, a{-}r{-}1))\cdot\\
&\psid{2e-2r-2}{e-r} \chaind{2e-2r-1}{e-r+1} \psid{2e-2r+1}{e-r+2} \chaind{2e-2r+2}{e-r+3} \clapdots \chaind{2e-r-1}{e} z_\la\\
= \; & \psid{e+b+1}{b+2} \chaind{e+b+2}{b+3} \clapdots \chaind{2e-r-2}{e-r-1} \psid{2e-r-1}{2e-2r+1} \psiu{2e-2r-1}{2e-r-1} \psi_{2e-2r-1} \psid{2e-2r-2}{e-r} \chaind{2e-2r-1}{e-r+1} \clapdots \chaind{2e-r-2}{e} z_\la\\
&+ \psid{e+b+1}{b+2} \chaind{e+b+2}{b+3} \clapdots \chaind{2e-r-2}{e-r-1} \psid{2e-2r-2}{e-r} \chaind{2e-2r-1}{e-r+1}
\left(\psid{2e-r-1}{2e-2r+1} \psid{2e-2r+1}{e-r+2} \chaind{2e-2r+2}{e-r+3} \clapdots \chaind{2e-r-1}{e} z_\la \right)\\
= \; & 0 + \psid{e+b+1}{b+2} \chaind{e+b+2}{b+3} \clapdots \chaind{2e-r-2}{e-r-1} \psid{2e-2r-2}{e-r} \chaind{2e-2r-1}{e-r+1}
\psid{2e-2r}{e-r+2} \chaind{2e-2r+1}{e-r+3} \clapdots \chaind{2e-r-2}{e} z_\la \text{ by parts~\ref{smallkeylemmas1} and~\ref{smallkeylemmas2}}\\
= \; & \psid{2b+2}{b+2} \chaind{2b+3}{b+3} \clapdots \chaind{e+b}{e} z_\la \text{ by the induction hypothesis.}\qedhere
\end{align*}
\end{enumerate}
\end{proof}

\begin{lem}\label{smallkeylemmasctd}
Suppose that $e>3$.
	
\begin{enumerate}[label=(\roman*)]

\item\label{smallkeylemmasctd1} Let $1\leq r \leq b$, $0\leq s \leq a-3$, and $0\leq k\leq b$.
Then
\begin{align*}
&\psid{2b-r+s+2}{2b-r+s+3-k} \psid{e+b-r}{b-r+s+2} \psid{2b-r+s+3-k}{b-r+s+3} \psid{2b-r+s+4}{b-r+s+4} \chaind{2b-r+s+5}{b-r+s+5} \clapdots \chaind{e+b}{e} \psid{b-r+s+1}{b-r+2} \psid{e-r}{b-r+3} \chaind{e-r+1}{b-r+4} \clapdots \chaind{e-2}{b+1} z_\la\\
\\
&=\psid{2b-r+s+2}{2b-r+s+2-k} \psid{e+b-r}{b-r+s+2} \psid{2b-r+s+2-k}{b-r+s+3} \psid{2b-r+s+4}{b-r+s+4} \chaind{2b-r+s+5}{b-r+s+5} \clapdots \chaind{e+b}{e} \psid{b-r+s+1}{b-r+2} \psid{e-r}{b-r+3} \chaind{e-r+1}{b-r+4} \clapdots \chaind{e-2}{b+1} z_\la.
\end{align*}

\item\label{smallkeylemmasctd2} Let $1\leq r \leq b$ and $0\leq s \leq a-3$.
Then
\begin{align*}
&\psid{2b-r+2}{b-r+2} \chaind{2b-r+3}{b-r+3} \clapdots \chaind{2b-r+s+1}{b-r+s+1} \psid{e+b-r}{b-r+s+2} \psid{2b-r+s+3}{b-r+s+3} \chaind{2b-r+s+4}{b-r+s+4} \clapdots \chaind{e+b}{e} \psid{b-r+s+1}{b-r+2} \psid{e-r}{b-r+3} \chaind{e-r+1}{b-r+4} \clapdots \chaind{e-2}{b+1} z_\la\\
\\
&=\psid{2b-r+2}{b-r+2} \chaind{2b-r+3}{b-r+3} \clapdots \chaind{2b-r+s+2}{b-r+s+2} \psid{e+b-r}{b-r+s+3} \psid{2b-r+s+4}{b-r+s+4} \chaind{2b-r+s+5}{b-r+s+5} \clapdots \chaind{e+b}{e} \psid{b-r+s+2}{b-r+2} \psid{e-r}{b-r+3} \chaind{e-r+1}{b-r+4} \clapdots \chaind{e-2}{b+1} z_\la.
\end{align*}

\item\label{smallkeylemmasctd3} Let $1\leq r \leq b$.
Then
\[
\psid{e+b-r}{b-r+2} \psid{2b-r+3}{b-r+3} \chaind{2b-r+4}{b-r+4} \clapdots \chaind{e+b}{e} \psid{e-r}{b-r+3} \chaind{e-r+1}{b-r+4} \clapdots \chaind{e-2}{b+1} z_\la
=\psid{2b-r+2}{b-r+2} \chaind{2b-r+3}{b-r+3} \clapdots \chaind{e+b}{e} \psid{e-r-1}{b-r+2} \chaind{e-r}{b-r+3} \clapdots \chaind{e-2}{b+1} z_\la.
\]

\item\label{smallkeylemmasctd4} The equation in part~\ref{smallkeylemmasctd1} also holds if $r=b+1$, $s=0$ and $0\leq k\leq b-1$.
That is,
\begin{align*}
&\psid{b+1}{b+2-k} \psid{e-1}{1} \psid{b+2-k}{2} \psid{b+3}{3} \chaind{b+4}{4} \clapdots \chaind{e+b}{e} \psid{a-1}{2} \chaind{a}{3} \clapdots \chaind{e-2}{b+1} z_\la\\
\\
&= \psid{b+1}{b+1-k} \psid{e-1}{1} \psid{b+1-k}{2} \psid{b+3}{3} \chaind{b+4}{4} \clapdots \chaind{e+b}{e} \psid{a-1}{2} \chaind{a}{3} \clapdots \chaind{e-2}{b+1} z_\la.
\end{align*}

\item\label{smallkeylemmasctd5} We have
\[
\psid{e-1}{1} \chaind{e}{2} \clapdots \chaind{e+b-1}{b+1} \psid{2b+2}{b+2} \chaind{2b+3}{b+3} \clapdots \chaind{e+b}{e} z_\la = \psid{b+1}{2} \psid{e-1}{1} \psi_2 \psid{b+3}{3} \chaind{b+4}{4} \clapdots \chaind{e+b}{e} \psid{a-1}{2} \chaind{a}{3} \clapdots \chaind{e-2}{b+1} z_\la.
\]

\end{enumerate}
\end{lem}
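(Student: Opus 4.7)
The plan is to prove each of the five parts by direct manipulation using the KLR braid, quadratic and commutation relations, combined with induction, and leveraging the explicit form of the residue sequence
\[
\bfi_\la = (0, e{-}1, e{-}2, \dots, e{-}b, 1, 2, \dots, a{-}1, 0, e{-}1, e{-}2, \dots, e{-}b, 1, 2, \dots, a{-}1).
\]
Crucially, the hypothesis $e>3$ ensures that accidental residue collisions among the three consecutive strands at any braid-move position never occur unless they are supposed to, so the only braid/quadratic corrections that arise are either the desired ones or vanish by a clean $\psi_p^2 e(\bfi) = 0$ argument with $i_p = i_{p+1}$.

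For part (i), I would induct on $k \geq 0$. The base case $k=0$ is a tautology since the two sides coincide. For the inductive step from $k$ to $k+1$, the difference between the two sides reduces to whether $\psi_{2b-r+s+3-k}$ sits inside the initial short block or is pulled one step into the long chain $\psid{e+b-r}{b-r+s+2}$. Applying the far-commutation relation $\psi_p \psi_q = \psi_q \psi_p$ for $|p-q|>1$ and a single braid relation at the correct position, the claim is reduced to showing that a single correction term is zero; after commuting the generators in the rest of the word past it, this correction acquires a subword $\psi_p^2 e(\bfi)$ with $i_p = i_{p+1}$, hence vanishes. Part (ii) is the iterated consequence: applying part (i) repeatedly from $k=0$ up to $k=s+1$ transforms the single left $\psi$ generator on the left-hand side into an extra link in the leading chain on the right. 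Part (iii) is the terminal iteration of (ii), in which the leading chain grows to its maximal length $\psid{2b-r+2}{b-r+2} \chaind{\cdots}{\cdots} \clapdots \chaind{e+b}{e}$. Part (iv) is the $r=b+1$, $s=0$ boundary case of (i); the only additional subtlety is confirming that the Garnir relation $\psi_1 \psi_2 \cdots \psi_{b+1} z_\la = 0$ is not inadvertently triggered, which is ensured by the range $0\leq k \leq b-1$.

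Part (v) is the main obstacle, as the two sides are superficially very different expressions. My plan is to start from the left-hand side and first commute the long $\psid{e-1}{1}$ factor rightward through the chain $\chaind{e}{2}\clapdots\chaind{e+b-1}{b+1}$ using the far-commutation relation for well-separated strands, producing a shape to which part (iii) of \cref{smallkeylemmasctd}, combined with part (iii) of \cref{smallkeylemmas}, can be applied iteratively. Each application collapses one of the chains $\chaind{\cdot}{\cdot}$ on the right of the second block into an additional generator of the $\psid{a-1}{2} \chaind{a}{3} \clapdots$ tail on the far right, eventually matching the target expression. The bookkeeping of which chain is being collapsed at each stage will be the most error-prone step, but the strategy is algorithmic once the residue sequence is tracked carefully.

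The principal obstacle is not conceptual but purely the density of the notation: every invocation of a braid or quadratic relation needs a verification that the three residues at that position are as required, and the chains must be realigned after each step. With $e>3$ the residue arithmetic modulo $e$ is well-behaved enough that no ``off by one'' coincidences intrude, so the computations, while long, are mechanical. The $e=2$ and $e=3$ situations are handled by the separate arguments used earlier in the paper and so need not be revisited here.
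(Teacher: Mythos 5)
Your overall strategy---prove (i) by a controlled braid move and obtain (ii), (iii) and (v) by iterating the earlier parts---matches the paper's, but several of the concrete steps as you describe them would fail. First, the $k=0$ case of (i) is \emph{not} a tautology: for $k=0$ the left-hand side begins $\psid{e+b-r}{b-r+s+2} \psid{2b-r+s+3}{b-r+s+3}\cdots$ while the right-hand side begins $\psi_{2b-r+s+2}\,\psid{e+b-r}{b-r+s+2} \psid{2b-r+s+2}{b-r+s+3}\cdots$; each value of $k$ is a separate one-braid-move identity (the right-hand side for parameter $k$ is literally the left-hand side for parameter $k+1$), so there is nothing to induct on and the base case already carries all the content. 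Second, your mechanism for controlling the braid move is not the one that works here: the relevant residue triple is $(e-b+r-1,\; s+1,\; e-b+k)$, and since $0\leq s\leq a-3$ forces $(s+1)\pm 1\leq a-1$ while $e-b+r-1\geq a$, the outer residue can never equal the middle residue $\pm 1$, so the braid relation produces \emph{no} error term at all. There is no correction term to kill with a $\psi_p^2 e(\bfi)=0$ argument, and you have not verified that the term you posit would vanish that way. (The iteration ranges are also off: (ii) applies (i) for $k=0,\dots,b$, and (iii) applies (ii) for $s=0,\dots,a-3$, not $k$ up to $s+1$.)

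For part (v) your opening move is incorrect: $\psid{e-1}{1}$ does not far-commute past $\chaind{e}{2}\clapdots\chaind{e+b-1}{b+1}$, since adjacent factors share all of the indices $2,\dots,e-1$. The paper instead applies part (iii) directly for $r=1,2,\dots,b$ in turn, each application absorbing one factor of the interleaved word and producing $\psid{e-1}{1}\psid{b+2}{2}\chaind{b+3}{3}\clapdots\chaind{e+b}{e}\psid{a-1}{2}\chaind{a}{3}\clapdots\chaind{e-2}{b+1}z_\la$, and then---this is the step missing from your plan---finishes with $b$ applications of part (iv), for $k=0,\dots,b-1$, to push the $\psid{e-1}{1}$ block into its final position. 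Part (iv) is not merely a boundary remark about the Garnir relation; it is the engine of the second half of the proof of (v), and without it the right-hand side of (v) cannot be reached. \Cref{smallkeylemmas}(iii), which you invoke, is used upstream in the proof of \cref{smallphivt} to arrive at the left-hand side of (v), not inside the proof of (v) itself.
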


\begin{proof}
\begin{enumerate}[label=(\roman*)]
\item We have
\begin{align*}
&\psid{2b-r+s+2}{2b-r+s+3-k} \psid{e+b-r}{b-r+s+2} \psid{2b-r+s+3-k}{b-r+s+3} \psid{2b-r+s+4}{b-r+s+4} \chaind{2b-r+s+5}{b-r+s+5} \clapdots \chaind{e+b}{e} \psid{b-r+s+1}{b-r+2} \psid{e-r}{b-r+3} \chaind{e-r+1}{b-r+4} \clapdots \chaind{e-2}{b+1} z_\la\\
\\
&=\psid{2b-r+s+2}{2b-r+s+3-k} \psid{e+b-r}{2b-r+s+4-k} (\psi_{2b-r+s+3-k} \psi_{2b-r+s+2-k} \psi_{2b-r+s+3-k} e(\bfj)) \cdot\\
&\qquad \cdot \psid{2b-r+s-1-k}{b-r+s+2} \psid{2b-r+s+2-k}{b-r+s+3}\psid{2b-r+s+4}{b-r+s+4} \chaind{2b-r+s+5}{b-r+s+5} \clapdots \chaind{e+b}{e} \psid{b-r+s+1}{b-r+2} \psid{e-r}{b-r+3} \chaind{e-r+1}{b-r+4} \clapdots \chaind{e-2}{b+1} z_\la\\
\intertext{where positions $2b-r+s+2-k$, $2b-r+s+3-k$, and $2b-r+s+4-k$ of $\bfj$ are positions $b-r+2$, $b+s+2$ and $e+b+1-k$ of $\bfi_\la$, respectively; i.e.~the corresponding residues are $e-b+r-1$, $s+1$, and $e-b+k$, respectively.
Since $1\leq s+1 \leq a-2$ and $a \leq e-b+r-1 \leq e-1$, it is clear that the corresponding braid relation never produces an error term, and the result follows:}
&=\psid{2b-r+s+2}{2b-r+s+2-k} \psid{e+b-r}{b-r+s+2} \psid{2b-r+s+2-k}{b-r+s+3} \psid{2b-r+s+4}{b-r+s+4} \chaind{2b-r+s+5}{b-r+s+5} \clapdots \chaind{e+b}{e} \psid{b-r+s+1}{b-r+2} \psid{e-r}{b-r+3} \chaind{e-r+1}{b-r+4} \clapdots \chaind{e-2}{b+1} z_\la.
\end{align*}

\item Apply part~\ref{smallkeylemmasctd1} for $k=0$, then $k=1$, and so on up to and including the case $k=b$.

\item Apply part~\ref{smallkeylemmasctd2} for $s=0$, then $s=1$, and so on up to and including the case $s=a-3$.

\item The proof is identical to part~\ref{smallkeylemmasctd1}, except now we notice that the third residue of the relevant triple is $e-b\leq e-b+k\leq e-1$, while the second is $1$.

\item Apply part~\ref{smallkeylemmasctd3} for $r=1$, then $r=2$, and so on up to and including the case $r=b$, to yield
\[
\psid{e-1}{1} \psid{e}{2} \chaind{e+1}{3} \clapdots \chaind{e+b-1}{b+1} \psid{2b+2}{b+2} \chaind{2b+3}{b+3} \clapdots \chaind{e+b}{e} z_\la = \psid{e-1}{1} \psid{b+2}{2} \chaind{b+3}{3} \clapdots \chaind{e+b}{e} \psid{a-1}{2} \chaind{a}{3} \clapdots \chaind{e-2}{b+1} z_\la.
\]
Then, applying part~\ref{smallkeylemmasctd4} for $k=0$, then $k=1$, and so on up to and including the case $k=b-1$ gives
\[
\psid{e-1}{1} \psid{b+2}{2} \chaind{b+3}{3} \clapdots \chaind{e+b}{e} \psid{a-1}{2} \chaind{a}{3} \clapdots \chaind{e-2}{b+1} z_\la =
\psid{b+1}{2} \psid{e-1}{1} \psi_2 \psid{b+3}{3} \chaind{b+4}{4} \clapdots \chaind{e+b}{e} \psid{a-1}{2} \chaind{a}{3} \clapdots \chaind{e-2}{b+1} z_\la.\qedhere
\]
\end{enumerate}
\end{proof}

\begin{lem}\label{lem:nextreduction}
Suppose that $e>3$.
	
\begin{enumerate}[label=(\roman*)]
	
\item\label{lem:nextreduction1}
For $3\leq x \leq a-1$, we have
\begin{align*}
&\psid{e-1}{b+x} \psid{b+1}{2} \chaind{b+2}{3} \clapdots \chaind{b+x-1}{x} \psid{b+x}{x} \chaind{b+x+1}{x+1} \clapdots \chaind{e+b}{e} \psid{a-1}{2} \chaind{a}{3} \clapdots \chaind{e-2}{b+1} z_\la\\
= \; &\psid{e-1}{b+x+1} \psid{b+1}{2} \chaind{b+2}{3} \clapdots \chaind{b+x}{x+1} \psid{b+x+1}{x+1} \chaind{b+x+2}{x+2} \clapdots \chaind{e+b}{e} \psid{a-1}{2} \chaind{a}{3} \clapdots \chaind{e-2}{b+1} z_\la.
\end{align*}

\item\label{lem:nextreduction2}
We have
\[
\psid{b+1}{2} \psid{e-1}{3} \psid{b+3}{3} \chaind{b+4}{4} \clapdots \chaind{e+b}{e} \psid{a-1}{2} \chaind{a}{3} \clapdots \chaind{e-2}{b+1} z_\la = \psid{b+1}{2} \chaind{b+2}{3} \clapdots \chaind{e-1}{a} \psid{e}{a} \chaind{e+1}{a+1} \clapdots \chaind{e+b}{e} \psid{a-1}{2} \chaind{a}{3} \clapdots \chaind{e-2}{b+1} z_\la.
\]
\end{enumerate}
\end{lem}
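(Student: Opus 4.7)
The plan is to establish both identities through a sequence of KLR braid and commutation moves applied to the products of $\psi$ generators acting on $z_\la$, together with the quadratic relation $\psi_r^2 e(\bfi) = e(\bfi)$ in the appropriate residue case, while disposing of error terms that vanish either via $y_r z_\la = 0$ (from the Specht presentation) or via \cref{psiaction} (because the resulting residue sequences admit no standard $\la$-tableau). Both parts build directly on the techniques already established in \cref{smallkeylemmas,smallkeylemmasctd}.

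For Part~\ref{lem:nextreduction1}, the LHS contains a trailing $\psi_{b+x}$ at the end of $\psid{e-1}{b+x}$ together with an initial $\psid{b+x}{x}$ at the start of the third factor, whereas the RHS replaces these with a single $\psid{b+x}{x+1}$ appended to the middle block. The approach is to slide the trailing $\psi_{b+x}$ rightward through the middle block $\psid{b+1}{2}\chaind{b+2}{3}\clapdots\chaind{b+x-1}{x}$. Since $\psi_{b+x}$ commutes with every $\psi_j$ for which $|j-(b+x)|>1$, the only nontrivial interaction in the middle block is with the unique $\psi_{b+x-1}$ that appears (at the start of $\psid{b+x-1}{x}$). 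After these commutations one reaches a configuration of the form $\psi_{b+x}\psi_{b+x-1}\psi_{b+x}$, at which point the cubic braid relation can be applied; the main braid term then combines with the initial $\psi_{b+x}$ of $\psid{b+x}{x}$ via a quadratic relation to drop two generators, yielding the RHS. For Part~\ref{lem:nextreduction2}, the analogous strategy applies: we expand $\psid{e-1}{3}$ as $\psi_{e-1}\psi_{e-2}\cdots\psi_3$ and push each generator rightward through the subsequent factors of $\psid{b+3}{3}\chaind{b+4}{4}\clapdots\chaind{e+b}{e}$, using braid moves to repackage the product into the canonical form $\chaind{b+2}{3}\clapdots\chaind{e-1}{a}\psid{e}{a}$ on the RHS.

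The principal obstacle in both parts is the bookkeeping. At every braid move we must identify the residue triple at the three positions involved (computed by acting the preceding portion of the product on $\bfi_\la$, whose explicit form is recorded in the proof of \cref{tableauswitch}), invoke the correct case of the cubic relation, and verify that every error term either vanishes directly via $y_r z_\la = 0$ and the mixed commutation relations, or corresponds to a residue sequence not realised by any standard $\la$-tableau. I anticipate being able to invoke \cref{smallkeylemmas,smallkeylemmasctd} as intermediate steps to streamline several of the resulting reductions, but the calculation will nonetheless require the same careful, inductive format seen in the preceding lemmas.
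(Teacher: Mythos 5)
Your overall strategy---direct manipulation with the KLR braid and quadratic relations, disposing of putative error terms via the Specht relations---is the same as the paper's, and you correctly locate the first braid triple in part~(i): the trailing $\psi_{b+x}$ of $\psid{e-1}{b+x}$, the leading $\psi_{b+x-1}$ of $\psid{b+x-1}{x}$, and the leading $\psi_{b+x}$ of $\psid{b+x}{x}$. But your account of how the computation closes up has a genuine gap. You claim that after this single braid move ``the main braid term then combines with the initial $\psi_{b+x}$ of $\psid{b+x}{x}$ via a quadratic relation''; that initial $\psi_{b+x}$ has already been consumed as the third entry of the braid triple, and there is no second $\psi_{b+x}$ remaining to pair with, so no quadratic relation is available at index $b+x$. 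What actually happens is a descending cascade: the braid move outputs $\psi_{b+x-1}\psi_{b+x}\psi_{b+x-1}$, whose trailing $\psi_{b+x-1}$ forms a new braid triple with the leading $\psi_{b+x-2}$ of the next sub-block and the leading $\psi_{b+x-1}$ of the block after that, and so on. One needs $b$ successive braid moves at indices $b+x, b+x-1,\dots,x+1$ (all error-free, since the residue triples are $(0,x-1,-b+j)$ for $0\leq j\leq b-1$ and $-b+j\not\equiv 0 \pmod e$) before the calculation terminates with the quadratic relation $\psi_x^2e(\bfi)=e(\bfi)$ (residues $(x-1,0)$ with $x-1\not\equiv 0,\pm1$) at index $x$, not at $b+x$. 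The two generators that disappear are one $\psi_{b+x}$ from the first factor and the bottom $\psi_x$ of $\psid{b+x}{x}$, which only becomes visible once the cascade has propagated all the way down; as written, your plan stalls after the first braid move.

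Two smaller points. For this particular lemma none of the braid or quadratic relations produce error terms, so the apparatus you set up for killing them (vanishing via $y_rz_\la=0$ or via unrealisable residue sequences) is not needed here, though it is needed elsewhere in \cref{sec:calc}. And for part~(ii) you propose an independent computation pushing each generator of $\psid{e-1}{3}$ rightward; the paper instead observes that $\psid{b+1}{2}\psid{e-1}{3}=\psid{e-1}{b+3}\psid{b+1}{2}\psid{b+2}{3}$, which puts the expression in the form of part~(i) with $x=3$, and then simply iterates part~(i) for $x=3,4,\dots,a-1$. Your route would amount to redoing the same cascades by hand, so you should structure part~(i) so that part~(ii) follows from it immediately.
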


\begin{proof}
\begin{enumerate}[label=(\roman*)]
\item
We have
\begin{align*}
&\psid{e-1}{b+x} \psid{b+1}{2} \chaind{b+2}{3} \clapdots \chaind{b+x-1}{x} \psid{b+x}{x} \chaind{b+x+1}{x+1} \clapdots \chaind{e+b}{e} \psid{a-1}{2} \chaind{a}{3} \clapdots \chaind{e-2}{b+1} z_\la\\
= \; &\psid{e-1}{b+x+1} \psid{b+1}{2} \chaind{b+2}{3} \clapdots \chaind{b+x-2}{x-1} (\psi_{b+x} \psi_{b+x-1} \psi_{b+x}(0, x-1, -b))\cdot\\
&\psid{b+x-2}{x} \psid{b+x-1}{x} \psid{b+x+1}{x+1} \chaind{b+x+2}{x+2} \clapdots \chaind{e+b}{e} \psid{a-1}{2} \chaind{a}{3} \clapdots \chaind{e-2}{b+1} z_\la\\
= \; &\psid{e-1}{b+x+1} \psid{b+1}{2} \chaind{b+2}{3} \clapdots \chaind{b+x-2}{x-1} \psi_{b+x-1} \psi_{b+x} (\psi_{b+x-1} \psi_{b+x-2} \psi_{b+x-1}(0,x-1, -b+1))\cdot\\
&\psid{b+x-3}{x} \psid{b+x-2}{x} \psid{b+x+1}{x+1} \chaind{b+x+2}{x+2} \clapdots \chaind{e+b}{e} \psid{a-1}{2} \chaind{a}{3} \clapdots \chaind{e-2}{b+1} z_\la\\
\vdots \;\; &\\
= \; &\psid{e-1}{b+x+1} \psid{b+1}{2} \chaind{b+2}{3} \clapdots \chaind{b+x-2}{x-1} \psid{b+x-1}{x+1} \psid{b+x}{x+2} (\psi_{x+1} \psi_x \psi_{x+1}(0, x-1, -1))\cdot\\
= \; & \psi_x \psid{b+x+1}{x+1} \chaind{b+x+2}{x+2} \clapdots \chaind{e+b}{e} \psid{a-1}{2} \chaind{a}{3} \clapdots \chaind{e-2}{b+1} z_\la\\
= \; &\psid{e-1}{b+x+1} \psid{b+1}{2} \chaind{b+2}{3} \clapdots \chaind{b+x}{x+1} (\psi_x^2(x-1, 0)) \psid{b+x+1}{x+1} \chaind{b+x+2}{x+2} \clapdots \chaind{e+b}{e} \psid{a-1}{2} \chaind{a}{3} \clapdots \chaind{e-2}{b+1} z_\la\\
= \; &\psid{e-1}{b+x+1} \psid{b+1}{2} \chaind{b+2}{3} \clapdots \chaind{b+x}{x+1} \psid{b+x+1}{x+1} \chaind{b+x+2}{x+2} \clapdots \chaind{e+b}{e} \psid{a-1}{2} \chaind{a}{3} \clapdots \chaind{e-2}{b+1} z_\la
\end{align*}

\item By applying part~\ref{lem:nextreduction1} repeatedly, we have
\begin{align*}
\psid{b+1}{2} \psid{e-1}{3} \psid{b+3}{3} \chaind{b+4}{4} \clapdots \chaind{e+b}{e} \psid{a-1}{2} \chaind{a}{3} \clapdots \chaind{e-2}{b+1} z_\la
=&\psid{e-1}{b+3} \psid{b+1}{2} \psid{b+2}{3} \psid{b+3}{3} \chaind{b+4}{4} \clapdots \chaind{e+b}{e} \psid{a-1}{2} \chaind{a}{3} \clapdots \chaind{e-2}{b+1} z_\la\\
= \;&\psid{e-1}{b+4} \psid{b+1}{2} \chaind{b+2}{3} \chaind{b+3}{4} \psid{b+4}{4} \psid{b+5}{5} \clapdots \chaind{e+b}{e} \psid{a-1}{2} \chaind{a}{3} \clapdots \chaind{e-2}{b+1} z_\la\\
\vdots \;\; &\\
= \;&\psid{b+1}{2} \chaind{b+2}{3} \clapdots \chaind{e-1}{a} \psid{e}{a} \chaind{e+1}{a+1} \clapdots \chaind{e+b}{e} \psid{a-1}{2} \chaind{a}{3} \clapdots \chaind{e-2}{b+1} z_\la.\qedhere
\end{align*}
\end{enumerate}
\end{proof}

\begin{lem}\label{lem:nexttreduction}
Suppose that $e>3$.
For $0\leq x \leq b-1$ and $1 \leq y \leq b-x+1$, we have
\[
\psi_{a+2x+y} \psid{e+x+y}{a+x+y} \chaind{e+x+y+1}{a+x+y+1} \clapdots \chaind{e+b}{e} \psid{a-1}{2} \chaind{a}{3} \clapdots \chaind{e-2}{b+1} z_\la = 0
\]
\end{lem}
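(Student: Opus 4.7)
I would prove this by downward induction on $y$ with $x$ fixed; equivalently, induction on the number $\delta:=b-x-y+1\ge 0$ of chains in the product $\psid{e+x+y}{a+x+y}\chaind{e+x+y+1}{a+x+y+1}\clapdots\chaind{e+b}{e}$. Let $E(x,y)$ denote the left-hand side and let $Q:=\psid{a-1}{2}\chaind{a}{3}\clapdots\chaind{e-2}{b+1}$; every $\psi$-generator appearing in $Q$ has subscript at most $e-2$.

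\emph{Base case} $\delta=0$ (equivalently $y=b-x+1$): the product of chains is empty and $a+2x+y=e+x+1$, so $E(x,b-x+1)=\psi_{e+x+1}\,Q\,z_\la$. Every generator of $Q$ has index at most $e-2<e+x+1-1$, so $\psi_{e+x+1}$ commutes through $Q$ to yield $Q\,\psi_{e+x+1}\,z_\la$. For $x\in[0,b-1]$ the index $e+x+1$ lies in $[e+1,e+b]$, disjoint from $\{b+1,e,e+b+1\}$, so $\psi_{e+x+1}\,z_\la=0$ by the defining relations of $\spe\la$.

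\emph{Inductive step} $\delta\ge 1$: set $k:=a+2x+y$, $l:=a+x+y$, $m:=e+x+y$, so $l\le k<m$. Commuting $\psi_k$ past $\psi_m,\dots,\psi_{k+2}$ (their indices exceed $k+1$), applying the KLR braid to the resulting triple $\psi_k\psi_{k+1}\psi_k$, and then commuting $\psi_{k+1}$ past $\psi_{k-1},\dots,\psi_l$ gives $\psi_k\psid{m}{l}=\psid{m}{l}\psi_{k+1}+\text{(braid error)}$. The main term of $E(x,y)$ then reads
\[
\psid{e+x+y}{a+x+y}\cdot\Bigl(\psi_{a+2x+y+1}\psid{e+x+y+1}{a+x+y+1}\chaind{e+x+y+2}{a+x+y+2}\clapdots\chaind{e+b}{e}\,Q\,z_\la\Bigr)=\psid{e+x+y}{a+x+y}\cdot E(x,y+1),
\]
since $a+2x+y+1=a+2x+(y+1)$ and $e+x+y+1=e+x+(y+1)$; this vanishes by the inductive hypothesis. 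The boundary subcases $k=m-1$ (no generators to commute past on the left of the braid) and $k=l$ (none to commute past on the right) are handled identically, the relevant commutation step simply being vacuous.

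\emph{Main obstacle.} The hard part is showing that the braid error term also vanishes. The KLR braid contributes a nonzero error only when the residue triple at positions $(k,k+1,k+2)$ satisfies $i_{k+2}=i_k\equiv i_{k+1}\pm1\pmod e$. One must compute this triple by tracking how the word to the right of the braid permutes $\bfi_\la=(0,-1,\dots,-b,1,\dots,a-1,0,-1,\dots,-b,1,\dots,a-1)$, and then either verify the triple fails the error condition, or show that the surviving error, in which $\psi_k$ and $\psi_{k+1}$ have been deleted from the otherwise reduced word, can be pushed through $Q$ to land on $z_\la$ as some $\psi_r$ with $r\notin\{b+1,e,e+b+1\}$, hence vanishing for the same reason as the base case. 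This parallels the residue bookkeeping carried out in the proofs of \cref{smallkeylemmas,smallkeylemmasctd,lem:nextreduction}.
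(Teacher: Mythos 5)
Your outline coincides with the paper's proof: reverse induction on $y$ for fixed $x$, the base case $y=b-x+1$ reducing to $\psi_{e+x+1}\psid{a-1}{2}\chaind{a}{3}\clapdots\chaind{e-2}{b+1}z_\la$ (which vanishes for exactly the reason you give, and which the paper merely asserts), and the inductive step converting $\psi_k\psid{m}{l}$ into $\psid{m}{l}\psi_{k+1}$ by a single braid relation so that your $E(x,y)$ becomes $\psid{e+x+y}{a+x+y}\,E(x,y+1)$.

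The gap is that you never dispose of the braid error term; you only describe two ways it might conceivably be handled, without deciding which applies or carrying either out, and this is the entire content of the inductive step. The paper's resolution is that the error simply never occurs: the residue triple of the idempotent sitting under $\psi_{a+2x+y}\psi_{a+2x+y+1}\psi_{a+2x+y}$, i.e.\ in positions $a+2x+y$, $a+2x+y+1$, $a+2x+y+2$, is $(\alpha,-x,-x-1)$, where $\alpha=-y-x-1$ if $y\leq b-x-1$ and $\alpha=a-1$ if $y=b-x$. The KLR braid relation produces an error only when the first and third residues agree, i.e.\ $\alpha\equiv -x-1\pmod e$. In the first case this forces $y\equiv 0\pmod e$, impossible since $1\leq y\leq b-x\leq b<e$; in the second it forces $a\equiv -x$, i.e.\ $x\equiv b\pmod e$ (using $a+b=e$), impossible since $0\leq x\leq b-1$. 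So your second proposed fallback (pushing a surviving error term through $Q$ onto $z_\la$) is never needed, and until you perform this residue computation the argument is incomplete at precisely the step the lemma exists to certify.
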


\begin{proof}
We fix $x$, and prove the statement by reverse induction on $y$.
If $y = b-x+1$, the expression is
\[
\psi_{e+x+1} \psid{a-1}{2} \chaind{a}{3} \clapdots \chaind{e-2}{b+1} z_\la = 0.
\]
If $y< b-x+1$, then
\begin{align*}
& \psi_{a+2x+y} \psid{e+x+y}{a+x+y} \chaind{e+x+y+1}{a+x+y+1} \clapdots \chaind{e+b}{e} \psid{a-1}{2} \chaind{a}{3} \clapdots \chaind{e-2}{b+1} z_\la\\
= \; &\psid{e+x+y}{a+2x+y+2} (\psi_{a+2x+y} \psi_{a+2x+y+1} \psi_{a+2x+y}(\alpha, -x, -x-1)) \psid{a+2x+y-1}{a+x+y} \chaind{e+x+y+1}{a+x+y+1} \clapdots \chaind{e+b}{e} \psid{a-1}{2} \chaind{a}{3} \clapdots \chaind{e-2}{b+1} z_\la\\
= \;&\psid{e+x+y}{a+x+y} \psi_{a+2x+y+1} \psid{e+x+y+1}{a+x+y+1} \chaind{e+x+y+2}{a+x+y+2} \clapdots \chaind{e+b}{e} \psid{a-1}{2} \chaind{a}{3} \clapdots \chaind{e-2}{b+1} z_\la\\
= \;&0 \text{ by the induction hypothesis,}
\end{align*}
where $\alpha = -y-x-1$ if $y\leq b-x-1$ and $\alpha = a-1$ if $y= b-x$.
\end{proof}

\begin{lem}\label{lem:penulttreduction}
Suppose that $e>3$.
For $0\leq x \leq b-1 < e-2$ or $0\leq x\leq b-2 = e-3$, we have
\begin{enumerate}[label=(\roman*)]
\Item\label{lem:penulttreduction1}
\begin{align*}
\left(\psiu{a+x}{a+2x-1} \psid{a+2x-1}{a+x}\right)
\psid{e+x+1}{a+x+1} \chaind{e+x+2}{a+x+2} \clapdots \chaind{e+b}{e} \psid{a-1}{2} \chaind{a}{3} \clapdots \chaind{e-2}{b+1} z_\la
= \psid{e+x+1}{a+x+1} \chaind{e+x+2}{a+x+2} \clapdots \chaind{e+b}{e} \psid{a-1}{2} \chaind{a}{3} \clapdots \chaind{e-2}{b+1} z_\la;
\end{align*}

\Item\label{lem:penulttreduction2}
\begin{align*}
&\psid{b+1}{2} \chaind{b+2}{3} \clapdots \chaind{e-2}{a-1} \psid{e-1}{a+x} \chaind{e}{a+x+1} \clapdots \chaind{e+x-1}{a+2x} \psid{e+x}{a+x} \chaind{e+x+1}{a+x+1} \clapdots \chaind{e+b}{e} \psid{a-1}{2} \chaind{a}{3} \clapdots \chaind{e-2}{b+1} z_\la\\
&= -\psid{b+1}{2} \chaind{b+2}{3} \clapdots \chaind{e-2}{a-1} \psid{e-1}{a+(x+1)} \chaind{e}{a+(x+1)+1} \clapdots \chaind{e+(x+1)-1}{a+2(x+1)} \psid{e+(x+1)}{a+(x+1)} \chaind{e+(x+1)+1}{a+(x+1)+1} \clapdots \chaind{e+b}{e} \psid{a-1}{2} \chaind{a}{3} \clapdots \chaind{e-2}{b+1} z_\la.
\end{align*}
\end{enumerate}
\end{lem}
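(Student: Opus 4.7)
The plan is to prove both parts by direct manipulation of the KLR relations, with part~\ref{lem:penulttreduction1} proceeding by iterated use of the quadratic relation and part~\ref{lem:penulttreduction2} by an explicit sequence of braid and commutation moves.

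For part~\ref{lem:penulttreduction1}, the case $x=0$ is immediate since both $\psiu{a}{a-1}$ and $\psid{a-1}{a}$ equal $1_\bbf$ by convention. For $x \geq 1$, observe that
\[
\psiu{a+x}{a+2x-1}\psid{a+2x-1}{a+x} = \psi_{a+x}\cdots\psi_{a+2x-2}\,\psi_{a+2x-1}^{2}\,\psi_{a+2x-2}\cdots\psi_{a+x}.
\]
The strategy is to iteratively apply the quadratic relation $\psi_r^2 e(\bfi) = e(\bfi)$ from the inside out: at each stage, the residues at the relevant pair of positions (after the preceding $\psi$'s and the long tail have acted on $z_\la$) should turn out to be distinct and non-adjacent on $\Gamma$, so that $\psi_r^2$ cancels and the next-innermost $\psi_{r-1}^2$ is exposed. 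After $x$ such cancellations the entire prefix collapses to the identity and the claimed equality follows. The residue conditions must be verified at each stage, which is a routine (if tedious) exercise given the structure of $\bfi_\la$ and the hypothesis on $x$.

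For part~\ref{lem:penulttreduction2}, I would transform the staircase on the left into the $(x+1)$-indexed staircase on the right by successive braid and commutation moves, pulling the block $\psid{e+x}{a+x}$ through the preceding $\chaind{e+x-1}{a+2x}$ step by step. The overall $-1$ sign should arise from exactly one application of the braid relation $\psi_r\psi_{r+1}\psi_r \,e(\bfi)= (\psi_{r+1}\psi_r\psi_{r+1}-1)e(\bfi)$ at the unique point where the relevant residue triple is of the form $(i,i-1,i)$; all other braid instances arising in the manoeuvre should be of generic type and contribute no error. Any error terms that do appear must be shown to vanish on $z_\la$, either because a resulting factor $\psi_r^2 e(\bfi)$ has $i_r = i_{r+1}$, or because the reorganised word lands on a residue sequence admitting no standard $\la$-tableau.

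The main obstacle in both parts is the combinatorial residue bookkeeping: every application of a braid or quadratic relation is sensitive to the residue triple or pair at the positions in question, and these residues depend globally on the accumulated permutation from all preceding $\psi$'s. The two hypothesis ranges ($0 \leq x \leq b-1 < e-2$ and $0 \leq x \leq b-2 = e-3$) are precisely what is needed to keep all relevant residues clear of the adjacencies that would trigger unwanted error terms, and the earlier lemmas of this section---particularly~\cref{smallkeylemmas,smallkeylemmasctd,lem:nextreduction,lem:nexttreduction}---should supply the intermediate reductions that make both arguments tractable.
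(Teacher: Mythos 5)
Your plan for part (i) is precisely the paper's proof: the palindromic prefix telescopes by applying the quadratic relation from the inside out, first $\psi_{a+2x-1}^2$ at residues $(-x-1,-x+1)$, then $\psi_{a+2x-2}^2$ at $(-x-1,-x+2)$, and so on down to $\psi_{a+x}^2$ at $(-x-1,0)$, each pair being distinct and non-adjacent, so nothing further is needed there.

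For part (ii) your overall strategy matches the paper's, but two details in your description would derail the execution. First, the braid case producing the $-1$ is $i_{r+2}=i_r=i_{r+1}-1$, i.e.\ a residue triple of the form $(i,i+1,i)$; in the proof this is $\psi_{a+2x}\psi_{a+2x+1}\psi_{a+2x}$ acting at residues $(-x-1,-x,-x-1)$. The triple $(i,i-1,i)$ you name is the $+1$ case and would give the wrong sign. Second, and more substantively, your framing reverses the roles of the two terms at the critical braid: it is the leading term $\psi_{a+2x+1}\psi_{a+2x}\psi_{a+2x+1}(\cdots)$ that must be killed --- it vanishes by \cref{lem:nexttreduction} with $y=1$ after commuting the rightmost $\psi_{a+2x+1}$ through to the tail --- while the ``$-1$'' correction term is exactly what survives, so it cannot be dismissed as an error term to be annihilated. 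Moreover, the surviving term is not yet the right-hand side: it carries a leftover palindromic factor $\psiu{a+x}{a+2x-1} \psid{a+2x-1}{a+x}$, and the final step of the paper's argument is to cancel this factor using part (i). Your plan never invokes part (i) inside the proof of part (ii); without that step the rearrangement does not land on the claimed word.
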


\begin{proof}
\begin{enumerate}[label=(\roman*)]
\item

Firstly, it is obvious that the statement holds when $x=0$, since the term $\psiu{a}{a-1}\psid{a-1}{a}$ is trivial.
Now assuming that $x>0$, we have

\begin{align*}
&\psiu{a+x}{a+2x-1} \psid{a+2x-1}{a+x} \psid{e+x+1}{a+x+1} \chaind{e+x+2}{a+x+2} \clapdots \chaind{e+b}{e} \psid{a-1}{2} \chaind{a}{3} \clapdots \chaind{e-2}{b+1} z_\la\\
= \; &\psiu{a+x}{a+2x-2} (\psi_{a+2x-1}^2(-x-1,-x+1)) \psid{a+2x-2}{a+x} \psid{e+x+1}{a+x+1} \chaind{e+x+2}{a+x+2} \clapdots \chaind{e+b}{e} \psid{a-1}{2} \chaind{a}{3} \clapdots \chaind{e-2}{b+1} z_\la\\
= \; &\psiu{a+x}{a+2x-3} (\psi_{a+2x-2}^2(-x-1,-x+2)) \psid{a+2x-3}{a+x} \psid{e+x+1}{a+x+1} \chaind{e+x+2}{a+x+2} \clapdots \chaind{e+b}{e} \psid{a-1}{2} \chaind{a}{3} \clapdots \chaind{e-2}{b+1} z_\la\\
\vdots \;\; &\\
= \; &(\psi_{a+x}^2(-x-1,0)) \psid{e+x+1}{a+x+1} \chaind{e+x+2}{a+x+2} \clapdots \chaind{e+b}{e} \psid{a-1}{2} \chaind{a}{3} \clapdots \chaind{e-2}{b+1} z_\la\\
= \; &\psid{e+x+1}{a+x+1} \chaind{e+x+2}{a+x+2} \clapdots \chaind{e+b}{e} \psid{a-1}{2} \chaind{a}{3} \clapdots \chaind{e-2}{b+1} z_\la.
\end{align*}

\item Applying \cref{lem:nexttreduction}, we have
\begin{align*}
&\psid{b+1}{2} \chaind{b+2}{3} \clapdots \chaind{e-2}{a-1} \psid{e-1}{a+x} \chaind{e}{a+x+1} \clapdots \chaind{e+x-1}{a+2x} \psid{e+x}{a+x} \chaind{e+x+1}{a+x+1} \clapdots \chaind{e+b}{e} \psid{a-1}{2} \chaind{a}{3} \clapdots \chaind{e-2}{b+1} z_\la\\
= \; &\psid{b+1}{2} \chaind{b+2}{3} \clapdots \chaind{e-2}{a-1} \psid{e-1}{a+x} \chaind{e}{a+x+1} \clapdots \chaind{e+x-2}{a+2x-1} \psid{e+x-1}{a+2x+1} \psi_{a+2x} \psid{e+x}{a+x} \chaind{e+x+1}{a+x+1} \clapdots \chaind{e+b}{e} \psid{a-1}{2} \chaind{a}{3} \clapdots \chaind{e-2}{b+1} z_\la\\
= \; &\psid{b+1}{2} \chaind{b+2}{3} \clapdots \chaind{e-2}{a-1} \psid{e-1}{a+x} \chaind{e}{a+x+1} \clapdots \chaind{e+x-2}{a+2x-1} \psid{e+x-1}{a+2x+1} \chaind{e+x}{a+2x+2}\cdot\\
&(\psi_{a+2x} \psi_{a+2x+1} \psi_{a+2x}(-x-1, -x, -x-1)) \psid{a+2x-1}{a+x}  \psid{e+x+1}{a+x+1} \chaind{e+x+2}{a+x+2} \clapdots \chaind{e+b}{e} \psid{a-1}{2} \chaind{a}{3} \clapdots \chaind{e-2}{b+1} z_\la\\
= \; &0 - \psid{b+1}{2} \chaind{b+2}{3} \clapdots \chaind{e-2}{a-1} \psid{e-1}{a+x} \chaind{e}{a+x+1} \clapdots \chaind{e+x-2}{a+2x-1}\cdot\\
\; & \psid{e+x-1}{a+2x+1} \chaind{e+x}{a+2x+2} \psid{a+2x-1}{a+x}  \psid{e+x+1}{a+x+1} \chaind{e+x+2}{a+x+2} \clapdots \chaind{e+b}{e} \psid{a-1}{2} \chaind{a}{3} \clapdots \chaind{e-2}{b+1} z_\la\\
= \; &- \psid{b+1}{2} \chaind{b+2}{3} \clapdots \chaind{e-2}{a-1} \psid{e-1}{a+x+1} \chaind{e}{a+x+2} \clapdots \chaind{e+x}{a+2x+2} \left(\psiu{a+x}{a+2x-1} \psid{a+2x-1}{a+x}\right) \psid{e+x+1}{a+x+1} \chaind{e+x+2}{a+x+2} \clapdots \chaind{e+b}{e} \psid{a-1}{2} \chaind{a}{3} \clapdots \chaind{e-2}{b+1} z_\la\\
= \; &-\psid{b+1}{2} \chaind{b+2}{3} \clapdots \chaind{e-2}{a-1} \psid{e-1}{a+(x+1)} \chaind{e}{a+(x+1)+1} \clapdots \chaind{e+(x+1)-1}{a+2(x+1)} \psid{e+(x+1)}{a+(x+1)} \chaind{e+(x+1)+1}{a+(x+1)+1} \clapdots \chaind{e+b}{e} \psid{a-1}{2} \chaind{a}{3} \clapdots \chaind{e-2}{b+1} z_\la,\\
&\text{ by part~\ref{lem:penulttreduction1}.}\qedhere
\end{align*}
\end{enumerate}
\end{proof}

\begin{lem}\label{lem:penultreduction}
Suppose that $e>3$.
For $2\leq x \leq b+1$, we have
\[
\psiu{x}{a+x-3} \psid{a+x-3}{x} \chaind{a+x-2}{x+1} \clapdots \chaind{e-2}{b+1} z_\la = \psid{a+x-2}{x+1} \chaind{a+x-1}{x+2} \clapdots \chaind{e-2}{b+1} z_\la.
\]
\end{lem}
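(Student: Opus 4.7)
The statement is directly analogous to \cref{lem:penulttreduction}\ref{lem:penulttreduction1}, and my plan is to mimic its proof. Writing
\[
\psiu{x}{a+x-3}\psid{a+x-3}{x}=\psi_x\psi_{x+1}\cdots\psi_{a+x-4}\,\bigl(\psi_{a+x-3}^{2}\bigr)\,\psi_{a+x-4}\cdots\psi_{x+1}\psi_x,
\]
I would first collapse the innermost pair $\psi_{a+x-3}^{2}$ against the idempotent picked out by the generators to its right, reducing the expression to $\psiu{x}{a+x-4}\psid{a+x-4}{x}\chaind{a+x-2}{x+1}\clapdots\chaind{e-2}{b+1}z_\la$. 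Iterating then peels off $\psi_{a+x-4}^{2}$, $\psi_{a+x-5}^{2}$, $\ldots$, $\psi_x^{2}$ in turn, finally leaving $\chaind{a+x-2}{x+1}\clapdots\chaind{e-2}{b+1}z_\la$ unchanged. This is most cleanly phrased as reverse induction on the index $r$ of the pair $\psi_r^{2}$ currently being collapsed, with $r$ running from $a+x-3$ down to $x$.

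The only real content of the proof, and its main obstacle, is verifying that at each of the $a-2$ stages the residues $j_r, j_{r+1}$ of the relevant idempotent differ by neither $0$ nor $\pm 1$ in $I = \bbz/e\bbz$, so that $\psi_r^{2}e(\bfj) = e(\bfj)$ (rather than $0$ or $\pm(y_{r+1}-y_r)e(\bfj)$). Starting from $\bfi_\la = (0, e-1, \ldots, e-b, 1, 2, \ldots, a-1, 0, e-1, \ldots, a-1)$, I would track the permutation induced by $\chaind{a+x-2}{x+1}\clapdots\chaind{e-2}{b+1}$, which interleaves the first-column residues $e-b+1, \ldots, e-b$ with the arm residues $1, 2, \ldots, a-1$, followed by the further partial shift produced by $\psi_x\cdots\psi_{r-1}$ acting on the left. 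The expected outcome is that at every stage positions $r$ and $r+1$ carry one ``large'' residue from the first-column block and one ``small'' residue from the arm block, whose cyclic difference is at least $2$ thanks to the hypothesis $e > 3$ together with $a + b = e$ and $2 \le x \le b+1$.

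Once this residue check is in place, the iterative collapse proceeds exactly as in \cref{lem:penulttreduction}\ref{lem:penulttreduction1}, establishing the claim.
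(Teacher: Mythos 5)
Your proposal is correct and follows essentially the same route as the paper: peel off the pairs $\psi_{a+x-3}^2,\psi_{a+x-4}^2,\dots,\psi_x^2$ from the inside out, each time checking that the two relevant residues (which the paper records as $(-x+1,a-2),(-x+1,a-3),\dots,(-x+1,1)$) are non-adjacent so that $\psi_r^2 e(\bfj)=e(\bfj)$; your bound ``cyclic difference at least $2$'' from $a+b=e$, $2\le x\le b+1$ and $e>3$ is exactly the verification needed. The paper additionally notes the degenerate case $a<3$, where both sides are trivially $z_\la$, which your empty-iteration reading covers implicitly.
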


\begin{proof}
Firstly, if $a<3$, then the left-hand side and the right-hand side are both equal to the generator $z_\la$.
If $a\geq 3$, we have 
\begin{align*}
\psiu{x}{a+x-3} \psid{a+x-3}{x} \chaind{a+x-2}{x+1} \clapdots \chaind{e-2}{b+1} z_\la &= \psiu{x}{a+x-4} (\psi_{a+x-3}^2(-x+1,a-2)) \psid{a+x-4}{x} \psid{a+x-2}{x+1} \chaind{a+x-1}{x+2} \clapdots \chaind{e-2}{b+1} z_\la\\
&= \psiu{x}{a+x-5} (\psi_{a+x-4}^2(-x+1,a-3)) \psid{a+x-5}{x} \psid{a+x-2}{x+1} \chaind{a+x-1}{x+2} \clapdots \chaind{e-2}{b+1} z_\la\\
&\;\; \vdots\\
&= (\psi_x^2(-x+1,1)) \psid{a+x-2}{x+1} \chaind{a+x-1}{x+2} \clapdots \chaind{e-2}{b+1} z_\la\\
&= \psid{a+x-2}{x+1} \chaind{a+x-1}{x+2} \clapdots \chaind{e-2}{b+1} z_\la.\qedhere
\end{align*}
\end{proof}

Repeated application of \cref{lem:penultreduction} yields the following corollary.

\begin{cor}\label{lem:finalreduction}
Suppose that $e>3$.
We have
\[
\psiu{b+1}{e-2} \chainu{b}{e-3} \clapdots \chainu{2}{a-1} \psid{a-1}{2} \chaind{a}{3} \clapdots \chaind{e-2}{b+1} z_\la = z_\la
\]
\end{cor}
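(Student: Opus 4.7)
The plan is to deduce this by iterating \cref{lem:penultreduction} exactly $b$ times, once for each value $x = 2, 3, \dots, b+1$. The left-hand side of the claim has a cleanly nested structure: an outer $\psiu$-chain whose inward-most factor is $\psiu{2}{a-1}$ and whose general $x$-th factor (counted from the inside out) is $\psiu{x}{a+x-3}$, followed by an inner $\psid$-chain $\psid{a-1}{2} \chaind{a}{3} \clapdots \chaind{e-2}{b+1}$ whose $y$-th factor becomes $\psid{a+x-3}{x}$ after the reindexing $x = y+1$. Therefore each $\psiu{x}{a+x-3}$ meets precisely the head of the remaining $\psid$-tail $\psid{a+x-3}{x} \chaind{a+x-2}{x+1} \clapdots \chaind{e-2}{b+1}$, giving exactly the input required by \cref{lem:penultreduction}.

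First I would apply \cref{lem:penultreduction} with $x = 2$, which collapses $\psiu{2}{a-1} \psid{a-1}{2} \chaind{a}{3} \clapdots \chaind{e-2}{b+1} z_\la$ to $\psid{a}{3} \chaind{a+1}{4} \clapdots \chaind{e-2}{b+1} z_\la$. The new leading factor $\psid{a}{3}$ of the inner chain is still perfectly matched with the next innermost remaining factor of the outer chain, namely $\psiu{3}{a}$, so the lemma can be applied again with $x = 3$. Iterating the argument for $x = 4, 5, \dots, b+1$ strips one factor from each chain at every step while preserving the matching structure. After $b - 1$ applications the expression has reduced to $\psiu{b+1}{e-2} \psid{e-2}{b+1} z_\la$, and one final use with $x = b+1$ yields $z_\la$, since on the right-hand side of the lemma the trailing chain $\chaind{a+b-1}{b+2} \clapdots \chaind{e-2}{b+1}$ has starting upper index $a+b-1 = e-1$ strictly greater than the cutoff $e-2$ and is therefore interpreted as the empty product.

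The corollary thus reduces to purely routine index bookkeeping; there is no genuine obstacle beyond verifying that the ranges of indices line up correctly at each stage, which they do by design. All of the real content has already been carried out inside the proof of \cref{lem:penultreduction} itself.
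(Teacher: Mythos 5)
Your proposal is correct and is exactly the paper's argument: the paper derives this corollary by "repeated application of \cref{lem:penultreduction}", i.e.\ by iterating that lemma for $x=2,3,\dots,b+1$ precisely as you describe. Your index bookkeeping (including that the $x=b+1$ step leaves the empty product, using $a+b=e$) matches what is intended.
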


We are now ready to prove \cref{smallphivt}, using the above lemmas.

\begin{proof}[Proof of \cref{smallphivt}]
In order to prove that $\phi(v_\ttt) = (-1)^{b+1} 2 v_\ttt$, it suffices to prove that
\[
\psid{2e-1}{e} \!\!\! \cdot \, \psid{e}{1} \chaind{e+1}{2} \clapdots \chaind{2e-1}{e} z_\la = (-1)^{b+1} 2 \psid{2e-1}{e} z_\la \quad \text{or} \quad (-1)^{e-1} 2 \psid{2e-1}{e} z_\la.
\]
We first suppose that $e=3$ and $\la = ((2,1),(2,1))$ or $((1^3),(1^3))$.
Then we have
\begin{align*}
\psid{5}{3}\cdot\psid{3}{1}\psid{4}{2}\psid{5}{3}z_\la
= \psid{5}{4}(\psi_3^2(0,1))\psid{2}{1}\psid{4}{2}\psid{5}{3}z_\la
&= \psid{5}{4}(y_3-y_4)\psid{2}{1}\psid{4}{2}\psid{5}{3}z_\la.
\end{align*}
The first term becomes
\begin{align*}
 \psid{5}{4}(y_3\psi_2(0,2)) \psi_1 \psid{4}{2} \psid{5}{3} z_\la
& = \psid{5}{4} \psi_2(y_2 \psi_1(0,0)) \psid{4}{2} \psid{5}{3} z_\la\\
& = \psid{5}{4} \psi_2(\psi_1 y_1+1) \psid{4}{2} \psid{5}{3} z_\la\\
& = \psid{5}{4} \psi_4(\psi_2 \psi_3 \psi_2(2,0,2))\psid{5}{3} z_\la\\
& = \psid{5}{4} \psi_4 (\psi_3 \psi_2 \psi_3-1) \psid{5}{3} z_\la\\
& = \psid{5}{4} \psid{4}{2}\psi_5(\psi_3 \psi_4 \psi_3(1,0,2)) z_\la - \psi_5 (\psi_4^2(2,1)) \psid{5}{3} z_\la\\
& = \psid{5}{4} \psid{4}{2} \psi_5(\psi_4 \psi_3 \psi_4) z_\la - \psi_5 (y_5 - y_4) \psid{5}{3} z_\la\\
& = 0 - \psi_5 (y_5 \psi_5 (1,1)) \psid{4}{3} z_\la + \psi_5^2 (y_4 \psi_4(1,2)) \psi_3 z_\la\\
& = - \psi_5 (\psi_5 y_6 - 1) \psid{4}{3} z_\la + \psi_5^2 \psid{4}{3} y_5 z_\la\\
&= 0 + \psid{5}{3} z_\la + 0.
\end{align*}
The second term becomes
\begin{align*}
 - \psid{5}{4}\psid{2}{1}(y_4\psi_4(2,1))\psid{3}{2} \psid{5}{3} z_\la
& = - \psid{5}{4} \psid{2}{1}\psid{4}{2} (y_5 \psi_5(1,1)) \psid{4}{3} z_\la\\
& = - \psid{5}{4} \psid{2}{1}\psid{4}{2} (\psi_5 y_6 - 1) \psid{4}{3} z_\la\\
& = -0 + \psid{5}{4} \psid{2}{1} (\psi_4 \psi_3 \psi_4 (2,1,2)) \psi_2 \psi_3 z_\la\\
& = \psid{5}{4}\psid{2}{1} (\psi_3\psi_4\psi_3-1) \psi_2 \psi_3 z_\la\\
& = \psid{5}{4}\psid{2}{1}\psi_3\psi_4(\psi_3\psi_2\psi_3(2,1,0)) z_\la - \psid{5}{4}(\psi_2\psi_1\psi_2(0,2,0)) \psi_3 z_\la\\
& = \psid{5}{4}\psid{2}{1}\psi_3\psi_4(\psi_2\psi_3\psi_2) z_\la - \psid{5}{4}(\psi_1\psi_2\psi_1-1) \psi_3 z_\la\\
& = 0 - 0 + \psid{5}{3}z_\la.
\end{align*}
We now suppose that either $e=3$ and $\la=((3),(3))$, or $e>3$ and $a>2$.
Then we have
\begin{align*}
\psid{2e-1}{e} \!\!\! \cdot \, \psid{e}{1} \chaind{e+1}{2} \clapdots \chaind{2e-1}{e} z_\la &= \psid{2e-1}{e+1} (\psi_e^2(0,a{-}1)) \psid{e-1}{1} \psid{e+1}{2} \chaind{e+2}{3} \clapdots \chaind{2e-1}{e} z_\la\\
&= \psid{2e-1}{e+2} \psid{e-1}{1} (\psi_{e+1}^2(-1,a{-}1)) \psid{e}{2} \psid{e+2}{3} \chaind{e+3}{4} \clapdots \chaind{2e-1}{e} z_\la\\
&= \psid{2e-1}{e+3} \psid{e-1}{1}\psid{e}{2} (\psi_{e+2}^2(-2,a{-}1))  \psid{e+1}{3} \chaind{e+3}{4} \clapdots \chaind{2e-1}{e} z_\la\\
&\;\; \vdots\\
&= \psid{2e-1}{e+b+1} \psid{e-1}{1} \chaind{e}{2} \; \clapdots \chaind{e+b-2}{b} (\psi_{e+b}^2(a,a{-}1)) \psid{e+b-1}{b+1} \psid{e+b+1}{b+2} \chaind{e+b+2}{b+3} \clapdots \chaind{2e-1}{e} z_\la\\
&= \psid{2e-1}{e+b+1} \psid{e-1}{1} \chaind{e}{2} \; \clapdots \chaind{e+b-2}{b} (y_{e+b+1} - y_{e+b}) \psid{e+b-1}{b+1} \psid{e+b+1}{b+2} \chaind{e+b+2}{b+3} \clapdots \chaind{2e-1}{e} z_\la.
\end{align*}
It thus suffices to show that
\begin{align*}
y_{e+b+1} \psid{e-1}{1} \chaind{e}{2} \; \clapdots \chaind{e+b-1}{b+1} \psid{e+b+1}{b+2} \chaind{e+b+2}{b+3} \clapdots \chaind{2e-1}{e} z_\la
&= (-1)^{b+1} \psid{e+b}{e} z_\la\\
&= -y_{e+b} \psid{e-1}{1} \chaind{e}{2} \; \clapdots \chaind{e+b-1}{b+1} \psid{e+b+1}{b+2} \chaind{e+b+2}{b+3} \clapdots \chaind{2e-1}{e} z_\la.
\end{align*}
We will prove the first equality here, for which we have set up the relevant computational lemmas -- the second equality may be proved in a similar manner.
\begin{align*}
& y_{e+b+1} \psid{e-1}{1} \chaind{e}{2} \; \clapdots \chaind{e+b-1}{b+1} \psid{e+b+1}{b+2} \chaind{e+b+2}{b+3} \clapdots \chaind{2e-1}{e} z_\la\\
= \; &\psid{e-1}{1} \chaind{e}{2} \; \clapdots \chaind{e+b-1}{b+1}
(y_{e+b+1} \psi_{e+b+1} (1,a{-}1))  \psid{e+b}{b+2} \psid{e+b+2}{b+3} \chaind{e+b+3}{b+4} \clapdots \chaind{2e-1}{e} z_\la\\
= \; &\psid{e-1}{1} \chaind{e}{2} \; \clapdots \chaind{e+b-1}{b+1} \psid{e+b+1}{b+2} (y_{e+b+2} \psi_{e+b+2} (2,a{-}1)) \psid{e+b+1}{b+3} \psid{e+b+3}{b+4} \chaind{e+b+4}{b+5} \clapdots \chaind{2e-1}{e} z_\la\\
\vdots \; \; &\\
= \; &\psid{e-1}{1} \chaind{e}{2} \; \clapdots \chaind{e+b-1}{b+1} \psid{e+b+1}{b+2} \chaind{e+b+2}{b+3} \clapdots \chaind{2e-2}{e-1} (y_{2e-1} \psi_{2e-1} (a{-}1,a{-}1)) \psid{2e-2}{e} z_\la\\
= \; &-\psid{e-1}{1} \chaind{e}{2} \; \clapdots \chaind{e+b-1}{b+1} \psid{e+b+1}{b+2} \chaind{e+b+2}{b+3} \clapdots \chaind{2e-2}{e-1} \psid{2e-2}{e} z_\la.
\end{align*}
If $e=3$, then this expression becomes
\begin{align*}
-\psid{2}{1}\psid{4}{2}\psid{4}{3}z_{((3),(3))}
& = -\psid{2}{1}(\psi_4\psi_3\psi_4(1,2,1))
\psi_2\psi_3z_{((3),(3))}\\
& = -\psid{2}{1}(\psi_3\psi_4\psi_3+1)
\psi_2\psi_3z_{((3),(3))}\\
& = -\psid{2}{1} \psi_3 \psi_4 (\psi_3\psi_2\psi_3(1,2,0)) z_{((3),(3))} -(\psi_2\psi_1\psi_2(0,1,0))\psi_3z_{((3),(3))}\\
& = -\psid{2}{1}\psi_3\psi_4\psi_2\psi_3\psi_2z_{((3),(3))} - (\psi_1 \psi_2 \psi_1 + 1) \psi_3 z_{((3),(3))}\\
& = 0 - 0 - \psi_3z_{((3),(3))},
\end{align*}
as required.

If $e>3$, then the above expression becomes
\begin{align*}
& -\psid{e-1}{1} \chaind{e}{2} \; \clapdots \chaind{e+b-1}{b+1} \psid{2b+2}{b+2} \chaind{2b+3}{b+3} \clapdots \chaind{e+b}{e} z_\la \text{ by \cref{smallkeylemmas}\ref{smallkeylemmas3}}\\
= \; &-\psid{b+1}{2} \psid{e-1}{1} \psi_2 \psid{b+3}{3} \chaind{b+4}{4} \clapdots \chaind{e+b}{e} \psid{a-1}{2} \chaind{a}{3} \clapdots \chaind{e-2}{b+1} z_\la \text{ by \cref{smallkeylemmasctd}\ref{smallkeylemmasctd5}}\\
= \; &-\psid{b+1}{2} \psid{e-1}{3} (\psi_2 \psi_1 \psi_2 (0,1,0)) \psid{b+3}{3} \chaind{b+4}{4} \clapdots \chaind{e+b}{e} \psid{a-1}{2} \chaind{a}{3} \clapdots \chaind{e-2}{b+1} z_\la\\
= \; &-\psid{b+1}{2} \psid{e-1}{3} (\psi_1 \psi_2 \psi_1 +1) \psid{b+3}{3} \chaind{b+4}{4} \clapdots \chaind{e+b}{e} \psid{a-1}{2} \chaind{a}{3} \clapdots \chaind{e-2}{b+1} z_\la\\
= \; &0 -\psid{b+1}{2} \psid{e-1}{3} \psid{b+3}{3} \chaind{b+4}{4} \clapdots \chaind{e+b}{e} \psid{a-1}{2} \chaind{a}{3} \clapdots \chaind{e-2}{b+1} z_\la.
\end{align*}
Now, we handle the case $b=0$ separately.
In this case, the above expression is
\begin{align*}
-\psid{b+1}{2} \psid{e-1}{3} \psid{b+3}{3} \chaind{b+4}{4} \clapdots \chaind{e+b}{e} \psid{a-1}{2} \chaind{a}{3} \clapdots \chaind{e-2}{b+1} z_\la
&= -\psid{e-1}{3} \psiu{3}{e} z_\la\\
&= -\psid{e-1}{4} \cancel{(\psi_3^2 (2,0))} \psiu{4}{e} z_\la\\
&= -\psid{e-1}{5} \cancel{(\psi_4^2 (3,0))} \psiu{5}{e} z_\la\\
&\;\; \vdots\\
&= -\cancel{(\psi_{e-1}^2(-2,0))} \psi_e z_\la\\
&= -\psi_e z_\la,
\end{align*}
in which case our proof is complete here.\hlabel{b0pf}

Now suppose that $b>0$.
Then by \cref{lem:nextreduction} followed by repeated application of \cref{lem:penulttreduction}\ref{lem:penulttreduction2},
\begin{align*}
&-\psid{b+1}{2} \psid{e-1}{3} \psid{b+3}{3} \chaind{b+4}{4} \clapdots \chaind{e+b}{e} \psid{a-1}{2} \chaind{a}{3} \clapdots \chaind{e-2}{b+1} z_\la\\
= \; &-\psid{b+1}{2} \chaind{b+2}{3} \clapdots \chaind{e-1}{a} \psid{e}{a} \chaind{e+1}{a+1} \clapdots \chaind{e+b}{e} \psid{a-1}{2} \chaind{a}{3} \clapdots \chaind{e-2}{b+1} z_\la\\
= \; &(-1)^2 \psid{b+1}{2} \chaind{b+2}{3} \clapdots \chaind{e-2}{a-1} \psid{e-1}{a+1} \chaind{e}{a+2} \psid{e+1}{a+1} \chaind{e+2}{a+2} \clapdots \chaind{e+b}{e} \psid{a-1}{2} \chaind{a}{3} \clapdots \chaind{e-2}{b+1} z_\la\\
\vdots \; \; &\\
= \; &(-1)^b \psid{b+1}{2} \chaind{b+2}{3} \clapdots \chaind{e-2}{a-1} \psi_{e-1} \psi_e \dots \psi_{e+b-2} \psid{e+b-1}{e-1} \chaind{e+b}{e} \psid{a-1}{2} \chaind{a}{3} \clapdots \chaind{e-2}{b+1} z_\la\\
= \; &(-1)^{b+1} \psid{b+1}{2} \chaind{b+2}{3} \clapdots \chaind{e-2}{a-1} \psid{e+b}{e} \psid{a-1}{2} \chaind{a}{3} \clapdots \chaind{e-2}{b+1} z_\la\\
= \; &(-1)^{b+1} \psid{e+b}{e} \psid{b+1}{2} \chaind{b+2}{3} \clapdots \chaind{e-2}{a-1} \psid{a-1}{2} \chaind{a}{3} \clapdots \chaind{e-2}{b+1} z_\la\\
= \; &(-1)^{b+1} \psid{e+b}{e} \psiu{b+1}{e-2} \chainu{b}{e-3} \clapdots \chainu{2}{a-1} \psid{a-1}{2} \chaind{a}{3} \clapdots \chaind{e-2}{b+1} z_\la\\
= \; &(-1)^{b+1} \psid{e+b}{e} z_\la \text{ by \cref{lem:finalreduction}, completing the proof.}
\end{align*}
If $e>3$ and $\la = ((2,1^{e-2}),(2,1^{e-2}))$ or $((1^e),(1^e))$, then we have
\begin{align*}
\psid{2e-1}{e} \!\!\! \cdot \, \psid{e}{1} \chaind{e+1}{2} \clapdots \chaind{2e-1}{e} z_\la &= \psid{2e-1}{e+1} (\psi_e^2(0,1)) \psid{e-1}{1} \psid{e+1}{2} \chaind{e+2}{3} \clapdots \chaind{2e-1}{e} z_\la\\
&=  \psid{2e-1}{e+1} (y_e - y_{e+1}) \psid{e-1}{1} \psid{e+1}{2} \chaind{e+2}{3} \clapdots \chaind{2e-1}{e} z_\la,
\end{align*}
and the proof may be finished in a similar manner to the other cases.
\end{proof}

\subsection{Proof of \texorpdfstring{\cref{prop:cancellation}}{Propositions~\ref{prop:cancellation}}}

Let $\la = ((ke),(je))$ for some $j,k \geq 1$.
In order to prove \cref{prop:cancellation}, we now look at the action of the KLR generators $\psi_1,\dots,\psi_{n-1}$ on an arbitrary basis element $v_{\ttt}\in\spe\la$, where $\ttt$ does not necessarily lie in $\calt_e$.

\begin{lem}\label{lem:psiaction}
Let $\ttt \in \std\la$, $v_\ttt = v(a_1, \dots, a_{je})$, $1\leq r <n$, and $1\leq s < je$ such that $r\not\equiv 2s\pmod{e}$.
\begin{enumerate}[label=(\roman*)]
\item\label{lem:psiaction1} If $a_s=r$, $a_{s+1}=r+1$, then $\psi_r v(a_1, \dots, a_{je}) = 0$.

\item\label{lem:psiaction2} If $s$ is maximal such that $a_s\leq{r-1}$, and $r, r+1 \notin \{a_1,\dots, a_{je}\}$, then $\psi_r v(a_1, \dots, a_{je}) = 0$.
\end{enumerate}
\end{lem}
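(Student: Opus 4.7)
The plan is to apply \cref{psiaction} in both parts and then show, via residue-sequence bookkeeping, that the resulting sum is empty. First I would verify that $r$ and $r+1$ lie in the same row of $\ttt$. In part~\ref{lem:psiaction1}, the hypothesis $a_s = r$, $a_{s+1} = r+1$ places them at the adjacent nodes $(1,s,2)$ and $(1,s+1,2)$. In part~\ref{lem:psiaction2}, maximality of $s$ together with $r, r+1 \notin \{a_1,\dots,a_{je}\}$ forces $a_{s+1} \geq r+2$; counting the entries $\leq r-1$ in each component then shows that $r$ and $r+1$ occupy $(1, r-s, 1)$ and $(1, r-s+1, 1)$. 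Hence \cref{psiaction} gives
\[
\psi_r v_\ttt = \sum_{\substack{\tts \in \std\la\\ \bfi_\tts = \bfi_{s_r\ttt}\\ \tts \domsby \ttt}} a_\tts v_\tts,
\]
so it suffices to show that no standard $\la$-tableau $\tts$ satisfies $\bfi_\tts = \bfi_{s_r\ttt}$.

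Since $\la = ((ke),(je))$ has a single row in each component, such a $\tts$ is determined by the partition of $\{1, \dots, n\}$ into its two components. If $c_1, c_2$ denote the number of entries of $\{1, \dots, r-1\}$ placed in components $1, 2$ of $\tts$, then the residue multiset of the nodes they occupy equals the union of $\{0, 1, \dots, c_1 - 1\}$ and $\{0, 1, \dots, c_2 - 1\}$ read mod $e$. Matching this with the corresponding multiset for $\ttt$ (which must agree since $\bfi_\tts$ and $\bfi_\ttt$ coincide on their first $r-1$ positions) pins down $\{c_1 \bmod e,\; c_2 \bmod e\}$: namely, $\{(r-s) \bmod e,\; (s-1) \bmod e\}$ in part~\ref{lem:psiaction1} and $\{(r-1-s) \bmod e,\; s \bmod e\}$ in part~\ref{lem:psiaction2}. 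This reduces to a short combinatorial lemma about the step function that counts, for each $k \in \{0, \dots, e-1\}$, how many of $c_1, c_2$ exceed $k$ modulo $e$.

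Finally, entry $r$ of $\tts$ must have residue $\res_\ttt(r+1)$, which equals $s \bmod e$ in part~\ref{lem:psiaction1} and $(r-s) \bmod e$ in part~\ref{lem:psiaction2}. As entry $r$ is placed in column $c_1 + 1$ of the first component or column $c_2 + 1$ of the second, its residue is $c_1$ or $c_2 \pmod e$. In part~\ref{lem:psiaction1} the matching yields $s \equiv r - s \pmod e$ or $s \equiv s - 1 \pmod e$; the latter is impossible and the former is exactly the excluded congruence $r \equiv 2s \pmod e$. Part~\ref{lem:psiaction2} is symmetric and produces the same forbidden congruence. Hence no valid $\tts$ exists and $\psi_r v_\ttt = 0$.

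The main technical obstacle is the multiset identification in the second paragraph, which needs an elementary but careful counting argument about these step functions; the remainder is direct congruence arithmetic. An alternative route via induction on $r$, stripping off full $e$-columns to reduce to the case $r < 2e$, seems considerably more cumbersome than a head-on count.
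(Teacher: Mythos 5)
Your argument is correct, but it takes a genuinely different route from the paper. The paper proves parts (i) and (ii) by a simultaneous induction on $r-s$ (with an inner induction on $\ell(w_\ttt)$), pushing $\psi_r$ through the explicit reduced expression $\psi_{w_\ttt}z_\la$ via the KLR braid relations and showing that each error term either dies against a defining relation of $\spe\la$ or is handled by the other part of the lemma. You instead note that in both cases $r$ and $r+1$ lie in the same row of $\ttt$, so \cref{psiaction} puts $\psi_r v_\ttt$ in the span of those $v_\tts$ with $\bfi_\tts=\bfi_{s_r\ttt}$, and you then show this span is zero by a residue count; in fact the same-row observation is dispensable, since $\psi_r v_\ttt=e(s_r\bfi_\ttt)\psi_r v_\ttt$ together with \cref{lem:tabresi} already confines $\psi_r v_\ttt$ to that span. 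Your count does go through: both components of $((ke),(je))$ are single rows, so the entries $1,\dots,r-1$ of any candidate $\tts$ fill initial segments of lengths $c_1,c_2$; the multiplicity of residue $k$ in $\{0,\dots,c-1\}$ read mod $e$ is $\lfloor c/e\rfloor$ plus $1$ if $k<c\bmod e$, so the residue multiset together with $c_1+c_2=r-1$ determines $\{c_1\bmod e,\,c_2\bmod e\}$, which must equal $\{r-s,\,s-1\}$ in part (i) and $\{r-1-s,\,s\}$ in part (ii); entry $r$ must then carry residue $s$ (resp.\ $r-s$), and equating this with $c_1$ or $c_2$ modulo $e$ forces precisely the excluded congruence $r\equiv 2s\pmod e$. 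This is the same mechanism the paper itself uses for \cref{lem:psis}, and it gives a shorter, non-inductive proof of this particular lemma; what the paper's computational route buys is uniformity with the genuinely nonzero calculations it must do anyway in \cref{cor:psiaction} and \cref{lem:newyaction}, where no residue obstruction is available. The one point you should write out in full is the step-function lemma identifying $\{c_1\bmod e, c_2\bmod e\}$ from the residue multiset — it is elementary but is the load-bearing step.
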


\begin{proof}
We proceed by induction on $r-s$ on both of the statements.
\begin{enumerate}[label=(\roman*)]
\item For $r=s$, we observe that
\[
\psi_r v(a_1, \dots, a_{je})
=\psi_r \psid{a_{r+2}-1}{r+2} \clapdots \chaind{a_{ke}-1}{ke} z_\la
=\psid{a_{r+2}-1}{r+2} \clapdots \chaind{a_{ke}-1}{ke} \psi_r z_\la
=0.
\]
Now assuming that $r>s$, we have
\begin{align*}
\psi_r v(a_1, \dots, a_{je})
&= \psid{a_1-1}{1} \clapdots \chaind{a_{s-1}-1}{s-1}
(\psi_r\psi_{r-1}\psi_r(s-1, s, r-s-1))
\psid{r-2}{s}\chaind{r-1}{s+1}
\psid{a_{s+2}-1}{s+2} \clapdots \chaind{a_{je}-1}{je} z_\la\\
&= \psid{a_1-1}{1} \clapdots \chaind{a_{s-1}-1}{s-1} \psi_{r-1} \psi_r \psi_{r-1} \psid{r-2}{s}\chaind{r-1}{s+1} \psid{a_{s+2}-1}{s+2} \clapdots \chaind{a_{je}-1}{je} z_\la\\
&= \psid{a_1-1}{1} \clapdots \chaind{a_{s-1}-1}{s-1} \psi_{r-1} \psi_r \psi_{r-1}v(1, \dots, s-1, r-1, r, a_{s+2}, \dots, a_{je})
\end{align*}
which is $0$ by induction if $r\not\equiv 2s+1\pmod e$ or if $r = s+1$ as $\psi_{r-1}$ commutes through to the right.
If $r\equiv{2s+1}\pmod{e}$ and $r>s+1$, then we continue:
\begin{align*}
& \psid{a_1-1}{1} \clapdots \chaind{a_{s-1}-1}{s-1} \psi_{r-1} \psi_r (\psi_{r-1}\psi_{r-2}\psi_{r-1}(s-1,s,s-1)) \psid{r-3}{s} \chaind{r-2}{s+1} \psid{a_{s+2}-1}{s+2} \clapdots \chaind{a_{je}-1}{je} z_\la\\
= \; &\psid{a_1-1}{1} \clapdots \chaind{a_{s-1}-1}{s-1} \psi_{r-1}\psi_r (\psi_{r-2}\psi_{r-1}\psi_{r-2} + 1) \psid{r-3}{s}\chaind{r-2}{s+1} \psid{a_{s+2}-1}{s+2} \clapdots \chaind{a_{je}-1}{je} z_\la.
\end{align*}
The first term becomes
\begin{align*}
& \psid{a_1-1}{1} \clapdots \chaind{a_{s-1}-1}{s-1} \psi_{r-1}\psi_r \psi_{r-2}\psi_{r-1}\psi_{r-2} \psid{r-3}{s}\chaind{r-2}{s+1} \psid{a_{s+2}-1}{s+2} \clapdots \chaind{a_{je}-1}{je} z_\la\\
= \; &\psid{a_1-1}{1} \clapdots \chaind{a_{s-1}-1}{s-1} \psi_{r-1}\psi_r \psi_{r-2} \psi_{r-1}\psi_{r-2} v(1, \dots, s-1, r-2, r-1, a_{s+2}, \dots, a_{je})\\
= \; &0 \text{ by induction.}
\end{align*}

If $s = je - 1$, then the second term becomes
\[
\psid{a_1-1}{1} \clapdots \chaind{a_{je-2}-1}{je-2} \psi_{r-1}\psi_r \psid{r-3}{je-1}\chaind{r-2}{je} z_\la
= \psid{a_1-1}{1} \clapdots \chaind{a_{je-2}-1}{je-2} \psi_{r-1} \psid{r-3}{je-1} \chaind{r-2}{je}\psi_r z_\la = 0.
\]
If $s < je - 1$, then the second term becomes
\begin{align*}
&\psid{a_1-1}{1} \clapdots \chaind{a_{s-1}-1}{s-1} \psi_{r-1} \psid{r-3}{s}\psi_r \psid{r-2}{s+1} \psid{a_{s+2}-1}{s+2} \clapdots \chaind{a_{je}-1}{je} z_\la\\
= \; &\psid{a_1-1}{1} \clapdots \chaind{a_{s-1}-1}{s-1} \psi_{r-1} \psid{r-3}{s}\psi_r v(1,\dots,s,r-1,a_{s+2},\dots,a_{je})\\
= \; &0
\end{align*}
by the inductive hypothesis of (ii) as $a_{s+1}\leq r-1$, $a_{s+2}\geq r+2$, and $r\not\equiv 2(s+1) \pmod e$.

\item For $r=s+1$, we have
\[
\psi_{s+1}v(a_1,\dots,a_{je}) = \psi_{s+1}\psid{a_{s+1}-1}{s+1} \clapdots \chaind{a_{je}-1}{je} z_\la.
\]
We observe that the first $s+2$ residues in the residue sequence of  $s_{s+1}\s{a_{s+1}-1}{s+1} \dots \s{a_{je}-1}{je}\ttt_\la$ are $0, 1, \dots, s-1, 1, 0$.
There exists no $\tts \in \std\la$ with such a residue sequence, and hence $\psi_{s+1}v(a_1,\dots,a_{je})=0$.

Now assuming that $r>s+1$, we argue by induction on $\ell(w_\ttt)$.
For the base case, the minimal length is obtained when $s=je-1$ and $v_\ttt = v(1, 2, \dots, je-1, r-2)$.
Then
\begin{align*}
\psi_r v(1, 2, \dots, je-1, r+2) &= (\psi_r \psi_{r+1} \psi_r (-1, r, r+1)) \psid{r-1}{je} z_\la\\
&= \psi_{r+1} \psi_r \psi_{r+1} \psid{r-1}{je} z_\la\\
&= \psi_{r+1} \psi_r \psid{r-1}{je} \psi_{r+1} z_\la\\
&= 0 \text{ since $r+1 \not \equiv -1 \pmod e$ by our residue hypothesis.}
\end{align*}

Now for $\ell(w_\ttt)$ arbitrary, we have
\begin{align*}
& \psi_r v(a_1,\dots,a_{je})\\
= \; &\psid{a_1-1}{1} \clapdots \chaind{a_s-1}{s} \psid{a_{s+1}-1}{r+2} (\psi_r\psi_{r+1}\psi_r(s, r-s-1, r-s)) \psid{r-1}{s+1}\psid{a_{s+2}-1}{s+2} \clapdots \chaind{a_{je}-1}{je} z_\la,
\end{align*}
where $r-s\not\equiv s\pmod{e}$.
\begin{align*}
& \psid{a_1-1}{1} \clapdots \chaind{a_s-1}{s} \psid{a_{s+1}-1}{r+2} \psi_{r+1} \psi_r \psi_{r+1} \psid{r-1}{s+1} \psid{a_{s+2}-1}{s+2} \clapdots \chaind{a_{je}-1}{je} z_\la\\
= \; &\psid{a_1-1}{1} \clapdots \chaind{a_s-1}{s} \psid{a_{s+1}-1}{r+2} \psi_{r+1} \psi_r \psi_{r+1} v(1,\dots,s,r,a_{s+2},\dots,a_{je}) = 0
\end{align*}	
by the inductive hypothesis if $r+1 \not\equiv 2(s+1) \pmod e$.
If $r+1 \equiv 2s+2 \pmod e$, then
\begin{align*}
&\psid{a_1-1}{1} \clapdots \chaind{a_{s+1}-1}{s+1} \psid{a_{s+2}-1}{r+3} (\psi_{r+1}\psi_{r+2}\psi_{r+1}(s+1, s, s+1)) \psid{r}{s+2} \psid{a_{s+3}-1}{s+3} \clapdots \chaind{a_{je}-1}{je} z_\la\\
= \; &\psid{a_1-1}{1} \clapdots \chaind{a_{s+1}-1}{s+1} \psid{a_{s+2}-1}{r+3} (\psi_{r+2}\psi_{r+1}\psi_{r+2} + 1) \psid{r}{s+2} \psid{a_{s+3}-1}{s+3} \clapdots \chaind{a_{je}-1}{je} z_\la.
\end{align*}
If $s = je - 2$, then this becomes
\[
\psid{a_1-1}{1} \clapdots \chaind{a_{je-1}-1}{je-1} \psid{a_{je}-1}{r+3} (\psi_{r+2}\psi_{r+1}\psi_{r+2} + 1) \psid{r}{je} z_\la = 0.
\]
If $s < je - 2$, then the first term becomes
\begin{align*}
&\psid{a_1-1}{1} \clapdots \chaind{a_{s+1}-1}{s+1} \psid{a_{s+2}-1}{r+3} \psi_{r+2}\psi_{r+1}\psi_{r+2} \psid{r}{s+2} \psid{a_{s+3}-1}{s+3} \clapdots \chaind{a_{je}-1}{je} z_\la\\
= \; &\psid{a_1-1}{1} \clapdots \chaind{a_{s+1}-1}{s+1} \psid{a_{s+2}-1}{r+3} \psi_{r+2}\psi_{r+1}\psi_{r+2} v(1, \dots, s+1, r+1, a_{s+3}, \dots, a_{je})\\
= \; &0 \text{ by induction as $r+2 \not\equiv 2s+4 \pmod e$.}
\end{align*}
Now, the second term becomes
\begin{align*}
&\psid{a_1-1}{1} \clapdots \chaind{a_s-1}{s} \psid{a_{s+1}-1}{s+1} \psid{a_{s+2}-1}{r+3} \psid{r}{s+2} \psid{a_{s+3}-1}{s+3} \clapdots \chaind{a_{je}-1}{je} z_\la\\
= \; &\psid{a_1-1}{1} \clapdots \chaind{a_s-1}{s} \psid{a_{s+1}-1}{r+1} \psid{a_{s+2}-1}{r+3} \psi_r \psid{r-1}{s+1} \chaind{r}{s+2} \psid{a_{s+3}-1}{s+3} \clapdots \chaind{a_{je}-1}{je} z_\la\\
= \; &\psid{a_1-1}{1} \clapdots \chaind{a_s-1}{s} \psid{a_{s+1}-1}{r+1} \psid{a_{s+2}-1}{r+3} \psi_r v(1, \dots, s, r, r+1, a_{s+3}, \dots, a_{je})\\
= \; &0 \text{ by the inductive hypothesis on (i), as $r\not\equiv 2s+2 \pmod e$.}\qedhere
\end{align*}
\end{enumerate}

\end{proof}

\begin{cor}\label{cor:psiaction}
Let $1\leq r <n$, $1\leq s < je$ with $r\geq s+1$ and $r\equiv{2s}\pmod{e}$.
Then
\begin{enumerate}[label=(\roman*)]
\item\label{cor:psiaction1} $\psi_r v(1, \dots, s, r+2, a_{s+2}, \dots, a_{je}) = v(1, \dots, s, r, a_{s+2}, \dots, a_{je})$;

\item\label{cor:psiaction2} $\psi_r v(1, \dots, s-1, r, r+1, a_{s+2}, \dots, a_{je}) = v(1, \dots, s-1, r-1, r, a_{s+2}, \dots, a_{je})$.
\end{enumerate}
\end{cor}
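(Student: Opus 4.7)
The plan is to mirror the structure of \cref{lem:psiaction}, applying the KLR braid relations to carefully chosen presentations of each basis vector. Whereas \cref{lem:psiaction} handles $r \not\equiv 2s \pmod e$, in which case the relevant braid relation is ``clean'' and the expression vanishes, the hypothesis $r \equiv 2s \pmod e$ places us in the resonant case where the braid relation introduces an error term $\pm 1$; I will show that this error term is precisely the basis vector claimed in each part.

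For part~(i), I write $v(1,\ldots,s,r+2,a_{s+2},\ldots,a_{je}) = \psid{r+1}{s+1} u$ with $u := v(1,\ldots,s+1,a_{s+2},\ldots,a_{je})$, so that
\[
\psi_r v(1,\ldots,s,r+2,a_{s+2},\ldots) = \psi_r \psi_{r+1} \psi_r \cdot \psid{r-1}{s+1} u.
\]
A direct computation of the residues of $\psid{r-1}{s+1} u = v(1,\ldots,s,r,a_{s+2},\ldots)$ at positions $r, r+1, r+2$ (using $r \equiv 2s \pmod e$ together with the standardness condition $a_{s+2} > r+2$, which forces $r+1$ and $r+2$ into the first component) gives the triple $(s, s-1, s)$, so the braid relation specialises to $\psi_r\psi_{r+1}\psi_r e(\bfi) = (\psi_{r+1}\psi_r\psi_{r+1} + 1) e(\bfi)$. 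Since $\psi_{r+1}$ commutes past $\psid{r-1}{s+1}$ and $\psi_{r+1} u = 0$ by \cref{lem:psiaction}\ref{lem:psiaction2}, the main term vanishes, and the error term yields $\psid{r-1}{s+1} u = v(1,\ldots,s,r,a_{s+2},\ldots)$.

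For part~(ii), I write $v(1,\ldots,s-1,r,r+1,a_{s+2},\ldots) = \psid{r-1}{s} \psi_r \psid{r-1}{s+1} u$, and (assuming $r \ge s+2$) move the rightmost $\psi_r$ leftward through $\psi_s, \psi_{s+1}, \ldots, \psi_{r-2}$ to obtain $\psi_r \psid{r-1}{s} \psi_r = \psi_r \psi_{r-1} \psi_r \cdot \psid{r-2}{s}$. Hence $\psi_r v = \psi_r\psi_{r-1}\psi_r \cdot v'$, where $v' := \psid{r-2}{s}\psid{r-1}{s+1} u = v(1,\ldots,s-1,r-1,r,a_{s+2},\ldots)$ is the target vector. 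The residues at positions $r-1, r, r+1$ of $v'$ are $(s-1, s, s-1)$, placing us again in the resonant case: $\psi_r\psi_{r-1}\psi_r e(\bfj) = (\psi_{r-1}\psi_r\psi_{r-1} + 1) e(\bfj)$. Since $a_s = r-1$ and $a_{s+1} = r$ in $v'$ and $r - 1 \not\equiv 2s \pmod e$, \cref{lem:psiaction}\ref{lem:psiaction1} gives $\psi_{r-1} v' = 0$, killing the main term and leaving the desired $v'$.

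The main obstacle is the careful residue bookkeeping at intermediate steps, which relies on the standardness hypothesis $a_{s+2} > r+2$ forcing the ``missing'' entries into specific nodes of the first component. The edge case $r = s+1$ (possible only when $s \equiv 1 \pmod e$) must be handled separately: here $\psid{r-1}{s}$ collapses, so one applies the braid relation directly to $\psi_{s+1}\psi_{s+2}\psi_{s+1}$ for part~(i) and $\psi_{s+1}\psi_s\psi_{s+1}$ for part~(ii), with the main term annihilated by $\psi_{s+2} u = 0$ or $\psi_s u = 0$ via \cref{lem:psiaction}.
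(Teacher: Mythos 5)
Your proposal is correct and follows essentially the same route as the paper: in each part you factor out the relevant $\psi_r\psi_{r\pm1}\psi_r$, identify the resonant residue triple ($(s,s-1,s)$, resp.\ $(s-1,s,s-1)$) forced by $r\equiv 2s\pmod e$, apply the braid relation to pick up the error term $+1$, and kill the braid-reordered main term via \cref{lem:psiaction}\ref{lem:psiaction2} (resp.\ \ref{lem:psiaction1}). The only cosmetic difference is that the paper applies \cref{lem:psiaction} directly to $v(1,\dots,s,r,a_{s+2},\dots)$ rather than first commuting $\psi_{r+1}$ back to $u$, and it absorbs the $r=s+1$ edge case into the convention $\psid{x}{y}=1$ for $x<y$ rather than treating it separately.
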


\begin{proof}
\begin{enumerate}[label=(\roman*)]
\item We have
\begin{align*}
&\psi_r v(1,\dots,s,r+2,a_{s+2},\dots,a_{je})\\
= \; &\psi_r\psid{r+1}{s+1}\psid{a_{s+2}-1}{s+2} \clapdots \chaind{a_{je}-1}{je} z_\la\\
= \; &(\psi_r \psi_{r+1} \psi_r (s,s-1,s)) \psid{r-1}{s+1} \psid{a_{s+2}-1}{s+2} \clapdots \chaind{a_{je}-1}{je} z_\la\\
= \; &(\psi_{r+1} \psi_r \psi_{r+1} + 1) \psid{r-1}{s+1} \psid{a_{s+2}-1}{s+2} \clapdots \chaind{a_{je}-1}{je} z_\la\\
= \; &\psi_{r+1}\psi_r\psi_{r+1}v(1,\dots, s, r, a_{s+2}, \dots, a_{je}) + v(1,\dots,s,r,a_{s+2},\dots,a_{je}),
\end{align*} 
and $\psi_{r+1}v(1,\dots,s,r,a_{s+2},\dots,a_{je})= 0$ by \cref{lem:psiaction}\ref{lem:psiaction2}.
\item We have
\begin{align*}
&\psi_rv(1,\dots,s-1,r,r+1,a_{s+2},\dots,a_{je})\\
= \; &\psi_r\psid{r-1}{s}\chaind{r}{s+1}\psid{a_{s+2}-1}{s+2} \clapdots \chaind{a_{je}-1}{je} z_\la\\
= \; &(\psi_r\psi_{r-1}\psi_r(s-1,s,s-1)) \psid{r-2}{s} \chaind{r-1}{s+1} \psid{a_{s+2}-1}{s+2} \clapdots \chaind{a_{je}-1}{je} z_\la\\
= \; &(\psi_{r-1}\psi_r\psi_{r-1} + 1) \psid{r-2}{s} \chaind{r-1}{s+1} \psid{a_{s+2}-1}{s+2} \clapdots \chaind{a_{je}-1}{je} z_\la\\
= \; &\psi_{r-1}\psi_r\psi_{r-1}v(1, \dots, s-1, r-1, r, a_{s+2}, \dots, a_{je}) + v(1, \dots, s-1, r-1, r, a_{s+2}, \dots, a_{je}),
\end{align*}
and the first term is $0$ 
by \cref{lem:psiaction}\ref{lem:psiaction1} since $r-1\not\equiv 2s\pmod{e}$.\qedhere
\end{enumerate}
\end{proof}

\begin{lem}\label{lem:psipsi}
Let $1 \leq s \leq i \leq r < n$, $s+r-i \leq je$, $a_{s-1}<i$, and $i \not\equiv x \pmod e$ for any $x\in \{2s-2, 2s-1, \dots, 2s+r-i\}$.
Then
\begin{align*}
&\psid{r}{i} v(a_1, \dots, a_{s-1}, i+1, i+2, \dots, r+1, a_{s+r-i+1}, \dots, a_{je})\\
= \; &v(a_1, \dots, a_{s-1}, i, i+1, \dots, r, a_{s+r-i+1}, \dots, a_{je}).
\end{align*}
\end{lem}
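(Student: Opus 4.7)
My plan is to induct on $r-i$.

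For the base case $r=i$, the claim reduces to
\[
\psi_i\, v(a_1,\dots,a_{s-1},i+1,a_{s+1},\dots,a_{je})=v(a_1,\dots,a_{s-1},i,a_{s+1},\dots,a_{je})
\]
under the hypotheses $a_{s-1}<i$ and $i\not\equiv 2s-2,2s-1,2s\pmod e$. I would expand the left-hand side using $v_\ttt=\psi_{w_\ttt}z_\la$, commute $\psi_i$ through the block $\psid{a_1-1}{1}\cdots\psid{a_{s-1}-1}{s-1}$ (legitimate because $a_{s-1}-1\leq i-2$, so every $\psi_l$ in that block satisfies $|i-l|\geq 2$), and then rewrite $\psi_i\psid{i}{s}=\psi_i^2\psid{i-1}{s}$. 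The whole base case thereby reduces to showing $\psi_i^2 e(\mathbf{k})=e(\mathbf{k})$, where $\mathbf{k}$ is the residue sequence of the resulting vector. A direct count gives $k_i=s-1$ (entry $i$ sits at node $(1,s,2)$) and $k_{i+1}=i-s$ (standardness of the left-hand side forces $a_{s+1}\geq i+2$, which pushes entry $i+1$ into column $i-s+1$ of the first component). The three exclusions $i\not\equiv 2s-2,2s-1,2s\pmod e$ are precisely the conditions $k_i-k_{i+1}\not\equiv -1,0,1\pmod e$, so the quadratic relation collapses to the identity.

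For the inductive step, I would factor $\psid{r}{i}=\psid{r}{i+1}\psi_i$ and apply the base case to the inner $\psi_i$ — the role of ``$a_{s+1}$'' is played by $i+2>i+1$, so the base case is applicable and gives
\[
\psi_i\, v(a_1,\dots,a_{s-1},i+1,i+2,\dots,r+1,a_{s+r-i+1},\dots)=v(a_1,\dots,a_{s-1},i,i+2,\dots,r+1,a_{s+r-i+1},\dots).
\]
I would then reindex via $s'=s+1$, $i'=i+1$, $r'=r$, treating the freshly placed entry $i$ at column $s$ as the new $a'_{s'-1}$. The condition $a'_{s'-1}<i'$ is $i<i+1$, and the shifted residue hypothesis becomes $i\not\equiv y \pmod e$ for $y\in\{2s-1,\dots,2s+r-i\}$, a subset of the original exclusion set. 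The inductive hypothesis then finishes the step. The decomposition
\[
\{2s-2,\dots,2s+r-i\}=\{2s-2,2s-1,2s\}\cup\{2s-1,\dots,2s+r-i\}
\]
shows the residue hypothesis is used tightly: the base case consumes the first three and the IH consumes the rest.

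The main obstacle will be the base-case residue bookkeeping — specifically, pinning down exactly where entries $i$ and $i+1$ sit in the target tableau. Once one observes that standardness of the left-hand side forces $a_{s+1}\geq i+2$ and therefore expels entry $i+1$ from the second component into column $i-s+1$ of the first, the residues $k_i$ and $k_{i+1}$ fall out immediately and the quadratic relation handles the rest; the inductive step is then purely formal.
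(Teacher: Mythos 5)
Your proof is correct and follows essentially the same route as the paper's: both peel off one $\psi_l$ at a time and reduce each step to the quadratic relation $\psi_l^2 e(\bfi)=e(\bfi)$, with the residue pair $(s{-}1+\text{shift},\,i-s)$ and the exclusion set $\{2s-2,\dots,2s+r-i\}$ arising in exactly the same way; you merely package the iteration as an induction on $r-i$ where the paper states it as a uniform claim for each $l\in\{i,\dots,r\}$. No gaps.
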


\begin{proof}
Suppose that $a_{s+l-i-1}<l$ for all $l \in \{i, i+1, \dots, r\}$.
Then the result follows directly from the KLR relations since, for all $l$,
\begin{align*}
&\psi_l v(a_1, \dots, a_{s+l-i-1}, l+1, a_{s+l-i+1}, a_{s+l-i+2}, \dots, a_{je})\\
= \; & \psid{a_1-1}{1} \clapdots \chaind{a_{s+l-i-1}-1}{s+l-i-1} \psi_l^2(s+l-i-1, i-s) \psid{l-1}{s+l-i} \psid{a_{s+l-i+1}-1}{s+l-i+1} \chaind{a_{s+l-i+1}-1}{s+l-i+1} \clapdots \chaind{a_{je}-1}{je} z_\la\\
= \; & \psid{a_1-1}{1} \clapdots \chaind{a_{s+l-i-1}-1}{s+l-i-1} v(1, \dots, s+l-i-1, l, a_{s+l-i+1}, a_{s+l-i+2}, \dots, a_{je})
\end{align*}
if $i \not \equiv 2s+l-i-2, 2s+l-i-1, 2s+l-i\pmod e$.
\end{proof}

\begin{cor}\label{cor:psidownaction}
Suppose that $1 \leq s \leq i \leq r < n$, $s+r-i < je$, $a_{s-1}\leqslant i-2$, $i\equiv 2s\pmod{e}$ and $r-i+2<e$.
Then
\begin{align*}
&\psid{r}{i} v(a_1, \dots, a_{s-1}, i, i+1, i+2, \dots, r+1, a_{s+r-i+2}, \dots, a_{je})\\
&= v(a_1, \dots, a_{s-1}, i-1, i, i+1, \dots, r, a_{s+r-i+2}, \dots, a_{je}).
\end{align*}	
\end{cor}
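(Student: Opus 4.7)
The plan is to argue by induction on $r - i$, treating the base case $r = i$ as a direct generalisation of \cref{cor:psiaction}\ref{cor:psiaction2} to an arbitrary prefix, and reducing the inductive step to \cref{lem:psipsi} via the split $\psid{r}{i} = \psid{r}{i+1} \psi_i$. Note first that $s < i$ strictly: since $a_1 < a_2 < \cdots < a_{s-1}$ forces $a_{s-1} \geq s - 1$, the hypothesis $a_{s-1} \leq i - 2$ yields $s - 1 \leq i - 2$.

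\textbf{Base case ($r = i$).} I would prove that
\[
\psi_i v(a_1, \dots, a_{s-1}, i, i+1, a_{s+2}, \dots, a_{je}) = v(a_1, \dots, a_{s-1}, i-1, i, a_{s+2}, \dots, a_{je})
\]
by mimicking the proof of \cref{cor:psiaction}\ref{cor:psiaction2}. Since $a_{s-1} \leq i-2$, the prefix $\psid{a_1-1}{1} \cdots \chaind{a_{s-1}-1}{s-1}$ involves only generators $\psi_l$ with $l \leq i-3$, each of which commutes with $\psi_i$. After pulling $\psi_i$ through the prefix and sliding the rightmost $\psi_i$ in $\psi_i \psid{i-1}{s} \chaind{i}{s+1}$ past the commuting $\psi_{i-2}, \dots, \psi_s$, a factor $\psi_i \psi_{i-1} \psi_i$ emerges, whose right-hand idempotent corresponds to the basis element $v(a_1, \dots, a_{s-1}, i-1, i, a_{s+2}, \dots)$. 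Using $i \equiv 2s \pmod e$, the entry $i+1$ in this tableau occupies the node $(1, i-s, 1)$ of the first component with residue $i-s-1 \equiv s-1 \pmod e$, so the residue triple at positions $(i-1, i, i+1)$ is $(s-1, s, s-1)$. The KLR braid relation then gives $\psi_i \psi_{i-1} \psi_i = \psi_{i-1}\psi_i\psi_{i-1} + 1$. The $+1$ term produces $v(a_1, \dots, a_{s-1}, i-1, i, a_{s+2}, \dots)$, while the $\psi_{i-1}\psi_i\psi_{i-1}$ term vanishes via \cref{lem:psiaction}\ref{lem:psiaction1} applied to $\psi_{i-1}$ acting on this same vector (since $i - 1 \not\equiv 2s \pmod e$).

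\textbf{Inductive step ($r > i$).} I would split $\psid{r}{i} = \psid{r}{i+1}\psi_i$. Applying the base case with the specific choice $a_{s+2} = i+2, \dots, a_{s+r-i+1} = r+1$ gives
\[
\psi_i v(a_1, \dots, a_{s-1}, i, i+1, i+2, \dots, r+1, a_{s+r-i+2}, \dots) = v(a_1, \dots, a_{s-1}, i-1, i, i+2, i+3, \dots, r+1, a_{s+r-i+2}, \dots).
\]
To this new vector I apply \cref{lem:psipsi} with the shifted parameters $s \mapsto s+2$, $i \mapsto i+1$, $r \mapsto r$, regarding $(a_1, \dots, a_{s-1}, i-1, i)$ as the new prefix. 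The inequalities $s + 2 \leq i + 1$ and $s + r - i + 1 \leq je$ come from $s < i$ and $s + r - i < je$, respectively, and the residue hypothesis $i+1 \not\equiv x \pmod e$ for $x \in \{2s+2, 2s+3, \dots, 2s+r-i+3\}$ is equivalent (via $i \equiv 2s \pmod e$) to $r - i + 2 < e$, which is precisely our assumption. The lemma thus yields $v(a_1, \dots, a_{s-1}, i-1, i, i+1, i+2, \dots, r, a_{s+r-i+2}, \dots)$, completing the induction.

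\textbf{Main obstacle.} The delicate step is the base case: one must correctly identify the residue triple at positions $(i-1, i, i+1)$ of the intermediate basis element as $(s-1, s, s-1) \pmod e$, which hinges crucially on $i \equiv 2s \pmod e$ forcing the entry $i+1$ into a first-component node of residue $s-1$. Once this is in hand, the inductive step is a clean reduction, with the hypothesis $r - i + 2 < e$ being tailored exactly to meet the residue genericity required by \cref{lem:psipsi}.
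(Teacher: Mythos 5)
Your proof is correct and follows essentially the same route as the paper's: split $\psid{r}{i} = \psid{r}{i+1}\psi_i$, resolve the $\psi_i$ via \cref{cor:psiaction}\ref{cor:psiaction2} (which, as you rightly observe, must first be extended to an arbitrary prefix with $a_{s-1}\leq i-2$ --- a point the paper passes over silently), and finish by applying \cref{lem:psipsi} to $\psid{r}{i+1}$. Your re-indexed residue-avoidance set $\{2s+2,\dots,2s+r-i+3\}$ for that final application is in fact the correct one (the set $\{2s,2s+1,\dots,2s+r-i+1\}$ printed in the paper appears to be shifted, since it contains $2s+1\equiv i+1$), though both sets consist of $r-i+2<e$ consecutive classes, so the conclusion is the same.
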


\begin{proof}
	Since $i\equiv 2s\pmod{e}$, we apply \cref{cor:psiaction}\ref{cor:psiaction2} to give us
	\begin{align*}
	&\psid{r}{i+1} \psi_i v(a_1, \dots, a_{s-1}, i, i+1, i+2, \dots, r+1, a_{s+r-i+2}, \dots, a_{je})\\
	&= \psid{r}{i+1} v(a_1, \dots, a_{s-1}, i-1, i, i+2, \dots, r+1, a_{s+r-i+2}, \dots, a_{je}).
	\end{align*}
	We now obtain our desired result by applying \cref{lem:psipsi} since $i+1\not\equiv x\pmod{e}$ for all $x\in\{2s,2s+1,\dots,2s+r-i+1\}$ (note that $x$ runs over $r-i+2 < e$ terms).
\end{proof}

\begin{lem}\label{lem:strings}
Let $1\leqslant r \leqslant je$. If $r\not\equiv 1\pmod{e}$, then $y_r\psi_r\psi_{r+1}\dots\psi_{je} z_\la=0$.
\end{lem}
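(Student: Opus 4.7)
The plan is to prove this by induction on $je - r$. At each step I apply the KLR commutation $y_s\psi_s e(\bfj) = (\psi_s y_{s+1} - \delta_{j_s, j_{s+1}})e(\bfj)$ and appeal either to the inductive hypothesis or to $\psi_r z_\la = 0$. The key ingredient is that for $\la = ((ke),(je))$ with $\kappa = (0,0)$, the residue sequence $\bfi_\la$ has $i_t \equiv t - 1 \pmod e$ for $1 \leq t \leq je$, and $i_{je+1} = 0$.

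For the base case $r = je$, the residues at positions $je$ and $je+1$ of $\bfi_\la$ are $je - 1 \pmod e$ and $0$, which differ because $je \equiv 0 \pmod e$ and $e \geq 2$. Hence $y_{je}\psi_{je}z_\la = \psi_{je}y_{je+1}z_\la$, which vanishes because $y_{je+1}z_\la = 0$.

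For the inductive step with $r < je$, I first commute $e(\bfi_\la)$ leftward past $\psi_{r+1}, \ldots, \psi_{je}$; tracking the permutation shows that at the resulting idempotent the residues at positions $r$ and $r+1$ are $r-1 \pmod e$ and $i_{je+1} = 0$, which differ precisely because $r \not\equiv 1 \pmod e$. This yields
\[
y_r\psi_r\psi_{r+1}\cdots\psi_{je}z_\la = \psi_r\, y_{r+1}\psi_{r+1}\cdots\psi_{je}z_\la.
\]
If $r + 1 \not\equiv 1 \pmod e$ the inductive hypothesis applied to $y_{r+1}\psi_{r+1}\cdots\psi_{je}z_\la$ finishes the proof. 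Otherwise $r \equiv 0 \pmod e$, which forces $r \leq je - e$ and hence $r + 2 \leq je$; a second commutation then produces an error term (the residues at positions $r+1$ and $r+2$ of the relevant idempotent are now both equal to $0$):
\[
\psi_r y_{r+1}\psi_{r+1}\cdots\psi_{je}z_\la = \psi_r\psi_{r+1}y_{r+2}\psi_{r+2}\cdots\psi_{je}z_\la - \psi_r\psi_{r+2}\cdots\psi_{je}z_\la.
\]
The first summand vanishes by the inductive hypothesis at $r + 2 \equiv 2 \not\equiv 1 \pmod e$. For the second, I commute $\psi_r$ rightward past $\psi_{r+2}, \ldots, \psi_{je}$ (using $\psi_r\psi_s = \psi_s\psi_r$ whenever $|r-s| \geq 2$) and invoke $\psi_r z_\la = 0$. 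The latter holds for every $r < je$ by the Specht presentation together with the Garnir relation $\psi_1 z_\la = 0$, which is all that the $d = 0$ Garnir on node $(1,1,2)$ gives.

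The main obstacle is precisely the case $r \equiv 0 \pmod e$: the clean passage of $y$ past $\psi$ breaks down after one step, the argument has to bifurcate, and the resulting error term must be controlled separately via the braid commutation of $\psi_r$ and the vanishing $\psi_r z_\la = 0$ coming from the Specht presentation.
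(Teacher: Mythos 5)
Your proof is correct and follows essentially the same route as the paper's: reverse induction on $r$ (the paper phrases it as induction on $\ell(w_\ttt)$), commuting $y$ past $\psi$ using the residue pair $(r-1,0)$ at positions $r,r+1$, and handling the single problematic case $r\equiv 0\pmod e$ by a second commutation whose error term is killed by sliding $\psi_r$ past $\psi_{r+2},\dots,\psi_{je}$ and using $\psi_r z_\la=0$. (Incidentally, your sign $-1$ on the error term is the one the relation $y_s\psi_s e(\bfj)=(\psi_s y_{s+1}-\delta_{j_s,j_{s+1}})e(\bfj)$ actually produces, where the paper's displayed $+1$ appears to be a typo; the term vanishes either way.)
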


\begin{proof}
We proceed by induction on $\ell(w_{\ttt})$, where the minimal length is obtained when $r=je$.

For $r=je$, we have
\begin{align*}
&(y_{je}\psi_{je}(-1,0))z_\la
=\psi_{je}y_{je+1} z_\la
=0.
\end{align*}

Now assuming that $r<je$,
\begin{align*}
(y_r\psi_r(r-1,0))\psi_{r+1}\psi_{r+2}\dots\psi_{je} z_\la
&=\psi_ry_{r+1}\psi_{r+1}\psi_{r+2}\psi_{r+3}\dots\psi_{je} z_\la=0
\end{align*}
by induction if $r\not\equiv 0\pmod{e}$.
If $r\equiv 0\pmod{e}$, then this term becomes
\begin{align*}
\psi_r(y_{r+1}\psi_{r+1}(0,0))\psi_{r+2}\psi_{r+3}\dots\psi_{je} z_\la
&=
\psi_r(\psi_{r+1}y_{r+2}+1)\psi_{r+2}\psi_{r+3}\dots\psi_{je} z_\la.
\end{align*}
The second term becomes $\psi_{r+2}\psi_{r+3}\dots\psi_{je}(\psi_rz_\la)=0$, whilst the first term is $0$ by induction.
\end{proof}

\begin{lem}\label{newlem:up}
Let $1\leqslant s<i\leqslant r<n$ and $s<je$, and suppose that $r\equiv 2s\pmod{e}$ and $r-i+2<e$. Then
\[
\psiu{i}{r}
v(1,\dots,s,r+2,a_{s+2},\dots,a_{je})
=v(1,\dots,s,i,a_{s+2},\dots,a_{je}).
\]
\end{lem}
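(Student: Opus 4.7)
The plan is to induct on $r - i$, holding $s$ and $r$ fixed. The base case $i = r$ collapses $\psiu{i}{r}$ to the single generator $\psi_r$, and since $r \equiv 2s \pmod{e}$ with $r \geq s + 1$, \cref{cor:psiaction}\ref{cor:psiaction1} applies directly to give $\psi_r v(1, \dots, s, r+2, a_{s+2}, \dots, a_{je}) = v(1, \dots, s, r, a_{s+2}, \dots, a_{je})$.

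For the inductive step, I will assume $s < i < r$ and that the statement holds with $i$ replaced by $i+1$ (permitted, since $r - (i+1) + 2 = r - i + 1 < e$). Peeling off the leftmost factor of $\psiu{i}{r}$ and invoking the inductive hypothesis rewrites
\[
\psiu{i}{r} v(1, \dots, s, r+2, a_{s+2}, \dots, a_{je}) = \psi_i v(1, \dots, s, i+1, a_{s+2}, \dots, a_{je}).
\]
It then suffices to establish the single-generator identity $\psi_i v(1, \dots, s, i+1, a_{s+2}, \dots) = v(1, \dots, s, i, a_{s+2}, \dots)$. Expanding $v(1, \dots, s, i+1, a_{s+2}, \dots) = \psid{i}{s+1} \psid{a_{s+2}-1}{s+2} \cdots \psid{a_{je}-1}{je} z_\la$ and extracting a factor of $\psi_i^2$ from the front reduces the task to showing $\psi_i^2 v_\ttr = v_\ttr$, where $\ttr = v(1, \dots, s, i, a_{s+2}, \dots, a_{je})$.

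The heart of the argument is a residue computation at positions $i$ and $i+1$ of $\ttr$. Entry $i$ sits at node $(1, s+1, 2)$, giving $(\bfi_\ttr)_i \equiv s \pmod{e}$, and since $a_{s+2} > r + 2 > i + 1$, entry $i+1$ lies in the first component at node $(1, i-s, 1)$, giving $(\bfi_\ttr)_{i+1} \equiv i - s - 1 \pmod{e}$. Combining $r \equiv 2s \pmod{e}$ with $r - i + 2 < e$ and $i < r$ forces $i$ to lie in $\{r - e + 3, \dots, r - 1\}$, whose residues modulo $e$ are precisely the complement of $\{2s, 2s + 1, 2s + 2\}$. Equivalently, $(\bfi_\ttr)_{i+1}$ avoids $\{s - 1, s, s + 1\} \pmod{e}$, so the KLR quadratic relation yields $\psi_i^2 e(\bfi_\ttr) = e(\bfi_\ttr)$, and hence $\psi_i^2 v_\ttr = v_\ttr$, completing the induction.

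The only mildly delicate point is this modular bookkeeping: one must verify that the three hypotheses together rule out precisely the three residues that would trigger a non-trivial branch of the quadratic relation. Once that check is in place, everything else falls out immediately.
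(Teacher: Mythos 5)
Your proof is correct and follows essentially the route the paper intends: the paper only says the argument is ``similar to the proof of Corollary~\ref{cor:psidownaction}'', and the details you supply are exactly those two ingredients --- Corollary~\ref{cor:psiaction}\ref{cor:psiaction1} for the rightmost generator $\psi_r$ (your base case), followed by repeated trivial quadratic relations $\psi_i^2 e(\bfi_\ttr)=e(\bfi_\ttr)$ for the remaining generators, justified by the residue check $i\not\equiv 2s,2s+1,2s+2\pmod e$. Your modular bookkeeping (that $r\equiv 2s$ and $1\leq r-i\leq e-3$ force $i$ to avoid precisely those three residue classes) is the right verification and checks out.
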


\begin{proof}
The proof is similar to the proof of \cref{cor:psidownaction}.
\end{proof}

\begin{lem}\label{lem:newyaction}
	
Let $0 \leq s \leq je-e$ and $v_\ttt = v(a_1, \dots, a_{je})$.
Then
\begin{enumerate}[label=(\roman*)]
\item\label{lem:yaction2} 
If $a_{s+e} = r$ for some $1\leq r \leq n$ such that $r \not \equiv 2s, 2s+1 \pmod{e}$ and $r-1,r+1, r+2, r+3, \dots, r+e-2 \not \in \{a_1, \dots, a_{je}\}$, then $y_{r-1} v_\ttt = 0$.

\item\label{lem:yaction1}
If $a_{s+e} = r$ for some $1\leq r \leq n$ such that $r \not \equiv 2s, 2s+1 \pmod{e}$ and $r+1, r+2, r+3, \dots, r+e-2 \not \in \{a_1, \dots, a_{je}\}$, then $y_r v_\ttt = 0$.

\item\label{lem:yaction3}
If for some $1\leq r < n$, we have $a_{s+i}=r-e+i$ for all $i \in \{1,\dots,e-1\}$, $a_{s+e}=r+1$, $r \equiv 2s \pmod e$ and $r+2,r+3,\dots, r+e \notin \{a_{s+e+1}, \dots, a_{je}\}$, then $\psi_r v_\ttt = 0$.
\end{enumerate}
\end{lem}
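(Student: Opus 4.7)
The strategy for all three parts mirrors the template established by the preceding technical lemmas in this section: write $v_\ttt = \psi_{w_\ttt} z_\la$ explicitly as a product of chains $\psid{a_1-1}{1} \psid{a_2-1}{2} \cdots \psid{a_{je}-1}{je}$, then commute the leading generator ($y_{r-1}$, $y_r$, or $\psi_r$) rightward through this product using the KLR relations, tracking where the $\delta$ error terms from $y_s \psi_s e(\bfi) = (\psi_s y_{s+1} - \delta_{i_s,i_{s+1}}) e(\bfi)$ and $y_{s+1} \psi_s e(\bfi) = (\psi_s y_s + \delta_{i_s,i_{s+1}}) e(\bfi)$ can fire. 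The payoff is that, under the stated residue hypotheses, the $\delta$'s all vanish, so the leading generator commutes freely to the right, eventually reaching $z_\la$ and annihilating it via $y_s z_\la = 0$ and $\psi_s z_\la = 0$ for $s \neq j$.

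For parts (i) and (ii), the hypothesis $a_{s+e} = r$ together with $r+1, r+2, \ldots, r+e-2 \notin \{a_1, \ldots, a_{je}\}$ (and the extra condition on $r-1$ in part (i)) pins down exactly which chains can contain the generators $\psi_{r-2}, \psi_{r-1}, \psi_r$ that interact with $y_{r-1}$ or $y_r$: namely the top of the chain $\psid{a_{s+e}-1}{s+e} = \psid{r-1}{s+e}$ and a later chain $\psid{a_{s+e+1}-1}{s+e+1}$ whose range must skip over $[r, r+e-2]$. Passing $y_{r-1}$ or $y_r$ across each such $\psi$ lowers (or raises) the $y$-subscript by one, and the residue constraint $r \not\equiv 2s, 2s+1 \pmod e$ (combined with the fact that consecutive entries of $\ttt$ have consecutive residues mod $e$) forces every candidate $\delta$ to be zero. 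After all these commutations, the $y$-generator is free to slide past the remaining chains (whose subscripts are disjoint from the relevant window) and dies on $z_\la$.

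For part (iii), the hypotheses imply that $w_\ttt$ contains the consecutive block $\psid{r-e}{s+1} \psid{r-e+1}{s+2} \cdots \psid{r-1}{s+e-1} \psid{r}{s+e}$ and that no $\psi_r$ occurs further to the right. Applying $\psi_r$ on the left and exploiting $r \equiv 2s \pmod e$, the two copies of $\psi_r$ (the new one and the top generator of $\psid{r}{s+e}$) sit at positions whose residues agree, so the quadratic relation fires to produce $y_{r+1} - y_r$ (or $y_r - y_{r+1}$) sandwiched inside the remaining expression. One then shows these dot terms vanish by applying part (ii) of this lemma, or by a direct commutation argument in the spirit of \cref{lem:strings}, completing the cancellation. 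The main obstacle will be the bookkeeping in part (iii), where one must also control the braid moves $\psi_r \psi_{r\pm 1} \psi_r = \psi_{r\pm 1} \psi_r \psi_{r\pm 1} \pm 1$ that could fire as $\psi_r$ sweeps through $\psid{r-e+1}{s+2} \cdots \psid{r-1}{s+e-1}$; the congruence $r \equiv 2s \pmod e$ is calibrated so that any such $\pm 1$ error terms are themselves killed by the surviving $\psi$-chain via the mechanism of \cref{lem:psiaction}.
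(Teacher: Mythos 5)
Your overall template (expand $v_\ttt$ as a product of $\psi$-chains and push the leading generator to the right) is the right starting point, but the central claim on which your parts (i) and (ii) rest --- that the residue hypotheses force every $\delta$ error term to vanish, so that $y_{r-1}$ or $y_r$ ``commutes freely to the right'' and dies on $z_\la$ --- is false. In the actual computation the dot generators repeatedly meet pairs of equal residues: for instance one encounters $(y_{s+e+1}\psi_{s+e+1})e(\bfi)$ with $i_{s+e+1}=i_{s+e+2}$, which produces $\psi_{s+e+1}y_{s+e+2}-1$, and the resulting $-1$ term is a genuine nonzero contribution that must be cancelled by other means. The paper disposes of these stray terms using \cref{lem:strings}, a long sequence of braid manipulations (via \cref{cor:psiaction,lem:psipsi,cor:psidownaction,newlem:up}), and --- crucially --- the inductive hypothesis of part (iii). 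So parts (i) and (ii) are not self-contained commutation arguments, and as described your proof of them would stall at the first fired $\delta$.

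This points to the second, structural gap: the three statements must be proved by a \emph{simultaneous} induction (the paper inducts on $r-s$ across all three parts, with an inner induction on $\ell(w_\ttt)$), because the dependence among them is mutual. You correctly observe that part (iii) needs part (ii) to dispose of its dot terms, but you miss that parts (i) and (ii) in turn need part (iii) to dispose of theirs; invoking (ii) inside the proof of (iii) while implicitly needing (iii) inside (i) and (ii) is circular unless the joint induction is set up in advance. Establishing that simultaneous induction on $r-s$ is precisely what breaks the circularity, and without it the argument does not close.
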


\begin{proof}
We proceed by simultaneous induction on $r-s$ on each of the three statements. Note that we apply \cref{cor:psiaction} without further reference.
\begin{enumerate}[label=(\roman*)]
\item Our base case is when $r = s+e+1$, so that $s \not \equiv 0,1\pmod{e}$ and $a_{s+e-1}=s+e-1$.
We prove this by induction on $\ell(w_\ttt)$.
For the base case, the minimal length is obtained when $s+e+1 = je$.
We thus have
\begin{align*}
y_{r-1} v_\ttt &= (y_{je-1} \psi_{je-1} (-2,0)) \psid{a_{je}-1}{je} z_\la\\
&= \psi_{je-1} \psid{a_{je}-1}{je+1} (y_{je}\psi_{je}(-1,0)) z_\la
=\psi_{je-1} \psid{a_{je}-1}{je+1} \psi_{je} y_{je+1} z_\la
= 0.
\end{align*}
Now suppose that $s+e+1<je$, and assume without loss of generality that $v_{\ttt}= v(1,\dots,s+e-1,s+e+1,s+2e,\dots,je+e-1)$.
Then we have
\begin{align*}
y_{r-1} v_{\ttt} 
&= (y_{s+e} \psi_{s+e}(s-1,0)) \psid{s+2e-1}{s+e+1} \chaind{s+2e}{s+e+2} \clapdots \chaind{je+e-2}{je} z_\la\\
&= \psi_{s+e} y_{s+e+1} \psid{s+2e-1}{s+e+1} \chaind{s+2e}{s+e+2} \clapdots \chaind{je+e-2}{je} z_\la\\
&= \psi_{s+e} \psid{s+2e-1}{s+e+2} y_{s+e+1} \psi_{s+e+1} \psid{s+2e}{s+e+2} \chaind{s+2e+1}{s+e+3} \clapdots \chaind{je+e-2}{je} z_\la\\
&= \psi_{s+e} \psid{s+2e-1}{s+e+2} y_{s+e+1} v(1,\dots,s+e,s+e+2,s+2e+1,\dots,je+e-3)\\
&=0 \text{ by induction if $s \not \equiv -1 \pmod{e}$.}
\intertext{ If $s \equiv -1 \pmod{e}$, then}
& \psi_{s+e}
\psid{s+2e-1}{s+e+2} (y_{s+e+1} \psi_{s+e+1}(-1,0)) \psid{s+2e}{s+e+2} \chaind{s+2e+1}{s+e+3} \clapdots \chaind{je+e-2}{je} z_\la\\
= \; & \psi_{s+e} \psid{s+2e-1}{s+e+1} y_{s+e+2} \psid{s+2e}{s+e+2} \chaind{s+2e+1}{s+e+3} \clapdots \chaind{je+e-2}{je} z_\la\\
= \; & \psi_{s+e} \psid{s+2e-1}{s+e+1} \psid{s+2e}{s+e+3} (y_{s+e+2} \psi_{s+e+2}(0,0)) \psid{s+2e+1}{s+e+3} \chaind{s+2e+2}{s+e+4} \clapdots \chaind{je+e-2}{je} z_\la\\
= \; & \psi_{s+e} \psid{s+2e-1}{s+e+1} \psid{s+2e}{s+e+3} (\psi_{s+e+2} y_{s+e+3} - 1) \psid{s+2e+1}{s+e+3} \chaind{s+2e+2}{s+e+4} \clapdots \chaind{je+e-2}{je} z_\la\\
= \; & \psi_{s+e} \psid{s+2e-1}{s+e+1} \psid{s+2e}{s+e+2} \psid{s+2e+1}{s+e+4} \chaind{s+2e+2}{s+e+5}
\clapdots \chaind{je+e-2}{je+1} y_{s+e+3} \psi_{s+e+3}\psi_{s+e+4}\dots\psi_{je}z_\la\\
\; & -\psi_{s+e} \psid{s+2e-1}{s+e+2} \psid{s+2e}{s+e+3} \psid{s+2e+1}{s+e+3} \chaind{s+2e+2}{s+e+4} \clapdots \chaind{je+e-2}{je} \psi_{s+e+1} z_\la\\
= \; & 0 \text{ by \cref{lem:strings}.}
\end{align*}
Next, we assume that $r>s+e+1$, and again argue by induction on $\ell(w_\ttt)$.
For the base case, the minimal length is obtained when $s+e=je$ and $v_\ttt = v(1, \dots, je-1, r)$.
Then
\[
y_{r-1} v_\ttt = (y_{r-1} \psi_{r-1} (-1, r-1)) \psid{r-2}{je} z_\la = 0.
\]
Now for $s+e < je$, we may assume by induction (on $\ell(w_\ttt)$) that $v_\ttt = v(1, \dots, s+e-1, r, r+e-1, r+e, \dots, r+je-s-2)$ and we have
\begin{align*}
y_{r-1} v_\ttt &= (y_{r-1} \psi_{r-1} (s-1, r-s-1)) \psid{r-2}{s+e} \psid{r+e-2}{s+e+1} \chaind{r+e-1}{s+e+2} \clapdots \chaind{r+je-s-3}{je} z_\la\\
&= \psid{r-1}{s+e} \psid{r+e-2}{r+1} y_r \psid{r}{s+e+1} \psid{r+e-1}{s+e+2} \chaind{r+e}{s+e+3} \clapdots \chaind{r+je-s-3}{je} z_\la\\
&= \psid{r-1}{s+e} \psid{r+e-2}{r+1} y_r v(1, \dots, s+e, r+1, r+e, \dots, r+je-s-2),\\
&=0 \text{ by induction if $r\not\equiv 2s+2 \pmod{e}$.}
\end{align*}
If $r \equiv 2s+2 \pmod{e}$, then we have
\begin{align*}
&\psid{r-1}{s+e} \psid{r+e-2}{r+1} (y_r\psi_r(s,s+1)) \psid{r-1}{s+e+1} \psid{r+e-1}{s+e+2} \chaind{r+e}{s+e+3} \clapdots \chaind{r+je-s-3}{je} z_\la\\
= \; &\psid{r-1}{s+e} \psid{r+e-2}{s+e+1} y_{r+1} \psid{r+e-1}{s+e+2} \chaind{r+e}{s+e+3} \clapdots \chaind{r+je-s-3}{je} z_\la\\
= \; &0 \text{ if $s+e=je-1$.}
\end{align*}
Now suppose that $s+e < je - 1$.
Then the above term becomes
\begin{align*}
&\psid{r-1}{s+e}\psid{r+e-2}{s+e+1} \psid{r+e-1}{r+2} (y_{r+1}\psi_{r+1}(s+1,s+1)) \psid{r}{s+e+2} \psid{r+e}{s+e+3} \chaind{r+e+1}{s+e+4} \clapdots \chaind{r+je-s-3}{je} z_\la\\
= \; &\psid{r-1}{s+e}\psid{r+e-2}{s+e+1} \psid{r+e-1}{r+2} (\psi_{r+1} y_{r+2} - 1) \psid{r}{s+e+2} \psid{r+e}{s+e+3} \chaind{r+e+1}{s+e+4} \clapdots \chaind{r+je-s-3}{je} z_\la\\
= \; &0 \text{ if $s+e=je-2$}.
\end{align*}
If $s+e < je - 2$, the first term becomes
\begin{align*}
&\psid{r-1}{s+e} \psid{r+e-2}{s+e+1} \chaind{r+e-1}{s+e+2} \psid{r+e}{r+3} y_{r+2} \psid{r+2}{s+e+3} \psid{r+e+1}{s+e+4} \chaind{r+e+2}{s+e+5} \clapdots \chaind{r+je-s-3}{je} z_\la\\
= \; &\psid{r-1}{s+e} \psid{r+e-2}{s+e+1} \chaind{r+e-1}{s+e+2}\psid{r+e}{r+3} y_{r+2} v(1, \dots, s+e+2, r+3, r+e+2, \dots, r+je-s-2),\\
= \; &0 \text{ by induction since $r \not\equiv 2s+3, 2s+4 \pmod{e}$.}
\end{align*}
Then the second term above becomes
\begin{align*}
& -\psid{r-1}{s+e} \psid{r+e-2}{s+e+1} \psid{r}{s+e+2} \psid{r+e-1}{r+3} \psid{r+e}{r+4} 
\cancel{\psi_{r+2}\psi_{r+3}\psi_{r+2}}
\psid{r+1}{s+e+3} \psid{r+e+1}{s+e+4} \chaind{r+e+2}{s+e+5} \clapdots \chaind{r+je-s-3}{je} z_\la,
\end{align*}
which is zero if $s+e=je-3$.
If $s+e<je-3$, then we continue
\begin{align*}
& -\psid{r-1}{s+e} 
\psid{r+e-2}{s+e+1} \psid{r}{s+e+2} \chaind{r+1}{s+e+3} \psid{r+e-1}{r+3} \chaind{r+e}{r+4} \psid{r+e+1}{s+e+4} \chaind{r+e+2}{s+e+5} \clapdots \chaind{r+je-s-3}{je} z_\la\\
= & -\psid{r-1}{s+e} \left( \psid{r+e-2}{r+2} \chaind{r+e-1}{r+3} \chaind{r+e}{r+4}\right) \psi_{r+1} \cancel{(\psi_r\psi_{r-1}\psi_r)} \psid{r-2}{s+e+1} \psid{r-1}{s+e+2} \psid{r+1}{s+e+3} \psid{r+e+1}{s+e+4} \chaind{r+e+2}{s+e+5} \clapdots \chaind{r+je-s-3}{je} z_\la\\
= & -\psid{r-1}{s+e} \left( \psid{r+e-2}{r+2} \chaind{r+e-1}{r+3} \chaind{r+e}{r+4}\right) \psi_{r+1} v(1,\dots,s+e,r-1,r,r+2,r+e+2,\dots,r+je-s-2)\\
= & \; 0 \text{ by the inductive hypothesis of part \ref{lem:yaction3} if $e=3$.}
\end{align*}
We apply \cref{newlem:up} without further reference.
If $e>3$, we have
\begin{align*}
& -\psid{r-1}{s+e} 
\left( \psid{r+e-2}{r+2} \chaind{r+e-1}{r+3} \chaind{r+e}{r+4}\right)
\psid{r-2}{s+e+1} \chaind{r-1}{s+e+2} 
\cancel{(\psi_{r+1}^2(s+2,s))}
\psid{r}{s+e+3} \psid{r+e+1}{s+e+4} \chaind{r+e+2}{s+e+5} \clapdots \chaind{r+je-s-3}{je} z_\la\\
= & -\psid{r-1}{s+e} \left( \psid{r+e-2}{r+3} \chaind{r+e-1}{r+4} \chaind{r+e}{r+5} \chaind{r+e+1}{r+6}\right)\\
& \ \cdot \psiu{r+2}{r+4} v(1,\dots,s+e,r-1,r,r+1,r+6,r+e+3,\dots,r+je-s-2)\\
= & -\psid{r-1}{s+e} \left( \psid{r+e-2}{r+3} \chaind{r+e-1}{r+4} \chaind{r+e}{r+5} \chaind{r+e+1}{r+6}\right)\\
& \ \cdot \psi_{r+2}v(1,\dots,s+e,r-1,r,r+1,r+3,r+e+3,\dots,r+je-s-2)\\
= & \; 0 \text{ by the inductive hypothesis of part \ref{lem:yaction3} if $e=4$.}
\end{align*}
If $e>4$, then we have
\begin{align*}
& -\psid{r-1}{s+e} \left( \psid{r+e-2}{r+3} \chaind{r+e-1}{r+4} \chaind{r+e}{r+5} \chaind{r+e+1}{r+6} \right) \psid{r-2}{s+e+1} \chaind{r-1}{s+e+2} \chaind{r}{s+e+3} \cancel{(\psi_{r+2}^2(s+3,s))} \psid{r+1}{s+e+4} \!\!\psid{r+e+2}{s+e+5} \chaind{r+e+3}{s+e+6} \clapdots \!\! \chaind{r+je-s-3}{je} \!\!\! z_\la\\
= &  -\psid{r-1}{s+e} \left( \psid{r+e-2}{r+4} \chaind{r+e-1}{r+5} \clapdots 
\chaind{r+e+2}{r+8} \right)\\
& \ \cdot \psiu{r+3}{r+6} v(1, \dots, s{+}e, r{-}1, r, r{+}1, r{+}2, r{+}8, r{+}e{+}4, \dots, r{+}je{-}s{-}2)\\
= & -\psid{r-1}{s+e} \left( \psid{r+e-2}{r+4} \chaind{r+e-1}{r+5} \clapdots 
\chaind{r+e+2}{r+8} \right)\\
& \ \cdot \psi_{r+3} v(1, \dots, s{+}e, r{-}1, r, r{+}1, r{+}2, r{+}4, r{+}e{+}4, \dots, r{+}je{-}s{-}2)\\
= & \; 0 \text{ by the inductive hypothesis of part \ref{lem:yaction3} if $e=5$.}
\end{align*}
Continuing in this fashion, we eventually obtain
\begin{align*}
& -\psid{r-1}{s+e} \psiu{r+e-2}{r+2e-4} v(1, \dots, s{+}e, r{-}1, r, \dots, r{+}e{-}3, r{+}2e{-}2, r{+}2e{-}1, \dots, je{+}r{-}s{-}2)\\
= & -\psid{r-1}{s+e} \psi_{r+e-2} v(1, \dots, s{+}e, r{-}1, r, \dots, r{+}e{-}3, r{+}e{-}1, r{+}2e{-}1, \dots, je{+}r{-}s{-}2)\\
= & \; 0 \text{ by the inductive hypothesis of part \ref{lem:yaction3}.}
\end{align*}

\item
If $r=s+e$, then the term $\psid{r-1}{s+e}$ is trivial so that
\[
y_r v_\ttt = y_{s+e} \psid{a_{s+e+1}-1}{s+e+1} \clapdots \chaind{a_{je}-1}{je} z_\la
= \psid{a_{s+e+1}-1}{s+e+1} \clapdots \chaind{a_{je}-1}{je} y_{s+e} z_\la = 0.
\]
We now suppose that $r>s+e$, and assume without loss of generality that $v_{\ttt} = v(1,\dots,s+e-1,r,r+e-1,r+e,\dots,je-s+r-2)$.
Then
\begin{align*}
& y_r v_\ttt\\
= \; & (y_r\psi_{r-1}(s-1, r-s-1)) \psid{r-2}{s+e} \psid{r+e-2}{s+e+1} \chaind{r+e-1}{s+e+2} \clapdots \chaind{je-s+r-3}{je} z_\la,\\
= \; & \psi_{r-1}y_{r-1} \psid{r-2}{s+e} \psid{r+e-2}{s+e+1} \chaind{r+e-1}{s+e+2} \clapdots \chaind{je-s+r-3}{je} z_\la\\
= \; &
\begin{cases}
0 & \text{if $r=s+e+1$}\\
\psi_{r-1} y_{r-1} v(1, \dots, s+e-1, r-1, r+e-1, \dots, je-s+r-2) & \text{if $r\geq s+e+2$}
\end{cases}\\
= \; &0 \text{ by induction if $r\not\equiv 2s+2\pmod{e}$.}
\end{align*}
If $r\geq s+e+2$ and $r\equiv 2s+2\pmod{e}$, then
\begin{align*}
& \psi_{r-1} (y_{r-1}\psi_{r-2}(s-1,s)) \psid{r-3}{s+e} \psid{r+e-2}{s+e+1} \clapdots \chaind{je-s+r-3}{je} z_\la\\
= \; & \psi_{r-1} \psi_{r-2} (y_{r-2} \psi_{r-3}(s-1,s-1)) \psid{r-4}{s+e} \psid{r+e-2}{s+e+1} \clapdots \chaind{je-s+r-3}{je} z_\la \\
= \; &
\begin{cases}
0 & \text{if $r=s+e+2$}\\
\psi_{r-1} \psi_{r-2} (\psi_{r-3}y_{r-3}+1) \psid{r-4}{s+e} \psid{r+e-2}{s+e+1} \clapdots \chaind{je-s+r-3}{je} z_\la & \text{if $r\geq s+e+3$.}
\end{cases}
\end{align*}
Assuming $r\geq s+e+3$, the first term of this is
\[
\psi_{r-1} \psi_{r-2} \psi_{r-3} y_{r-3} v(1, \dots, s+e-1, r-3, r+e-1, \dots, je-s+r-2) = 0 \text{ by induction.}
\]
If $je=s+e$, then the second term becomes
\[
\psi_{r-1} \psi_{r-2}\psid{r-4}{je} z_\la
= \psi_{r-1} \psid{r-4}{je}\psi_{r-2} z_\la = 0.
\]
Now suppose that $je > s+e$.
Then the second term becomes
\begin{align*}
&\psi_{r-1} \psid{r-4}{s+e} \psid{r+e-2}{r} \cancel{\psi_{r-2} \psi_{r-1} \psi_{r-2}} \psid{r-3}{s+e+1} \psid{r+e-1}{s+e+2} \chaind{r+e}{s+e+3} \clapdots \chaind{je-s+r-3}{je} z_\la\\
= \; & \psi_{r-1} \psid{r+e-2}{r+1} \psid{r-4}{s+e}\chaind{r-3}{s+e+1} \psid{r+e-1}{r+2} \cancel{\psi_r\psi_{r+1}\psi_r} \psid{r-1}{s+e+2} \psid{r+e}{s+e+3} \chaind{r+e+1}{s+e+4} \clapdots \chaind{je-s+r-3}{je} z_\la\\
= \; & \psid{r+e-2}{r+1} \psid{r+e-1}{r+2} \psi_{r-1} v(1,\dots,s+e-1,r-3,r-2,r,r+e+1,\dots,je-s+r-2)\\
= \; & 0 \text{ by the inductive hypothesis of part \ref{lem:yaction3} if $e=3$.}
\end{align*}
From here, the proof concludes in a similar manner to the proof of part \ref{lem:yaction2}.

\item 
Our base case is when $r=s+e$, so that $s \equiv 0 \pmod{e}$.
We prove this by induction on $\ell(w_{\ttt})$, and assume without loss of generality that $v_{\ttt}=v(1,\dots,s+e-1,s+e+1,s+2e+1,s+2e+2,\dots,je+e)$.
For the base case, the minimal length is obtained when $s+e=je$.
\[
\psi_rv_{\ttt} =(\psi_{s+e}^2(-1,0)) z_\la
=(y_{s+e}-y_{s+e+1}) z_\la
=0.
\]
Now suppose that $s+e<je$.
Then we have
\begin{align*}
\psi_r v_{\ttt} 
& = (\psi_{s+e}^2 (-1,0)) \psid{s+2e}{s+e+1} \chaind{s+2e+1}{s+e+2} \clapdots \chaind{je+e-1}{je} z_\la\\
& = (y_{s+e}-y_{s+e+1}) \psid{s+2e}{s+e+1} \chaind{s+2e+1}{s+e+2} \clapdots \chaind{je+e-1}{je} z_\la\\
& = 0 - \psid{s+2e}{s+e+2} (y_{s+e+1} \psi_{s+e+1}(0,0)) \psid{s+2e+1}{s+e+2} \chaind{s+2e+2}{s+e+3} \clapdots \chaind{je+e-1}{je} z_\la\\
& = - \psid{s+2e}{s+e+2} (\psi_{s+e+1} y_{s+e+2} - 1) \psid{s+2e+1}{s+e+2} \chaind{s+2e+2}{s+e+3} \clapdots \chaind{je+e-1}{je}z_\la.
\end{align*}
The first term becomes
\[
- \psid{s+2e}{s+e+1} \psid{s+2e+1}{s+e+3} \chaind{s+2e+2}{s+e+4} \clapdots \chaind{je+e-1}{je+1} y_{s+e+2} \psi_{s+e+2}\psi_{s+e+3} \dots \psi_{je} z_\la = 0
\text{ by \cref{lem:strings}.}
\]
The second term becomes
\begin{align*}
& \psid{s+2e}{s+e+3} \chaind{s+2e+1}{s+e+4} \cancel{\psi_{s+e+2} \psi_{s+e+3} \psi_{s+e+2}} \psid{s+2e+2}{s+e+3} \chaind{s+2e+3}{s+e+4} \clapdots \chaind{je+e-1}{je} z_\la \\
= \; & \psid{s+2e}{s+e+3} \psid{s+2e+1}{s+e+5} \chaind{s+2e+2}{s+e+6} \cancel{\psi_{s+e+4} \psi_{s+e+5} \psi_{s+e+4}} \psi_{s+e+3} \psid{s+2e+3}{s+e+4} \chaind{s+2e+4}{s+e+5} \clapdots \chaind{je+e-1}{je} z_\la \\
= \; & \psid{s+2e}{s+e+4} \chaind{s+2e+1}{s+e+5} \chaind{s+2e+2}{s+e+6} \psi_{s+e+3} v(1,\dots,s+e+2,s+e+4,s+2e+4,\dots,je+e)\\
= \; & 0 \text{ by induction if $e=3$.}
\end{align*}
If $e>3$, then we have
\begin{align*}
& \psid{s+2e}{s+e+4} \chaind{s+2e+1}{s+e+5} \chaind{s+2e+2}{s+e+6} \cancel{(\psi_{s+e+3}^2(2,0))} \psid{s+2e+3}{s+e+4} \chaind{s+2e+4}{s+e+5} \clapdots \chaind{je+e-1}{je} z_\la\\
= \; &\psid{s+2e}{s+e+4} \chaind{s+2e+1}{s+e+5} \psid{s+2e+2}{s+e+7} \chaind{s+2e+3}{s+e+8} \cancel{\psi_{s+e+6}\psi_{s+e+7}\psi_{s+e+6}} \psid{s+e+5}{s+e+4} \psid{s+2e+4}{s+e+5} \chaind{s+2e+5}{s+e+6} \clapdots \chaind{je+e-1}{je} z_\la\\
= \; &\psid{s+2e}{s+e+4} \psid{s+2e+1}{s+e+6} \chaind{s+2e+2}{s+e+7} \chaind{s+2e+3}{s+e+8} \cancel{(\psi_{s+e+5}^2(3,1))}\psi_{s+e+4} \psid{s+2e+4}{s+e+5} \chaind{s+2e+5}{s+e+6} \clapdots \chaind{je+e-1}{je} z_\la\\
= \; &\psid{s+2e}{s+e+5} \chaind{s+2e+1}{s+e+6} \chaind{s+2e+2}{s+e+7} \chaind{s+2e+3}{s+e+8} \psi_{s+e+4} v(1,\dots,s+e+3,s+e+5,s+2e+5,\dots,je+e)\\
= \; &0 \text{ by induction if $e=4$.}
\end{align*}
If $e>4$, then we have
\begin{align*}
& \psid{s+2e}{s+e+5} \chaind{s+2e+1}{s+e+6} \chaind{s+2e+2}{s+e+7} \chaind{s+2e+3}{s+e+8} \cancel{\psi_{s+e+4}^2(3,0))} \psid{s+2e+4}{s+e+5} \chaind{s+2e+5}{s+e+6} \clapdots \chaind{je+e-1}{je} z_\la \\
= \; & \psid{s+2e}{s+e+5} \chaind{s+2e+1}{s+e+6} \chaind{s+2e+2}{s+e+7} \psid{s+2e+3}{s+e+9} \chaind{s+2e+4}{s+e+10} \cancel{\psi_{s+e+8}\psi_{s+e+9}\psi_{s+e+8}} \psid{s+e+7}{s+e+5} \psid{s+2e+5}{s+e+6} \chaind{s+2e+6}{s+e+7} \clapdots \chaind{je+e-1}{je} z_\la\\
= \; & \psid{s+2e}{s+e+5} \chaind{s+2e+1}{s+e+6} \psid{s+2e+2}{s+e+8} \chaind{s+2e+3}{s+e+9} \chaind{s+2e+4}{s+e+10} \cancel{(\psi_{s+e+7}^2(4,2))} \psid{s+e+6}{s+e+5} \psid{s+2e+5}{s+e+6} \chaind{s+2e+6}{s+e+7} \clapdots \chaind{je+e-1}{je} z_\la \\
= \; & \psid{s+2e}{s+e+5} \psid{s+2e+1}{s+e+7} \chaind{s+2e+2}{s+e+8} \chaind{s+2e+3}{s+e+9} \chaind{s+2e+4}{s+e+10} \cancel{(\psi_{s+e+6}^2(4,1))} \psi_{s+e+5} \psid{s+2e+5}{s+e+6} \chaind{s+2e+6}{s+e+7} \clapdots \chaind{je+e-1}{je} z_\la\\
= \; & \psid{s+2e}{s+e+6} \chaind{s+2e+1}{s+e+7} \clapdots \chaind{s+2e+4}{s+e+10} \psi_{s+e+5} v(1,\dots,s+e+4,s+e+6,s+2e+6,\dots,je+e)\\
= \; & 0 \text{ by induction if $e=5$.}
\end{align*}
If $e>5$, we continue in this way until we obtain
\[
\psid{s+2e}{s+2e+1} \chaind{s+2e+1}{s+2e+2} \clapdots \chaind{s+3e-1}{s+3e} \psi_{s+2e}
v(1,\dots,s+2e-1,s+2e+1,s+3e+1,\dots,je+e) = 0
\]
by induction.

We now suppose that $r>s+e$, and again we will use induction on $\ell(w_\ttt)$, so we may assume without loss of generality that $v_{\ttt}=v(1,\dots,s,r-e+1,r-e+2,\dots,r-1,r+1,r+e+1,r+e+2,\dots,je+r-s)$.

For the base case, we assume that $s+e=je$ and hence $r \equiv s \equiv 0\pmod{e}$. Applying \Cref{cor:psidownaction}, we have
\begin{align*}
\psi_r v_\ttt 
&= \psi_r v(1,\dots,je-e,r-e+1,r-e+2,\dots,r-1,r+1)\\
&= \psi_r \psid{r-e}{je-e+1} \chaind{r-e+1}{je-e+2} \clapdots
\chaind{r-2}{je-1} \psid{r}{je} z_\la\\
&= \psid{r-e}{je-e+1} \chaind{r-e+1}{je-e+2} \clapdots
\chaind{r-2}{je-1} 
(\psi_r^2(-1,0))
\psid{r-1}{je} z_\la\\
&= \psid{r-e}{je-e+1} \chaind{r-e+1}{je-e+2} \clapdots
\chaind{r-2}{je-1} 
(y_r - y_{r+1})
\psid{r-1}{je} z_\la\\
&= \psid{r-e}{je-e+1} \chaind{r-e+1}{je-e+2} \clapdots
\chaind{r-2}{je-1} 
y_r \psid{r-1}{je} z_\la - 0\\
&= \psid{r-e}{je-e+1} \chaind{r-e+1}{je-e+2} \clapdots
\chaind{r-2}{je-1}
(y_r \psi_{r-1}(-1,-1)) 
\psid{r-2}{je} z_\la \\
&= \psid{r-e}{je-e+1} \chaind{r-e+1}{je-e+2} \clapdots
\chaind{r-2}{je-1}
(\psi_{r-1}y_{r-1} + 1) 
\psid{r-2}{je} z_\la \\
&= 0 + \psid{r-e}{je-e+1} \chaind{r-e+1}{je-e+2} \clapdots
\chaind{r-2}{je-1}
\psid{r-2}{je} z_\la
\text{ by the inductive hypothesis of part \ref{lem:yaction1}} \\
&= \psid{r-e}{je-e+1} \chaind{r-e+1}{je-e+2} \clapdots
\chaind{r-3}{je-2}
\cancel{\psi_{r-2}\psiu{r-3}{r-2}}
\psid{r-4}{je-1}
\chaind{r-3}{je} z_\la\\
&= \psid{r-e}{je-e+1} \chaind{r-e+1}{je-e+2} \clapdots
\chaind{r-4}{je-3}
\cancel{\psid{r-3}{r-4}\psiu{r-5}{r-3}}
\psid{r-6}{je-2}
\chaind{r-5}{je-1}
\psid{r-4}{je} z_\la\\
&= \psid{r-e}{je-e+1} \chaind{r-e+1}{je-e+2} \clapdots
\chaind{r-5}{je-4}
\cancel{\psid{r-4}{r-6}\psiu{r-7}{r-4}}
\psid{r-8}{je-3}
\chaind{r-7}{je-2}
\chaind{r-6}{je-1}
\psid{r-5}{je} z_\la\\
 & \; \; \vdots \\
&= \psid{r-e}{je-e+1}
 \cancel{\psid{r-e+1}{r-2e+4} \psiu{r-2e+3}{r-e+1}}
\psid{r-2e+2}{je-e+2}
\chaind{r-2e+3}{je-e+3}
\clapdots
\chaind{r-e-1}{je-1}
\psid{r-e}{je} z_\la \\
&= \psi_{r-e} \cancel{\psid{r-e-1}{r-2e+2} \psiu{r-2e+1}{r-e-1}}
\psid{r-2e}{je-e+1}
\chaind{r-2e+1}{je-e+2}
\clapdots
\chaind{r-e-2}{je-1}
\psid{r-e}{je} z_\la \\
&=   
\psid{r-2e}{je-e+1}
\chaind{r-2e+1}{je-e+2}
\clapdots
\chaind{r-e-2}{je-1}
(\psi_{r-e}^2(-1,0))
\psid{r-e-1}{je} z_\la.
\end{align*}
We repeat the above process $s-j-1$ more times, until we reach
\begin{align*}
(\psi_{je}^2(-1,0)) z_\la
= (y_{je} - y_{je+1}) z_\la
= 0.
\end{align*}
We now suppose that $s+e<je$. We thus have
\begin{align*}
\psi_r v_\ttt & = \psi_r \psid{r-e}{s+1} \chaind{r-e+1}{s+2} \clapdots \chaind{r-2}{s+e-1} \psid{r}{s+e} \psid{r+e}{s+e+1} \chaind{r+e+1}{s+e+2} \clapdots \chaind{je+r-s-1}{je} z_\la \\
& = \psid{r-e}{s+1} \chaind{r-e+1}{s+2} \clapdots \chaind{r-2}{s+e-1} (\psi_r^2(s-1,s)) \psid{r-1}{s+e} \psid{r+e}{s+e+1} \chaind{r+e+1}{s+e+2} \clapdots \chaind{je+r-s-1}{je} z_\la\\
& = \psid{r-e}{s+1} \chaind{r-e+1}{s+2} \clapdots \chaind{r-2}{s+e-1} (y_r - y_{r+1}) \psid{r-1}{s+e} \psid{r+e}{s+e+1} \chaind{r+e+1}{s+e+2} \clapdots \chaind{je+r-s-1}{je} z_\la. \hypertarget{2terms}{\tag{\textdagger}}
\end{align*}
Applying the inductive hypothesis of part \ref{lem:yaction1}, the first term of \hyperlink{2terms}{(\textdagger)} becomes
\begin{align*}
& \psid{r-e}{s+1} \chaind{r-e+1}{s+2} \clapdots \chaind{r-2}{s+e-1} (y_r\psi_{r-1}(s-1,s-1)) \psid{r-2}{s+e} \psid{r+e}{s+e+1} \chaind{r+e+1}{s+e+2} \clapdots \chaind{je+r-s-1}{je} z_\la\\
= & \; \psid{r-e}{s+1} \chaind{r-e+1}{s+2} \clapdots \chaind{r-2}{s+e-1} (\psi_{r-1} y_{r-1} + 1) \psid{r-2}{s+e} \psid{r+e}{s+e+1} \chaind{r+e+1}{s+e+2} \clapdots \chaind{je+r-s-1}{je} z_\la\\
= & \; 0 + \psid{r-e}{s+1} \chaind{r-e+1}{s+2} \clapdots \chaind{r-3}{s+e-2} \cancel{\psi_{r-2} \psi_{r-3} \psi_{r-2}} \psid{r-4}{s+e-1} \psid{r-3}{s+e} \psid{r+e}{s+e+1} \chaind{r+e+1}{s+e+2} \clapdots \chaind{je+r-s-1}{je} z_\la\\
= & \; \psid{r-e}{s+1} \chaind{r-e+1}{s+2} \clapdots \chaind{r-4}{s+e-3} \psi_{r-3}
\cancel{\psi_{r-4}\psi_{r-5}\psi_{r-4}} \psid{r-6}{s+e-2} \chaind{r-5}{s+e-1} \psid{r-3}{s+e} \psid{r+e}{s+e+1} \chaind{r+e+1}{s+e+2} \clapdots \chaind{je+r-s-1}{je} z_\la \\
= & \; \psid{r-e}{r-e-2} \chaind{r-e+1}{r-e-1} \clapdots \chaind{r-4}{r-6} \psi_{r-3} \psid{r-e-3}{s+1} \chaind{r-e-2}{s+2} \clapdots \chaind{r-5}{s+e-1} \psid{r-3}{s+e} \psid{r+e}{s+e+1} \chaind{r+e+1}{s+e+2} \clapdots \chaind{je+r-s-1}{je} z_\la\\
= & \; \psid{r-e}{r-e-2} \chaind{r-e+1}{r-e-1} \clapdots \chaind{r-4}{r-6} \psi_{r-3} v(1, \dots, s, r{-}e{-}2, r{-}e{-}1, \dots, r{-}4, r{-}2, r{+}e{+}1, \dots, je{+}r{-}s)\\
= & \; 0 \text{ by induction on $r-s$ if $e=3$.}
\end{align*}
If $e>3$, then by applying \cref{cor:psidownaction}, we have	
\begin{align*}
& \left(\psid{r-e}{r-e-2} \chaind{r-e+1}{r-e-1} \clapdots  \chaind{r-4}{r-6}\right) \psid{r-e-3}{s+1} \chaind{r-e-2}{s+2} \clapdots \chaind{r-5}{s+e-1} \cancel{(\psi_{r-3}^2(s-1,s-3))} \psid{r-4}{s+e} \psid{r+e}{s+e+1} \chaind{r+e+1}{s+e+2} \clapdots \chaind{je+r-s-1}{je} z_\la\\
= & \left(\psid{r-e}{r-e-2} \chaind{r-e+1}{r-e-1} \clapdots \chaind{r-5}{r-7}\right) \psid{r-e-3}{s+1} \chaind{r-e-2}{s+2} \clapdots \chaind{r-8}{s+e-4}\psi_{r-4}\\
& \ \cdot \psid{r-5}{r-6} v(1,\dots,s+e-4,r-6,r-5,r-4,r-3,r+e+1,\dots,je+r-s)\\
= & \left(\psid{r-e}{r-e-2} \chaind{r-e+1}{r-e-1} \clapdots \chaind{r-5}{r-7}\right) \psid{r-e-3}{s+1} \chaind{r-e-2}{s+2} \clapdots \chaind{r-8}{s+e-4}\\
& \ \cdot \psi_{r-4} v(1,\dots,s+e-4,r-7,r-6,r-5,r-3,r+e+1,\dots,je+r-s) \\
= & \; 0 \text{ by induction if $e=4$.}
\end{align*}
As in parts \ref{lem:yaction2} and \ref{lem:yaction1}, we continue in this fashion for $e>4$, until we eventually obtain
\[
\psi_{r-e} v(1, \dots, s, r{-}2e{+}1, r{-}2e{+}3, \dots, r{-}e{-}1, r{-}e{+}1, r{+}e{+}1, \dots, je{+}r{-}s) = 0
\]
by induction.
Applying the inductive hypothesis of part \ref{lem:yaction2}, the second term of \hyperlink{2terms}{(\textdagger)} becomes
\begin{align*}
& - \psid{r-e}{s+1} \chaind{r-e+1}{s+2} \clapdots \chaind{r-1}{s+e} \psid{r+e}{r+2} (y_{r+1}\psi_{r+1}(s,s)) \psid{r}{s+e+1} \psid{r+e+1}{s+e+2} \chaind{r+e+2}{s+e+3} \clapdots \chaind{je+r-s-1}{je} z_\la\\
= & - \psid{r-e}{s+1} \chaind{r-e+1}{s+2} \clapdots \chaind{r-1}{s+e} \psid{r+e}{r+2} (\psi_{r+1}y_{r+2} - 1) \psid{r}{s+e+1} \psid{r+e+1}{s+e+2} \chaind{r+e+2}{s+e+3} \clapdots \chaind{je+r-s-1}{je} z_\la\\
= & - \psid{r-e}{s+1} \chaind{r-e+1}{s+2} \clapdots \chaind{r-1}{s+e} \psid{r+e}{s+e+1} \psid{r+e+1}{r+3} y_{r+2} v(1, \dots, s{+}e{+}1, r{+}3, r{+}e{+}3, \dots, je{+}r{-}s)\\
& + \psid{r+e}{r+3} \psid{r-e}{s+1} \chaind{r-e+1}{s+2} \clapdots \chaind{r}{s+e+1} \psid{r+e+1}{r+4} \cancel{\psi_{r+2}\psi_{r+3}\psi_{r+2}} \psid{r+1}{s+e+2} \psid{r+e+2}{s+e+3} \chaind{r+e+3}{s+e+4} \clapdots \chaind{je+r-s-1}{je} z_\la\\
= & \; 0 + \psid{r+e}{r+3} \chaind{r+e+1}{r+4} \psid{r-e}{s+1} \chaind{r-e+1}{s+2} \clapdots \chaind{r+1}{s+e+2} \psid{r+e+2}{s+e+3} \chaind{r+e+3}{s+e+4} \clapdots \chaind{je+r-s-1}{je} z_\la\\
= & \; \psid{r+e}{r+3}\psid{r+e+1}{r+5} \psid{r-e}{s+1} \chaind{r-e+1}{s+2} \clapdots \chaind{r+1}{s+e+2} \psid{r+e+2}{r+6} \cancel{\psi_{r+4} \psi_{r+5} \psi_{r+4}} \psid{r+3}{s+e+3} \psid{r+e+3}{s+e+4} \chaind{r+e+4}{s+e+5} \clapdots \chaind{je+r-s-1}{je} z_\la\\
= & \; \psid{r+e}{r+4} \chaind{r+e+1}{r+5} \chaind{r+e+2}{r+6} \psid{r-e}{s+1}\chaind{r-e+1}{s+2}\chaind{r-e+2}{s+3}\\
& \cdot \psi_{r+3} v(1, \dots, s{+}3, r{-}e{+}4, r{-}e{+}5, \dots, r{+}2, r{+}4, r{+}e{+}4, \dots, je{+}r{-}s)\\
= & \; 0 \text{ by induction if $e=3$.}
\end{align*}
If $e>3$, then we have
\begin{align*}
& \psid{r+e}{r+4} \chaind{r+e+1}{r+5} \chaind{r+e+2}{r+6} \psid{r-e}{s+1}\chaind{r-e+1}{s+2} \clapdots \chaind{r+1}{s+e+2} \cancel{(\psi_{r+3}^2(s+2,s))} \psid{r+2}{s+e+3} \psid{r+e+3}{s+e+4} \chaind{r+e+4}{s+e+5} \clapdots \chaind{je+r-s-1}{je} z_\la\\
= & \; \psid{r+e}{r+4} \chaind{r+e+1}{r+5} \psid{r+e+2}{r+7} \chaind{r+e+3}{r+8} \psid{r-e}{s+1}\chaind{r-e+1}{s+2} \clapdots \chaind{r+2}{s+e+3} \cancel{\psi_{r+6}\psi_{r+7} \psi_{r+6}} \psid{r+5}{s+e+4} \psid{r+e+4}{s+e+5} \chaind{r+e+5}{s+e+6} \clapdots \chaind{je+r-s-1}{je} z_\la\\
= & \; \psid{r+e}{r+4} \psid{r+e+1}{r+6} \chaind{r+e+2}{r+7} \chaind{r+e+3}{r+8} \psid{r-e}{s+1}\chaind{r-e+1}{s+2}\clapdots \chaind{r+2}{s+e+3} \cancel{(\psi_{r+5}^2(s+3,s+1))} \psid{r+4}{s+e+4} \psid{r+e+4}{s+e+5} \chaind{r+e+5}{s+e+6} \clapdots \chaind{je+r-s-1}{je} z_\la\\
= & \; \psid{r+e}{r+5} \chaind{r+e+1}{r+6} \psid{r+e+2}{r+7} \chaind{r+e+3}{r+8} \psid{r-e}{s+1} \chaind{r-e+1}{s+2} \chaind{r-e+2}{s+3} \chaind{r-e+3}{s+4}\\
& \cdot \psi_{r+4} v(1, \dots, s{+}4, r{-}e{+}5, r{-}e{+}6, \dots, r{+}3, r{+}5, r{+}e{+}5, r{+}e{+}6, \dots, je{+}r{-}s)\\
= & \; 0 \text{ by induction if $e=4$.}
\end{align*}
We continue for $e>4$ in a similar manner until we reach
\begin{align*}
& \psi_{r+e} \psid{r-e}{s+1} \chaind{r-e+1}{s+2} \clapdots \chaind{r+e-2}{s+2e-1} \psid{r+e}{s+2e} \psid{r+2e}{s+2e+1} \chaind{r+2e+1}{s+2e+2} \clapdots \chaind{je+r-s-1}{je} z_\la\\
= & \; \psi_{r+e} v(1, \dots, s, r{-}e{+}1, r{-}e{+}2, \dots, r{+}e{-}1, r{+}e{+}1, r{+}2e{+}1, r{+}2e{+}2, \dots, je{+}r{-}s)\\
= & \; 0 \text{ by induction.}\qedhere
\end{align*}
\end{enumerate}
\end{proof}

\begin{proof}[Proof of \cref{prop:cancellation}]
\begin{enumerate}[label=(\roman*)]
\item This in fact follows just like the proof of \cref{smallphivt}, with indices shifted by the corresponding multiples of $e$.
In fact, that proof gives that
\[
\psi_{(r+1)e-1} \psi_{(r+1)e-2} \dots \psi_{re} \Psi_r v = -2 \psi_{(r+1)e-1} \psi_{(r+1)e-2} \dots \psi_{re} v,
\]
since there we allow each component to be an arbitrary (small) hook, not just the trivial partition $(e)$.
If we follow the proof, setting $b=0$, it may be considerably shortened and in fact the prefix of generators $\psi_{(r+1)e-1} \psi_{(r+1)e-2} \dots \psi_{re+1}$ is not needed at all -- the special case $b=0$ of that proof ends on page~\pageref{b0pf}.

\item Without loss of generality, we will assume that $\Psi_{r+1} \Psi_r v$ is reduced, i.e.~$v$ is a linear combination of basis vectors indexed by standard tableaux that have brick $r$ in the second component and bricks $r+1$ and $r+2$ in the first.
One can show that if brick $r$ is in the first component, the calculation of $\Psi_r v_\ttt$ reduces to applying part~\ref{prop:cancellation1} of the proposition to basis vectors of the assumed form.
If brick $r$ is in the second component but bricks $r+1$ and $r+2$ are not both in the first component, then the calculation of $\Psi_{r+1} \Psi_r v_\ttt$ reduces to applying part~\ref{prop:cancellation3} of the proposition to basis vectors of the assumed form.

By repeatedly applying~\cref{cor:psiaction}\ref{cor:psiaction1}, we have
\begin{align*}
& \psi_{re} \Psi_{r+1} \Psi_r v\\
= \; & \psi_{re}
\left(\psid{re+e}{re+1} \chaind{re+e+1}{re+2} \clapdots \chaind{re+2e-1}{re+e}\right)
\left(\psid{re}{re-e+1} \chaind{re+1}{re-e+2} \clapdots \chaind{re+e-1}{re}\right) v\\
= \; & \psi_{re} \psid{re+e}{re-e+1} \chaind{re+e+1}{re-e+2} \clapdots \chaind{re+2e-1}{re} v\\
= \; & \psid{re+e}{re+2} \cancel{\psi_{re} \psi_{re+1} \psi_{re}} \psid{re-1}{re-e+1} \psid{re+e+1}{re-e+2} \chaind{re+e+2}{re-e+3} \clapdots \chaind{re+2e-1}{re} v\\
= \; & \psid{re+e}{re+3} \psid{re-1}{re-e+1} \psid{re+e+1}{re+4} \cancel{\psi_{re+2} \psi_{re+3} \psi_{re+2}}
\psid{re+1}{re-e+2} \psid{re+e+2}{re-e+3} \chaind{re+e+3}{re-e+4} \clapdots \chaind{re+2e-1}{re} v\\
\vdots \; &\\
= \; & \left(\psid{re+e}{re+3} \psid{re-1}{re-e+1}\right)
\left(\psid{re+e+1}{re+5} \psid{re+1}{re-e+2}\right)
\left(\psid{re+e+2}{re+7} \psid{re+3}{re-e+3}\right) \dots
\left(\psid{re+2e-4}{re+2e-5} \psid{re+2e-9}{re-3}\right) \cdot\\
\; & \left(\psi_{re+2e-3} \psid{re+2e-7}{re-2}\right)
\left(\psid{re+2e-5}{re-1}\right)
\cancel{\psi_{re+2e-2} \psi_{re+2e-1} \psi_{re+2e-2}} \psid{re+2e-3}{re} v\\
= \; & \left(\psid{re+e}{re+3} \psid{re+e+1}{re+5} \psid{re+e+2}{re+7}\dots \psid{re+2e-4}{re+2e-5} \psi_{re+2e-3}\right) \psid{re-1}{re-e+1} \psid{re+1}{re-e+2}\dots \psid{re+2e-5}{re-1} \psid{re+2e-3}{re} v.
\end{align*}
If $e=3$, this becomes $\psi_{3r+3} \psid{3r-1}{3r-2} \psid{3r+1}{3r-1} \psid{3r+3}{3r}v$.
However, if $e>3$, then by applying \cref{lem:psipsi}, we have
\begin{align*}
& \; \left(\psid{re+e}{re+3} \psid{re+e+1}{re+5} \psid{re+e+2}{re+7} \dots \psid{re+2e-4}{re+2e-5} \right)
\psid{re-1}{re-e+1} \psid{re+1}{re-e+2} \dots \psid{re+2e-5}{re-1}
\cancel{\psi_{re+2e-3}^2}
\left(\psid{re+2e-4}{re} v \right)\\
= & \; \left(\psid{re+e}{re+3} \psid{re+e+1}{re+5} \dots \psid{re+2e-5}{re+2e-7} \right)
\psid{re-1}{re-e+1} \psid{re+1}{re-e+2} \dots
\\%
&\quad \dots
\psid{re+2e-7}{re-2}
\cancel{\psid{re+2e-4}{re+2e-5}\psiu{re+2e-5}{re+2e-4}}
\left(\psid{re+2e-6}{re-1} \chaind{re+2e-5}{re} \right) v\\
= & \; \left(\psid{re+e}{re+3} \psid{re+e+1}{re+5} \dots \psid{re+2e-6}{re+2e-9} \right)
\psid{re-1}{re-e+1} \psid{re+1}{re-e+2} \dots\\
&\quad \dots 
\psid{re+2e-9}{re-3}
\cancel{\psid{re+2e-5}{re+2e-7} \psiu{re+2e-7}{re+2e-5}}
\left(\psid{re+2e-8}{re-2} \chaind{re+2e-7}{re-1} \chaind{re+2e-6}{re} \right) v\\
\vdots \; &\\
= & \; \psi_{re+e} \psid{re-1}{re-e+1}\psid{re+1}{re-e+2}
\cancel{\psid{re+e-1}{re+3}\psiu{re+3}{re+e-1}}
\left(\psid{re+2}{re-e+3} \chaind{re+3}{re-e+4} \clapdots \chaind{re+e-2}{re-1}\right)
\psid{re+e}{re} v.
\end{align*}
For $e\geq 3$, the last terms become
\begin{align*}
& \; \psid{re-1}{re-e+1} \psid{re+1}{re-e+2} \chaind{re+2}{re-e+3} \clapdots \chaind{re+e-2}{re-1}
(\psi_{re+e}^2(-1,0))
\psid{re+e-1}{re}v\\
= & \; \psid{re-1}{re-e+1} \psid{re+1}{re-e+2} \chaind{re+2}{re-e+3} \clapdots \chaind{re+e-2}{re-1}
(y_{re+e} - y_{re+e+1})
\psid{re+e-1}{re}v.
\end{align*}
We know from \cref{lem:ys} that the second term becomes zero, whilst the first term is
\begin{align*}
& \; \psid{re-1}{re-e+1} \psid{re+1}{re-e+2} \chaind{re+2}{re-e+3} \clapdots \chaind{re+e-2}{re-1}
(y_{re+e}\psi_{re+e-1}(-1,-1))
\psid{re+e-2}{re} v\\
= & \; \psid{re-1}{re-e+1} \psid{re+1}{re-e+2} \chaind{re+2}{re-e+3} \clapdots \chaind{re+e-2}{re-1}
(\psi_{re+e-1}y_{re+e-1}+1)
\psid{re+e-2}{re} v.
\end{align*}
Now the first term is
\begin{align*}
&\psid{re-1}{re-e+1} \psid{re+1}{re-e+2}
\chaind{re+2}{re-e+3} \clapdots \chaind{re+e-2}{re-1} \psi_{re+e-1} y_{re+e-1} v(1, \dots, re-1, re+e-1, a_{re+1}, \dots, a_{re})\\
&= 0 \text{ by \cref{lem:newyaction}\ref{lem:yaction1} since $re+e-1 \not\equiv 0,1 \pmod{e}$.}
\end{align*}
If $e=3$, then applying \cref{cor:psiaction}\ref{cor:psiaction2} to the second term yields
\[
\psid{3r-1}{3r-2} \psid{3r+1}{3r-1} \psid{3r+1}{3r} v
= \psid{3r-1}{3r-2}
\cancel{\psi_{3r+1} \psi_{3r} \psi_{3r+1}}
\psi_{3r-1} \psi_{3r} v
= \cancel{\psi_{3r-1} \psi_{3r-2} \psi_{3r-1}} \psi_{3r} v
= \psi_{3r} v.
\]
If $e>3$, then repeatedly applying \cref{cor:psidownaction} to the second term yields
\begin{align*}
& \; \psid{re-1}{re-e+1} \psid{re+1}{re-e+2} \chaind{re+2}{re-e+3} \clapdots \chaind{re+e-3}{re-2} \cancel{\psi_{re+e-2} \psiu{re+e-3}{re+e-2}}
 \psid{re+e-4}{re-1} \chaind{re+e-3}{re} v\\
= & \; \psid{re-1}{re-e+1} \psid{re+1}{re-e+2} \chaind{re+2}{re-e+3} \clapdots 
\chaind{re+e-4}{re-3}
\cancel{\psid{re+e-3}{re+e-4} \psiu{re+e-5}{re+e-3}}
 \psid{re+e-6}{re-2} \chaind{re+e-5}{re-1} \chaind{re+e-4}{re} v\\
\vdots \; &\\
= & \; \psid{re-1}{re-e+1} 
\cancel{\psid{re+1}{re-e+4}\psiu{re-e+3}{re+1}}
\psiu{re-e+2}{re} v \\
= & \; \cancel{\psid{re-1}{re-e+2} \psiu{re-e+1}{re-1}} \psi_{re} v\\
= & \; \psi_{re} v.
\end{align*}

\item
The proof proceeds analogously to part \ref{prop:cancellation2}, and is omitted for the sake of brevity.\qedhere
\end{enumerate}
\end{proof}

\bibliographystyle{amsalpha}  
\phantomsection
\addcontentsline{toc}{section}{\refname}
\bibliography{master}

\end{document}